\newtheorem{theorem}{Theorem}[section]
\newtheorem{lemma}[theorem]{Lemma}
\theoremstyle{definition}
\theoremstyle{remark}
\numberwithin{equation}{section}
\def\GG{\operatorname{G\Gamma}}
\def\esup{\operatornamewithlimits{ess\,sup}}
\def\R{\mathbb R}
\def\ap{\approx}
\def\mf{\mathfrak M}
\def\rad{\operatorname{rad}}
\def\qq{\qquad}
\def\rn{\R^n}
\def\la{\lambda}
\def\vp{\varphi}
\def\rw{\rightarrow}
\def\dn{\downarrow}
\def\R{\mathbb R}
\def\M{\mathfrak M}
\def\mp{{\mathfrak M}}
\def\W{{\mathcal W}}
\begin{document}

\setcounter{page}{1}

\title[On some restricted inequalities for $T_{u,b}$ and their applications]{On some restricted inequalities for the iterated Hardy-type operator involving suprema and their applications}

\author[R.Ch. Mustafayev]{RZA MUSTAFAYEV}
\address{RZA MUSTAFAYEV, Department of Mathematics, Kamil \"{O}zda\u{g} Faculty of Science, Karamano\u{g}lu Mehmetbey University, 70200, Karaman, Turkey}
\email{rzamustafayev@gmail.com}

\author[N. B\.{I}LG\.{I}\c{C}L\.{I}]{NEV\.{I}N B\.{I}LG\.{I}\c{C}L\.{I}}
\address{NEV\.{I}N B\.{I}LG\.{I}\c{C}L\.{I}, Republic of Turkey Ministry of National Education, Kirikkale High School, 71100, Kirikkale, Turkey}
\email{nevinbilgicli@gmail.com}

\author[M. Yilmaz]{MERVE YILMAZ}
\address{MERVE YILMAZ, Department of Mathematics, Kamil \"{O}zda\u{g} Faculty of Science, Karamano\u{g}lu Mehmetbey University, 70200, Karaman, Turkey}
\email{mervegorgulu@kmu.edu.tr}

\subjclass[2010]{42B25, 42B35}

\keywords{generalized maximal functions, classical and generalized Lorentz spaces,
iterated Hardy inequalities involving suprema, weights}

\begin{abstract}
	In this paper  we characterize the inequality
	\begin{equation*}
	\bigg( \int_0^{\infty} \bigg( \int_0^x \big[ T_{u,b}f^* (t)\big]^r\,dt\bigg)^{\frac{q}{r}} w(x)\,dx\bigg)^{\frac{1}{q}} \le C \, \bigg( \int_0^{\infty} \bigg( \int_0^x [f^* (\tau)]^p\,d\tau \bigg)^{\frac{m}{p}} v(x)\,dx \bigg)^{\frac{1}{m}}
	\end{equation*}
	for $1 < m < p \le r < q < \infty$ or $1 < m \le r < \min\{p,q\} < \infty$, where $w$ and $v$ are weight functions on  $(0,\infty)$. The inequality is required to hold with some positive constant $C$ for all measurable functions defined on measure space $({\mathbb R}^n,dx)$. Here $f^*$ is the non-increasing rearrangement of a measurable function $f$ defined on $\rn$ and $T_{u,b}$ is the iterated Hardy-type operator involving suprema, whish is defined for a measurable non-negative function $f$ on $(0,\infty)$ by 
	$$
	(T_{u,b} g)(t) : = \sup_{t \le \tau < \infty} \frac{u(\tau)}{B(\tau)} \int_0^{\tau} g(s)b(s)\,ds,\qquad t \in (0,\infty),
	$$ 
	where $u$ and $b$ are two weight functions on $(0,\infty)$ such that $u$ is continuous on $(0,\infty)$ and the function 
	$B(t) : = \int_0^t b(s)\,ds$ satisfies  $0 < B(t) < \infty$ for every $t \in (0,\infty)$. 
	
	At the end of the paper, as an application of obtained results, we calculate the norm of the generalized maximal operator $M_{\phi,\Lambda^{\alpha}(b)}$,
	defined with $0 < \alpha < \infty$ and functions $b,\,\phi: (0,\infty) \rightarrow (0,\infty)$ for all measurable functions $f$ on ${\mathbb R}^n$ by
	\begin{equation*}
	M_{\phi,\Lambda^{\alpha}(b)}f(x) : = \sup_{Q \ni x} \frac{\|f \chi_Q\|_{\Lambda^{\alpha}(b)}}{\phi (|Q|)}, \qquad x \in \rn,
	\end{equation*}
	from $\GG(p_1,m_1,v)$ into $\GG(p_2,m_2,w)$. Here $\Lambda^{\alpha}(b)$ and $\GG(p,m,w)$ are the classical and generalized	Lorentz spaces, defined as a set of all measurable functions $f$ defined on $\rn$ for which 
	$$
	\|f\|_{\Lambda^{\alpha}(b)} = \bigg( \int_0^{\infty} [f^*(s)]^{\alpha} b(s)\,ds \bigg)^{\frac{1}{\alpha}} < \infty
	\quad \mbox{and} \quad \|f\|_{\GG(p,m,w)} = \bigg( \int_0^{\infty} \bigg( \int_0^x [f^* (\tau)]^p\,d\tau \bigg)^{\frac{m}{p}} v(x)\,dx \bigg)^{\frac{1}{m}} < \infty,
	$$
	respectively.
\end{abstract}

\maketitle


\section{Introduction}\label{in}

Let $({\mathcal R}, \mu)$ be a $\sigma$-finite non-atomic measure space. Denote by
$\mf({\mathcal R})$ the set of all $\mu$-measurable functions on ${\mathcal R}$
and $\mf_0 ({\mathcal R})$ the class of functions in $\mf ({\mathcal R})$ that
are finite $\mu$ - a.e. on ${\mathcal R}$. The symbol $\mp^+ ({\mathcal R})$ stands for the
collection of all $f\in\mp ({\mathcal R})$ which are non-negative on
${\mathcal R}$. 

The non-increasing rearrangement $f^*$ of $f \in \mp_0({\mathcal R})$ is given by
$$
f^* (t) = \inf \big\{ \lambda \ge 0:\, \mu (\{x \in {\mathcal R}:\,|f(x)| > \lambda\}) \le t \big\}, \qquad t \in [0,\mu({\mathcal R})).
$$
The maximal non-increasing rearrangement of $f$ is defined as follows 
\begin{equation}\label{max.func.decr.rear.}
f^{**}(t) : = \frac{1}{t} \int_0^t f^* (\tau)\,d\tau, \qquad t \in (0,\mu({\mathcal R})).
\end{equation}

Most of the functions which we shall deal with will be defined on ${\mathbb R}^n$ or $(0,\infty)$. In this case, $({\mathcal R}, \mu)$ is ${\mathbb R}^n$ or $(0,\infty)$ endowed with the $n$-dimensional Lebesgue measure or the one-dimensional Lebesgue measure, respectively. We shall write just $\mp^+$ instead of $\mp^+ (0,\infty)$.

Let $\Omega$ be any measurable subset of $\rn$, $n\geq 1$. The family of all weight functions (also called just weights) on $\Omega$, that is, locally integrable non-negative functions on $\Omega$, denoted by $\W(\Omega)$.

For $p\in (0,\infty]$ and $w\in \mp^+(\Omega)$, we define the functional
$\|\cdot\|_{p,w,\Omega}$ on $\mp (\Omega)$ by
\begin{equation*}
\|f\|_{p,w,\Omega} : = \left\{\begin{array}{cl}
\left(\int_{\Omega} |f(x)|^p w(x)\,dx \right)^{1/p} & \qq\mbox{if}\qq p<\infty, \\
\esup_{\Omega} |f(x)|w(x) & \qq\mbox{if}\qq p=\infty.
\end{array}
\right.
\end{equation*}

If, in addition, $w\in \W(\Omega)$, then the weighted Lebesgue space
$L^p(w,\Omega)$ is given by
\begin{equation*}
L^p(w,\Omega) = \{f\in \mp (\Omega):\,\, \|f\|_{p,w,\Omega} <
\infty\}
\end{equation*}
and it is equipped with the quasi-norm $\|\cdot\|_{p,w,\Omega}$.

When $w\equiv 1$ on $\Omega$, we write simply $L^p(\Omega)$ and
$\|\cdot\|_{p,\Omega}$ instead of $L^p(w,\Omega)$ and
$\|\cdot\|_{p,w,\Omega}$, respectively.

Quite many familiar function spaces can be defined using the
non-increasing rearrangement of a function. One of the most
important classes of such spaces are the so-called classical Lorentz
spaces.

Let $p \in (0,\infty)$ and $w \in {\mathcal W}(0,\mu({\mathcal R}))$. Then the
classical Lorentz spaces $\Lambda^p (w)$ and $\Gamma^p (w)$ consist
of all functions $f \in {\mathfrak M}({\mathcal R})$ for which
$$
\|f\|_{\Lambda^p(w)} : = \bigg( \int_0^{\mu({\mathcal R})} [f^*(s)]^p w(s)\,ds \bigg)^{\frac{1}{p}} < \infty
\quad \mbox{and} \quad \|f\|_{\Gamma^p(w)} : = \bigg( \int_0^{\mu({\mathcal R})} [f^{**}(s)]^p w(s)\,ds \bigg)^{\frac{1}{p}} < \infty,
$$
respectively. For more information about the Lorentz $\Lambda$ and $\Gamma$ see
e.g. \cite{cpss} and the references therein.

The study of particular problems in the regularity theory of PDE's led to the definition of spaces involving inner integral means involving powers of the non-increasing rearrangements of functions.
The generalized Lorentz $\GG(p,m,v)({\mathcal R,\mu})$ space (denoted simply by $\GG(p,m,v)$), introduced and studied in \cite{FR2008} and \cite{FRZ2009}, is defined as the collection of all $g \in \mp (\mathcal R)$ such that
$$
\|g\|_{\GG(p,m,v)} = \bigg( \int_0^{\mu({\mathcal R})} \bigg( \int_0^x [g^* (\tau)]^p\,d\tau \bigg)^{\frac{m}{p}} v(x)\,dx \bigg)^{\frac{1}{m}} < \infty,
$$
where $m,\,p \in (0,\infty)$, $v \in \W(0,\mu({\mathcal R}))$.

The spaces $\GG(p,m,v)$ cover several types of important function spaces and have plenty of applications. For example, when $\mu({\mathcal R}) = \infty$, $p = 1$, $m > 1$ and $v(t) = t^{-m}w(t)$, $t \in (0,\infty)$, where $w$ is another weight on $(0,\infty)$, then $\GG(p,m,v)$ reduces the spaces $\Gamma^m(w)$. Another important example is obtained when $\mu({\mathcal R}) = 1$, $m = 1$, $p \in (1,\infty)$ and $v(t) = t^{-1}\big( \log ({2} / {t})\big)^{-1/p}$ for $t \in (0,1)$. In this case the space $\GG(p,m,v)$ coincides with the small Lebesgue space, which was originally studied by Fiorenza in \cite{F2000}. In the same paper it was proved that this space is the associate space of the grand Lebesgue space introduced in \cite{IS} in connection with integrability properties of Jacobians. Subsequently, Fiorenza and Karadzhov in \cite{FK} derived an equivalent form of the norm in the small Lebesgue space written in the form of the norm in the $\GG(p,m,v)$ space with the above mentioned parameters and weight. Note that the condition $\int_{(0,1)} t^{m / p} v(t)\,dt < \infty$ ensures $\GG(p,m,v)$ to be a quasi-Banach function space when ${\mathcal R} \subset \rn$ with $\mu ({\mathcal R}) = 1$ (cf. \cite{FFGKR}). Recently, the connection of the $\GG(p,m,v)$ space with some well-known function spaces have been studied in \cite{AFFGR}. In the present paper we take $(\rn,dx)$ as underlying measure space and use the notation $\GG(p,m,v)$ for $\GG(p,m,v)(\rn,dx)$.  

The study on maximal operators occupies an important place in harmonic analysis. These significant non-linear operators, whose
behavior are very informative in particular in differentiation
theory, provided the understanding and the inspiration for the
development of the general class of singular and potential operators
(see, for instance, \cites{stein1970,guz1975,GR,tor1986,stein1993,graf2008,graf}).

The main example is the Hardy-Littlewood maximal function which is
defined  for locally integrable functions $f$ on $\rn$ by
$$
(Mf)(x) : = \sup_{Q \ni x}\frac{1}{|Q|} \int_Q |f(y)|\,dy, \qquad x \in \rn,
$$
where the supremum is taken over all cubes $Q$ containing $x$. By a cube, we mean an open cube with sides parallel to the coordinate axes.

Another important example is the fractional maximal operator, $M_{\gamma}$, $\gamma \in (0,n)$,
defined  for locally integrable functions $f$ on $\rn$ by
$$
(M_{\gamma} f) (x) := \sup_{Q \ni x} |Q|^{ \gamma / n - 1} \int_{Q}
|f(y)|\,dy,\qquad x \in \rn.
$$

One more example is the fractional maximal operator $M_{s,\gamma,\mathbb A}$ defined in \cite{edop} for all measurable functions $f$ on $\rn$ by
$$
(M_{s,\gamma,\mathbb A} f) (x) : = \sup_{Q \ni x}
\frac{\|f\chi_Q\|_s}{ \|\chi_Q\|_{sn / (n-\gamma),{\mathbb A}}},
\qquad x \in \rn.
$$
Here $s \in (0,\infty)$, $\gamma \in [0,n)$, $\mathbb A = (A_0,A_{\infty}) \in {\mathbb R}^2$ and
$$
\ell^{\mathbb A} (t) : = (1 + |\log t|)^{A_0} \chi_{[0,1]}(t) + (1 +
|\log t|)^{A_{\infty}} \chi_{[1,\infty)}(t),  \qquad t \in (0,\infty).
$$
Recall that the following equivalency holds:
$$
(M_{s,\gamma,\mathbb A} f) (x) \ap \sup_{Q \ni x}
\frac{\|f\chi_Q\|_s}{ |Q|^{(n-\gamma)/(sn)}\ell^{\mathbb A}(|Q|)},
\quad x \in \rn.
$$

Hence, if $s = 1$, $\gamma = 0$ and ${\mathbb A} = (0,0)$, then
$M_{s,\gamma,\mathbb A}$ is equivalent to $M$. If $s = 1$, $\gamma \in
(0,n)$ and ${\mathbb A} = (0,0)$, then $M_{s,\gamma,\mathbb A}$ is
equivalent to $M_{\gamma}$. Moreover, if $s = 1$, $\gamma \in [0,n)$ and ${\mathbb A} \in \R^2$,
then $M_{s,\gamma,\mathbb A} $ is the fractional maximal operator
which corresponds to potentials with logarithmic smoothness treated
in \cites{OT1,OT2}. In particular, if $\gamma = 0$, then
$M_{1,\gamma,\mathbb A}$ is the maximal operator of purely
logarithmic order.

Given $p$ and $q$, $0 < p,\,q < \infty$, let $M_{p,q}$ denote the
maximal operator associated to the Lorentz $L^{p,q}$ spaces defined for all measurable function $f$ on $\rn$ by
$$
M_{p,q} f (x) : = \sup_{Q \ni x} \frac{\|f \chi_Q\|_{p,q}}{\|\chi_Q\|_{p,q}},
$$
where $\|\cdot\|_{p,q}$ is the usual Lorentz norm
$$
\|f\|_{p,q} : = \bigg( \int_0^{\infty} \big[ \tau^{1 / p} f^*
(\tau)\big]^q \frac{d\tau}{\tau}\bigg)^{1 / q}.
$$
This operator was introduced by Stein in \cite{stein1981} in order
to obtain certain endpoint results in differentiation theory. The
operator $M_{p,q}$ have been also considered by other authors, for
instance see  \cite{neug1987,leckneug,basmilruiz,perez1995,ler2005}.

Let $0 < \alpha < \infty$, $b \in \W(0,\infty)$ and $\phi: (0,\infty)
\rightarrow (0,\infty)$. Recall the definition of the generalized maximal function introduced in \cite{musbil} and denoted for all measurable function $f$ on $\rn$ by
\begin{equation}
M_{\phi,\Lambda^{\alpha}(b)}f(x) : = \sup_{Q \ni x} \frac{\|f \chi_Q\|_{\Lambda^{\alpha}(b)}}{\phi (|Q|)}, \qquad x \in \rn.
\end{equation}

Obviously, $M_{\phi,\Lambda^{\alpha}(b)} = M$, where $M$ is the Hardy-Littlewood maximal operator, when $\alpha = 1$,
$b \equiv 1$ and $\phi (t) = t$ $(t >0)$.
Note that $M_{\phi,\Lambda^{\alpha}(b)} = M_{\gamma}$, where
$M_{\gamma}$ is  the fractional maximal operator, when $\alpha = 1$,
$b \equiv 1$ and $\phi (t) = t^{1 - \gamma / n}$ $(t >0)$ with $0 <
\gamma < n$. Moreover, $M_{\phi,\Lambda^{\alpha}(b)} \ap
M_{s,\gamma,\mathbb A}$, when $\alpha = s$, $b \equiv 1$ and $\phi
(t) = t^{(n - \gamma) / (sn)} \ell^{\mathbb A} (t)$ $(t >0)$ with
$0 < \gamma < n$ and $\mathbb A = (A_0,A_{\infty}) \in {\mathbb
	R}^2$. It is worth also to mention that
$M_{\phi,\Lambda^{\alpha}(b)} = M_{p,q}$, when $\alpha = q$, $b(t) =
t^{q/ p - 1}$ and $\phi (t) = t^{1 / p}$ $(t >0)$.

The boundedness of $M_{\phi,\Lambda^{\alpha}(b)}$ between classical Lorentz spaces $\Lambda$ was completely characterized in \cite{musbil}. In view of importance of generalized Lorentz spaces in our opinion it will be interesting to obtain a characterization of the boundedness of this maximal function between Lorentz $\GG$ spaces.

The iterated Hardy-type operator involving suprema $T_{u,b}$ is defined for non-negative measurable function $g$ on the interval $(0,\infty)$ by 
$$
(T_{u,b} g)(t) : = \sup_{t \le \tau < \infty}
\frac{u(\tau)}{B(\tau)} \int_0^{\tau} g(y)b(y)\,dy,\qquad t \in (0,\infty),
$$
where $u$ and $b$ are two weight functions on $(0,\infty)$ such that $u$ is continuous on $(0,\infty)$ and the function $B(t) : = \int_0^t b(s)\,ds$ satisfies  $0 < B(t) < \infty$ for every $t \in (0,\infty)$. 
Such operators have been found indispensable in the search for optimal pairs of rearrangement-invariant norms for which a Sobolev-type inequality holds (cf. \cite{kerp}). They constitute a very useful tool for characterization of the associate norm of an
operator-induced norm, which naturally appears as an optimal domain norm in a Sobolev embedding (cf. \cite{pick2000, pick2002}). Supremum operators are also very useful in limiting interpolation theory as can be seen from their appearance for example in \cite{cwikpys, dok, evop, pys}.

The aim of the paper is to characterize the following restricted inequality for $T_{u,b}$:
\begin{equation}\label{main.ineq.}
\bigg( \int_0^{\infty} \bigg( \int_0^x \big[ T_{u,b}f^* (t)\big]^r\,dt\bigg)^{\frac{q}{r}} w(x)\,dx\bigg)^{\frac{1}{q}} \le C \, \bigg( \int_0^{\infty} \bigg( \int_0^x [f^* (\tau)]^p\,d\tau \bigg)^{\frac{m}{p}} v(x)\,dx \bigg)^{\frac{1}{m}}.
\end{equation}
Here $m,\,p,\,q,\,r$ are positive real numbers and $w,\,v$ are weight functions on  $(0,\infty)$. 

A function $\phi: (0,\infty) \rightarrow (0,\infty)$ is said to satisfy the $\Delta_2$ - condition, denored $\phi \in \Delta_2$, if for some $C > 0$
$$
\phi (2t) \le C \, \phi (t) \quad \mbox{for all} \quad 0 < t < \infty.
$$

A function $\phi: (0,\infty) \rightarrow (0,\infty)$ is said to be
quasi-increasing, if for some $C > 0$
$$
\phi (t_1) \le C \phi (t_2),
$$
whenever $0 < t_1 \le t_2 < \infty$.

A function $\phi: (0,\infty) \rightarrow (0,\infty)$ is said to
satisfy the $Q_r$-condition, $0 < r < \infty$, denoted $\phi \in
Q_r(0,\infty)$, if for some constant $C > 0$
$$
\phi \bigg(\sum_{i=1}^n t_i \bigg) \le C \bigg( \sum_{i=1}^n
\phi(t_i)^r \bigg)^{1/r},
$$
for every finite set of non-negative real numbers
$\{t_1,\ldots,t_n\}$.

Motivation for studying inequality \eqref{main.ineq.} comes directly from the following eqiuvalency statement.
\begin{theorem}\label{main.reduc.thm}
	Let $0 < p_1,\,p_2 < \infty$, $0 < m_1,\,m_2 < \infty$, $0 < \alpha \le r < \infty$ and $v,\,w \in \W (0,\infty)$. Assume that $\phi \in Q_{r}(0,\infty)$ is a quasi-increasing function. Suppose that $b \in \W (0,\infty)$ is such that $0 < B(t) < \infty$ for all $t > 0$, $B \in \Delta_2$, $B(\infty) = \infty$ and $B(t) / t^{\alpha / r}$ is quasi-increasing. Then the inequality 
	\begin{equation}\label{opnorm of M}
	\bigg( \int_0^{\infty} \bigg( \int_0^x \big[ \big(M_{\phi,\Lambda^{\alpha}(b)}f\big)^* (t)\big]^{p_2}\,dt\bigg)^{\frac{m_2}{p_2}} w(x)\,dx\bigg)^{\frac{1}{m_2}} \le C \bigg( \int_0^{\infty} \bigg( \int_0^x [f^* (\tau)]^{p_1}\,d\tau \bigg)^{\frac{m_1}{p_1}} v(x)\,dx \bigg)^{\frac{1}{m_1}}
	\end{equation}
	holds for all $f \in \mp (\rn)$ if and only if the inequality
	$$
	\bigg( \int_0^{\infty} \bigg( \int_0^x \big[ T_{B/\phi^{\alpha},b} h^* (t) \big]^{\frac{p_2}{\alpha}}\,dt\bigg)^{\frac{m_2}{p_2}} w(x)\,dx\bigg)^{\frac{1}{m_2}} \le C \bigg( \int_0^{\infty} \bigg( \int_0^x [h^* (\tau)]^{\frac{p_1}{\alpha}}\,d\tau \bigg)^{\frac{m_1}{p_1}} v(x)\,dx \bigg)^{\frac{1}{m_1}}
	$$
	holds for all $h \in \mp (\rn)$.
\end{theorem}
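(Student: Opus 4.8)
\emph{Strategy.} The plan is to reduce the equivalence to a single pointwise rearrangement estimate for $M_{\phi,\Lambda^{\alpha}(b)}$, followed by the power substitution $h=|f|^{\alpha}$. The key input, available from \cite{musbil} (where the mapping properties of $M_{\phi,\Lambda^{\alpha}(b)}$ between classical Lorentz $\Lambda$ spaces are settled under exactly the hypotheses imposed here), is that there exist constants $c_1,c_2>0$, \emph{independent of $f$ and of $t$}, such that
\begin{equation}\label{pw}
c_1\bigg(\sup_{t\le\tau<\infty}\frac{1}{\phi(\tau)^{\alpha}}\int_0^{\tau}[f^*(s)]^{\alpha}b(s)\,ds\bigg)^{\frac1\alpha}\le\big(M_{\phi,\Lambda^{\alpha}(b)}f\big)^*(t)\le c_2\bigg(\sup_{t\le\tau<\infty}\frac{1}{\phi(\tau)^{\alpha}}\int_0^{\tau}[f^*(s)]^{\alpha}b(s)\,ds\bigg)^{\frac1\alpha}
\end{equation}
for all $f\in\mp(\rn)$ and $t\in(0,\infty)$. (If $\phi$ is not continuous, replace it beforehand by an equivalent continuous quasi-increasing member of $Q_r(0,\infty)$ so that $u:=B/\phi^{\alpha}$ qualifies in the definition of $T_{u,b}$; this affects only the constants in \eqref{pw}.)

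\emph{Rewriting the estimate through $T_{u,b}$.} Fix $f\in\mp(\rn)$ and set $h:=|f|^{\alpha}$, so $h\in\mp^+(\rn)$ and $h^*(s)=[f^*(s)]^{\alpha}$ for $s\in(0,\infty)$. Since $u=B/\phi^{\alpha}$ gives $u(\tau)/B(\tau)=\phi(\tau)^{-\alpha}$, we obtain
$$
\big(T_{B/\phi^{\alpha},b}h^*\big)(t)=\sup_{t\le\tau<\infty}\frac{1}{\phi(\tau)^{\alpha}}\int_0^{\tau}h^*(s)b(s)\,ds=\sup_{t\le\tau<\infty}\frac{1}{\phi(\tau)^{\alpha}}\int_0^{\tau}[f^*(s)]^{\alpha}b(s)\,ds,
$$
so \eqref{pw} reads $\big(M_{\phi,\Lambda^{\alpha}(b)}f\big)^*(t)\ap\big(T_{B/\phi^{\alpha},b}h^*(t)\big)^{1/\alpha}$ with constants independent of $f$ and $t$.

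\emph{Substitution into the two inequalities.} For every $x>0$ one has the exact identity $\int_0^x[f^*(\tau)]^{p_1}\,d\tau=\int_0^x[h^*(\tau)]^{p_1/\alpha}\,d\tau$, so the right-hand side of \eqref{opnorm of M} equals the right-hand side of the asserted inequality written for $h$. On the left-hand side, raising the equivalence of the previous step to the power $p_2$ and integrating in $t$ gives $\int_0^x[(M_{\phi,\Lambda^{\alpha}(b)}f)^*(t)]^{p_2}\,dt\ap\int_0^x[T_{B/\phi^{\alpha},b}h^*(t)]^{p_2/\alpha}\,dt$ for each $x>0$; since $s\mapsto s^{m_2/p_2}$ is increasing and integration against the weight $w$ is monotone, this two-sided estimate is preserved, whence the left-hand side of \eqref{opnorm of M} is equivalent, with constants depending only on $c_1,c_2,p_2,\alpha,m_2$, to the left-hand side of the asserted inequality for $h$. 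Hence \eqref{opnorm of M} holds for a given $f$ with some constant if and only if the asserted inequality holds for $h=|f|^{\alpha}$ with a comparable constant. Finally, every $h\in\mp^+(\rn)$ arises as $|f|^{\alpha}$ with $f:=h^{1/\alpha}$, and both inequalities depend on their arguments only through the non-increasing rearrangement, so \eqref{opnorm of M} holds for all $f\in\mp(\rn)$ precisely when the asserted inequality holds for all $h\in\mp(\rn)$.

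\emph{The main obstacle.} Everything above is formal once \eqref{pw} is granted, so the heart of the matter is \eqref{pw} itself. Its upper bound is proved by a covering argument: for each level $\lambda$ one covers $\{M_{\phi,\Lambda^{\alpha}(b)}f>\lambda\}$ by cubes on which $\|f\chi_Q\|_{\Lambda^{\alpha}(b)}>\lambda\,\phi(|Q|)$, extracts a subfamily of bounded overlap, and sums their contributions; this is exactly where the $Q_r$-condition on $\phi$, the $\Delta_2$-property and $B(\infty)=\infty$ for $B$, the quasi-monotonicity of $B(t)/t^{\alpha/r}$, and the inequality $\alpha\le r$ enter, and it is also what produces the supremum over $\tau\ge t$ rather than a single scale; the matching lower bound follows by testing $M_{\phi,\Lambda^{\alpha}(b)}$ against suitable truncations of $f^*$. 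As this is carried out in \cite{musbil}, in the present proof \eqref{pw} is only invoked; the single point one must check carefully is that its equivalence constants are independent of $f$ and $t$, which is precisely what legitimizes the substitution step.
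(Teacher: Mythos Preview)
Your approach matches the paper's: both hinge on the pointwise rearrangement estimate from \cite{musbil} followed by the substitution $h=|f|^{\alpha}$. The one overstatement is that you assert the two-sided pointwise bound for \emph{all} $f\in\mp(\rn)$, whereas \cite{musbil} (as cited in the paper) gives the upper bound in that generality (Corollary~3.6) but the lower bound (Lemma~3.12) only for radially decreasing functions. This matters in the direction ``\eqref{opnorm of M} $\Rightarrow$ $T_{u,b}$-inequality'': your claim that ``both inequalities depend on their arguments only through the non-increasing rearrangement'' presupposes that $(M_{\phi,\Lambda^{\alpha}(b)}f)^*$ is determined by $f^*$, which is precisely the two-sided estimate for all $f$ that you have not independently justified. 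The paper avoids this circularity by first deducing the $T_{u,b}$-inequality for radially decreasing $g$ (where the lower bound is available), and then extending to arbitrary $h\in\mp(\rn)$ by choosing a radially decreasing $g$ with $g^*=h^*$ and using that the $T_{u,b}$-inequality manifestly depends on $h^*$ alone. With that adjustment your argument is complete and coincides with the paper's.
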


The method used for solution of inequalty \eqref{main.ineq.} is based on the combination of the duality techniques with the formula 
\begin{equation*}\label{transmon}
\sup_{g:\,\int_0^x g \le \int_0^x f} \int_0^{\infty} g(x)w(x)\,dx = \int_0^{\infty} f(x) \bigg( \sup_{t \ge x} w(t)\bigg)\,dx
\end{equation*}
from \cite{Sinn}, which holds for $f,\,w \in \mp^+(0,\infty)$. On the other hand, it uses estimates of optimal constants in iterated Hardy-type inequalities, "gluing" lemmas, which allows to reduce the problem to using of the boundedness of weighted iterated Hardy-type operators involving suprema from weighted Lebesgue spaces into weighted Ces\`{a}ro function spaces. Detailed information on materials that are used in the proofs of the main results is given in the following section.

It should be noted that the method developed in the present paper allows to solve inequality \eqref{main.ineq.} in the case $0 < m \le 1$, $0 < p < \infty$ or $1 < m < \infty$, $0 < p \le 1$, as well. However, we are able to solve the inequality under the restrictions $1 < m < p \le r < q < \infty$ or $1 < m \le r < \min\{p,q\} < \infty$. For the remaining values of parameters the conditions that characterize the weighted iterated Hardy-type inequalities contains more complicated expressions and the approach used in our paper needs an improvement.

Using duality principle puts restriction $r \le q$ on parameters. But for $r = q$ inequality \eqref{main.ineq.} is a special case of the inequality
\begin{equation}
\bigg( \int_0^{\infty}  \big[ T_{u,b}f^* (x)\big]^q \, w(x)\,dx\bigg)^{\frac{1}{q}} \le C \, \bigg( \int_0^{\infty} \bigg( \int_0^x [f^* (\tau)]^p\,d\tau \bigg)^{\frac{m}{p}} v(x)\,dx \bigg)^{\frac{1}{m}},
\end{equation}
which we are going to investigate in our forthcoming studies.

Throughout the paper, we always denote by  $C$ a positive
constant, which is independent of main parameters but it may vary
from line to line. However a constant with subscript such as $C_1$
does not change in different occurrences. By $a\lesssim b$,
we mean that $a\leq \la b$, where $\la >0$ depends on
inessential parameters. If $a\lesssim b$ and $b\lesssim a$, we write
$a\approx b$ and say that $a$ and $b$ are  equivalent. 

As usual, we put $0 \cdot \infty = 0$, $\infty / \infty =0$ and $0/0 = 0$. If $p\in [1,+\infty]$, we define $p'$ by $1/p + 1/p' = 1$.

The paper is organized as follows. We start with formulations of background material in Section \ref{BM}. In Section \ref{MR} we present solution of the restricted inequality. Finally, in Section \ref{BofMF} we calculate the norm of generalized maximal function between Lorentz $\GG$ spaces.




\section{Background material}\label{BM}

In this section we collect background material that will be used in the proofs of the main theorems.

We begin with the following characterization of the norm of the associate space of $\GG(p,m,v)$ given in \cite{GPS}. Recall that the associate space $\GG(p,m,v)^{\prime}$ of $\GG(p,m,v)$ is defined as the collection of all functions $g \in \mp (\rn)$ such that
$$
\|g\|_{\GG(p,m,v)^{\prime}} = \sup_{\|f\|_{\GG(p,m,v)} \le 1} \int_0^{\infty} f^* (t) g^* (t)\,dt < \infty.
$$
As it is mentioned in \cite{GPS}, it is reasonable to adopt a general assumption that $p,\,m$ and $v$ are such that
\begin{equation}\label{nontriv}
\int_0^t v(s)s^{\frac{m}{p}}\,ds + \int_t^{\infty} v(s)\,ds < \infty,  \qquad t \in (0,\infty), 
\end{equation}
because if this requirement is not satisfied, then $\GG(p,m,v) = \{0\}$. 

Under the assumption \eqref{nontriv}, we denote
\begin{equation}\label{defof_v}
	v_0 (t) : = t^{\frac{m}{p} - 1}\int_0^t v(s)s^{\frac{m}{p}}\,ds \int_t^{\infty} v(s)\,ds, \qquad t \in (0,\infty),
\end{equation}
and
\begin{equation}\label{defof_u}
	v_1(t) : = \int_0^t v(s)s^{\frac{m}{p}}\,ds + t^{\frac{m}{p}} \int_t^{\infty} v(s)\,ds, \qquad t \in (0,\infty).
\end{equation}

Moreover, we assume that a weight $v$ is non-degenerate (with respect to the power function $t^{m / p}$), that is,
\begin{equation}\label{nondegen}
\int_0^1 v(s)\,ds = \int_1^{\infty} v(s)s^{\frac{m}{p}}\,ds = \infty.
\end{equation}

We denote the set of all weight functions satisfying conditions \eqref{nontriv} and \eqref{nondegen}  by $\W_{m,p}(0,\infty)$.

\begin{theorem}\cite[Theorem 1.1]{GPS}\label{assosGG} 
	Assume that $1 < m,\,p < \infty$ and $v \in \W_{m,p}(0,\infty)$. Then
	\begin{align*}
	\|g\|_{\GG(p,m,v)^{\prime}} \ap & \,\bigg( \int_0^{\infty} \bigg( \int_t^{\infty} g^{**}(s)^{p'}\,ds \bigg)^{\frac{m'}{p'}} \frac{t^{\frac{m'}{p'}} v_0(t)}{v_1(t)^{m' + 1}}\,dt \bigg)^{\frac{1}{m'}}.
	\end{align*}	
\end{theorem}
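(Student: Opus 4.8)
The plan is to evaluate the supremum defining $\|g\|_{\GG(p,m,v)^{\prime}}$ by peeling off, one at a time, the two layers of the $\GG$-norm: the inner power $p$ and the outer weighted $L^{m}(v)$ averaging. \emph{Reductions.} Since $\|\cdot\|_{\GG(p,m,v)}$ depends on $f$ only through $f^{*}$, and, by the Hardy--Littlewood inequality, $\int_{0}^{\infty}f^{*}g^{*}\,dt$ is the largest value of $\int_{\rn}|fg|$ attainable by rearranging $f$, the supremum in the definition of $\|g\|_{\GG(p,m,v)^{\prime}}$ equals $\sup\{\int_{0}^{\infty}\phi(t)g^{*}(t)\,dt:\ \phi\in\mp^{+},\ \phi\downarrow,\ \|\phi\|_{\GG(p,m,v)}\le1\}$. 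Writing $\phi=h^{1/p}$ (so that $\int_{0}^{x}\phi^{p}\,d\tau=\int_{0}^{x}h\,d\tau$ and all the exponents rescale consistently) and noting that $\|\phi\|_{\GG(p,m,v)}=\big(\int_{0}^{\infty}(\int_{0}^{x}h(\tau)\,d\tau)^{m/p}v(x)\,dx\big)^{1/m}=\|h\|_{\GG(1,m/p,v)}^{1/p}$, this becomes
$$\|g\|_{\GG(p,m,v)^{\prime}}=\sup_{h\downarrow}\frac{\int_{0}^{\infty}h(t)^{1/p}g^{*}(t)\,dt}{\|h\|_{\GG(1,m/p,v)}^{1/p}}.$$
A useful bookkeeping remark is that $v_{0}$ and $v_{1}$ from \eqref{defof_v} and \eqref{defof_u} are \emph{unchanged} when $(p,m)$ is replaced by $(1,m/p)$, since only the ratio $m/p$ enters their definitions.

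\emph{Splitting off $g$ and reduction to a $\Gamma$-type duality.} For a weight $b$ on $(0,\infty)$, Hölder's inequality in the exponent $p$ gives $\int_{0}^{\infty}h^{1/p}g^{*}\,dt\le\big(\int_{0}^{\infty}hb\big)^{1/p}\big(\int_{0}^{\infty}(g^{*})^{p'}b^{1-p'}\big)^{1/p'}$, with equality for the extremal $b$; and, restricting to non-increasing $b$, one has $\sup_{h\downarrow}\int_{0}^{\infty}hb/\|h\|_{\GG(1,m/p,v)}=\|b\|_{\GG(1,m/p,v)^{\prime}}$ by a further use of Hardy--Littlewood. Optimising over $b$ and interchanging $\sup_{h}$ with $\inf_{b}$ (a minimax step; see below) therefore yields
$$\|g\|_{\GG(p,m,v)^{\prime}}\ap\inf_{b\downarrow}\Big[\Big(\int_{0}^{\infty}(g^{*})^{p'}b^{1-p'}\,dt\Big)^{1/p'}\,\|b\|_{\GG(1,m/p,v)^{\prime}}^{1/p}\Big],$$
so that everything is reduced to the associate norm of the Lorentz-type space $\GG(1,m/p,v)$. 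The latter is the classical duality for the Hardy operator (Sawyer-type, together with its extension to quasi-norms when $m<p$): it is an expression in $b^{**}$ whose weight is built out of the two quantities $\int_{0}^{t}v(s)s^{m/p}\,ds$ and $\int_{t}^{\infty}v(s)\,ds$, which are exactly the ingredients of $v_{1}$ (appearing in the denominator) and of $v_{0}$ (in the numerator). Inserting this and carrying out the remaining minimisation over $b$ --- whose minimiser is comparable with a suitable power of $\int_{t}^{\infty}g^{**}$, which is the mechanism by which the two separate integrabilities (exponent $p$ from the inner layer, exponent $m/p$ from the outer one) fuse into the single exponent $m'/p'$ --- and then simplifying with the arithmetic identities relating $m'$, $p'$ and $(m/p)'$, one arrives at $\big(\int_{0}^{\infty}(\int_{t}^{\infty}g^{**}(s)^{p'}\,ds)^{m'/p'}\,t^{m'/p'}v_{0}(t)v_{1}(t)^{-m'-1}\,dt\big)^{1/m'}$, which is the assertion.

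\emph{The main obstacle.} The genuinely delicate point is the minimax interchange --- equivalently, the fact that the admissible set $\{h\downarrow:\|h\|_{\GG(1,m/p,v)}\le1\}$ fails to be convex when $m<p$, and that the functions realising equality in the Hölder step above need not be monotone. The robust way around this, which I would follow, is to discretise: choose a covering sequence $\{x_{k}\}_{k\in\Z}$ adapted to $v$ --- and here the hypothesis $v\in\W_{m,p}(0,\infty)$, i.e.\ \eqref{nontriv} and \eqref{nondegen}, is used precisely to guarantee that this sequence is bi-infinite and that no endpoint contributions are lost --- replace every integral by the corresponding sum over the blocks $(x_{k},x_{k+1})$ up to multiplicative constants, solve the resulting discrete bilinear extremal problem by elementary discrete Hölder and the duality of the discrete Hardy inequality, and then anti-discretise back to the integral form above. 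Keeping the two interacting scales (the outer exponent $m/p$ attached to $v$, the inner exponent $p$ attached to $f^{*}$) correctly aligned throughout this procedure is where the real work lies; the hypotheses $1<m,p<\infty$ are used exactly so that the conjugate exponents $m'$ and $p'$ are at one's disposal for the Hölder and duality steps.
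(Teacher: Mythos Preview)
The paper does \emph{not} prove this statement: Theorem~\ref{assosGG} is quoted verbatim from \cite[Theorem~1.1]{GPS} as background material, with no argument supplied. There is therefore no ``paper's own proof'' to compare against; what you have written is an attempt to reconstruct the argument of Gogatishvili, Pick and Soudsk\'y.

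As an outline of that argument, your proposal points in a reasonable direction --- the result in \cite{GPS} does indeed rest on discretisation and anti-discretisation, and the non-degeneracy hypothesis $v\in\W_{m,p}(0,\infty)$ is used exactly to make the discretising sequence bi-infinite. However, what you have written is a programme rather than a proof, and the parts you label as obstacles are genuinely unfinished:
\begin{itemize}
\item The minimax interchange $\sup_{h}\inf_{b}\leftrightarrow\inf_{b}\sup_{h}$ is asserted, not proved; you correctly flag that the admissible set fails to be convex when $m<p$, but then you do not actually establish the interchange. Saying ``the robust way around this is to discretise'' replaces one unproved step by another.
\item The discretisation itself is only named. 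The substance of \cite{GPS} is precisely the careful choice of the covering sequence and the block-by-block estimates; none of that is present here.
\item Your appeal to ``the classical duality for the Hardy operator (Sawyer-type)'' for $\|b\|_{\GG(1,m/p,v)^{\prime}}$ is problematic when $m<p$: then the outer exponent $m/p$ is below $1$, which is outside the classical Sawyer range and requires the separate machinery that \cite{GPS} develops.
\item The final minimisation over $b$, which you say yields a minimiser ``comparable with a suitable power of $\int_{t}^{\infty}g^{**}$'', is not carried out, and the claimed simplification to the displayed weight $t^{m'/p'}v_{0}(t)v_{1}(t)^{-m'-1}$ is not shown.
\end{itemize}
In short: the skeleton is plausible and roughly matches the strategy of \cite{GPS}, but every load-bearing step is deferred rather than executed.
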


We recall the following well-known duality principle in weighted Lebesgue spaces. 
\begin{theorem}
Let $p > 1$, $f \in \M^+ (0,\infty)$	and $w \in \W (0,\infty)$. Then
$$
\bigg( \int_0^{\infty} f(t)^p w(t)\,dt \bigg)^{\frac{1}{p}} = \sup_{h \in \M^+ }\frac{\int_0^{\infty} f(t)h(t)\,dt}{\bigg(\int_0^{\infty}h(t)^{p'} w(t)^{1-p'}\,dt\bigg)^{\frac{1}{p'}}}.
$$
\end{theorem}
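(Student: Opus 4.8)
The plan is to prove the two inequalities ``$\ge$'' and ``$\le$'' separately: one direction is H\"older's inequality and the other is its sharpness, witnessed by an explicit extremal function (so in fact one could also simply quote a standard reference, but the argument is short). For ``$\ge$'', I would first observe that the supremum may be restricted to those $h \in \M^+$ for which the denominator $\int_0^\infty h^{p'} w^{1-p'}\,dt$ is positive and finite, and (since $1 - p' < 0$) to those $h$ vanishing a.e.\ on $\{w = 0\}$, since otherwise that denominator is $+\infty$ and the quotient vanishes. For such $h$, splitting the integrand on $\{w > 0\}$ and applying H\"older's inequality with exponents $p$ and $p'$ gives
\begin{equation*}
\int_0^\infty f h\,dt = \int_{\{w > 0\}} \bigl( f w^{1/p}\bigr)\bigl( h w^{-1/p}\bigr)\,dt \le \Bigl( \int_0^\infty f^p w\,dt\Bigr)^{1/p}\Bigl( \int_0^\infty h^{p'} w^{-p'/p}\,dt \Bigr)^{1/p'};
\end{equation*}
since $-p'/p = 1 - p'$, dividing by the last factor and taking the supremum over admissible $h$ yields $\RHS \le \LHS$.

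For the reverse inequality, I would assume first $0 < \int_0^\infty f^p w\,dt < \infty$ and test with $h := f^{p-1} w$. Using $(p-1)p' = p$ one computes directly that $\int_0^\infty f h\,dt = \int_0^\infty f^p w\,dt$ and $\int_0^\infty h^{p'} w^{1-p'}\,dt = \int_0^\infty f^{(p-1)p'} w\,dt = \int_0^\infty f^p w\,dt$, both finite and positive, so the corresponding quotient equals $\bigl( \int_0^\infty f^p w\,dt\bigr)^{1 - 1/p'} = \bigl( \int_0^\infty f^p w\,dt\bigr)^{1/p} = \LHS$, whence $\RHS \ge \LHS$. If instead $\int_0^\infty f^p w\,dt = 0$, then $f = 0$ a.e.\ on $\{w > 0\}$, and restricting as above to $h$ supported in $\{w > 0\}$ forces $\int_0^\infty f h\,dt = 0$ for every admissible $h$, so both sides are $0$ (using the convention $0/0 = 0$).

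It remains to treat the case $\int_0^\infty f^p w\,dt = \infty$, which I would handle by truncation: set $f_N := \min\{f,N\}\,\chi_{(1/N,N)}$. Since $w$ is locally integrable, $\int_0^\infty f_N^p w\,dt \le N^p \int_{1/N}^N w\,dt < \infty$, and by monotone convergence $\int_0^\infty f_N^p w\,dt \uparrow \infty$, so this quantity is positive for all large $N$. Applying the nondegenerate case already proved to $f_N$ and using $f_N \le f$ gives
\begin{equation*}
\RHS \ge \sup_{h \in \M^+} \frac{\int_0^\infty f_N h\,dt}{\bigl( \int_0^\infty h^{p'} w^{1-p'}\,dt\bigr)^{1/p'}} = \Bigl( \int_0^\infty f_N^p w\,dt \Bigr)^{1/p} \longrightarrow \infty
\end{equation*}
as $N \to \infty$, so $\RHS = \infty = \LHS$. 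The only point requiring any care — such as it is — is this bookkeeping of the degenerate cases, together with the verification that the extremal choice $h = f^{p-1} w$ has finite, nonzero denominator, which is precisely the nondegeneracy exploited above; everything else is H\"older's inequality and its equality condition.
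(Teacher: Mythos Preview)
Your proof is correct and is the standard argument for this classical duality principle: H\"older's inequality for the upper bound, the explicit extremizer $h=f^{p-1}w$ for the lower bound, together with a clean disposal of the degenerate cases via truncation and the stated conventions. The paper itself does not give a proof of this statement---it is simply recalled as a well-known fact---so there is nothing further to compare.
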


We will use the following statement. 
\begin{theorem}\cite[Theorem 2.1]{Sinn}\label{transfermon}
	Suppose $f,\,w \in \mp^+$. Then
	\begin{equation*}
	\sup_{g:\,\int_0^x g \le \int_0^x f} \int_0^{\infty} g(x)w(x)\,dx = \int_0^{\infty} f(x) \bigg( \sup_{t \ge x} w(t)\bigg)\,dx.
	\end{equation*}
\end{theorem}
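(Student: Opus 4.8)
Write $F(x):=\int_0^x f$ and $G(x):=\int_0^x g$, and let $\widetilde w(x):=\sup_{t\ge x}w(t)$ denote the least non-increasing majorant of $w$, so that $\widetilde w$ is non-increasing, $w\le\widetilde w$ pointwise, and the asserted identity reads
\[
\sup\Big\{\int_0^\infty g\,w:\ G\le F\ \text{on}\ (0,\infty)\Big\}=\int_0^\infty f\,\widetilde w.
\]
The plan is to establish the two inequalities separately; the inequality ``$\le$'' is soft, while ``$\ge$'' carries the weight of the argument and is proved by constructing a near-optimal competitor $g$ that transports the mass of $f$ to the right.

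For ``$\le$'', fix an admissible $g$. Since $g\ge0$ and $w\le\widetilde w$, we have $\int_0^\infty g\,w\le\int_0^\infty g\,\widetilde w$. Using the layer-cake identity $\widetilde w(x)=\int_0^\infty\chi_{\{\widetilde w>s\}}(x)\,ds$ together with Tonelli's theorem gives $\int_0^\infty g\,\widetilde w=\int_0^\infty\big(\int_{\{\widetilde w>s\}}g\big)\,ds$. Because $\widetilde w$ is non-increasing, every level set $\{\widetilde w>s\}$ is an initial interval $(0,a_s)$, so the constraint yields $\int_{\{\widetilde w>s\}}g=\int_0^{a_s}g\le\int_0^{a_s}f=\int_{\{\widetilde w>s\}}f$ (passing to the limit when $a_s=\infty$); reassembling by Tonelli once more, $\int_0^\infty g\,\widetilde w\le\int_0^\infty f\,\widetilde w$, and taking the supremum over admissible $g$ proves ``$\le$''.

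For ``$\ge$'' it suffices to produce, for each $\varepsilon>0$, an admissible $g$ with $\int_0^\infty g\,w\ge\int_0^\infty f\,\widetilde w-\varepsilon$ (and an admissible $g$ with $\int_0^\infty g\,w$ arbitrarily large when the right-hand side is infinite). First I would reduce, via monotone convergence, to the case where $w$ is bounded with bounded support: replacing $w$ by $w_N:=\min(w,N)\chi_{(0,N)}$ one has $w_N\le w$, so a competitor for the pair $(f,w_N)$ is one for $(f,w)$, and $\widetilde{w_N}\uparrow\widetilde w$ pointwise, hence $\int f\,\widetilde{w_N}\uparrow\int f\,\widetilde w$; likewise one may assume $F$ finite on $(0,\infty)$, whence $F$ is continuous. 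Then partition $(0,R)$ by $0=y_0<y_1<\dots<y_M=R$ so that the non-increasing bounded function $\widetilde w$ oscillates by less than $\varepsilon$ on each cell $(y_{k-1},y_k)$; by the very definition of $\widetilde w$ as a running supremum, for each $k$ there is a point $\tau_k$ lying close to the right end of the $k$-th cell (or to its right) with $w(\tau_k)\ge\widetilde w(y_{k-1})-2\varepsilon$. Now build $g$ by \emph{transporting mass to the right}: onto a short interval essentially abutting $\tau_k$ place a density carrying slightly less than $\int_{y_{k-1}}^{y_k}f$, arranged so that $G=\int_0^\cdot g$ never exceeds $F$. Granting this, $\int_0^\infty g\,w\ge\sum_k\big(\text{mass placed near }\tau_k\big)w(\tau_k)\gtrsim\sum_k\big(\int_{y_{k-1}}^{y_k}f\big)\big(\widetilde w(y_{k-1})-2\varepsilon\big)$, and since $\widetilde w(y_{k-1})\ge\widetilde w(x)$ for $x$ in the $k$-th cell, the last sum is at least $\int_0^R f\,\widetilde w-2\varepsilon F(R)$ minus the mass lost in transport; refining the partition, sending $\varepsilon$ and the auxiliary lengths to $0$, and undoing the truncations finishes the proof.

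The genuinely delicate point is the admissibility of the transported competitor. Since $w$ is an arbitrary weight, with no monotonicity and no regularity, its running supremum may force mass to be carried far to the right and redistributed across several of the cells $(y_{k-1},y_k)$, so that maintaining $\int_0^x g\le\int_0^x f$ for \emph{every} $x$ requires a careful choice of the short intervals and of the (slightly reduced) transported masses, leaning on the continuity of $F$; one must also be a little careful about the interplay between the pointwise supremum defining $\widetilde w$ and what the integral $\int g\,w$ actually sees. Everything else — the layer-cake computation, the monotone-convergence reductions, and the final telescoping estimate — is routine once the competitor has been correctly constructed.
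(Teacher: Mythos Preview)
The paper does not prove this theorem; it is quoted from \cite{Sinn} and used throughout as a black box. There is consequently no proof here against which to compare your attempt.

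Your outline is nonetheless correct. The ``$\le$'' direction via the layer-cake identity and the fact that the level sets of $\widetilde w$ are initial intervals is the standard clean argument. For ``$\ge$'', the mass-transport construction works, and the difficulty you flag in your final paragraph largely dissolves once you observe that the near-supremum point $\tau_k$ may always be chosen with $\tau_k\ge y_k$ (not merely $\tau_k\ge y_{k-1}$): since the partition makes $\widetilde w(y_k)\ge\widetilde w(y_{k-1})-\varepsilon$ and $\widetilde w(y_k)=\sup_{t\ge y_k}w(t)$, there exists $\tau_k\ge y_k$ with $w(\tau_k)\ge\widetilde w(y_{k-1})-2\varepsilon$. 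With that choice, placing the mass $m_k=\int_{y_{k-1}}^{y_k}f$ in a short interval inside $(y_k,\infty)$ near $\tau_k$ yields, for every $x$,
\[
G(x)\le\sum_{k:\,y_k<x}m_k=F\big(\max\{y_k:y_k<x\}\big)\le F(x),
\]
so admissibility is automatic and no reduction of the transported masses is needed.
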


We will apply the following "gluing" lemma.
\begin{lemma}\cite[Lemma 2.7]{gogmusunv}\label{gluing.lem.0} 
	Let $\alpha$ and $\beta$ be positive numbers. Suppose that $g,\,h \in \M^+ $ and $a \in \W (0,\infty)$ is non-decreasing. Then
	\begin{align*}
	\esup_{x \in (0,\infty)} \bigg( \int_0^{\infty} \bigg(\frac{a(x)}{a(x) + a(t)}\bigg)^{\beta} g(t)\,dt \bigg)^{\frac{1}{\beta}} \bigg(\int_0^{\infty} \bigg(\frac{a(t)}{a(x) + a(t)}\bigg)^{\alpha} h(t)\,dt\bigg)^{\frac{1}{\alpha}} & \\
	& \hspace{-7cm} \ap \esup_{x \in (0,\infty)} \bigg( \int_0^x g(t)\,dt\bigg)^{\frac{1}{\beta}} \bigg(\int_x^{\infty} h(t)\,dt\bigg)^{\frac{1}{\alpha}} + \esup_{x \in (0,\infty)} \bigg( \int_x^{\infty} a(t)^{-\beta} g(t)\,dt \bigg)^{\frac{1}{\beta}}  \bigg( \int_0^x a(t)^{\alpha}h(t)\,dt\bigg)^{\frac{1}{\alpha}}.
	\end{align*}
\end{lemma}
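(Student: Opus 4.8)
The plan is to prove the two one-sided estimates separately, after a preliminary change of variables that reduces the assertion to the model weight $a(t)=t$. Since $a$ is non-decreasing, I would first reduce, by a routine approximation, to the case in which $a$ is a $C^1$ strictly increasing function; substituting $t=a^{-1}(\tau)$ in every integral occurring in the statement and putting $\xi:=a(x)$ turns the two kernels into $\xi/(\xi+\tau)$ and $\tau/(\xi+\tau)$, and simultaneously converts $\int_0^x g\,dt$, $\int_x^\infty a^{-\beta}g\,dt$, $\int_0^x a^\alpha h\,dt$, $\int_x^\infty h\,dt$ into $\int_0^\xi\tilde g$, $\int_\xi^\infty\tau^{-\beta}\tilde g$, $\int_0^\xi\tau^\alpha\tilde h$, $\int_\xi^\infty\tilde h$ for suitable $\tilde g,\tilde h\in\M^+$. (The range of $a$ may be a proper subinterval of $(0,\infty)$, which is immaterial.) Thus it suffices to treat $a(t)=t$.

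For the direction ``$\gtrsim$'' I would, for each fixed $x$, simply discard part of the domain of integration on the left. Keeping only $t\in(0,x)$ in the first inner integral, where $x/(x+t)\ge\frac12$, and only $t\in(x,\infty)$ in the second, where $t/(x+t)\ge\frac12$, produces (up to a constant) the first term on the right; keeping instead $t\in(x,\infty)$ in the first integral, where $x/(x+t)\ge\frac12\,x/t$, and $t\in(0,x)$ in the second, where $t/(x+t)\ge\frac12\,t/x$, the two powers of $x$ cancel and we obtain the second term. Taking the essential supremum over $x$ and using $\esup_x(F_1+F_2)\ap\esup_x F_1+\esup_x F_2$ finishes this direction.

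The substantial direction is ``$\lesssim$''. For fixed $x$ I would split each inner integral at the point $x$ and use the elementary bounds $x/(x+t)\le1$ on $(0,x)$, $x/(x+t)\le x/t$ on $(x,\infty)$, $t/(x+t)\le t/x$ on $(0,x)$, $t/(x+t)\le1$ on $(x,\infty)$, together with $(A+B)^{1/p}\ap A^{1/p}+B^{1/p}$, to estimate the integrand of the left-hand side at $x$ by
$$\Big[\big(\int_0^x\tilde g\big)^{1/\beta}+x\big(\int_x^\infty t^{-\beta}\tilde g\big)^{1/\beta}\Big]\Big[x^{-1}\big(\int_0^x t^\alpha\tilde h\big)^{1/\alpha}+\big(\int_x^\infty\tilde h\big)^{1/\alpha}\Big].$$
Expanding the product gives four terms. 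One of them, $\big(\int_0^x\tilde g\big)^{1/\beta}\big(\int_x^\infty\tilde h\big)^{1/\alpha}$, is the integrand of the first term on the right; in another, $x\big(\int_x^\infty t^{-\beta}\tilde g\big)^{1/\beta}\cdot x^{-1}\big(\int_0^x t^\alpha\tilde h\big)^{1/\alpha}$, the factors $x$ cancel and we recover the integrand of the second term on the right. The remaining two ``diagonal'' pieces,
$$P(x):=\big(\int_0^x\tilde g\big)^{1/\beta}x^{-1}\big(\int_0^x t^\alpha\tilde h\big)^{1/\alpha},\qquad Q(x):=x\big(\int_x^\infty t^{-\beta}\tilde g\big)^{1/\beta}\big(\int_x^\infty\tilde h\big)^{1/\alpha},$$
must be absorbed into the two suprema on the right; this is the heart of the matter.

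To carry out the absorption I would discretize. Taking $x_k=2^k$ and writing $\gamma_j=\int_{2^j}^{2^{j+1}}\tilde g$, $\eta_j=\int_{2^j}^{2^{j+1}}\tilde h$ (together with their weighted versions $2^{-j\beta}\gamma_j$ and $2^{j\alpha}\eta_j$), the kernels become the geometric weights $2^{(k-j)\beta}$ for $j>k$ and $2^{(j-k)\alpha}$ for $j\le k$, and the whole inequality reduces to a discrete inequality between suprema over $k\in\Z$ of products of partial sums of the sequences $(\gamma_j)$, $(\eta_j)$ and their weighted versions. In that discrete form the summability of the geometric weights localizes $\int_0^{x_k}t^\alpha\tilde h$ near the index $k$, so $\esup_k P(x_k)$ is dominated by pairings of $\int_0^{x_k}\tilde g$ with $\eta_j$ for $j$ close to $k$, each of which is bounded by the discrete analogue of the first or the second right-hand supremum; $\esup_k Q(x_k)$ is treated symmetrically. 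Reassembling the continuous suprema and undoing the change of variables then completes the proof. The main obstacle is exactly this last step: the naive monotonicity bounds $x^{-1}\big(\int_0^x t^\alpha\tilde h\big)^{1/\alpha}\le\big(\int_0^x\tilde h\big)^{1/\alpha}$ and $x\big(\int_x^\infty t^{-\beta}\tilde g\big)^{1/\beta}\le\big(\int_x^\infty\tilde g\big)^{1/\beta}$ are far too crude — they yield only $\esup_x P,\ \esup_x Q\le\big(\int_0^\infty\tilde g\big)^{1/\beta}\big(\int_0^\infty\tilde h\big)^{1/\alpha}$, which in general strictly exceeds the right-hand side — so one has to genuinely exploit the geometric decay of the kernels, which the dyadic discretization makes transparent.
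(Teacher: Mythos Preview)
The paper does not prove this lemma at all: it is quoted verbatim from \cite[Lemma~2.7]{gogmusunv} and used as a black box, so there is no ``paper's proof'' to compare your argument with. What I can do is assess your proposal on its own merits.

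Your reduction to $a(t)=t$ and the ``$\gtrsim$'' direction are fine. For the ``$\lesssim$'' direction your four-term decomposition is correct, and you correctly identify that the two cross terms $P(x)=\big(\int_0^x\tilde g\big)^{1/\beta}x^{-1}\big(\int_0^x t^\alpha\tilde h\big)^{1/\alpha}$ and $Q(x)$ are the whole difficulty. However, the discretization sketch you give for absorbing them is not a proof. The sentence ``the summability of the geometric weights localizes $\int_0^{x_k}t^\alpha\tilde h$ near the index $k$'' is simply not true in general: if $\tilde h$ is supported well to the left of $2^k$ then all of $H(2^k)$ comes from indices $j\ll k$, and the geometric weight $2^{j\alpha}$ does nothing to prevent that. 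More importantly, even after writing $H(2^k)\approx\sum_{j\le k}2^{j\alpha}\eta_j$, each summand of $P(2^k)$ carries the factor $G(2^k)^{1/\beta}$, not $G(2^j)^{1/\beta}$, so it is \emph{not} the discrete analogue of either term on the right; you still have to pass from $G(2^k)$ to $G(2^j)$ or to $\int_{2^j}^\infty t^{-\beta}\tilde g$, and that is exactly the step you have not supplied. A quick sanity check shows the constants are genuinely nontrivial: with $\alpha=\beta=1$ and $\tilde g=\tilde h$ an approximate unit mass at $1$, one has $\sup_x P(x)\approx 1$ while $R_1+R_2\approx\tfrac12$, so the absorption cannot be done term-by-term without a real argument.

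In short: the strategy is reasonable and the claim $\sup_x P(x)+\sup_x Q(x)\lesssim R_1+R_2$ is very likely true, but your paragraph does not establish it. You need either a genuine discrete argument (splitting $G(2^k)$ at a carefully chosen index depending on where the mass of $\tilde g$ sits, and treating the two pieces against $R_1$ and $R_2$ separately) or a different device; the hand-wave about geometric localization does not close the gap.
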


We recall the following "an integration by parts" formula. 
\begin{theorem}\cite[Theorem 2.1]{musbil_2}\label{thm.IBP.0} 
	Let $\alpha > 0$. Let $g$ be a non-negative function on $(0,\infty)$ such that $0 < \int_0^t g < \infty$ for all $t \in (0,\infty)$ and let $f$ be a non-negative non-increasing right-continuous function on $(0,\infty)$. Then
	\begin{align*}
	A_1 : = \int_0^{\infty} \bigg( \int_0^t g \bigg)^{\alpha} g(t) [f(t) - \lim_{t \rw +\infty} f(t)]\,dt < \infty \quad \Longleftrightarrow \quad A_2 : = \int_{(0,\infty)} \bigg( \int_0^t g \bigg)^{\alpha + 1}\,d[-f(t)] < \infty.
	\end{align*}
	Moreover, $A_1 \approx A_2$.
\end{theorem}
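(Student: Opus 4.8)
The plan is to reduce the equivalence to one application of Tonelli's theorem combined with the fundamental theorem of calculus. Write $G(t):=\int_0^t g$. Since $0<\int_0^t g<\infty$ for every $t$, the function $g$ is locally integrable, so $G$ is non-decreasing, absolutely continuous on each interval $[0,s]$, satisfies $G'=g$ a.e.\ and $\lim_{t\to 0+}G(t)=0$ (absolute continuity of the Lebesgue integral). Let $L:=\lim_{t\to\infty}f(t)\in[0,\infty)$ (it exists because $f\ge 0$ is non-increasing) and put $\widetilde f:=f-L\ge 0$; then $\widetilde f$ is non-increasing, right-continuous, tends to $0$ at $\infty$, and, since adding a constant does not change a Lebesgue–Stieltjes measure, $d[-f]=d[-\widetilde f]=:\mu$, a non-negative Borel measure on $(0,\infty)$. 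In this notation $A_1=\int_0^\infty G(t)^\alpha g(t)\widetilde f(t)\,dt$ and $A_2=\int_{(0,\infty)}G(t)^{\alpha+1}\,d\mu(t)$.

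First I would establish the tail representation $\widetilde f(t)=\mu\big((t,\infty)\big)$ for every $t>0$. Since $-\widetilde f$ is non-decreasing and right-continuous, its Lebesgue–Stieltjes measure satisfies $\mu\big((a,b]\big)=\widetilde f(a)-\widetilde f(b)$ for $0<a<b$; letting $b\to\infty$ and using continuity of $\mu$ from below, since $(a,b]\uparrow(a,\infty)$ as $b\to\infty$, together with $\widetilde f(b)\to 0$, yields the claim. (This is exactly where the right-continuity of $f$ enters, and where subtracting $L$ is needed so that the boundary contribution at $\infty$ disappears.) Substituting $\widetilde f(t)=\int_{(0,\infty)}\chi_{(t,\infty)}(s)\,d\mu(s)$ into $A_1$ and interchanging the order of integration by Tonelli's theorem (all factors being non-negative) gives
\[
A_1=\int_{(0,\infty)}\bigg(\int_0^s G(t)^\alpha g(t)\,dt\bigg)\,d\mu(s).
\]

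It remains to evaluate the inner integral. Because $\alpha+1>1$, the map $x\mapsto x^{\alpha+1}$ is continuously differentiable on $[0,G(s)]$ — with derivative $(\alpha+1)x^\alpha$, continuous and vanishing at $0$ — hence Lipschitz there; consequently $t\mapsto G(t)^{\alpha+1}$, a Lipschitz map composed with an absolutely continuous one, is absolutely continuous on $[0,s]$ with $\frac{d}{dt}G(t)^{\alpha+1}=(\alpha+1)G(t)^\alpha g(t)$ for a.e.\ $t$. The fundamental theorem of calculus together with $G(0+)=0$ then gives $\int_0^s G(t)^\alpha g(t)\,dt=G(s)^{\alpha+1}/(\alpha+1)$, whence
\[
A_1=\frac{1}{\alpha+1}\int_{(0,\infty)}G(s)^{\alpha+1}\,d\mu(s)=\frac{1}{\alpha+1}\,A_2.
\]
In particular $A_1<\infty\iff A_2<\infty$ and $A_1\approx A_2$ with constants depending only on $\alpha$; in fact the relation is an identity.

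I do not anticipate a serious obstacle: the argument is a clean Fubini-plus-FTC computation. The only steps requiring care are measure-theoretic bookkeeping — interpreting $\int_{(0,\infty)}\cdots\,d[-f]$ as a Lebesgue–Stieltjes integral and justifying the tail identity $\widetilde f(t)=d[-f]\big((t,\infty)\big)$ (which relies on right-continuity of $f$), and verifying that $G^{\alpha+1}$ is absolutely continuous so that the fundamental theorem of calculus applies even though $G$ is only absolutely continuous (handled by "Lipschitz $\circ$ absolutely continuous $=$ absolutely continuous"). An alternative proof via Stieltjes integration by parts is possible, but is less tidy, since one then has to check separately that the boundary term $G(\ve)^{\alpha+1}f(\ve)$ tends to $0$ as $\ve\to 0+$, which the present approach avoids automatically.
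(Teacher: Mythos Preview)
Your proof is correct and in fact sharper than what is asserted: you obtain the exact identity $A_1=\frac{1}{\alpha+1}A_2$, from which the equivalence and the relation $A_1\approx A_2$ are immediate. The measure-theoretic steps are handled carefully --- the tail formula $\widetilde f(t)=\mu\big((t,\infty)\big)$ via right-continuity, the Tonelli interchange, and the absolute continuity of $G^{\alpha+1}$ via the Lipschitz-composed-with-AC argument --- and no gaps remain.

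Note, however, that the paper does not give its own proof of this statement: it is quoted verbatim as \cite[Theorem~2.1]{musbil_2} in the background section and used as a black box later in the proof of Theorem~\ref{them.prev.2}. So there is no ``paper's proof'' to compare against here; your argument stands on its own as a clean, self-contained justification.
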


Investigation of weighted iterated Hardy-type inequalities started with studying of the inequality
\begin{equation}\label{mainn0}
\bigg( \int_0^{\infty} \bigg(\int_0^t \bigg( \int_s^{\infty} h(y)\,dy\bigg)^m u(s)\,ds \bigg)^{\frac{q}{m}}w(t)\,dt\bigg)^{\frac{1}{q}} \le C \bigg(\int_0^{\infty} h(t)^pv(t)\,dt\bigg)^{\frac{1}{p}}, \qquad h \in \mp^+. 
\end{equation}
Note that inequality \eqref{mainn0} have been considered in the case $m=1$ in \cite{gop2009} (see also \cite{g1}), where the result was
presented without proof, and in the case $p=1$ in \cite{gjop} and \cite{ss}, where the special
type of weight function $v$ was considered. Recall that the inequality has been completely characterized	in \cite{GMP1} and \cite{GMP2} in the case $0<m<\infty$, $0<q\leq \infty$, $1 \le p < \infty$ by using discretization and anti-discretization methods. Another approach to get the characterization of inequalities \eqref{mainn0} was presented in \cite{PS_Proc_2013}. The characterization of the inequality can be reduced to the characterization of the weighted Hardy	inequality on the cones of non-increasing functions (see, \cite{gog.mus.2017_1} and \cite{gog.mus.2017_2}). Different approach to solve iterated Hardy-type inequalities has been given in \cite{mus.2017}.

As it was mentioned in \cite{gog.mus.2017_1} the characterization of "dual" inequality
\begin{equation}\label{iterH2}
\bigg( \int_0^{\infty} \bigg(\int_t^{\infty} \bigg( \int_0^s h(y)\,dy\bigg)^m u(s)\,ds \bigg)^{\frac{q}{m}}w(t)\,dt\bigg)^{\frac{1}{q}} \le C \bigg(\int_0^{\infty} h(t)^pv(t)\,dt\bigg)^{\frac{1}{p}}, \qquad h \in \mp^+.
\end{equation}
can be easily obtained from the solutions of inequality \eqref{mainn0}, which was presented in \cite{GKPS}. 

\begin{theorem}\cite[Theorem 2.9, (a) and (c)]{GKPS}\label{gks} 
	Let $p,\,q,\,m \in (1,\infty)$ and $u,\,w,\,v$ be weights on $(0,\infty)$. Assume that the following non-degeneracy conditions are satisfied:
	\begin{itemize}
		\item 
		$u$ is strictly positive, $\int_t^{\infty} u(s)\,ds < \infty$ for all $t \in (0,\infty)$, $\int_0^{\infty} u(s)\,ds = \infty$, 
		
		\item 
		$\int_0^t w(s)\,ds < \infty$, $\int_t^{\infty} w(s) \bigg( \int_s^{\infty} u(y)\,dy\bigg)^{\frac{q}{m}}\,ds < \infty$ for all $t \in (0,\infty)$,
		
		\item 
		$\int_0^1 w(s) \bigg( \int_s^{\infty} u(y)\,dy\bigg)^{\frac{q}{m}}\,ds = \infty$, $\int_1^{\infty} w(s)\,ds = \infty$.
	\end{itemize}
	Let
	$$
	C = \sup_{h \in \mp^+ } \frac{\bigg( \int_0^{\infty} \bigg(\int_t^{\infty} \bigg( \int_0^s h(y)\,dy\bigg)^m u(s)\,ds \bigg)^{\frac{q}{m}}w(t)\,dt\bigg)^{\frac{1}{q}}}{\bigg(\int_0^{\infty} h(t)^pv(t)\,dt\bigg)^{\frac{1}{p}}}
	$$
	
	{\rm (a)} If $p \le m$ and $p \le q$, then $C \approx D_1 + D_2$, where
	$$
	D_1 = \sup_{t \in (0,\infty)} \bigg( \int_0^t w(s) \,ds \bigg)^{\frac{1}{q}}  \, \bigg( \int_t^{\infty} u(s)\,ds \bigg)^{\frac{1}{m}} \bigg(\int_0^t v(\tau)^{1-p'}\,d\tau\bigg)^{\frac{1}{p'}}
	$$
	and
	$$
	D_2 = \sup_{t \in (0,\infty)} \bigg( \int_t^{\infty} \bigg( \int_s^{\infty} u(y)\,dy \bigg)^{\frac{q}{m}} w(s) \, ds \bigg)^{\frac{1}{q}} \bigg(\int_0^t v(s)^{1-p'}\,ds\bigg)^{\frac{1}{p'}}.
	$$
	
	{\rm (b)} If $m < p$ and $p \le q$, then $C \approx D_2 + D_3$, where
	$$
	D_3 = \sup_{t \in (0,\infty)} \bigg( \int_0^t w(s) \,ds \bigg)^{\frac{1}{q}} \bigg(\int_t^{\infty} \bigg( \int_s^{\infty} u(y)\,dy \bigg)^{\frac{p}{p - m}} \bigg( \int_0^s v(\tau)^{1 - p'}\,d\tau\bigg)^{\frac{p(m-1)}{p - m}} v(s)^{1 - p'}\,ds\bigg)^{\frac{p - m}{pm}}.
	$$
\end{theorem}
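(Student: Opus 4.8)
\emph{Proof strategy.} The plan is to deduce the characterisation from the already available solution of the ``non-dual'' inequality \eqref{mainn0} by a reciprocal change of variables. First I would carry out in the functional defining $C$ the substitutions $y\mapsto 1/y$ in the innermost integral, $s\mapsto 1/s$ in the middle one, and $t\mapsto 1/t$ in the outer one. Setting $\tilde h(\sigma):=\sigma^{-2}h(1/\sigma)$, $\tilde u(\sigma):=\sigma^{-2}u(1/\sigma)$, $\tilde w(\tau):=\tau^{-2}w(1/\tau)$ and $\tilde v(\tau):=\tau^{2p-2}v(1/\tau)$, Fubini's theorem should give
\[
\int_0^\infty\Big(\int_t^\infty\Big(\int_0^s h\Big)^m u(s)\,ds\Big)^{\frac qm}w(t)\,dt=\int_0^\infty\Big(\int_0^\tau\Big(\int_\sigma^\infty \tilde h\Big)^m\tilde u(\sigma)\,d\sigma\Big)^{\frac qm}\tilde w(\tau)\,d\tau,
\]
together with $\int_0^\infty h^pv=\int_0^\infty\tilde h^p\tilde v$, while $h\mapsto\tilde h$ is a bijection of $\mp^+$ onto itself. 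Thus $C$ becomes the optimal constant in \eqref{mainn0} with data $(\tilde u,\tilde v,\tilde w)$, and I would check that the non-degeneracy hypotheses on $(u,v,w)$ translate into the corresponding ones for $(\tilde u,\tilde v,\tilde w)$.

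Since \eqref{mainn0} has been completely characterised for $0<m<\infty$, $0<q\le\infty$, $1\le p<\infty$ (see \cite{GMP1}, \cite{GMP2}, and also \cite{PS_Proc_2013}) — for $p\le\min\{m,q\}$ the optimal constant being equivalent to a sum of two Muckenhoupt-type suprema, and for $m<p\le q$ to the sum of a Muckenhoupt-type supremum and a Maz'ya--Rozin-type integral — it would remain to rewrite those conditions, stated for $(\tilde u,\tilde v,\tilde w)$, back in terms of $(u,v,w)$. This is routine because the substitutions collapse neatly, e.g. $\int_0^\tau\tilde u=\int_{1/\tau}^\infty u$, $\int_0^\tau\tilde w=\int_{1/\tau}^\infty w$ and $\int_0^\tau\tilde v^{1-p'}=\int_{1/\tau}^\infty v^{1-p'}$; after simplification and the change $\tau=1/t$ one lands on $C\ap D_1+D_2$ in case (a) and $C\ap D_2+D_3$ in case (b).

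If a self-contained argument is preferred (valid when $q\ge m$), I would instead write $H:=\int_0^\cdot h$ and apply duality in $L^{q/m}(w)$ followed by Fubini, so that the left-hand side of \eqref{iterH2} becomes $\big(\sup_\varphi\int_0^\infty H^m u\,\Phi\big)^{1/m}$, the supremum being over $\varphi\ge0$ with $\|\varphi\|_{(q/m)',w^{1-(q/m)'}}\le1$ and $\Phi:=\int_0^\cdot\varphi$. For a fixed $\Phi$ the inequality $\|H\|_{L^m(u\Phi)}\le K(\Phi)\|h\|_{L^p(v)}$ is a classical weighted Hardy inequality, so that $K(\Phi)\ap\sup_t\big(\int_t^\infty u\Phi\big)^{1/m}\big(\int_0^t v^{1-p'}\big)^{1/p'}$ for $p\le m$ and $K(\Phi)$ is the corresponding Maz'ya--Rozin integral for $m<p$; hence $C\ap\sup_\Phi K(\Phi)$. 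When $p\le m$ this functional is \emph{linear} in $\Phi$, so I would swap the two suprema, evaluate $\sup_\varphi\int_t^\infty u(s)\big(\int_0^s\varphi\big)\,ds$ by Fubini and $L^{q/m}(w)$-duality (equivalently via Theorem~\ref{transfermon}), and split the resulting integral at $\sigma=t$, reaching $D_1+D_2$. When $m<p$ the functional is \emph{nonlinear} in $\Phi$; here I would run a second duality step that turns the problem into a Hardy inequality with kernel $\int_{\max\{t,\sigma\}}^\infty u$ and resolve it with the ``gluing'' Lemma~\ref{gluing.lem.0} and the integration-by-parts Theorem~\ref{thm.IBP.0}, ending at $D_3$.

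The hard part will be the case $m<p$: because $K(\Phi)$ depends nonlinearly on $\Phi$, the supremum over $\Phi$ cannot simply be pushed inside the integral, and the second reduction together with the bookkeeping in Lemma~\ref{gluing.lem.0} must be carried out carefully to land precisely on $D_3$. A secondary difficulty is that the direct route misses the subcase $p\le q<m$ of (a), where $q/m<1$ and the outer duality is unavailable, so there one must rely on the reduction to \eqref{mainn0} and the discretisation/anti-discretisation machinery behind its characterisation; and one should verify throughout that the stated non-degeneracy conditions are exactly those needed for each invoked Hardy characterisation to apply.
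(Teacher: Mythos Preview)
The paper does not prove this theorem; it is quoted verbatim from \cite[Theorem~2.9]{GKPS} as background material, with only the remark that the characterisation of the dual inequality \eqref{iterH2} ``can be easily obtained from the solutions of inequality \eqref{mainn0}'' (citing \cite{gog.mus.2017_1}). Your reciprocal change-of-variables strategy is precisely that reduction, so your first approach is exactly the route the paper alludes to, and it is correct.

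Your alternative self-contained duality argument is a genuinely different line and is worth a brief comment. It works cleanly in case (a) when $q\ge m$, since $K(\Phi)$ is homogeneous of degree $1/m$ in $\Phi$ and the supremum over $\varphi$ can be computed by $L^{q/m}(w)$-duality after swapping suprema; you correctly flag that for $p\le q<m$ this outer duality is unavailable, so one must fall back on the change-of-variables reduction (or on discretisation). In case (b) your plan to handle the nonlinear dependence on $\Phi$ via a second duality and Lemma~\ref{gluing.lem.0} is plausible but is the delicate step: the Maz'ya--Rozin integrand involves $\big(\int_t^\infty u\Phi\big)^{p/(p-m)}$, and extracting a supremum over $\Phi$ from inside this integral requires care to land exactly on $D_3$ rather than on an equivalent but differently-shaped condition. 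The change-of-variables route avoids this difficulty entirely, which is why the paper (and \cite{GKPS}) prefers it.
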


Another pair of "dual" weighted iterated Hardy-type inequalities are
\begin{equation}\label{iterH1}
\bigg( \int_0^{\infty} \bigg(\int_t^{\infty} \bigg( \int_s^{\infty} h(y)\,dy\bigg)^m u(s)\,ds \bigg)^{\frac{q}{m}}w(t)\,dt\bigg)^{\frac{1}{q}} \le C \bigg(\int_0^{\infty} h(t)^pv(t)\,dt\bigg)^{\frac{1}{p}}, \qquad h \in \mp^+ 
\end{equation}
and 
\begin{equation}\label{iterH3}
\bigg( \int_0^{\infty} \bigg(\int_0^t \bigg( \int_0^s h(y)\,dy\bigg)^m u(s)\,ds \bigg)^{\frac{q}{m}}w(t)\,dt\bigg)^{\frac{1}{q}} \le C \bigg(\int_0^{\infty} h(t)^pv(t)\,dt\bigg)^{\frac{1}{p}}, \qquad h \in \mp^+.
\end{equation}
Both of them were characterized in \cite{gog.mus.2017_1} by so-called "flipped" conditions. The "classical" conditions ensuring the validity of \eqref{iterH1} was recently presented in \cite{krepick}.

\begin{theorem}\cite[Theorem 1.1, (a) and (c)]{krepick}\label{krepick} 
Let $p,\,q,\,m \in (1,\infty)$ and $u,\,w,\,v$ be weights such that the pair $(u,w)$ is admissible with respect to $(m.q)$, that is,
$$
0 < \int_0^t \bigg(\int_s^t u(y)\,dy\bigg)^{\frac{q}{m}} w(s)\,ds < \infty, \qquad t \in (0,\infty). 
$$
Let
$$
C = \sup_{h \in \mp^+ } \frac{\bigg( \int_0^{\infty} \bigg(\int_t^{\infty} \bigg( \int_s^{\infty} h(y)\,dy\bigg)^m u(s)\,ds \bigg)^{\frac{q}{m}}w(t)\,dt\bigg)^{\frac{1}{q}}}{\bigg(\int_0^{\infty} h(t)^pv(t)\,dt\bigg)^{\frac{1}{p}}}
$$

{\rm (a)} If $p \le m$ and $p \le q$, then $C \approx E_1$, where
$$
E_1 = \sup_{t \in (0,\infty)} \bigg( \int_0^t w(s) \, \bigg( \int_s^t u(y)\,dy \bigg)^{\frac{q}{m}} ds \bigg)^{\frac{1}{q}} \bigg(\int_t^{\infty} v(s)^{1-p'}\,ds\bigg)^{\frac{1}{p'}}.
$$

{\rm (b)} If $m < p$ and $p \le q$, then $C \approx E_1 + E_2$, where
$$
E_2 = \sup_{t \in (0,\infty)} \bigg( \int_0^t w(s) \,ds \bigg)^{\frac{1}{q}} \bigg(\int_t^{\infty} \bigg( \int_t^s u(y)\,dy \bigg)^{\frac{m}{p - m}} u(s) \bigg( \int_s^{\infty} v(\tau)^{1 - p'}\,d\tau\bigg)^{\frac{m(p-1)}{p - m}}\,ds\bigg)^{\frac{p - m}{pm}}.
$$
\end{theorem}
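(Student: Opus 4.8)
The plan is to establish the two equivalences directly, computing the least constant $C$ in \eqref{iterH1}. By a routine density and truncation argument one reduces to $h \in \mp^+$ bounded with compact support in $(0,\infty)$, so that $G(s) := \int_s^{\infty} h(y)\,dy$ is finite, non-increasing, locally absolutely continuous, $G(\infty)=0$, and all integrals below converge; set $\Phi(t) := \int_t^{\infty} G(s)^m u(s)\,ds$, so the left-hand side of \eqref{iterH1} equals $\big(\int_0^{\infty}\Phi(t)^{q/m}w(t)\,dt\big)^{1/q}$.

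\emph{Necessity.} Fix $t \in (0,\infty)$ and test \eqref{iterH1} with $h = v^{1-p'}\chi_{(t,N)}$: then $\int_s^{\infty} h = \int_t^N v^{1-p'}$ for every $s \le t$ and $\|h\|_{L^p(v)}^p = \int_t^N v^{1-p'}$, so restricting the outer integral to $(0,t)$ and the middle integral in $s$ to $(\tau,t)$, dividing out the appropriate power of $\int_t^N v^{1-p'}$ and letting $N\to\infty$ give
\begin{equation*}
\bigg( \int_0^t \big( \int_{\tau}^t u \big)^{\frac{q}{m}} w(\tau)\,d\tau \bigg)^{\frac{1}{q}} \bigg( \int_t^{\infty} v^{1-p'} \bigg)^{\frac{1}{p'}} \le C,
\end{equation*}
that is, $E_1 \lesssim C$. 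For part (b), the bound $E_2 \lesssim C$ follows from the standard $m<p$ Hölder-extremal test function supported on $(t,\infty)$, built from $s \mapsto \int_s^{\infty}v^{1-p'}$ and the kernel $\int_t^s u$; the admissibility hypothesis on $(u,w)$ is precisely what makes that test function admissible and the resulting quantity finite.

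\emph{Sufficiency.} Here $q \ge m$ — in part (b) because $q \ge p > m$, while the subcase $q < m$ of part (a) is discussed below — so the outer integral may be dualised. Writing $\frac{m}{q} + \frac{1}{(q/m)'} = 1$, using Fubini's theorem with $P(s) := \int_0^s g(\tau)w(\tau)\,d\tau$, and then also taking the supremum over $h$ and interchanging suprema, one reduces the determination of $C$ to
\begin{equation*}
C^m \ap \sup_{g \in \mp^+} \frac{C_0(uP)^m}{\big(\int_0^{\infty} g^{(q/m)'}w\big)^{1/(q/m)'}},
\end{equation*}
where $C_0(uP)$ is the least constant in the \emph{non-iterated} weighted Copson inequality $\big(\int_0^{\infty}(\int_s^{\infty}h)^m u(s)P(s)\,ds\big)^{1/m} \le C_0(uP)\,\|h\|_{L^p(v)}$. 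The classical characterisation of this inequality is available — a $\sup$-condition of Muckenhoupt--Bradley type when $p \le m$, a Maz'ya--Rozin type integral condition when $m<p$ — so inserting it and maximising over $g$ completes the argument: in part (a) a further application of Fubini and a sharp Hölder inequality collapse the right-hand side to $E_1^m$, whereas in part (b) the maximisation over $g$ is itself a weighted Hardy-type inequality for $g$ with an Oinarov-type kernel $\int_{\sigma}^{x}u$, whose characterisation (obtained by the methods behind Theorem \ref{gks} and \eqref{mainn0}--\eqref{iterH2}) yields the two terms $E_1 + E_2$.

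\emph{Main obstacles.} I expect three points to require the most work. First, the subcase $q < m$ of part (a): since then $q/m < 1$ the outer dualisation above is unavailable, so I would instead either apply Sinnamon's reverse duality to $\big(\int_0^{\infty}\Phi^{q/m}w\big)^{m/q}$ with a carefully chosen auxiliary weight (after which the same reduction via Fubini and the Copson inequality applies), or reduce \eqref{iterH1} to a weighted Hardy inequality on the cone of non-increasing functions via $h=-G'$ and invoke Sawyer--Stepanov duality, or start from the ``flipped'' characterisation of \eqref{iterH1} in \cite{gog.mus.2017_1} and verify directly that it is equivalent to $E_1$. Second, the final maximisation over $g$ in part (b): this is the step that genuinely splits the answer into $E_1$ and $E_2$, and it needs careful Fubini manipulations together with an ``integration by parts'' of the type of Theorem \ref{thm.IBP.0}. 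Third, keeping all equivalence constants independent of $u,v,w$ throughout, which is where the non-degeneracy and admissibility hypotheses enter.
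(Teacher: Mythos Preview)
The paper does not prove this statement: Theorem~\ref{krepick} is quoted verbatim from \cite[Theorem~1.1, (a) and (c)]{krepick} as background material in Section~\ref{BM}, with no proof supplied. There is therefore no ``paper's own proof'' to compare your proposal against.

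That said, your sketch is a reasonable outline of the standard route to such results and is broadly in the spirit of the approach in \cite{krepick}. The necessity part via the test functions $h=v^{1-p'}\chi_{(t,N)}$ is correct and standard. For sufficiency, the reduction by dualising the outer $L^{q/m}$-integral and then invoking the non-iterated Copson characterisation is the right idea when $q>m$; you have correctly flagged the genuine issue that in part~(a) the relation $p\le m$, $p\le q$ does \emph{not} force $q\ge m$, so the subcase $q<m$ needs a separate argument. Of the three alternatives you list for that subcase, the cleanest is probably the reduction to a Hardy inequality on the cone of non-increasing functions (via $G=\int_{\cdot}^{\infty}h$) followed by Sawyer--Stepanov duality, which is also close to how \cite{gog.mus.2017_1} handles the ``flipped'' version. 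Your description of the final maximisation over $g$ in part~(b) as a weighted Hardy inequality with the Oinarov kernel $\int_\sigma^x u$ is accurate, and the splitting into $E_1+E_2$ indeed comes out of the two-term characterisation of that inequality.
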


We need solutions of inequalities 
\begin{align*}
\bigg( \int_t^{\infty} \bigg( \int_t^x \bigg( \sup_{s \le \tau}u(\tau) \bigg(\int_t^{\tau} f(y)\,dy \bigg) \bigg) a(s)\,ds \bigg)^q w(x)\,dx \bigg)^{\frac{1}{q}} \le C \, \bigg( \int_t^{\infty} f(s)^pv(s)\,ds\bigg)^{\frac{1}{p}}, \qquad f \in \mp^+ (t,\infty)
\end{align*}
and
\begin{align*}
\bigg( \int_t^{\infty} \bigg( \int_t^y \bigg( \sup_{s \le \tau}u(\tau)  \bigg(\int_{\tau}^{\infty} f(z)\,dz \bigg) \bigg) a(s)\,ds \bigg)^q w(y)\,dy \bigg)^{\frac{1}{q}} \le C \, \bigg( \int_t^{\infty} f(s)^pv(s)\,ds\bigg)^{\frac{1}{p}}, \qquad f \in \mp^+ (t,\infty)
\end{align*}
where $t$ is a given pont in $[0,\infty)$ and $1 < p,\, q < \infty$ and $u \in \W (t,\infty) \cap C (t,\infty)$ and  $a, v,\,w \in \W (t,\infty)$.

The proofs of the following two statements are straightforward and capacious, but for the convenience of the reader we give the complete proof of both of them.
\begin{theorem}\label{aux.thm.2} 
	Let $1 < p , q < \infty$. Given $t \ge 0$ assume that $u \in \W (t,\infty) \cap C (t,\infty)$ and  $a, v,\,w \in \W (t,\infty)$. Moreover, assume that $0 < \int_0^x v(\tau)^{1-p'}\,d\tau< \infty,\, x > t$.
	\begin{itemize}
		\item If $p\le q$, then	
		\begin{align*}
		\sup_{f \in \mp^+ (t,\infty)} \frac{\bigg( \int_t^{\infty} \bigg( \int_t^x \bigg( \sup_{s \le \tau}u(\tau) \bigg(\int_t^{\tau} f(y)\,dy \bigg) \bigg) a(s)\,ds \bigg)^q w(x)\,dx \bigg)^{\frac{1}{q}}}{\bigg( \int_t^{\infty} f(s)^pv(s)\,ds\bigg)^{\frac{1}{p}}} & \\
		& \hspace{-8cm} \approx \, \sup_{y \in (t,\infty)} \bigg( \int_t^{y} v(x)^{1-p'} \, \bigg( \int_{x}^{y} \bigg( \sup_{s \le \tau}u(\tau)\bigg) a(s)\,ds \bigg)^{p'} \, dx \bigg)^{\frac{1}{p'}} \, \bigg( \int_{y}^{\infty} w (z) \,dz \bigg)^{\frac{1}{q}} \\
		& \hspace{-7.5cm} + \sup_{y \in (t,\infty)} \bigg( \int_t^{y} v(x)^{1-p'}  \, dx \bigg)^{\frac{1}{p'}} \, \bigg( \int_{y}^{\infty} \, \bigg( \int_{y}^{z} \bigg( \sup_{s \le \tau}u(\tau)\bigg) a(s)\,ds \bigg)^{q} w(z) \,dz \bigg)^{\frac{1}{q}} \\
		& \hspace{-7.5cm} + \sup_{x \in (t,\infty)} \bigg( \int_{[x,\infty)} \, d \, \bigg( - \sup_{y \le \tau} u(\tau)^{p'} \bigg( \int_t^{\tau} v(s)^{1-p'}\,ds \bigg) \bigg) \bigg)^{\frac{1}{p'}} \, \bigg( \int_t^{x} \bigg( \int_{t}^{z} a \bigg)^q w(z) \,dz \bigg)^{\frac{1}{q}} \\
		& \hspace{-7.5cm} + \sup_{x \in (t,\infty)} \bigg( \int_{(t,x]} \,  \bigg( \int_t^{y} a \bigg)^{p'} \, d \, \bigg( - \sup_{y \le \tau} u(\tau)^{p'} \bigg( \int_t^{\tau} v(s)^{1-p'}\,ds \bigg) \bigg) \bigg)^{\frac{1}{p'}} \, \bigg( \int_{x}^{\infty} w(z) \,dz \bigg)^{\frac{1}{q}} \\
		& \hspace{-7.5cm} + \bigg( \int_t^{\infty} \bigg( \int_t^{z} a \bigg)^q w(z) \,dz \bigg)^{\frac{1}{q}} \, \lim_{x \rightarrow \infty} \bigg(\sup_{x \le \tau} u(\tau) \bigg( \int_t^{\tau} v(s)^{1-p'}\,ds \bigg)^{\frac{1}{p'}}\bigg).
		\end{align*}
		
		\item If $q<p$, then
		\begin{align*}
		\sup_{f \in \mp^+ (t,\infty)} \frac{\bigg( \int_t^{\infty} \bigg( \int_t^x \bigg( \sup_{s \le \tau}u(\tau) \bigg(\int_t^{\tau} f(y)\,dy \bigg) \bigg) a(s)\,ds \bigg)^q w(x)\,dx \bigg)^{\frac{1}{q}}}{\bigg( \int_t^{\infty} f(s)^pv(s)\,ds\bigg)^{\frac{1}{p}}} & \\
		& \hspace{-9cm} \approx \, \bigg( \int_t^{\infty}  \bigg( \int_t^y v(x)^{1-p'} \,dx \bigg)^{\frac{p(q-1)}{p-q}} v(y)^{1-p'} \, \bigg( \int_{y}^{\infty} \bigg( \int_{y}^{z} \bigg( \sup_{s \le \tau} u(\tau)\bigg) a(s)\,ds \bigg)^{q} w(z)\, dz \bigg)^{\frac{p}{p-q}} \, dy \bigg)^{\frac{p-q}{pq}} \\
		& \hspace{-8.5cm} + \bigg( \int_t^{\infty} \bigg( \int_t^{y} v(x)^{1-p'} \bigg( \int_{x}^{y} \bigg( \sup_{s \le \tau} u(\tau) \bigg) a(s)\,ds \bigg)^{p'} \, dx \bigg)^{\frac{q(p-1)}{p-q}} \bigg( \int_{y}^{\infty} w(z) \,dz \bigg)^{\frac{q}{p-q}} w(y)\,dy \bigg)^{\frac{p-q}{pq}} \\
		& \hspace{-8.5cm} + \bigg( \int_t^{\infty} \bigg( \int_{[x,\infty)} \, d \, \bigg( - \bigg(\sup_{y \le \tau} u(\tau)^{p'} \, \bigg( \int_t^{\tau} v(s)^{1-p'}\,ds \bigg) \bigg) \bigg) \bigg)^{\frac{q(p-1)}{p-q}} 
		\, \bigg( \int_t^{x} \bigg( \int_t^{z} a \bigg)^q w(z) \,dz \bigg)^{\frac{q}{p-q}} \bigg( \int_t^{x} a\bigg)^q w(x) \,dx\bigg)^{\frac{p-q}{pq}} \\
		& \hspace{-8.5cm} + \bigg( \int_t^{\infty} \bigg( \int_{(t,x]} \,  \bigg( \int_t^{y} a \bigg)^{p'} \, d \, \bigg( - \bigg(\sup_{y \le \tau} u(\tau)^{p'} \, \bigg( \int_t^{\tau} v(s)^{1-p'}\,ds \bigg) \bigg) \bigg)  \bigg)^{\frac{q(p-1)}{p-q}} \, \bigg( \int_{x}^{\infty} w(z) \,dz \bigg)^{\frac{q}{p-q}} w(x)\,dx \bigg)^{\frac{p-q}{pq}} \\
		& \hspace{-8.5cm} + \bigg( \int_t^{\infty} \bigg( \int_t^{z} a \bigg)^q w(z) \,dz \bigg)^{\frac{1}{q}} \, \lim_{x \rightarrow \infty} \bigg(\sup_{x \le \tau} u(\tau) \bigg( \int_t^{\tau} v(s)^{1-p'}\,ds \bigg)^{\frac{1}{p'}}\bigg).
		\end{align*}
	\end{itemize}
\end{theorem}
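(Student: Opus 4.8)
The plan is to pass, by duality, from the displayed inequality to the norm of a sublinear supremum operator acting into a weighted $L^{1}$ space, to decompose that operator into a ``Hardy part'' responsible for the first two terms on the right-hand side and a ``localized supremum part'' responsible for the last three, and to handle each of these by the material collected in Section~\ref{BM}. First I would reduce the inner operator: since $\int_{t}^{\tau}f$ is nondecreasing in $\tau$, one checks directly that $\sup_{s\le\tau}u(\tau)\int_{t}^{\tau}f=\sup_{s\le\tau}\bar u(\tau)\int_{t}^{\tau}f$, where $\bar u(s):=\sup_{s\le\tau<\infty}u(\tau)$ is the least nonincreasing majorant of $u$, so one may assume that $u$ itself is nonincreasing --- which is why $\sup_{s\le\tau}u(\tau)$ reappears in the conclusion in place of $u$. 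Since $q>1$, I would then dualize the outer $L^{q}(w,(t,\infty))$ norm by the weighted duality principle and use Fubini's theorem to rewrite the left-hand side of the displayed inequality (for a fixed $f$) as
\[
\sup_{h}\ \int_{t}^{\infty}\Big(\sup_{s\le\tau}u(\tau)\int_{t}^{\tau}f\Big)\,a(s)\,H(s)\,ds,
\]
the supremum being over all $h\in\mp^{+}$ with $\int_{t}^{\infty}h^{q'}w^{1-q'}\le 1$ and $H(s):=\int_{s}^{\infty}h$.

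Next I would split $\int_{t}^{\tau}f=\int_{t}^{s}f+\int_{s}^{\tau}f$ for $\tau\ge s$; using that $u$ is nonincreasing one obtains the pointwise equivalence $\sup_{s\le\tau}u(\tau)\int_{t}^{\tau}f\approx u(s)\int_{t}^{s}f+\sup_{s\le\tau}u(\tau)\int_{s}^{\tau}f$. Since for nonnegative quantities the supremum of a sum is comparable to the sum of the suprema, the best constant $C$ is then equivalent to $C_{1}+C_{2}$, where $C_{1}$ and $C_{2}$ are the suprema, over all admissible $f$ and $h$, of the two ratios coming from the two summands. For $C_{1}$, two further applications of Fubini's theorem identify it with the operator norm from $L^{p}(v,(t,\infty))$ into $L^{q}(w,(t,\infty))$ of the Hardy operator with kernel $k(x,y)=\int_{y}^{x}u(s)a(s)\,ds$, a nonnegative kernel satisfying the Oinarov identity $k(x,y)=k(x,z)+k(z,y)$ for $y\le z\le x$. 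The classical characterization of weighted Hardy inequalities with Oinarov kernels, in the cases $p\le q$ and $q<p$, then yields exactly the first two terms of the asserted expression (with $\sup_{s\le\tau}u(\tau)$ in place of $u$, per the reduction above).

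The core of the proof is $C_{2}$, built on the localized operator $f\mapsto\sup_{s\le\tau}u(\tau)\int_{s}^{\tau}f$; write $V(y):=\int_{t}^{y}v(s)^{1-p'}\,ds$. Here a single global Hölder estimate --- replacing $V(\tau)-V(s)$ by $V(\tau)$ --- is lossy and would collapse three terms into one, so the dependence on the lower limit $s$ must be retained throughout. The plan is: (i) apply Hölder's inequality on each interval $(s,\tau)$ together with its explicit extremizer $f\propto v^{1-p'}$, which brings in the function $w_{0}(y):=\sup_{y\le\tau}u(\tau)^{p'}V(\tau)$, the least nonincreasing majorant of $u^{p'}V$; (ii) use the integration-by-parts formula of Theorem~\ref{thm.IBP.0} to replace the relevant running integrals by integrals against $d(-w_{0})$; (iii) after a dichotomy according to whether the optimal $\tau$ is comparable to, or much larger than, the running outer variable, apply the gluing Lemma~\ref{gluing.lem.0} with the nondecreasing weight there chosen to be $V$, so as to split the resulting two-parameter supremum into the third and fourth terms of the asserted expression, the boundary contribution as $\tau\to\infty$ giving the fifth, limiting, term. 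For $q<p$ one replaces the pointwise supremum estimates in (i)--(iii) by the dual $L^{q}$--$L^{p}$ estimates (with exponent $p/(p-q)$), which turns the third and fourth terms into their Lebesgue-exponent forms and leaves the fifth unchanged.

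For the matching lower bounds one substitutes, term by term, test functions $f$ and $h$ modeled on the extremizers of the corresponding sub-estimate (Hölder, the two Oinarov conditions, the gluing lemma), truncated so as to stay admissible --- this is where the hypothesis $0<V(x)<\infty$ ($x>t$) enters. I expect steps (ii)--(iii) for $C_{2}$ to be the main obstacle: carrying the lower limit $s$ through $V(\tau)-V(s)$ and extracting from the ensuing two-parameter supremum exactly the three sharp terms is precisely what forces the combined use of the integration-by-parts formula and the gluing lemma; everything else --- five terms, two parameter regimes, two ranges $p\le q$ and $q<p$ --- is the bookkeeping that makes the argument, as the authors put it, ``capacious''.
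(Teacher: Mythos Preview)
The paper's proof is far simpler than your proposal: it observes that the case $t=0$ is already established in \cite[Theorem~3.1]{musbil_2}, and for $t>0$ performs the translation $x\mapsto x+t$ to reduce the inequality on $(t,\infty)$ to the known inequality on $(0,\infty)$ with shifted weights $u_t(x)=u(x+t)$, $a_t(x)=a(x+t)$, $v_t(x)=v(x+t)$, $w_t(x)=w(x+t)$. The entire argument is then a long but purely mechanical verification that each of the five terms on the right-hand side transforms correctly under this shift; no duality, no splitting, no Oinarov kernels, no gluing lemma is invoked.

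Your proposal, by contrast, outlines a direct proof from first principles --- in effect re-deriving the $t=0$ result of \cite{musbil_2} rather than citing it. The broad strategy (dualize the outer $L^q$ norm, split $\int_t^\tau f=\int_t^s f+\int_s^\tau f$, treat the Hardy part $C_1$ via an Oinarov-kernel characterization and the localized supremum part $C_2$ via H\"older together with Theorem~\ref{thm.IBP.0} and Lemma~\ref{gluing.lem.0}) is indeed the standard route to such characterizations and is plausible in outline. But you yourself flag the treatment of $C_2$ as the main obstacle and leave it at the level of a plan; the Oinarov-kernel result you invoke for $C_1$ is not among the tools collected in Section~\ref{BM} and would have to be imported; and the matching lower bounds are only sketched. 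So your route may well succeed, but it is substantially longer and harder than what the paper actually does, which is a one-step translation to $t=0$ followed by a citation.
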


\begin{proof}
	The statement was formulated in \cite[Theorem 3.1]{musbil_2} for $t = 0$. Assume that $t > 0$. The proof uses multiple changes of variables of the type $x + t = y$ several times, which we will not specify at any step exactly.
	
	It is easy to see that, given a point $t \in (0,\infty)$, the inequality
	\begin{align}
	\bigg( \int_t^{\infty} \bigg( \int_t^x \bigg( \sup_{s \le \tau}u(\tau) \bigg(\int_t^{\tau} f(y)\,dy \bigg) \bigg) a(s)\,ds \bigg)^q w(x)\,dx \bigg)^{\frac{1}{q}} \le C \bigg( \int_t^{\infty} f(s)^pv(s)\,ds\bigg)^{\frac{1}{p}}, \qquad f \in \mp^+ (t,\infty) \label{ineq2} 
	\end{align}
	is equivalent to the inequality
	\begin{equation}\label{eq.5}
	\bigg(\int_{0}^{\infty}\bigg(\int_0^x \bigg( \sup_{s \le \tau}u_t(\tau) \bigg(\int_0^{\tau} f(y)\,dy\bigg) \bigg) a_t(s) \,ds \bigg)^q\,w_t(x) \, dx \bigg)^\frac{1}{q} \le C\, \bigg(\int_0^{\infty} f(s)^p v_t(s)\,ds\bigg)^\frac{1}{p}, \qquad f \in \mp^+ (0,\infty),
	\end{equation}
	where the weight functions $u_t$, $w_t$, $v_t$ and $a_t$ are defined for $x > 0$ as follows:
	\begin{align*}
	u_t(x):= u(x+t),\qquad w_t(x):= \,w(x+t),\qquad v_t(x):= v(x+t), \qquad a_t(x):=a(x+t).
	\end{align*}
	Indeed: Applying changes of variables, we get that 
	\begin{alignat*}{2}
	\int_{t}^{\infty}\bigg(\int_t^x \bigg( \sup_{s \le \tau}u(\tau) \bigg(\int_t^{\tau} f(z)\,dz \bigg) \bigg) a(s) \,ds\bigg)^q w(x) \, dx & \\
	& \hspace{-6cm} = \int_{t}^{\infty}\bigg(\int_t^x \bigg( \sup_{s \le \tau}u(\tau) \bigg(\int_0^{\tau-t} f(z+t)\,dz \bigg) \bigg)a(s) \,ds \bigg)^q w(x) \, dx \\
	& \hspace{-6cm} = \int_{t}^{\infty}\bigg(\int_t^x \bigg( \sup_{s-t \le \tau-t}u(\tau) \bigg( \int_0^{\tau-t} f(z+t)\,dz\bigg)\bigg) a(s) \,ds \bigg)^q w(x) \, dx \\
	& \hspace{-6cm} = \int_{t}^{\infty}\bigg(\int_t^x \bigg( \sup_{s-t \le \tau}u(\tau+t) \bigg(\int_0^{\tau} f(z+t)\,dz\bigg)\bigg) a(s) \,ds \bigg)^q w(x) \, dx \\
	& \hspace{-6cm} = \int_{t}^{\infty}\bigg(\int_0^{x-t} \bigg( \sup_{s \le \tau}u(\tau+t) \bigg( \int_0^{\tau} f(z+t)\,dz\bigg)\bigg) a(s+t) \,ds \bigg)^q w(x) \, dx \\
	& \hspace{-6cm} = \int_{0}^{\infty}\bigg(\int_0^{x} \bigg( \sup_{s \le \tau}u(\tau+t) \bigg(\int_0^{\tau} f(z+t)\,dz\bigg) \bigg) a(s+t)\,ds \bigg)^q\,w(x + t) \, dx \\
	& \hspace{-6cm} = \int_{0}^{\infty}\bigg(\int_0^{x} \bigg( \sup_{s \le \tau}u_t(\tau) \bigg(\int_0^{\tau} f(z+t)\,dz\bigg) \bigg) a_t(s)\,ds \bigg)^q\,w_t(x) \, dx
	\end{alignat*}
	and
	\begin{alignat*}{2}
	\int_t^{\infty} f(s)^p v(s)\,ds = \int_0^{\infty} f(s+t)^p v(s+t)\,ds = \int_0^{\infty} f(s+t)^p v_t(s)\,ds.
	\end{alignat*}
	Hence inequality \eqref{ineq2} can be rewritten as follows:
	\begin{equation*}
	\bigg(\int_{0}^{\infty}\bigg(\int_0^{y} \bigg( \sup_{s \le \tau}u_t(\tau) \bigg(\int_{\tau}^{\infty} f(z+t)\,dz\bigg) \bigg) a_t(s) \,ds \bigg)^q\,w_t(y) \, dy \bigg)^\frac{1}{q} \le C\, \bigg(\int_0^{\infty} f(s+t)^p v_t(s)\,ds\bigg)^\frac{1}{p}, \qquad f \in \mp^+ (t,\infty).
	\end{equation*}
	It remains note that the latter is equivalent to inequality \eqref{eq.5}.

	Let $p\le q$. By \cite[Theorem 3.1]{musbil_2}, we have for any $t \in (0,\infty)$ that
	\begin{align*}
	\sup_{f \in \mp^+ (0,\infty)} \frac{\bigg(\int_{0}^{\infty}\bigg(\int_0^x \bigg( \sup_{s \le \tau}u_t(\tau) \bigg(\int_0^{\tau} f(y)\,dy\bigg) \bigg) a_t(s) \,ds \bigg)^q\,w_t(x) \, dx \bigg)^\frac{1}{q}}{\bigg(\int_0^{\infty} f(s)^p v_t(s)\,ds\bigg)^\frac{1}{p}} & \\
	& \hspace{-9cm} \approx \, \sup_{y \in (0,\infty)} \bigg( \int_0^y v_t(x)^{1-p'} \, \bigg( \int_x^y \bigg( \sup_{s \le \tau}u_t(\tau)\bigg) a_t(s)\,ds \bigg)^{p'} \, dx \bigg)^{\frac{1}{p'}} \, \bigg( \int_y^{\infty} w_t(z) \,dz \bigg)^{\frac{1}{q}}  \\
	& \hspace{-8.5cm} + \sup_{y \in (0,\infty)} \bigg( \int_0^y v_t(x)^{1-p'}  \, dx \bigg)^{\frac{1}{p'}} \, \bigg( \int_y^{\infty} \, \bigg( \int_y^z \bigg( \sup_{s \le \tau}u_t(\tau)\bigg) a_t(s)\,ds \bigg)^{q} w_t(z) \,dz \bigg)^{\frac{1}{q}} \\
	& \hspace{-8.5cm} + \, \sup_{x \in (0,\infty)} \bigg( \int_{[x,\infty)} \, d \, \bigg( - \sup_{y \le \tau} u_t(\tau)^{p'} \bigg( \int_0^{\tau} v_t(s)^{1-p'}\,ds \bigg) \bigg) \bigg)^{\frac{1}{p'}} \, \bigg( \int_0^x A_t(z)^q w_t(z) \,dz \bigg)^{\frac{1}{q}} \\
	& \hspace{-8.5cm} + \sup_{x \in (0,\infty)} \bigg( \int_{(0,x]} \,  A_t(y)^{p'} \, d \, \bigg( - \sup_{y \le \tau} u_t(\tau)^{p'} \bigg( \int_0^{\tau} v_t(s)^{1-p'}\,ds \bigg) \bigg) \bigg)^{\frac{1}{p'}} \, \bigg( \int_x^{\infty} w_t(z) \,dz \bigg)^{\frac{1}{q}} \\
	& \hspace{-8.5cm} + \bigg( \int_0^{\infty} A_t(z)^q w_t(z) \,dz \bigg)^{\frac{1}{q}} \, \lim_{x \rightarrow \infty} \bigg(\sup_{x \le \tau} u_t(\tau) \bigg( \int_0^{\tau} v_t(s)^{1-p'}\,ds \bigg)^{\frac{1}{p'}}\bigg).
	\end{align*}
	
	Since
	\begin{align*}
	\sup_{y \in (0,\infty)} \bigg( \int_0^y v_t(x)^{1-p'} \, \bigg( \int_x^y \bigg( \sup_{s \le \tau}u_t(\tau)\bigg) a_t(s)\,ds \bigg)^{p'} \, dx \bigg)^{\frac{1}{p'}} \, \bigg( \int_y^{\infty} w_t(z) \,dz \bigg)^{\frac{1}{q}} & \\
	& \hspace{-9cm} = \sup_{y \in (0,\infty)} \bigg( \int_0^y v(x + t)^{1-p'} \, \bigg( \int_x^y \bigg( \sup_{s \le \tau}u(\tau + t)\bigg) a(s + t)\,ds \bigg)^{p'} \, dx \bigg)^{\frac{1}{p'}} \, \bigg( \int_y^{\infty} w (z + t) \,dz \bigg)^{\frac{1}{q}} \\
	& \hspace{-9cm} = \sup_{y \in (0,\infty)} \bigg( \int_0^y v(x + t)^{1-p'} \, \bigg( \int_x^y \bigg( \sup_{s + t \le \tau}u(\tau)\bigg) a(s + t)\,ds \bigg)^{p'} \, dx \bigg)^{\frac{1}{p'}} \, \bigg( \int_y^{\infty} w (z + t) \,dz \bigg)^{\frac{1}{q}} \\
	& \hspace{-9cm} = \sup_{y \in (0,\infty)} \bigg( \int_0^y v(x + t)^{1-p'} \, \bigg( \int_{x + t}^{y + t} \bigg( \sup_{s \le \tau}u(\tau)\bigg) a(s)\,ds \bigg)^{p'} \, dx \bigg)^{\frac{1}{p'}} \, \bigg( \int_y^{\infty} w (z + t) \,dz \bigg)^{\frac{1}{q}} \\
	& \hspace{-9cm} = \sup_{y \in (0,\infty)} \bigg( \int_t^{y + t} v(x)^{1-p'} \, \bigg( \int_{x}^{y + t} \bigg( \sup_{s \le \tau}u(\tau)\bigg) a(s)\,ds \bigg)^{p'} \, dx \bigg)^{\frac{1}{p'}} \, \bigg( \int_{y + t}^{\infty} w (z) \,dz \bigg)^{\frac{1}{q}} \\
	& \hspace{-9cm} = \sup_{y \in (t,\infty)} \bigg( \int_t^{y} v(x)^{1-p'} \, \bigg( \int_{x}^{y} \bigg( \sup_{s \le \tau}u(\tau)\bigg) a(s)\,ds \bigg)^{p'} \, dx \bigg)^{\frac{1}{p'}} \, \bigg( \int_{y}^{\infty} w (z) \,dz \bigg)^{\frac{1}{q}},
	\end{align*}
	
	\begin{align*}
	\sup_{y \in (0,\infty)} \bigg( \int_0^y v_t(x)^{1-p'}  \, dx \bigg)^{\frac{1}{p'}} \, \bigg( \int_y^{\infty} \, \bigg( \int_y^z \bigg( \sup_{s \le \tau}u_t(\tau)\bigg) a_t(s)\,ds \bigg)^{q} w_t(z) \,dz \bigg)^{\frac{1}{q}} & \\
	& \hspace{-10cm} = \sup_{y \in (0,\infty)} \bigg( \int_0^y v(x + t)^{1-p'}  \, dx \bigg)^{\frac{1}{p'}} \, \bigg( \int_y^{\infty} \, \bigg( \int_y^z \bigg( \sup_{s \le \tau}u(\tau + t)\bigg) a(s + t)\,ds \bigg)^{q} w(z + t) \,dz \bigg)^{\frac{1}{q}} \\
	& \hspace{-10cm} = \sup_{y \in (0,\infty)} \bigg( \int_t^{y + t} v(x)^{1-p'}  \, dx \bigg)^{\frac{1}{p'}} \, \bigg( \int_y^{\infty} \, \bigg( \int_{y + t}^{z + t} \bigg( \sup_{s \le \tau}u(\tau)\bigg) a(s)\,ds \bigg)^{q} w(z + t) \,dz \bigg)^{\frac{1}{q}} \\
	& \hspace{-10cm} = \sup_{y \in (0,\infty)} \bigg( \int_t^{y + t} v(x)^{1-p'}  \, dx \bigg)^{\frac{1}{p'}} \, \bigg( \int_{y + t}^{\infty} \, \bigg( \int_{y + t}^{z} \bigg( \sup_{s \le \tau}u(\tau)\bigg) a(s)\,ds \bigg)^{q} w(z) \,dz \bigg)^{\frac{1}{q}} \\
	& \hspace{-10cm} = \sup_{y \in (t,\infty)} \bigg( \int_t^{y} v(x)^{1-p'}  \, dx \bigg)^{\frac{1}{p'}} \, \bigg( \int_{y}^{\infty} \, \bigg( \int_{y}^{z} \bigg( \sup_{s \le \tau}u(\tau)\bigg) a(s)\,ds \bigg)^{q} w(z) \,dz \bigg)^{\frac{1}{q}},
	\end{align*}
	
	\begin{align*}
	\sup_{x \in (0,\infty)} \bigg( \int_{[x,\infty)} \, d \, \bigg( - \sup_{y \le \tau} u_t(\tau)^{p'} \bigg( \int_0^{\tau} v_t(s)^{1-p'}\,ds \bigg) \bigg) \bigg)^{\frac{1}{p'}} \, \bigg( \int_0^x A_t(z)^q w_t(z) \,dz \bigg)^{\frac{1}{q}} & \\
	& \hspace{-10.5cm} = \sup_{x \in (0,\infty)} \bigg( \int_{[x,\infty)} \, d \, \bigg( - \sup_{y \le \tau} u(\tau + t)^{p'} \bigg( \int_0^{\tau} v(s + t)^{1-p'}\,ds \bigg) \bigg) \bigg)^{\frac{1}{p'}} \, \bigg( \int_0^x \bigg( \int_{t}^{z+t} a \bigg)^q w(z + t) \,dz \bigg)^{\frac{1}{q}} \\
	& \hspace{-10.5cm} = \sup_{x \in (0,\infty)} \bigg( \int_{[x,\infty)} \, d \, \bigg( - \sup_{y \le \tau} u(\tau + t)^{p'} \bigg( \int_t^{\tau + t} v(s)^{1-p'}\,ds \bigg) \bigg) \bigg)^{\frac{1}{p'}} \, \bigg( \int_0^x \bigg( \int_{t}^{z+t} a \bigg)^q w(z + t) \,dz \bigg)^{\frac{1}{q}} \\
	& \hspace{-10.5cm} = \sup_{x \in (0,\infty)} \bigg( \int_{[x,\infty)} \, d \, \bigg( - \sup_{y + t \le \tau} u(\tau)^{p'} \bigg( \int_t^{\tau} v(s)^{1-p'}\,ds \bigg) \bigg) \bigg)^{\frac{1}{p'}} \, \bigg( \int_0^x \bigg( \int_{t}^{z+t} a \bigg)^q w(z + t) \,dz \bigg)^{\frac{1}{q}} \\
	& \hspace{-10.5cm} = \sup_{x \in (0,\infty)} \bigg( \int_{[x + t,\infty)} \, d \, \bigg( - \sup_{y \le \tau} u(\tau)^{p'} \bigg( \int_t^{\tau} v(s)^{1-p'}\,ds \bigg) \bigg) \bigg)^{\frac{1}{p'}} \, \bigg( \int_t^{x + t} \bigg( \int_{t}^{z} a \bigg)^q w(z) \,dz \bigg)^{\frac{1}{q}} \\
	& \hspace{-10.5cm} = \sup_{x \in (t,\infty)} \bigg( \int_{[x,\infty)} \, d \, \bigg( - \sup_{y \le \tau} u(\tau)^{p'} \bigg( \int_t^{\tau} v(s)^{1-p'}\,ds \bigg) \bigg) \bigg)^{\frac{1}{p'}} \, \bigg( \int_t^{x} \bigg( \int_{t}^{z} a \bigg)^q w(z) \,dz \bigg)^{\frac{1}{q}},
	\end{align*}
	
	\begin{align*}
	\sup_{x \in (0,\infty)} \bigg( \int_{(0,x]} \,  A_t(y)^{p'} \, d \, \bigg( - \sup_{y \le \tau} u_t(\tau)^{p'} \bigg( \int_0^{\tau} v_t(s)^{1-p'}\,ds \bigg) \bigg) \bigg)^{\frac{1}{p'}} \, \bigg( \int_x^{\infty} w_t(z) \,dz \bigg)^{\frac{1}{q}} & \\
	& \hspace{-11cm} = \sup_{x \in (0,\infty)} \bigg( \int_{(0,x]} \,  \bigg( \int_t^{y + t} a \bigg)^{p'} \, d \, \bigg( - \sup_{y \le \tau} u(\tau + t)^{p'} \bigg( \int_0^{\tau} v(s + t)^{1-p'}\,ds \bigg) \bigg) \bigg)^{\frac{1}{p'}} \, \bigg( \int_x^{\infty} w(z + t) \,dz \bigg)^{\frac{1}{q}} \\
	& \hspace{-11cm} = \sup_{x \in (0,\infty)} \bigg( \int_{(0,x]} \,  \bigg( \int_t^{y + t} a \bigg)^{p'} \, d \, \bigg( - \sup_{y \le \tau} u(\tau + t)^{p'} \bigg( \int_t^{\tau + t} v(s)^{1-p'}\,ds \bigg) \bigg) \bigg)^{\frac{1}{p'}} \, \bigg( \int_x^{\infty} w(z + t) \,dz \bigg)^{\frac{1}{q}} \\
	& \hspace{-11cm} = \sup_{x \in (0,\infty)} \bigg( \int_{(0,x]} \,  \bigg( \int_t^{y + t} a \bigg)^{p'} \, d \, \bigg( - \sup_{y + t \le \tau} u(\tau)^{p'} \bigg( \int_t^{\tau} v(s)^{1-p'}\,ds \bigg) \bigg) \bigg)^{\frac{1}{p'}} \, \bigg( \int_x^{\infty} w(z + t) \,dz \bigg)^{\frac{1}{q}} \\
	& \hspace{-11cm} = \sup_{x \in (0,\infty)} \bigg( \int_{(t,x + t]} \,  \bigg( \int_t^{y} a \bigg)^{p'} \, d \, \bigg( - \sup_{y \le \tau} u(\tau)^{p'} \bigg( \int_t^{\tau} v(s)^{1-p'}\,ds \bigg) \bigg) \bigg)^{\frac{1}{p'}} \, \bigg( \int_{x + t}^{\infty} w(z) \,dz \bigg)^{\frac{1}{q}} \\
	& \hspace{-11cm} = \sup_{x \in (t,\infty)} \bigg( \int_{(t,x]} \,  \bigg( \int_t^{y} a \bigg)^{p'} \, d \, \bigg( - \sup_{y \le \tau} u(\tau)^{p'} \bigg( \int_t^{\tau} v(s)^{1-p'}\,ds \bigg) \bigg) \bigg)^{\frac{1}{p'}} \, \bigg( \int_{x}^{\infty} w(z) \,dz \bigg)^{\frac{1}{q}},
	\end{align*}
	
	\begin{align*}
	\bigg( \int_0^{\infty} A_t(z)^q w_t(z) \,dz \bigg)^{\frac{1}{q}} \, \lim_{x \rightarrow \infty} \bigg(\sup_{x \le \tau} u_t(\tau) \bigg( \int_0^{\tau} v_t(s)^{1-p'}\,ds \bigg)^{\frac{1}{p'}}\bigg) & \\
	& \hspace{-8cm} = \bigg( \int_0^{\infty} \bigg( \int_t^{z + t} a \bigg)^q w(z + t) \,dz \bigg)^{\frac{1}{q}} \, \lim_{x \rightarrow \infty} \bigg(\sup_{x \le \tau} u(\tau + t) \bigg( \int_0^{\tau} v(s + t)^{1-p'}\,ds \bigg)^{\frac{1}{p'}}\bigg) \\
	& \hspace{-8cm} = \bigg( \int_t^{\infty} \bigg( \int_t^{z} a \bigg)^q w(z) \,dz \bigg)^{\frac{1}{q}} \, \lim_{x \rightarrow \infty} \bigg(\sup_{x \le \tau} u(\tau + t) \bigg( \int_t^{\tau + t} v(s)^{1-p'}\,ds \bigg)^{\frac{1}{p'}}\bigg) \\
	& \hspace{-8cm} = \bigg( \int_t^{\infty} \bigg( \int_t^{z} a \bigg)^q w(z) \,dz \bigg)^{\frac{1}{q}} \, \lim_{x \rightarrow \infty} \bigg(\sup_{x + t \le \tau} u(\tau) \bigg( \int_t^{\tau} v(s)^{1-p'}\,ds \bigg)^{\frac{1}{p'}}\bigg) \\
	& \hspace{-8cm} = \bigg( \int_t^{\infty} \bigg( \int_t^{z} a \bigg)^q w(z) \,dz \bigg)^{\frac{1}{q}} \, \lim_{x \rightarrow \infty} \bigg(\sup_{x \le \tau} u(\tau) \bigg( \int_t^{\tau} v(s)^{1-p'}\,ds \bigg)^{\frac{1}{p'}}\bigg),
	\end{align*}
	we arrive at
	\begin{align*}
	\sup_{f \in \mp^+ (t,\infty)} \frac{\bigg( \int_t^{\infty} \bigg( \int_t^x \bigg( \sup_{s \le \tau}u(\tau) \bigg(\int_t^{\tau} f(y)\,dy \bigg) \bigg) a(s)\,ds \bigg)^q w(x)\,dx \bigg)^{\frac{1}{q}}}{\bigg( \int_t^{\infty} f(s)^pv(s)\,ds\bigg)^{\frac{1}{p}}} & \\
	& \hspace{-9cm} \approx \, \sup_{y \in (t,\infty)} \bigg( \int_t^{y} v(x)^{1-p'} \, \bigg( \int_{x}^{y} \bigg( \sup_{s \le \tau}u(\tau)\bigg) a(s)\,ds \bigg)^{p'} \, dx \bigg)^{\frac{1}{p'}} \, \bigg( \int_{y}^{\infty} w (z) \,dz \bigg)^{\frac{1}{q}} \\
	& \hspace{-8.5cm} + \sup_{y \in (t,\infty)} \bigg( \int_t^{y} v(x)^{1-p'}  \, dx \bigg)^{\frac{1}{p'}} \, \bigg( \int_{y}^{\infty} \, \bigg( \int_{y}^{z} \bigg( \sup_{s \le \tau}u(\tau)\bigg) a(s)\,ds \bigg)^{q} w(z) \,dz \bigg)^{\frac{1}{q}} \\
	& \hspace{-8.5cm} + \sup_{x \in (t,\infty)} \bigg( \int_{[x,\infty)} \, d \, \bigg( - \sup_{y \le \tau} u(\tau)^{p'} \bigg( \int_t^{\tau} v(s)^{1-p'}\,ds \bigg) \bigg) \bigg)^{\frac{1}{p'}} \, \bigg( \int_t^{x} \bigg( \int_{t}^{z} a \bigg)^q w(z) \,dz \bigg)^{\frac{1}{q}} \\
	& \hspace{-8.5cm} + \sup_{x \in (t,\infty)} \bigg( \int_{(t,x]} \,  \bigg( \int_t^{y} a \bigg)^{p'} \, d \, \bigg( - \sup_{y \le \tau} u(\tau)^{p'} \bigg( \int_t^{\tau} v(s)^{1-p'}\,ds \bigg) \bigg) \bigg)^{\frac{1}{p'}} \, \bigg( \int_{x}^{\infty} w(z) \,dz \bigg)^{\frac{1}{q}} \\
	& \hspace{-8.5cm} + \bigg( \int_t^{\infty} \bigg( \int_t^{z} a \bigg)^q w(z) \,dz \bigg)^{\frac{1}{q}} \, \lim_{x \rightarrow \infty} \bigg(\sup_{x \le \tau} u(\tau) \bigg( \int_t^{\tau} v(s)^{1-p'}\,ds \bigg)^{\frac{1}{p'}}\bigg).
	\end{align*}
	
	Let $q < p$. By \cite[Theorem 3.1]{musbil_2}, we have for any $t \in (0,\infty)$ that
	\begin{align*}
	\sup_{f \in \mp^+ (0,\infty)} \frac{\bigg(\int_{0}^{\infty}\bigg(\int_0^x \bigg( \sup_{s \le \tau}u_t(\tau) \bigg(\int_0^{\tau} f(y)\,dy\bigg) \bigg) a_t(s) \,ds \bigg)^q\,w_t(x) \, dx \bigg)^\frac{1}{q}}{\bigg(\int_0^{\infty} f(s)^p v_t(s)\,ds\bigg)^\frac{1}{p}} & \\
	& \hspace{-9.5cm} \approx \, \bigg( \int_0^{\infty}  \bigg( \int_0^y v_t(x)^{1-p'} \,dx \bigg)^{\frac{p(q-1)}{p-q}} v_t(y)^{1-p'} \, \bigg( \int_y^{\infty} \bigg( \int_y^z \bigg( \sup_{s \le \tau} u_t(\tau)\bigg) a_t(s)\,ds \bigg)^{q} w_t(z)\, dz \bigg)^{\frac{p}{p-q}} \, dy \bigg)^{\frac{p-q}{pq}}  \\
	& \hspace{-9cm} + \bigg( \int_0^{\infty} \bigg( \int_0^y v_t(x)^{1-p'} \bigg( \int_x^y \bigg( \sup_{s \le \tau} u_t(\tau) \bigg) a_t(s)\,ds \bigg)^{p'} \, dx \bigg)^{\frac{q(p-1)}{p-q}} \bigg( \int_y^{\infty} w_t(z) \,dz \bigg)^{\frac{q}{p-q}} w_t(y)\,dy \bigg)^{\frac{p-q}{pq}} \\
	& \hspace{-9cm} + \, \bigg( \int_0^{\infty} \bigg( \int_{[x,\infty)} \, d \, \bigg( - \bigg(\sup_{y \le \tau} u_t(\tau)^{p'} \, \bigg( \int_0^{\tau} v_t(s)^{1-p'}\,ds \bigg) \bigg) \bigg) \bigg)^{\frac{q(p-1)}{p-q}} 
	\, \bigg( \int_0^x A_t(z)^q w_t(z) \,dz \bigg)^{\frac{q}{p-q}} A_t(x)^q w_t(x) \,dx\bigg)^{\frac{p-q}{pq}}  \\
	& \hspace{-9cm} + \bigg( \int_0^{\infty} \bigg( \int_{(0,x]} \,  A_t(y)^{p'} \, d \, \bigg( - \bigg(\sup_{y \le \tau} u_t(\tau)^{p'} \, \bigg( \int_0^{\tau} v_t(s)^{1-p'}\,ds \bigg) \bigg) \bigg)  \bigg)^{\frac{q(p-1)}{p-q}} \, \bigg( \int_x^{\infty} w_t(z) \,dz \bigg)^{\frac{q}{p-q}} w_t(x)\,dx \bigg)^{\frac{p-q}{pq}}  \\
	& \hspace{-9cm} + \bigg( \int_0^{\infty} A_t(z)^q w_t(z) \,dz \bigg)^{\frac{1}{q}} \, \lim_{x \rightarrow \infty} \bigg(\sup_{x \le \tau} u_t(\tau) \bigg( \int_0^{\tau} v_t(s)^{1-p'}\,ds \bigg)^{\frac{1}{p'}}\bigg).
	\end{align*}
	
	Since
	\begin{align*}
	\bigg( \int_0^{\infty}  \bigg( \int_0^y v_t(x)^{1-p'} \,dx \bigg)^{\frac{p(q-1)}{p-q}} v_t(y)^{1-p'} \, \bigg( \int_y^{\infty} \bigg( \int_y^z \bigg( \sup_{s \le \tau} u_t(\tau)\bigg) a_t(s)\,ds \bigg)^{q} w_t(z)\, dz \bigg)^{\frac{p}{p-q}} \, dy \bigg)^{\frac{p-q}{pq}} & \\
	& \hspace{-13cm} = \bigg( \int_0^{\infty}  \bigg( \int_0^{y} v(x + t)^{1-p'} \,dx \bigg)^{\frac{p(q-1)}{p-q}} v(y + t)^{1-p'} \, \bigg( \int_y^{\infty} \bigg( \int_y^z \bigg( \sup_{s \le \tau} u(\tau + t)\bigg) a(s + t)\,ds \bigg)^{q} w(z + t)\, dz \bigg)^{\frac{p}{p-q}} \, dy \bigg)^{\frac{p-q}{pq}} \\
	& \hspace{-13cm} = \bigg( \int_0^{\infty}  \bigg( \int_t^{y + t} v(x)^{1-p'} \,dx \bigg)^{\frac{p(q-1)}{p-q}} v(y + t)^{1-p'} \, \bigg( \int_y^{\infty} \bigg( \int_y^z \bigg( \sup_{s + t \le \tau} u(\tau)\bigg) a(s + t)\,ds \bigg)^{q} w(z + t)\, dz \bigg)^{\frac{p}{p-q}} \, dy \bigg)^{\frac{p-q}{pq}} \\
	& \hspace{-13cm} = \bigg( \int_0^{\infty}  \bigg( \int_t^{y + t} v(x)^{1-p'} \,dx \bigg)^{\frac{p(q-1)}{p-q}} v(y + t)^{1-p'} \, \bigg( \int_y^{\infty} \bigg( \int_{y + t}^{z + t} \bigg( \sup_{s \le \tau} u(\tau)\bigg) a(s)\,ds \bigg)^{q} w(z + t)\, dz \bigg)^{\frac{p}{p-q}} \, dy \bigg)^{\frac{p-q}{pq}} \\
	& \hspace{-13cm} = \bigg( \int_0^{\infty}  \bigg( \int_t^{y + t} v(x)^{1-p'} \,dx \bigg)^{\frac{p(q-1)}{p-q}} v(y + t)^{1-p'} \, \bigg( \int_{y + t}^{\infty} \bigg( \int_{y + t}^{z} \bigg( \sup_{s \le \tau} u(\tau)\bigg) a(s)\,ds \bigg)^{q} w(z)\, dz \bigg)^{\frac{p}{p-q}} \, dy \bigg)^{\frac{p-q}{pq}} \\
	& \hspace{-13cm} = \bigg( \int_t^{\infty}  \bigg( \int_t^y v(x)^{1-p'} \,dx \bigg)^{\frac{p(q-1)}{p-q}} v(y)^{1-p'} \, \bigg( \int_{y}^{\infty} \bigg( \int_{y}^{z} \bigg( \sup_{s \le \tau} u(\tau)\bigg) a(s)\,ds \bigg)^{q} w(z)\, dz \bigg)^{\frac{p}{p-q}} \, dy \bigg)^{\frac{p-q}{pq}},
	\end{align*}
	
	\begin{align*}
	\bigg( \int_0^{\infty} \bigg( \int_0^y v_t(x)^{1-p'} \bigg( \int_x^y \bigg( \sup_{s \le \tau} u_t(\tau) \bigg) a_t(s)\,ds \bigg)^{p'} \, dx \bigg)^{\frac{q(p-1)}{p-q}} \bigg( \int_y^{\infty} w_t(z) \,dz \bigg)^{\frac{q}{p-q}} w_t(y)\,dy \bigg)^{\frac{p-q}{pq}} & \\
	& \hspace{-12cm} = \bigg( \int_0^{\infty} \bigg( \int_0^y v(x + t)^{1-p'} \bigg( \int_x^y \bigg( \sup_{s \le \tau} u(\tau + t) \bigg) a(s + t)\,ds \bigg)^{p'} \, dx \bigg)^{\frac{q(p-1)}{p-q}} \bigg( \int_y^{\infty} w(z + t) \,dz \bigg)^{\frac{q}{p-q}} w(y + t)\,dy \bigg)^{\frac{p-q}{pq}}  \\
	& \hspace{-12cm} = \bigg( \int_0^{\infty} \bigg( \int_0^y v(x + t)^{1-p'} \bigg( \int_x^y \bigg( \sup_{s + t \le \tau} u(\tau) \bigg) a(s + t)\,ds \bigg)^{p'} \, dx \bigg)^{\frac{q(p-1)}{p-q}} \bigg( \int_{y + t}^{\infty} w(z) \,dz \bigg)^{\frac{q}{p-q}} w(y + t)\,dy \bigg)^{\frac{p-q}{pq}}  \\
	& \hspace{-12cm} = \bigg( \int_0^{\infty} \bigg( \int_0^y v(x + t)^{1-p'} \bigg( \int_{x + t}^{y + t} \bigg( \sup_{s \le \tau} u(\tau) \bigg) a(s)\,ds \bigg)^{p'} \, dx \bigg)^{\frac{q(p-1)}{p-q}} \bigg( \int_{y + t}^{\infty} w(z) \,dz \bigg)^{\frac{q}{p-q}} w(y + t)\,dy \bigg)^{\frac{p-q}{pq}}  \\
	& \hspace{-12cm} = \bigg( \int_0^{\infty} \bigg( \int_t^{y + t} v(x)^{1-p'} \bigg( \int_{x}^{y + t} \bigg( \sup_{s \le \tau} u(\tau) \bigg) a(s)\,ds \bigg)^{p'} \, dx \bigg)^{\frac{q(p-1)}{p-q}} \bigg( \int_{y + t}^{\infty} w(z) \,dz \bigg)^{\frac{q}{p-q}} w(y + t)\,dy \bigg)^{\frac{p-q}{pq}}  \\
	& \hspace{-12cm} = \bigg( \int_t^{\infty} \bigg( \int_t^{y} v(x)^{1-p'} \bigg( \int_{x}^{y} \bigg( \sup_{s \le \tau} u(\tau) \bigg) a(s)\,ds \bigg)^{p'} \, dx \bigg)^{\frac{q(p-1)}{p-q}} \bigg( \int_{y}^{\infty} w(z) \,dz \bigg)^{\frac{q}{p-q}} w(y)\,dy \bigg)^{\frac{p-q}{pq}},
	\end{align*}
	
	\begin{align*}
	\bigg( \int_0^{\infty} \bigg( \int_{[x,\infty)} \, d \, \bigg( - \bigg(\sup_{y \le \tau} u_t(\tau)^{p'} \, \bigg( \int_0^{\tau} v_t(s)^{1-p'}\,ds \bigg) \bigg) \bigg) \bigg)^{\frac{q(p-1)}{p-q}} 
	\, \bigg( \int_0^x A_t(z)^q w_t(z) \,dz \bigg)^{\frac{q}{p-q}} A_t(x)^q w_t(x) \,dx\bigg)^{\frac{p-q}{pq}}  & \\
	& \hspace{-14.8cm}  = \bigg( \int_0^{\infty} \bigg( \int_{[x,\infty)} \, d \, \bigg( - \bigg(\sup_{y \le \tau} u(\tau + t)^{p'} \, \bigg( \int_0^{\tau} v(s + t)^{1-p'}\,ds \bigg) \bigg) \bigg) \bigg)^{\frac{q(p-1)}{p-q}} 
	\, \bigg( \int_0^x \bigg( \int_t^{z + t} a \bigg)^q w(z + t) \,dz \bigg)^{\frac{q}{p-q}} \bigg( \int_t^{x + t} a\bigg)^q w(x + t) \,dx\bigg)^{\frac{p-q}{pq}}  \\
	& \hspace{-14.8cm}  = \bigg( \int_0^{\infty} \bigg( \int_{[x,\infty)} \, d \, \bigg( - \bigg(\sup_{y \le \tau} u(\tau + t)^{p'} \, \bigg( \int_t^{\tau + t} v(s)^{1-p'}\,ds \bigg) \bigg) \bigg) \bigg)^{\frac{q(p-1)}{p-q}} 
	\, \bigg( \int_t^{x + t} \bigg( \int_t^{z} a \bigg)^q w(z) \,dz \bigg)^{\frac{q}{p-q}} \bigg( \int_t^{x + t} a\bigg)^q w(x + t) \,dx\bigg)^{\frac{p-q}{pq}}  \\
	& \hspace{-14.8cm}  = \bigg( \int_0^{\infty} \bigg( \int_{[x,\infty)} \, d \, \bigg( - \bigg(\sup_{y + t \le \tau} u(\tau)^{p'} \, \bigg( \int_t^{\tau} v(s)^{1-p'}\,ds \bigg) \bigg) \bigg) \bigg)^{\frac{q(p-1)}{p-q}} 
	\, \bigg( \int_t^{x + t} \bigg( \int_t^{z} a \bigg)^q w(z) \,dz \bigg)^{\frac{q}{p-q}} \bigg( \int_t^{x + t} a\bigg)^q w(x + t) \,dx\bigg)^{\frac{p-q}{pq}}  \\
	& \hspace{-14.8cm}  = \bigg( \int_0^{\infty} \bigg( \int_{[x + t,\infty)} \, d \, \bigg( - \bigg(\sup_{y \le \tau} u(\tau)^{p'} \, \bigg( \int_t^{\tau} v(s)^{1-p'}\,ds \bigg) \bigg) \bigg) \bigg)^{\frac{q(p-1)}{p-q}} 
	\, \bigg( \int_t^{x + t} \bigg( \int_t^{z} a \bigg)^q w(z) \,dz \bigg)^{\frac{q}{p-q}} \bigg( \int_t^{x + t} a\bigg)^q w(x + t) \,dx\bigg)^{\frac{p-q}{pq}}  \\
	& \hspace{-14.8cm}  = \bigg( \int_t^{\infty} \bigg( \int_{[x,\infty)} \, d \, \bigg( - \bigg(\sup_{y \le \tau} u(\tau)^{p'} \, \bigg( \int_t^{\tau} v(s)^{1-p'}\,ds \bigg) \bigg) \bigg) \bigg)^{\frac{q(p-1)}{p-q}} 
	\, \bigg( \int_t^{x} \bigg( \int_t^{z} a \bigg)^q w(z) \,dz \bigg)^{\frac{q}{p-q}} \bigg( \int_t^{x} a\bigg)^q w(x) \,dx\bigg)^{\frac{p-q}{pq}},
	\end{align*}
	
	\begin{align*}
	\bigg( \int_0^{\infty} \bigg( \int_{(0,x]} \,  A_t(y)^{p'} \, d \, \bigg( - \bigg(\sup_{y \le \tau} u_t(\tau)^{p'} \, \bigg( \int_0^{\tau} v_t(s)^{1-p'}\,ds \bigg) \bigg) \bigg)  \bigg)^{\frac{q(p-1)}{p-q}} \, \bigg( \int_x^{\infty} w_t(z) \,dz \bigg)^{\frac{q}{p-q}} w_t(x)\,dx \bigg)^{\frac{p-q}{pq}} & \\
	& \hspace{-13cm} = \bigg( \int_0^{\infty} \bigg( \int_{(0,x]} \,  \bigg( \int_t^{y + t} a \bigg)^{p'} \, d \, \bigg( - \bigg(\sup_{y \le \tau} u(\tau + t)^{p'} \, \bigg( \int_0^{\tau} v(s + t)^{1-p'}\,ds \bigg) \bigg) \bigg)  \bigg)^{\frac{q(p-1)}{p-q}} \, \bigg( \int_x^{\infty} w(z + t) \,dz \bigg)^{\frac{q}{p-q}} w(x + t)\,dx \bigg)^{\frac{p-q}{pq}} \\
	& \hspace{-13cm} = \bigg( \int_0^{\infty} \bigg( \int_{(0,x]} \,  \bigg( \int_t^{y + t} a \bigg)^{p'} \, d \, \bigg( - \bigg(\sup_{y + t \le \tau} u(\tau)^{p'} \, \bigg( \int_t^{\tau} v(s)^{1-p'}\,ds \bigg) \bigg) \bigg)  \bigg)^{\frac{q(p-1)}{p-q}} \, \bigg( \int_{x + t}^{\infty} w(z) \,dz \bigg)^{\frac{q}{p-q}} w(x + t)\,dx \bigg)^{\frac{p-q}{pq}} \\
	& \hspace{-13cm} = \bigg( \int_0^{\infty} \bigg( \int_{(t,x + t]} \,  \bigg( \int_t^{y} a \bigg)^{p'} \, d \, \bigg( - \bigg(\sup_{y \le \tau} u(\tau)^{p'} \, \bigg( \int_t^{\tau} v(s)^{1-p'}\,ds \bigg) \bigg) \bigg)  \bigg)^{\frac{q(p-1)}{p-q}} \, \bigg( \int_{x + t}^{\infty} w(z) \,dz \bigg)^{\frac{q}{p-q}} w(x + t)\,dx \bigg)^{\frac{p-q}{pq}} \\
	& \hspace{-13cm} = \bigg( \int_t^{\infty} \bigg( \int_{(t,x]} \,  \bigg( \int_t^{y} a \bigg)^{p'} \, d \, \bigg( - \bigg(\sup_{y \le \tau} u(\tau)^{p'} \, \bigg( \int_t^{\tau} v(s)^{1-p'}\,ds \bigg) \bigg) \bigg)  \bigg)^{\frac{q(p-1)}{p-q}} \, \bigg( \int_{x}^{\infty} w(z) \,dz \bigg)^{\frac{q}{p-q}} w(x)\,dx \bigg)^{\frac{p-q}{pq}},
	\end{align*}
	we arrive at 
	\begin{align*}
	\sup_{f \in \mp^+ (t,\infty)} \frac{\bigg( \int_t^{\infty} \bigg( \int_t^x \bigg( \sup_{s \le \tau}u(\tau) \bigg(\int_t^{\tau} f(y)\,dy \bigg) \bigg) a(s)\,ds \bigg)^q w(x)\,dx \bigg)^{\frac{1}{q}}}{\bigg( \int_t^{\infty} f(s)^pv(s)\,ds\bigg)^{\frac{1}{p}}} & \\
	& \hspace{-9cm} \approx \, \bigg( \int_t^{\infty}  \bigg( \int_t^y v(x)^{1-p'} \,dx \bigg)^{\frac{p(q-1)}{p-q}} v(y)^{1-p'} \, \bigg( \int_{y}^{\infty} \bigg( \int_{y}^{z} \bigg( \sup_{s \le \tau} u(\tau)\bigg) a(s)\,ds \bigg)^{q} w(z)\, dz \bigg)^{\frac{p}{p-q}} \, dy \bigg)^{\frac{p-q}{pq}} \\
	& \hspace{-8.5cm} + \bigg( \int_t^{\infty} \bigg( \int_t^{y} v(x)^{1-p'} \bigg( \int_{x}^{y} \bigg( \sup_{s \le \tau} u(\tau) \bigg) a(s)\,ds \bigg)^{p'} \, dx \bigg)^{\frac{q(p-1)}{p-q}} \bigg( \int_{y}^{\infty} w(z) \,dz \bigg)^{\frac{q}{p-q}} w(y)\,dy \bigg)^{\frac{p-q}{pq}} \\
	& \hspace{-8.5cm} + \bigg( \int_t^{\infty} \bigg( \int_{[x,\infty)} \, d \, \bigg( - \bigg(\sup_{y \le \tau} u(\tau)^{p'} \, \bigg( \int_t^{\tau} v(s)^{1-p'}\,ds \bigg) \bigg) \bigg) \bigg)^{\frac{q(p-1)}{p-q}} 
	\, \bigg( \int_t^{x} \bigg( \int_t^{z} a \bigg)^q w(z) \,dz \bigg)^{\frac{q}{p-q}} \bigg( \int_t^{x} a\bigg)^q w(x) \,dx\bigg)^{\frac{p-q}{pq}} \\
	& \hspace{-8.5cm} + \bigg( \int_t^{\infty} \bigg( \int_{(t,x]} \,  \bigg( \int_t^{y} a \bigg)^{p'} \, d \, \bigg( - \bigg(\sup_{y \le \tau} u(\tau)^{p'} \, \bigg( \int_t^{\tau} v(s)^{1-p'}\,ds \bigg) \bigg) \bigg)  \bigg)^{\frac{q(p-1)}{p-q}} \, \bigg( \int_{x}^{\infty} w(z) \,dz \bigg)^{\frac{q}{p-q}} w(x)\,dx \bigg)^{\frac{p-q}{pq}} \\
	& \hspace{-8.5cm} + \bigg( \int_t^{\infty} \bigg( \int_t^{z} a \bigg)^q w(z) \,dz \bigg)^{\frac{1}{q}} \, \lim_{x \rightarrow \infty} \bigg(\sup_{x \le \tau} u(\tau) \bigg( \int_t^{\tau} v(s)^{1-p'}\,ds \bigg)^{\frac{1}{p'}}\bigg).
	\end{align*}
	The proof is completed.
\end{proof}	

\begin{theorem}\label{aux.thm.3} 
	Let $1 < p,\, q < \infty$. Given $t \ge 0$ assume that $u \in \W (t,\infty) \cap C (t,\infty)$ and  $a, v,\,w \in \W (t,\infty)$. Moreover, assume that $0 < \int_{x}^{\infty} v(\tau)^{1-p'}\,d\tau< \infty,\, x > t$.
	Denote by
	$$
	{\mathcal V}(x) : = \bigg( \int_{x}^{\infty} v^{1 - p'}\bigg)^{- \frac{2p'}{p' + 1}}\,v(x)^{1 - p'}, \quad {\mathcal U} (x) : = u(x)\bigg( \int_x^{\infty} v^{1 - p'}\bigg)^{\frac{2}{p' + 1}}, \qquad x \in (0,\infty).
	$$
	
	\begin{itemize}
		\item If $p\le q$, then 
		\begin{align*}
		\sup_{f \in \mp^+ (t,\infty)} \frac{\bigg( \int_t^{\infty} \bigg( \int_t^y \bigg( \sup_{s \le \tau}u(\tau)  \bigg(\int_{\tau}^{\infty} f(z)\,dz \bigg) \bigg) a(s)\,ds \bigg)^q w(y)\,dy \bigg)^{\frac{1}{q}}}{\bigg( \int_t^{\infty} f(s)^pv(s)\,ds\bigg)^{\frac{1}{p}}} & \\
		&\hspace{-9cm} \approx \, \sup_{y \in (t,\infty)} \bigg(\int_{t}^{y} {\mathcal V}(x) \bigg(\int_{x}^{y}
		\bigg(\sup_{s \le \tau} {\mathcal U} (\tau) \bigg) a(s)\,ds\bigg)^{p'}\,dx\bigg)^\frac{1}{p'}\bigg(\int_{y}^{\infty}\,w(x)\,dx\bigg)^\frac{1}{q} \\
		&\hspace{-8.5cm} + \, \sup_{y \in (t,\infty)} \bigg(\int_{t}^{y} {\mathcal V}(x)\,dx\bigg)^\frac{1}{p'} \bigg(\int_{y}^{\infty}\bigg(\int_{y}^{z}\bigg(\sup_{s \le \tau} {\mathcal U} (\tau) \bigg)a(s)\,ds\bigg)^{q}\,w(z)\,dz\bigg)^\frac{1}{q} \\
		&\hspace{-8.5cm} + \sup_{x \in (t,\infty)} \bigg( \int_{[x,\infty)} \, d \, \bigg( - \sup_{s \le \tau} {\mathcal U} (\tau)^{p'} \, \bigg( \int_t^{\tau}{\mathcal V}(y)\,dy \bigg) \bigg) \bigg)^{\frac{1}{p'}} \, \bigg( \int_t^{x} \bigg( \int_{t}^{y} a\bigg)^q \,w(y)\,dy\bigg)^{\frac{1}{q}} \\
		&\hspace{-8.5cm} + \sup_{x \in (t,\infty)} \bigg( \int_{(t,x]} \bigg(\int_{t}^{s} a\bigg)^{p'} \, d \, \bigg( - \sup_{s \le \tau} {\mathcal U} (\tau)^{p'}\,\bigg( \int_t^{\tau} {\mathcal V}(y)\,dy  \bigg) \bigg) \bigg)^{\frac{1}{p'}} \, \bigg( \int_{x}^{\infty}\,w(z)\,dz\bigg)^{\frac{1}{q}} \\
		&\hspace{-8.5cm} + \bigg( \int_t^{\infty} \bigg(\int_{t}^{z} a\bigg) ^q \,w(z)\,dz\bigg)^{\frac{1}{q}}\lim_{s\rightarrow\infty}\bigg(\sup_{s \le\tau } {\mathcal U} (\tau) \bigg(\int_t^{\tau}{\mathcal V}(y) \,dy\bigg)^{\frac{1}{p'}}\bigg) \\
		&\hspace{-8.5cm} + \bigg( \int_t^{\infty} v(s)^{1 - p'}\,ds\bigg)^{- \frac{1}{p(p' + 1)}} \bigg(\int_t^{\infty}\bigg(\int_t^{x}\bigg(\sup_{s \le\tau } {\mathcal U} (\tau) \bigg)a(s)\, ds\bigg)^q \,w(x)\,dx\bigg)^{\frac{1}{q}}.
		\end{align*}
		
		\item If $q<p$, then
		\begin{align*}
		\sup_{f \in \mp^+ (t,\infty)} \frac{\bigg( \int_t^{\infty} \bigg( \int_t^y \bigg( \sup_{s \le \tau}u(\tau)  \bigg(\int_{\tau}^{\infty} f(z)\,dz \bigg) \bigg) a(s)\,ds \bigg)^q w(y)\,dy \bigg)^{\frac{1}{q}}}{\bigg( \int_t^{\infty} f(s)^pv(s)\,ds\bigg)^{\frac{1}{p}}} & \\
		&\hspace{-9cm} \approx \, \bigg(\int_{t}^{\infty}\bigg(\int_{t}^{y} {\mathcal V}(x)\,dx\bigg)^\frac{p(q-1)}{p-q} {\mathcal V}(y) \bigg(\int_{y}^{\infty}\bigg(\int_{y}^{z}\bigg(\sup_{s \le \tau} {\mathcal U}(\tau) \bigg)a(s)\,ds\bigg)^{q}\,w(z)\,dz\bigg)^\frac{p}{p-q}\,dy\bigg)^\frac{p-q}{pq} \\
		&\hspace{-8.5cm} + \, \bigg(\int_{t}^{\infty}\bigg(\int_{t}^{y} {\mathcal V} (x) \bigg(\int_{x}^{y}\bigg(\sup_{s \le \tau} {\mathcal U} (\tau)\bigg)a(s)\,ds\bigg)^{p'}\,dx\bigg)^\frac{q(p-1)}{p-q}\bigg(\int_{y}^{\infty}\,w(z)\,dz\bigg)^\frac{q}{p-q}\,w(y)\,dy\bigg)^\frac{p-q}{pq} \\
		&\hspace{-8.5cm} + \bigg(\int_{t}^{\infty} \bigg( \int_{[x,\infty)} \, d \, \bigg( -\bigg( \sup_{s \le \tau} {\mathcal U}(\tau)^{p'} \bigg( \int_t^{\tau} {\mathcal V} (y) \,dy \bigg) \bigg) \bigg)\bigg)^\frac{q(p-1)}{p-q}\, \bigg( \int_t^{x} \bigg( \int_{t}^{z} a \bigg)^q \,w(z)\,dz\bigg)^{\frac{q}{p-q}} \bigg( \int_{t}^{x} a \bigg)^q \,w(x)\,dx\bigg)^\frac{p-q}{pq} \\
		&\hspace{-8.5cm} + \bigg(\int_{t}^{\infty} \bigg( \int_{(t,x]} \bigg( \int_{t}^{s} a \bigg)^{p'}\, d \, \bigg( -\bigg( \sup_{s \le \tau} {\mathcal U}(\tau)^{p'} \bigg( \int_t^{\tau} {\mathcal V} (y) \,dy \bigg) \bigg) \bigg)\bigg)^\frac{q(p-1)}{p-q}\, \bigg( \int_{x}^{\infty} w(z)\,dz\bigg)^{\frac{q}{p-q}} w(x)\,dx\bigg)^\frac{p-q}{pq} \\
		&\hspace{-8.5cm} + \bigg( \int_t^{\infty} \bigg(\int_{t}^{z} a\bigg) ^q \,w(z)\,dz\bigg)^{\frac{1}{q}}\lim_{s\rightarrow\infty}\bigg(\sup_{s \le\tau } {\mathcal U} (\tau) \bigg(\int_t^{\tau}{\mathcal V}(y) \,dy\bigg)^{\frac{1}{p'}}\bigg) \\
		&\hspace{-8.5cm} + \bigg( \int_t^{\infty} v(s)^{1 - p'}\,ds\bigg)^{- \frac{1}{p(p' + 1)}} \bigg(\int_t^{\infty}\bigg(\int_t^{x}\bigg(\sup_{s \le\tau } {\mathcal U} (\tau) \bigg)a(s)\, ds\bigg)^q \,w(x)\,dx\bigg)^{\frac{1}{q}}.
		\end{align*}
	\end{itemize}
	
\end{theorem}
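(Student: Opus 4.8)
The plan closely parallels the proof of Theorem \ref{aux.thm.2}. The first step is to reduce the general case $t\ge 0$ to the case $t=0$: for $t>0$, performing the substitutions $x\mapsto x+t$, $y\mapsto y+t$, $z\mapsto z+t$, $s\mapsto s+t$, $\tau\mapsto\tau+t$ in every integral, and reindexing each inner supremum $\sup_{s\le\tau}$ accordingly (just as in the displayed computation in the proof of Theorem \ref{aux.thm.2}), shows that the inequality over $(t,\infty)$ with data $u,a,v,w$ is equivalent to the same inequality over $(0,\infty)$ with the translated weights $u_t(x):=u(x+t)$, $a_t(x):=a(x+t)$, $v_t(x):=v(x+t)$, $w_t(x):=w(x+t)$. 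Since $\int_x^{\infty}v_t^{1-p'}=\int_{x+t}^{\infty}v^{1-p'}$, the auxiliary weights transform as ${\mathcal V}_t(x)={\mathcal V}(x+t)$ and ${\mathcal U}_t(x)={\mathcal U}(x+t)$, so it suffices to establish the equivalence for $t=0$ and then translate each term back.

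The core of the argument is the case $t=0$, where the \emph{tail} supremum operator $f\mapsto\sup_{s\le\tau}u(\tau)\int_{\tau}^{\infty}f$ must be reduced to the \emph{head} supremum operator governed by Theorem \ref{aux.thm.2}. Write $V(x):=\int_x^{\infty}v^{1-p'}$. The key substitution is to pass from $f$ to the modified function $g:=f\,V^{-2/(p'+1)}$, accompanied by the modified supremum weight ${\mathcal U}=u\,V^{2/(p'+1)}$. Two ingredients drive the reduction. First, H\"older's inequality applied to $\int_{\tau}^{\infty}f=\int_{\tau}^{\infty}\big(fV^{-2/(p'+1)}\big)V^{2/(p'+1)}$, combined with the monotonicity of $V$, converts the tail average into a quantity comparable with a head average of $g$ against ${\mathcal V}$; at the same time $\int_0^{\infty}f^{p}v$ becomes comparable with $\int_0^{\infty}g^{p}\tilde v$, where $\tilde v$ is the weight determined by $\tilde v^{1-p'}={\mathcal V}$. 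Second, the ``gluing'' Lemma \ref{gluing.lem.0}, applied with the non-decreasing weight $V^{-1}$, together with the integration-by-parts Theorem \ref{thm.IBP.0} (which is responsible for the Lebesgue--Stieltjes terms of the form $d\big(-\sup_{y\le\tau}{\mathcal U}(\tau)^{p'}\int_0^{\tau}{\mathcal V}\big)$), splits the resulting supremum into the several terms appearing in the characterisation. The exponent $2/(p'+1)$ is precisely the one balancing these estimates; its ``defect'' in H\"older's inequality produces the extra summand $\big(\int_0^{\infty}v^{1-p'}\big)^{-1/(p(p'+1))}\big(\int_0^{\infty}(\int_0^{x}(\sup_{s\le\tau}{\mathcal U}(\tau))a(s)\,ds)^{q}w(x)\,dx\big)^{1/q}$.

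Having reduced the problem to a head supremum operator, I would apply Theorem \ref{aux.thm.2} at $t=0$ with $u$ replaced by ${\mathcal U}$, $v$ replaced by $\tilde v$ (so that $\int_0^{x}\tilde v^{1-p'}=\int_0^{x}{\mathcal V}$), and $a,w,p,q$ unchanged; one first checks that ${\mathcal U},{\mathcal V},a,w$ satisfy the non-degeneracy hypotheses required there, which follows from those imposed in Theorem \ref{aux.thm.3}. The resulting characterisation gives the five ``main'' terms of Theorem \ref{aux.thm.3}, simultaneously in the ranges $p\le q$ and $q<p$; the sixth (limit) term carries over directly, and the seventh is the boundary term identified above. (Should the $t=0$ version already be recorded in \cite{musbil_2}, one simply quotes it, exactly as the proof of Theorem \ref{aux.thm.2} quotes \cite[Theorem 3.1]{musbil_2}.) It then remains to transfer each term of the $t=0$ characterisation to general $t$ by the changes of variables $x\mapsto x+t$, etc., using ${\mathcal V}_t(x)={\mathcal V}(x+t)$, ${\mathcal U}_t(x)={\mathcal U}(x+t)$, $\int_0^{\infty}v_t^{1-p'}=\int_t^{\infty}v^{1-p'}$, and the relabelling of each supremum --- verbatim as in the computations displayed in the proof of Theorem \ref{aux.thm.2}. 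Each of the seven terms is handled by a single such substitution.

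The step I expect to be the main obstacle is the $t=0$ reduction of the tail operator to the head operator: one must verify \emph{both} inequalities in the resulting norm equivalence and, above all, isolate correctly the additional boundary term. In the range $q<p$ this requires particular care with the order of integration (Fubini/Minkowski) and with the limit $\lim_{s\to\infty}\big(\sup_{s\le\tau}{\mathcal U}(\tau)(\int_0^{\tau}{\mathcal V})^{1/p'}\big)$. Once this reduction is in place, the translation step, although it produces long chains of elementary substitutions, is entirely mechanical, the only subtlety being the consistent relabelling of the suprema.
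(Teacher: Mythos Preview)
Your proposal is correct and takes essentially the same approach as the paper: the paper also reduces to $t=0$ by the substitutions $x\mapsto x+t$, etc., quotes the $t=0$ case directly from \cite[Theorem~3.3]{musbil_2} (exactly as you anticipated in your parenthetical remark), and then mechanically translates each term back to $(t,\infty)$ using ${\mathcal V}_t(x)={\mathcal V}(x+t)$ and ${\mathcal U}_t(x)={\mathcal U}(x+t)$. Your additional sketch of a direct reduction of the tail operator to the head operator via the substitution with exponent $2/(p'+1)$ is therefore not needed here, though it correctly captures the mechanism behind \cite[Theorem~3.3]{musbil_2}.
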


\begin{proof}
	The statement was formulated in \cite[Theorem 3.3]{musbil_2} for $t = 0$. Note that, given a point $t \in (0,\infty)$, the inequality
	\begin{equation} \label{ineq1}
	\bigg( \int_t^{\infty} \bigg( \int_t^y \bigg( \sup_{s \le \tau}u(\tau)  \bigg(\int_{\tau}^{\infty} f(z)\,dz \bigg) \bigg) a(s)\,ds \bigg)^q w(y)\,dy \bigg)^{\frac{1}{q}} \le C \,\bigg( \int_t^{\infty} f(s)^pv(s)\,ds\bigg)^{\frac{1}{p}}, \qquad f \in \mp^+ (t,\infty)
	\end{equation}
	is equivalent to the inequality
	\begin{equation}\label{eq2}
	\bigg(\int_{0}^{\infty}\bigg(\int_0^{y} \bigg( \sup_{s \le \tau}u_t(\tau) \bigg(\int_{\tau}^{\infty} f(z)\,dz\bigg) \bigg) a_t(s) \,ds \bigg)^q\,w_t(y) \, dy \bigg)^\frac{1}{q} \le C\, \bigg(\int_0^{\infty} f(s)^p v_t(s)\,ds\bigg)^\frac{1}{p}, \qquad f \in \mp^+ (0,\infty).
	\end{equation}
	Indeed: Applying changes of variables, we get that 
	\begin{alignat*}{2}
	\int_{t}^{\infty}\bigg(\int_t^y \bigg( \sup_{s \le \tau}u(\tau) \bigg(\int_{\tau}^{\infty} f(z)\,dz \bigg) \bigg) a(s) \,ds\bigg)^q w(y) \, dy & \\
	& \hspace{-5cm} = \int_{t}^{\infty}\bigg(\int_t^y \bigg( \sup_{s \le \tau}u(\tau) \bigg(\int_{\tau-t}^{\infty} f(z+t)\,dz \bigg) \bigg)a(s) \,ds \bigg)^q w(y) \, dy \\
	& \hspace{-5cm} = \int_{t}^{\infty}\bigg(\int_t^y \bigg( \sup_{s-t \le \tau-t}u(\tau) \bigg( \int_{\tau-t}^{\infty} f(z+t)\,dz\bigg)\bigg) a(s) \,ds \bigg)^q w(y) \, dy \\
	& \hspace{-5cm} = \int_{t}^{\infty}\bigg(\int_t^y \bigg( \sup_{s-t \le \tau}u(\tau+t) \bigg(\int_{\tau}^{\infty} f(z+t)\,dz\bigg)\bigg) a(s) \,ds \bigg)^q w(y) \, dy \\
	& \hspace{-5cm} = \int_{t}^{\infty}\bigg(\int_0^{y-t} \bigg( \sup_{s \le \tau}u(\tau+t) \bigg( \int_{\tau}^{\infty} f(z+t)\,dz\bigg)\bigg) a(s+t) \,ds \bigg)^q w(y) \, dy \\
	& \hspace{-5cm} = \int_{0}^{\infty}\bigg(\int_0^{y} \bigg( \sup_{s \le \tau}u(\tau+t) \bigg(\int_{\tau}^{\infty} f(z+t)\,dz\bigg) \bigg) a(s+t)\,ds \bigg)^q\,w(y + t) \, dy \\
	& \hspace{-5cm} = \int_{0}^{\infty}\bigg(\int_0^{y} \bigg( \sup_{s \le \tau}u_t(\tau) \bigg(\int_{\tau}^{\infty} f(z+t)\,dz\bigg) \bigg) a_t(s)\,ds \bigg)^q\,w_t(y) \, dy
	\end{alignat*}
	and
	\begin{alignat*}{2}
	\int_t^{\infty} f(s)^p v(s)\,ds = \int_0^{\infty} f(s+t)^p v(s+t)\,ds = \int_0^{\infty} f(s+t)^p v_t(s)\,ds.
	\end{alignat*}
	Hence inequality \eqref{ineq1} can be rewritten as follows:
	\begin{equation*}
	\bigg(\int_{0}^{\infty}\bigg(\int_0^{y} \bigg( \sup_{s \le \tau}u_t(\tau) \bigg(\int_{\tau}^{\infty} f(z+t)\,dz\bigg) \bigg) a_t(s) \,ds \bigg)^q\,w_t(y) \, dy \bigg)^\frac{1}{q} \le C\, \bigg(\int_0^{\infty} f(s+t)^p v_t(s)\,ds\bigg)^\frac{1}{p}, \qquad f \in \mp^+ (t,\infty).
	\end{equation*}
	It remains note that the latter is equivalent to inequality \eqref{eq2}.
	
	Let $p\le q$. By \cite[Theorem 3.3]{musbil_2}, we have for any $t \in (0,\infty)$ that
	\begin{align*}
	\sup_{f \in \mp^+ (0,\infty)} \frac{\bigg(\int_{0}^{\infty}\bigg(\int_0^{y} \bigg( \sup_{s \le \tau}u_t(\tau) \bigg(\int_{\tau}^{\infty} f(z)\,dz\bigg) \bigg) a_t(s) \,ds \bigg)^q\,w_t(y) \, dy \bigg)^\frac{1}{q}}{\bigg(\int_0^{\infty} f(s)^p v_t(s)\,ds\bigg)^\frac{1}{p}} & \notag \\
	& \hspace{-9.5cm} \approx \sup_{y \in (0,\infty)} \bigg(\int_{0}^{y} \Psi_t(x)^{-p'} \psi_t(x)\bigg(\int_{x}^{y}\bigg(\sup_{s \le \tau}u_t(\tau)\Psi_t(\tau)^2\bigg)a_t(s)\,ds\bigg)^{p'}\,dx\bigg)^\frac{1}{p'}\bigg(\int_{y}^{\infty}\,w_t(x)\,dx\bigg)^\frac{1}{q}	\notag \\
	& \hspace{-9cm} + \sup_{y \in (0,\infty)} \bigg(\int_{0}^{y} \Psi_t(x)^{-p'} \psi_t(x)\bigg)^\frac{1}{p'} \bigg(\int_{y}^{\infty}\bigg(\int_{y}^{z}\bigg(\sup_{s \le \tau} u_t(\tau)\Psi_t(\tau)^2\bigg)a_t(s)\,ds\bigg)^{q}\,w_t(z)\,dz\bigg)^\frac{1}{q} \notag \\
	& \hspace{-9cm} + \sup_{x \in (0,\infty)} \bigg( \int_{[x,\infty)} \, d \, \bigg( - \sup_{s \le \tau} u_t(\tau)^{p'}\,\Psi_t(\tau)^{2p'} \bigg( \int_0^{\tau}\Psi_t(y)^{-p'} \psi_t(y) \,dy \bigg) \bigg) \bigg)^{\frac{1}{p'}} \, \bigg( \int_0^{x} A_t(y)^q \,w_t(y)\,dy\bigg)^{\frac{1}{q}} \notag \\
	& \hspace{-9cm} + \sup_{x \in (0,\infty)} \bigg( \int_{(0,x]} A_t(s)^{p'} \, d \, \bigg( - \sup_{s \le \tau} u_t(\tau)^{p'}\,\Psi_t(\tau)^{2p'} \bigg( \int_0^{\tau}\Psi_t(y)^{-p'} \psi_t(y) \,dy \bigg) \bigg) \bigg)^{\frac{1}{p'}} \, \bigg( \int_x^{\infty}\,w_t(z)\,dz\bigg)^{\frac{1}{q}} \notag \\
	& \hspace{-9cm} + \bigg( \int_0^{\infty} A_t(z)^q \,w_t(z)\,dz\bigg)^{\frac{1}{q}}\lim_{s\rightarrow\infty}\bigg(\sup_{s \le\tau }u_t(\tau)\Psi_t(\tau)^2\bigg(\int_0^{\tau}\Psi_t(y)^{-p'} \psi_t(y) \,dy\bigg)^{\frac{1}{p'}}\bigg) \notag \\
	& \hspace{-9cm} + \bigg( \int_0^{\infty} \psi_t(z)\,dz\bigg)^{-\frac{1}{p}} \bigg(\int_0^{\infty}\bigg(\int_0^{x}\bigg(\sup_{s \le\tau }u_t(\tau)\Psi_t(\tau)^2\bigg)a_t(s)\, ds\bigg)^q \,w_t(x)\,dx\bigg)^{\frac{1}{q}}, 	
	\end{align*}
	where
	$$
	A_t(x) : = \int_{0}^{x} a_t(z)\,dz, \quad \psi_t(x) : = \bigg( \int_x^{\infty} v_t(s)^{1 - p'}\,ds\bigg)^{- \frac{p'}{p' + 1}}\,v_t(x)^{1 - p'}, \quad \Psi_t(x) : = \bigg( \int_x^{\infty} v_t(s)^{1 - p'}\,ds\bigg)^{\frac{1}{p' + 1}}, \quad x\in (0,\infty).
	$$
	Using changes of variables, we get for any $x > 0$ that	
	$$ 
	A_t(x) = \int_{0}^{x} a(z+t)\,dz = \int_{t}^{x+t} a(z)\,dz, 
	$$
	\begin{align*}
	\psi_t(x) & =\bigg( \int_x^{\infty} v(s+t)^{1 - p'}\,ds\bigg)^{- \frac{p'}{p' + 1}}\,v(x+t)^{1 - p'} \\
	& = \bigg( \int_{x+t}^{\infty} v(s)^{1 - p'}\,ds\bigg)^{- \frac{p'}{p' + 1}}\,v(x+t)^{1 - p'},
	\end{align*}
	and 
	\begin{align*}
	\Psi_t(x) & =\bigg( \int_{x+t}^{\infty} v(s)^{1 - p'}\,ds\bigg)^{\frac{1}{p' + 1}}.
	\end{align*}
	
	Since
	\begin{align*}
	\sup_{y \ge  0} \bigg(\int_{0}^{y} \Psi_t(x)^{-p'} \psi_t(x)\bigg(\int_{x}^{y}\bigg(\sup_{s \le \tau}u_t(\tau)\Psi_t(\tau)^2\bigg)a_t(s)\,ds\bigg)^{p'}\,dx\bigg)^\frac{1}{p'}\bigg(\int_{y}^{\infty}\,w_t(x)\,dx\bigg)^\frac{1}{q} & \\
	& \hspace{-11cm} = \, \sup_{y \in (0,\infty)} \bigg(\int_{0}^{y} {\mathcal V}(x + t) \bigg(\int_{x}^{y}\bigg(\sup_{s \le\tau} {\mathcal U}(\tau+t) \bigg) a(s+t)\,ds\bigg)^{p'}\,dx\bigg)^\frac{1}{p'}\bigg(\int_{y}^{\infty}\,w(x+t)\,dx\bigg)^\frac{1}{q} \\
	&\hspace{-11cm} = \, \sup_{y \in (0,\infty)} \bigg(\int_{0}^{y} {\mathcal V}(x + t) \bigg(\int_{x}^{y}
	\bigg(\sup_{s+t \le \tau}{\mathcal U} (\tau) \bigg) a(s+t)\,ds\bigg)^{p'}\,dx\bigg)^\frac{1}{p'}\bigg(\int_{y}^{\infty}\,w(x+t)\,dx\bigg)^\frac{1}{q}\\
	&\hspace{-11cm} = \, \sup_{y \in (0,\infty)} \bigg(\int_{0}^{y} {\mathcal V}(x + t) \bigg(\int_{x+t}^{y+t}
	\bigg(\sup_{s \le \tau} {\mathcal U} (\tau) \bigg) a(s)\,ds\bigg)^{p'} \,dx\bigg)^\frac{1}{p'}\bigg(\int_{y}^{\infty}\,w(x+t)\,dx\bigg)^\frac{1}{q}\\
	&\hspace{-11cm} = \, \sup_{y \in (0,\infty)} \bigg(\int_{t}^{y+t} {\mathcal V}(x) \bigg(\int_{x}^{y+t}
	\bigg(\sup_{s \le \tau} {\mathcal U} (\tau) \bigg) a(s)\,ds\bigg)^{p'}\,dx\bigg)^\frac{1}{p'}\bigg(\int_{y+t}^{\infty}\,w(x)\,dx\bigg)^\frac{1}{q}\\
	&\hspace{-11cm} = \, \sup_{y \in (t,\infty)} \bigg(\int_{t}^{y} {\mathcal V}(x) \bigg(\int_{x}^{y}
	\bigg(\sup_{s \le \tau} {\mathcal U} (\tau) \bigg) a(s)\,ds\bigg)^{p'}\,dx\bigg)^\frac{1}{p'}\bigg(\int_{y}^{\infty}\,w(x)\,dx\bigg)^\frac{1}{q},
	\end{align*}
	
	\begin{align*}
	\sup_{y \in (0,\infty)} \bigg(\int_{0}^{y} \Psi_t(x)^{-p'} \psi_t(x)\bigg)^\frac{1}{p'} \bigg(\int_{y}^{\infty}\bigg(\int_{y}^{z}\bigg(\sup_{s \le \tau} u_t(\tau)\Psi_t(\tau)^2\bigg)a_t(s)\,ds\bigg)^{q}\,w_t(z)\,dz\bigg)^\frac{1}{q} & \\
	& \hspace{-11cm} = \, \sup_{y \in (0,\infty)} \bigg(\int_{0}^{y} {\mathcal V}(x + t)\,dx \bigg)^\frac{1}{p'} \bigg(\int_{y}^{\infty}\bigg(\int_{y}^{z}\bigg(\sup_{s \le \tau} {\mathcal U} (\tau+t) \bigg)a(s+t)\,ds\bigg)^{q}\,w(z+t)\,dz\bigg)^\frac{1}{q} \\
	& \hspace{-11cm} = \, \sup_{y \in (0,\infty)} \bigg(\int_{0}^{y} {\mathcal V}(x + t)\,dx \bigg)^\frac{1}{p'} \bigg(\int_{y}^{\infty}\bigg(\int_{y}^{z}\bigg(\sup_{s+t \le \tau} {\mathcal U} (\tau) \bigg)a(s+t)\,ds\bigg)^{q}\,w(z+t)\,dz\bigg)^\frac{1}{q} \\
	&\hspace{-11cm} = \, \sup_{y \in (0,\infty)} \bigg(\int_{0}^{y} {\mathcal V}(x + t) \,dx\bigg)^\frac{1}{p'} \bigg(\int_{y}^{\infty}\bigg(\int_{y+t}^{z+t}\bigg(\sup_{s \le \tau} {\mathcal U} (\tau) \bigg)a(s)\,ds\bigg)^{q}\,w(z+t)\,dz\bigg)^\frac{1}{q} \\
	&\hspace{-11cm} = \, \sup_{y \in (0,\infty)} \bigg(\int_{t}^{y+t} {\mathcal V}(x)\,dx \bigg)^\frac{1}{p'} \bigg(\int_{y}^{\infty}\bigg(\int_{y+t}^{z+t}\bigg(\sup_{s \le \tau} {\mathcal U} (\tau) \bigg)a(s)\,ds\bigg)^{q}\,w(z+t)\,dz\bigg)^\frac{1}{q}\\
	&\hspace{-11cm} = \, \sup_{y \in (0,\infty)} \bigg(\int_{t}^{y+t} {\mathcal V}(x)\,dx \bigg)^\frac{1}{p'} \bigg(\int_{y+t}^{\infty}\bigg(\int_{y+t}^{z}\bigg(\sup_{s \le \tau} {\mathcal U} (\tau) \bigg)a(s)\,ds\bigg)^{q}\,w(z)\,dz\bigg)^\frac{1}{q}\\
	&\hspace{-11cm} = \, \sup_{y \in (t,\infty)} \bigg(\int_{t}^{y} {\mathcal V}(x)\,dx \bigg)^\frac{1}{p'} \bigg(\int_{y}^{\infty}\bigg(\int_{y}^{z}\bigg(\sup_{s \le \tau} {\mathcal U} (\tau) \bigg)a(s)\,ds\bigg)^{q}\,w(z)\,dz\bigg)^\frac{1}{q},
	\end{align*}
	
	\begin{align*}
	\sup_{x \in (0,\infty)} \bigg( \int_{[x,\infty)} \, d \, \bigg( - \sup_{s \le \tau} u_t(\tau)^{p'}\,\Psi_t(\tau)^{2p'} \bigg( \int_0^{\tau}\Psi_t(y)^{-p'} \psi_t(y) \,dy \bigg) \bigg) \bigg)^{\frac{1}{p'}} \, \bigg( \int_0^{x} A_t(y)^q \,w_t(y)\,dy\bigg)^{\frac{1}{q}} & \\
	&\hspace{-12.5cm} =\sup_{x \in (0,\infty)} \bigg( \int_{[x,\infty)} \, d \, \bigg( - \sup_{s \le \tau} {\mathcal U} (\tau+t)^{p'} \bigg( \int_0^{\tau} {\mathcal V} (y + t) \,dy \bigg) \bigg) \bigg)^{\frac{1}{p'}} \, \bigg( \int_t^{x+t} \bigg( \int_{t}^{y+t} a\bigg)^q \,w(y+t)\,dy\bigg)^{\frac{1}{q}}\\
	&\hspace{-12.5cm} =\sup_{x \in (0,\infty)} \bigg( \int_{[x,\infty)} \, d \, \bigg( - \sup_{s \le \tau} {\mathcal U} (\tau+t)^{p'} \bigg( \int_t^{\tau+t} {\mathcal V} (y)\,dy \bigg) \bigg) \bigg)^{\frac{1}{p'}} \, \bigg( \int_t^{x+t} \bigg( \int_{t}^{y} a\bigg)^q \,w(y)\,dy\bigg)^{\frac{1}{q}}\\
	&\hspace{-12.5cm} =\sup_{x \in (0,\infty)} \bigg( \int_{[x,\infty)} \, d \, \bigg( - \sup_{s+t \le \tau} {\mathcal U} (\tau)^{p'} \bigg( \int_t^{\tau} {\mathcal V} (y) \,dy \bigg) \bigg) \bigg)^{\frac{1}{p'}} \, \bigg( \int_t^{x+t} \bigg( \int_{t}^{y} a\bigg)^q \,w(y)\,dy\bigg)^{\frac{1}{q}}\\
	&\hspace{-12.5cm} =\sup_{x \in (0,\infty)} \bigg( \int_{[x+t,\infty)} \, d \, \bigg( - \sup_{s \le \tau} {\mathcal U} (\tau)^{p'} \bigg( \int_t^{\tau} {\mathcal V} (y) \,dy \bigg) \bigg) \bigg)^{\frac{1}{p'}} \, \bigg( \int_t^{x+t} \bigg( \int_{t}^{y} a\bigg)^q \,w(y)\,dy\bigg)^{\frac{1}{q}}\\
	&\hspace{-12.5cm} = \sup_{x \in (t,\infty)} \bigg( \int_{[x,\infty)} \, d \, \bigg( - \sup_{s \le \tau} {\mathcal U} (\tau)^{p'} \bigg( \int_t^{\tau} {\mathcal V} (y)\,dy \bigg) \bigg) \bigg)^{\frac{1}{p'}} \, \bigg( \int_t^{x} \bigg( \int_{t}^{y} a\bigg)^q \,w(y)\,dy\bigg)^{\frac{1}{q}},
	\end{align*}
	
	\begin{align*}
	\sup_{x \in (0,\infty)} \bigg( \int_{(0,x]} A_t(s)^{p'} \, d \, \bigg( - \sup_{s \le \tau} u_t(\tau)\,\Psi_t(\tau)^{2p'} \bigg( \int_0^{\tau}\Psi_t(y)^{-p'} \psi_t(y) \,dy \bigg) \bigg) \bigg)^{\frac{1}{p'}} \, \bigg( \int_x^{\infty}\,w_t(z)\,dz\bigg)^{\frac{1}{q}} & \\
	&\hspace{-12.5cm} =\sup_{x \in (0,\infty)} \bigg( \int_{(0,x]} \bigg(\int_{t}^{s+t} a\bigg)^{p'} \, d \, \bigg( - \sup_{s \le \tau} {\mathcal U} (\tau+t)^{p'} \bigg( \int_0^{\tau} {\mathcal V} (y + t)\,dy  \bigg) \bigg) \bigg)^{\frac{1}{p'}} \, \bigg( \int_x^{\infty}\,w(z+t)\,dz\bigg)^{\frac{1}{q}}\\
	&\hspace{-12.5cm} =\sup_{x \in (0,\infty)} \bigg( \int_{(0,x]} \bigg(\int_{t}^{s+t} a\bigg)^{p'} \, d \, \bigg( - \sup_{s \le \tau} {\mathcal U} (\tau+t)^{p'} \bigg( \int_t^{\tau+t} {\mathcal V} (y) \,dy  \bigg) \bigg) \bigg)^{\frac{1}{p'}} \, \bigg( \int_{x+t}^{\infty}\,w(z)\,dz\bigg)^{\frac{1}{q}}\\
	&\hspace{-12.5cm} =\sup_{x \in (0,\infty)} \bigg( \int_{(0,x]} \bigg(\int_{t}^{s+t} a\bigg)^{p'} \, d \, \bigg( - \sup_{s+t \le \tau} {\mathcal U} (\tau)^{p'} \bigg( \int_t^{\tau} {\mathcal V} (y) \,dy  \bigg) \bigg) \bigg)^{\frac{1}{p'}} \, \bigg( \int_{x+t}^{\infty}\,w(z)\,dz\bigg)^{\frac{1}{q}}\\
	&\hspace{-12.5cm} =\sup_{x \in (0,\infty)} \bigg( \int_{(t,x+t]} \bigg(\int_{t}^{s} a\bigg)^{p'} \, d \, \bigg( - \sup_{s \le \tau} {\mathcal U} (\tau)^{p'} \bigg( \int_t^{\tau} {\mathcal V} (y) \,dy  \bigg) \bigg) \bigg)^{\frac{1}{p'}} \, \bigg( \int_{x+t}^{\infty}\,w(z)\,dz\bigg)^{\frac{1}{q}}\\
	&\hspace{-12.5cm} = \sup_{x \in (t,\infty)} \bigg( \int_{(t,x]} \bigg(\int_{t}^{s} a\bigg)^{p'} \, d \, \bigg( - \sup_{s \le \tau} {\mathcal U} (\tau)^{p'} \bigg( \int_t^{\tau} {\mathcal V} (y) \,dy  \bigg) \bigg) \bigg)^{\frac{1}{p'}} \, \bigg( \int_{x}^{\infty}\,w(z)\,dz\bigg)^{\frac{1}{q}},
	\end{align*}
	
	\begin{align*}
	\bigg( \int_0^{\infty} A_t(z)^q \,w_t(z)\,dz\bigg)^{\frac{1}{q}}\lim_{s\rightarrow\infty}\bigg(\sup_{s \le\tau }u_t(\tau)\Psi_t(\tau)^2\bigg(\int_0^{\tau}\Psi_t(y)^{-p'} \psi_t(y) \,dy\bigg)^{\frac{1}{p'}}\bigg) & \\
	& \hspace{-10cm} = \bigg( \int_0^{\infty} \bigg(\int_{t}^{z+t} a\bigg) ^q \,w(z+t)\,dz\bigg)^{\frac{1}{q}}\lim_{s\rightarrow\infty}\bigg(\sup_{s \le\tau } {\mathcal U} (\tau+t) \bigg(\int_0^{\tau} {\mathcal V} (y + t) \,dy\bigg)^{\frac{1}{p'}}\bigg)\\
	& \hspace{-10cm} = \bigg( \int_t^{\infty} \bigg(\int_{t}^{z} a\bigg) ^q \,w(z)\,dz\bigg)^{\frac{1}{q}}\lim_{s\rightarrow\infty}\bigg(\sup_{s \le\tau } {\mathcal U} (\tau+t) \bigg(\int_t^{\tau+t} {\mathcal V} (y) \,dy\bigg)^{\frac{1}{p'}}\bigg)\\
	& \hspace{-10cm} = \bigg( \int_t^{\infty} \bigg(\int_{t}^{z} a\bigg) ^q \,w(z)\,dz\bigg)^{\frac{1}{q}}\lim_{s\rightarrow\infty}\bigg(\sup_{s+t \le\tau } {\mathcal U} (\tau) \bigg(\int_t^{\tau} {\mathcal V} (y) \,dy\bigg)^{\frac{1}{p'}}\bigg)\\
	& \hspace{-10cm} = \bigg( \int_t^{\infty} \bigg(\int_{t}^{z} a\bigg) ^q \,w(z)\,dz\bigg)^{\frac{1}{q}}\lim_{s\rightarrow\infty}\bigg(\sup_{s \le\tau } {\mathcal U} (\tau) \bigg(\int_t^{\tau} {\mathcal V} (y) \,dy\bigg)^{\frac{1}{p'}}\bigg),
	\end{align*}
	
	\begin{align*}
	\bigg( \int_0^{\infty} \psi_t(z)\,dz\bigg)^{-\frac{1}{p}} \bigg(\int_0^{\infty}\bigg(\int_0^{x}\bigg(\sup_{s \le\tau }u_t(\tau)\Psi_t(\tau)^2\bigg)a_t(s)\, ds\bigg)^q \,w_t(x)\,dx\bigg)^{\frac{1}{q}} & \\
	&\hspace{-9cm} =	\bigg( \int_0^{\infty} {\mathcal V} (z + t)\,dz\bigg)^{-\frac{1}{p}} \bigg(\int_0^{\infty}\bigg(\int_0^{x}\bigg(\sup_{s \le\tau }{\mathcal U} (\tau+t) \bigg)a(s+t)\, ds\bigg)^q \,w(x+t)\,dx\bigg)^{\frac{1}{q}}\\
	&\hspace{-9cm} =	\bigg( \int_t^{\infty} {\mathcal V} (z) \,dz\bigg)^{-\frac{1}{p}} \bigg(\int_0^{\infty}\bigg(\int_0^{x}\bigg(\sup_{s \le\tau } {\mathcal U} (\tau+t) \bigg)a(s+t)\, ds\bigg)^q \,w(x+t)\,dx\bigg)^{\frac{1}{q}}\\
	&\hspace{-9cm} =	\bigg( \int_t^{\infty} {\mathcal V} (z) \,dz\bigg)^{-\frac{1}{p}} \bigg(\int_0^{\infty}\bigg(\int_0^{x}\bigg(\sup_{s+t \le\tau } {\mathcal U} (\tau) \bigg)a(s+t)\, ds\bigg)^q \,w(x+t)\,dx\bigg)^{\frac{1}{q}}\\
	&\hspace{-9cm} =	\bigg( \int_t^{\infty} {\mathcal V} (z) \,dz\bigg)^{-\frac{1}{p}} \bigg(\int_0^{\infty}\bigg(\int_t^{x+t}\bigg(\sup_{s \le\tau } {\mathcal U} (\tau) \bigg)a(s)\, ds\bigg)^q \,w(x+t)\,dx\bigg)^{\frac{1}{q}}\\
	&\hspace{-9cm} = \bigg( \int_t^{\infty} {\mathcal V} (z) \,dz\bigg)^{-\frac{1}{p}} \bigg(\int_t^{\infty}\bigg(\int_t^{x}\bigg(\sup_{s \le\tau } {\mathcal W} (\tau) \bigg)a(s)\, ds\bigg)^q \,w(x)\,dx\bigg)^{\frac{1}{q}},
	\end{align*}
	
	combining, we arrive at
	\begin{align*}
	\sup_{f \in \mp^+ (t,\infty)} \frac{\bigg( \int_t^{\infty} \bigg( \int_t^y \bigg( \sup_{s \le \tau}u(\tau)  \bigg(\int_{\tau}^{\infty} f(z)\,dz \bigg) \bigg) a(s)\,ds \bigg)^q w(y)\,dy \bigg)^{\frac{1}{q}}}{\bigg( \int_t^{\infty} f(s)^pv(s)\,ds\bigg)^{\frac{1}{p}}} & \\
	&\hspace{-9cm} \approx \, \sup_{y \in (t,\infty)} \bigg(\int_{t}^{y} {\mathcal V}(x) \bigg(\int_{x}^{y}
	\bigg(\sup_{s \le \tau} {\mathcal U} (\tau) \bigg) a(s)\,ds\bigg)^{p'}\,dx\bigg)^\frac{1}{p'}\bigg(\int_{y}^{\infty}\,w(x)\,dx\bigg)^\frac{1}{q} \\
	&\hspace{-8.5cm} + \, \sup_{y \in (t,\infty)} \bigg(\int_{t}^{y} {\mathcal V} (x) \,dx \bigg)^\frac{1}{p'} \bigg(\int_{y}^{\infty}\bigg(\int_{y}^{z}\bigg(\sup_{s \le \tau} {\mathcal U} (\tau) \bigg)a(s)\,ds\bigg)^{q}\,w(z)\,dz\bigg)^\frac{1}{q} \\
	&\hspace{-8.5cm} + \sup_{x \in (t,\infty)} \bigg( \int_{[x,\infty)} \, d \, \bigg( - \sup_{s \le \tau} {\mathcal U} (\tau)^{p'} \bigg( \int_t^{\tau} {\mathcal V} (y) \,dy \bigg) \bigg) \bigg)^{\frac{1}{p'}} \, \bigg( \int_t^{x} \bigg( \int_{t}^{y} a\bigg)^q \,w(y)\,dy\bigg)^{\frac{1}{q}} \\
	&\hspace{-8.5cm} + \sup_{x \in (t,\infty)} \bigg( \int_{(t,x]} \bigg(\int_{t}^{s} a\bigg)^{p'} \, d \, \bigg( - \sup_{s \le \tau} {\mathcal U} (\tau)^{p'} \bigg( \int_t^{\tau} {\mathcal V} (y) \,dy  \bigg) \bigg) \bigg)^{\frac{1}{p'}} \, \bigg( \int_{x}^{\infty}\,w(z)\,dz\bigg)^{\frac{1}{q}} \\
	&\hspace{-8.5cm} + \bigg( \int_t^{\infty} \bigg(\int_{t}^{z} a\bigg) ^q \,w(z)\,dz\bigg)^{\frac{1}{q}}\lim_{s\rightarrow\infty}\bigg(\sup_{s \le\tau } {\mathcal U} (\tau) \bigg(\int_t^{\tau} {\mathcal V} (y) \,dy\bigg)^{\frac{1}{p'}}\bigg) \\
	&\hspace{-8.5cm} + \bigg( \int_t^{\infty} {\mathcal V} (z)\,dz\bigg)^{-\frac{1}{p}} \bigg(\int_t^{\infty}\bigg(\int_t^{x}\bigg(\sup_{s \le\tau } {\mathcal U} (\tau) \bigg)a(s)\, ds\bigg)^q \,w(x)\,dx\bigg)^{\frac{1}{q}}.
	\end{align*}
	
	Let $q < p$. By \cite[Theorem 3.3]{musbil_2}, we have for any $t \in (0,\infty)$ that
	\begin{align*}
	\sup_{f \in \mp^+ (0,\infty)} \frac{\bigg(\int_{0}^{\infty}\bigg(\int_0^{y} \bigg( \sup_{s \le \tau}u_t(\tau) \bigg(\int_{\tau}^{\infty} f(z)\,dz\bigg) \bigg) a_t(s) \,ds \bigg)^q\,w_t(y) \, dy \bigg)^\frac{1}{q}}{\bigg(\int_0^{\infty} f(s)^p v_t(s)\,ds\bigg)^\frac{1}{p}} & \notag \\
	&\hspace{-9.5cm} \approx \, \bigg(\int_{0}^{\infty}\bigg(\int_{0}^{y} \Psi_t(x)^{-p'} \psi_t(x)\bigg)^\frac{p(q-1)}{p-q}\Psi_t(y)^{-p'} \psi_t(y) \bigg(\int_{y}^{\infty}\bigg(\int_{y}^{z}\bigg(\sup_{s \le \tau} u_t(\tau)\Psi_t(\tau)^2\bigg)a_t(s)\,ds\bigg)^{q}\,w_t(z)\,dz\bigg)^\frac{p}{p-q}\,dy\bigg)^\frac{p-q}{pq} \notag \\
	&\hspace{-9.3cm} + \, \bigg(\int_{0}^{\infty}\bigg(\int_{0}^{y} \Psi_t(x)^{-p'} \psi_t(x)\bigg(\int_{x}^{y}\bigg(\sup_{s \le \tau}u_t(\tau)\Psi_t(\tau)^2\bigg)a_t(s)\,ds\bigg)^{p'}\,dx\bigg)^\frac{q(p-1)}{p-q}\bigg(\int_{y}^{\infty}\,w_t(z)\,dz\bigg)^\frac{q}{p-q}\,w_t(y)\,dy\bigg)^\frac{p-q}{pq} \notag \\
	&\hspace{-9.3cm} + \,\bigg(\int_{0}^{\infty} \bigg( \int_{[x,\infty)} \, d \, \bigg( -\bigg( \sup_{s \le \tau} u_t(\tau)^{p'}\,\Psi_t(\tau)^{2p'} \bigg( \int_0^{\tau}\Psi_t(y)^{-p'} \psi_t(y) \,dy \bigg) \bigg) \bigg)\bigg)^{\frac{q(p-1)}{p-q}} \, \bigg( \int_0^{x} A_t(z)^q \,w_t(z)\,dz\bigg)^{\frac{q}{p-q}}A_t(x)^q \,w_t(x)\,dx\bigg)^\frac{p-q}{pq} \notag \\
	&\hspace{-9.3cm} + \,\bigg(\int_{0}^{\infty} \bigg( \int_{(0,x]} A_t(s)^{p'}\, d \, \bigg( -\bigg( \sup_{s \le \tau} u_t(\tau)^{p'}\,\Psi_t(\tau)^{2p'} \bigg( \int_0^{\tau}\Psi_t(y)^{-p'} \psi_t(y) \,dy \bigg) \bigg) \bigg)\bigg)^{\frac{q(p-1)}{p-q}} \, \bigg( \int_x^{\infty} w_t(z)\,dz\bigg)^{\frac{q}{p-q}} w_t(x)\,dx\bigg)^\frac{p-q}{pq} \notag \\
	&\hspace{-9.3cm} +\, \bigg( \int_0^{\infty} A_t(z)^q \,w_t(z)\,dz\bigg)^{\frac{1}{q}}\lim_{s\rightarrow\infty}\bigg(\sup_{s \le\tau }u_t(\tau)\Psi_t(\tau)^2\bigg(\int_0^{\tau}\Psi_t(y)^{-p'} \psi_t(y) \,dy\bigg)^{\frac{1}{p'}}\bigg) \notag \\
	&\hspace{-9.3cm} +\,\bigg( \int_0^{\infty} \psi_t(t) \,dt\bigg)^{-\frac{1}{p}}\bigg(\int_0^{\infty}\bigg(\int_0^{x}\bigg(\sup_{s \le\tau }u_t(\tau)\Psi_t(\tau)^2\bigg)a_t(s)\, ds\bigg)^q \,w_t(x)\,dx\bigg)^{\frac{1}{q}}. 
	\end{align*}
	
	Since
	\begin{align*}
	\bigg(\int_{0}^{\infty}\bigg(\int_{0}^{y} \Psi_t(x)^{-p'} \psi_t(x)\bigg)^\frac{p(q-1)}{p-q}\Psi_t(y)^{-p'} \psi_t(y) \bigg(\int_{y}^{\infty}\bigg(\int_{y}^{z}\bigg(\sup_{s \le \tau} u_t(\tau)\Psi_t(\tau)^2\bigg)a_t(s)\,ds\bigg)^{q}\,w_t(z)\,dz\bigg)^\frac{p}{p-q}\,dy\bigg)^\frac{p-q}{pq} & \\
	&\hspace{-15cm} = \bigg(\int_{0}^{\infty}\bigg(\int_{0}^{y} {\mathcal V}(x + t)\,dx\bigg)^\frac{p(q-1)}{p-q} {\mathcal V}(y + t) \bigg(\int_{y}^{\infty}\bigg(\int_{y}^{z}\bigg(\sup_{s \le \tau} {\mathcal U}(\tau+t) \bigg)a(s + t)\,ds\bigg)^{q}\,w(z + t)\,dz\bigg)^\frac{p}{p-q}\,dy\bigg)^\frac{p-q}{pq} \\
	&\hspace{-15cm} = \bigg(\int_{0}^{\infty}\bigg(\int_{t}^{y + t} {\mathcal V}(x)\,dx\bigg)^\frac{p(q-1)}{p-q} {\mathcal V}(y + t) \bigg(\int_{y}^{\infty}\bigg(\int_{y}^{z}\bigg(\sup_{s + t \le \tau} {\mathcal U}(\tau) \bigg)a(s + t)\,ds\bigg)^{q}\,w(z + t)\,dz\bigg)^\frac{p}{p-q}\,dy\bigg)^\frac{p-q}{pq} \\
	&\hspace{-15cm} = \bigg(\int_{0}^{\infty}\bigg(\int_{t}^{y + t} {\mathcal V}(x)\,dx\bigg)^\frac{p(q-1)}{p-q} {\mathcal V}(y + t) \bigg(\int_{y}^{\infty}\bigg(\int_{y + t}^{z + t}\bigg(\sup_{s \le \tau} {\mathcal U}(\tau) \bigg)a(s)\,ds\bigg)^{q}\,w(z + t)\,dz\bigg)^\frac{p}{p-q}\,dy\bigg)^\frac{p-q}{pq} \\
	&\hspace{-15cm} = \bigg(\int_{0}^{\infty}\bigg(\int_{t}^{y + t} {\mathcal V}(x)\,dx\bigg)^\frac{p(q-1)}{p-q} {\mathcal V}(y + t) \bigg(\int_{y + t}^{\infty}\bigg(\int_{y + t}^{z}\bigg(\sup_{s \le \tau} {\mathcal U}(\tau) \bigg)a(s)\,ds\bigg)^{q}\,w(z)\,dz\bigg)^\frac{p}{p-q}\,dy\bigg)^\frac{p-q}{pq} \\
	&\hspace{-15cm} = \bigg(\int_{t}^{\infty}\bigg(\int_{t}^{y} {\mathcal V}(x)\,dx\bigg)^\frac{p(q-1)}{p-q} {\mathcal V}(y) \bigg(\int_{y}^{\infty}\bigg(\int_{y}^{z}\bigg(\sup_{s \le \tau} {\mathcal U}(\tau) \bigg)a(s)\,ds\bigg)^{q}\,w(z)\,dz\bigg)^\frac{p}{p-q}\,dy\bigg)^\frac{p-q}{pq},
	\end{align*}
	
	\begin{align*}
	\bigg(\int_{0}^{\infty}\bigg(\int_{0}^{y} \Psi_t(x)^{-p'} \psi_t(x)\bigg(\int_{x}^{y}\bigg(\sup_{s \le \tau}u_t(\tau)\Psi_t(\tau)^2\bigg)a_t(s)\,ds\bigg)^{p'}\,dx\bigg)^\frac{q(p-1)}{p-q}\bigg(\int_{y}^{\infty}\,w_t(z)\,dz\bigg)^\frac{q}{p-q}\,w_t(y)\,dy\bigg)^\frac{p-q}{pq} & \\
	&\hspace{-14cm} = \bigg(\int_{0}^{\infty}\bigg(\int_{0}^{y} {\mathcal V} (x + t) \bigg(\int_{x}^{y}\bigg(\sup_{s \le \tau} {\mathcal U} (\tau + t)\bigg)a(s + t)\,ds\bigg)^{p'}\,dx\bigg)^\frac{q(p-1)}{p-q}\bigg(\int_{y}^{\infty}\,w(z + t)\,dz\bigg)^\frac{q}{p-q}\,w(y + t)\,dy\bigg)^\frac{p-q}{pq} \\
	&\hspace{-14cm} = \bigg(\int_{0}^{\infty}\bigg(\int_{0}^{y} {\mathcal V} (x + t) \bigg(\int_{x}^{y}\bigg(\sup_{s + t \le \tau} {\mathcal U} (\tau)\bigg)a(s + t)\,ds\bigg)^{p'}\,dx\bigg)^\frac{q(p-1)}{p-q}\bigg(\int_{y}^{\infty}\,w(z + t)\,dz\bigg)^\frac{q}{p-q}\,w(y + t)\,dy\bigg)^\frac{p-q}{pq} \\
	&\hspace{-14cm} = \bigg(\int_{0}^{\infty}\bigg(\int_{0}^{y} {\mathcal V} (x + t) \bigg(\int_{x + t}^{y + t}\bigg(\sup_{s \le \tau} {\mathcal U} (\tau)\bigg)a(s)\,ds\bigg)^{p'}\,dx\bigg)^\frac{q(p-1)}{p-q}\bigg(\int_{y + t }^{\infty}\,w(z)\,dz\bigg)^\frac{q}{p-q}\,w(y + t)\,dy\bigg)^\frac{p-q}{pq} \\
	&\hspace{-14cm} = \bigg(\int_{0}^{\infty}\bigg(\int_{t}^{y + t} {\mathcal V} (x) \bigg(\int_{x}^{y + t}\bigg(\sup_{s \le \tau} {\mathcal U} (\tau)\bigg)a(s)\,ds\bigg)^{p'}\,dx\bigg)^\frac{q(p-1)}{p-q}\bigg(\int_{y + t }^{\infty}\,w(z)\,dz\bigg)^\frac{q}{p-q}\,w(y + t)\,dy\bigg)^\frac{p-q}{pq} \\
	&\hspace{-14cm} = \bigg(\int_{t}^{\infty}\bigg(\int_{t}^{y} {\mathcal V} (x) \bigg(\int_{x}^{y}\bigg(\sup_{s \le \tau} {\mathcal U} (\tau)\bigg)a(s)\,ds\bigg)^{p'}\,dx\bigg)^\frac{q(p-1)}{p-q}\bigg(\int_{y}^{\infty}\,w(z)\,dz\bigg)^\frac{q}{p-q}\,w(y)\,dy\bigg)^\frac{p-q}{pq},
	\end{align*}
	
	\begin{align*}
	\bigg(\int_{0}^{\infty} \bigg( \int_{[x,\infty)} \, d \, \bigg( -\bigg( \sup_{s \le \tau} u_t(\tau)^{p'}\,\Psi_t(\tau)^{2p'} \bigg( \int_0^{\tau}\Psi_t(y)^{-p'} \psi_t(y) \,dy\bigg) \bigg) \bigg)\bigg)^\frac{q(p-1)}{p-q} \, \bigg( \int_0^{x} A_t(z)^q \,w_t(z)\,dz\bigg)^{\frac{q}{p-q}}A_t(x)^q \,w_t(x)\,dx\bigg)^\frac{p-q}{pq} & \\
	&\hspace{-16.8cm} = \bigg(\int_{0}^{\infty} \bigg( \int_{[x,\infty)} \, d \, \bigg( -\bigg( \sup_{s \le \tau} {\mathcal U}(\tau + t)^{p'} \bigg( \int_0^{\tau} {\mathcal V} (y + t) \,dy \bigg) \bigg) \bigg)\bigg)^\frac{q(p-1)}{p-q} \, \bigg( \int_0^{x} \bigg( \int_{t}^{z + t} a \bigg)^q \,w(z + t)\,dz\bigg)^{\frac{q}{p-q}} \bigg( \int_{t}^{x+t} a \bigg)^q \,w(x + t)\,dx\bigg)^\frac{p-q}{pq} \\
	&\hspace{-16.8cm} = \bigg(\int_{0}^{\infty} \bigg( \int_{[x,\infty)} \, d \, \bigg( -\bigg( \sup_{s \le \tau} {\mathcal U}(\tau + t)^{p'} \bigg( \int_t^{\tau + t} {\mathcal V} (y) \,dy \bigg) \bigg) \bigg)\bigg)^\frac{q(p-1)}{p-q} \, \bigg( \int_t^{x + t} \bigg( \int_{t}^{z} a \bigg)^q \,w(z)\,dz\bigg)^{\frac{q}{p-q}} \bigg( \int_{t}^{x+t} a \bigg)^q \,w(x + t)\,dx\bigg)^\frac{p-q}{pq} \\
	&\hspace{-16.8cm} = \bigg(\int_{0}^{\infty} \bigg( \int_{[x,\infty)} \, d \, \bigg( - \bigg( \sup_{s + t \le \tau} {\mathcal U}(\tau)^{p'} \bigg( \int_t^{\tau} {\mathcal V} (y) \,dy \bigg) \bigg) \bigg)\bigg)^\frac{q(p-1)}{p-q} \, \bigg( \int_t^{x + t} \bigg( \int_{t}^{z} a \bigg)^q \,w(z)\,dz\bigg)^{\frac{q}{p-q}} \bigg( \int_{t}^{x+t} a \bigg)^q \,w(x + t)\,dx\bigg)^\frac{p-q}{pq} \\
	&\hspace{-16.8cm} = \bigg(\int_{0}^{\infty} \bigg( \int_{[x + t,\infty)} \, d \, \bigg( -\bigg( \sup_{s \le \tau} {\mathcal U}(\tau)^{p'} \bigg( \int_t^{\tau} {\mathcal V} (y) \,dy \bigg) \bigg) \bigg)\bigg)^\frac{q(p-1)}{p-q} \, \bigg( \int_t^{x + t} \bigg( \int_{t}^{z} a \bigg)^q \,w(z)\,dz\bigg)^{\frac{q}{p-q}} \bigg( \int_{t}^{x+t} a \bigg)^q \,w(x + t)\,dx\bigg)^\frac{p-q}{pq} \\
	&\hspace{-16.8cm} = \bigg(\int_{t}^{\infty} \bigg( \int_{[x,\infty)} \, d \, \bigg( -\bigg( \sup_{s \le \tau} {\mathcal U}(\tau)^{p'} \bigg( \int_t^{\tau} {\mathcal V} (y) \,dy \bigg) \bigg) \bigg)\bigg)^\frac{q(p-1)}{p-q} \, \bigg( \int_t^{x} \bigg( \int_{t}^{z} a \bigg)^q \,w(z)\,dz\bigg)^{\frac{q}{p-q}} \bigg( \int_{t}^{x} a \bigg)^q \,w(x)\,dx\bigg)^\frac{p-q}{pq},
	\end{align*}
	
	\begin{align*}
	\bigg(\int_{0}^{\infty} \bigg( \int_{(0,x]} A_t(s)^{p'}\, d \, \bigg( -\bigg( \sup_{s \le \tau} u_t(\tau)^{p'}\,\Psi_t(\tau)^{2p'} \bigg( \int_0^{\tau}\Psi_t(y)^{-p'} \psi_t(y) \,dy \bigg) \bigg) \bigg)\bigg)^\frac{q(p-1)}{p-q} \, \bigg( \int_x^{\infty} w_t(z)\,dz\bigg)^{\frac{q}{p-q}} w_t(x)\,dx\bigg)^\frac{p-q}{pq} & \\
	&\hspace{-15.5cm} = \bigg(\int_{0}^{\infty} \bigg( \int_{(0,x]} \bigg( \int_{t}^{s+t} a \bigg)^{p'}\, d \, \bigg( -\bigg( \sup_{s \le \tau} {\mathcal U}(\tau + t)^{p'} \bigg( \int_0^{\tau}  {\mathcal V}(y + t) \,dy \bigg) \bigg) \bigg)\bigg)^\frac{q(p-1)}{p-q} \, \bigg( \int_x^{\infty} w(z + t)\,dz\bigg)^{\frac{q}{p-q}} w(x + t)\,dx\bigg)^\frac{p-q}{pq} \\
	&\hspace{-15.5cm} = \bigg(\int_{0}^{\infty} \bigg( \int_{(0,x]} \bigg( \int_{t}^{s+t} a \bigg)^{p'}\, d \, \bigg( -\bigg( \sup_{s + t \le \tau} {\mathcal U}(\tau)^{p'} \bigg( \int_t^{\tau} {\mathcal V} (y) \,dy \bigg) \bigg) \bigg)\bigg)^\frac{q(p-1)}{p-q} \, \bigg( \int_{x + t}^{\infty} w(z)\,dz\bigg)^{\frac{q}{p-q}} w(x + t)\,dx\bigg)^\frac{p-q}{pq} \\
	&\hspace{-15.5cm} = \bigg(\int_{0}^{\infty} \bigg( \int_{(t,x + t]} \bigg( \int_{t}^{s} a \bigg)^{p'}\, d \, \bigg( -\bigg( \sup_{s \le \tau} {\mathcal U}(\tau)^{p'} \bigg( \int_t^{\tau} {\mathcal V} (y) \,dy \bigg) \bigg) \bigg)\bigg)^\frac{q(p-1)}{p-q} \, \bigg( \int_{x + t}^{\infty} w(z)\,dz\bigg)^{\frac{q}{p-q}} w(x + t)\,dx\bigg)^\frac{p-q}{pq} \\
	&\hspace{-15.5cm} = \bigg(\int_{t}^{\infty} \bigg( \int_{(t,x]} \bigg( \int_{t}^{s} a \bigg)^{p'}\, d \, \bigg( -\bigg( \sup_{s \le \tau} {\mathcal U}(\tau)^{p'} \bigg( \int_t^{\tau} {\mathcal V} (y) \,dy \bigg) \bigg) \bigg)\bigg)^\frac{q(p-1)}{p-q} \, \bigg( \int_{x}^{\infty} w(z)\,dz\bigg)^{\frac{q}{p-q}} w(x)\,dx\bigg)^\frac{p-q}{pq},
	\end{align*}
	
	combining, we arrive at
	\begin{align*}
	\sup_{f \in \mp^+ (t,\infty)} \frac{\bigg( \int_t^{\infty} \bigg( \int_t^y \bigg( \sup_{s \le \tau}u(\tau)  \bigg(\int_{\tau}^{\infty} f(z)\,dz \bigg) \bigg) a(s)\,ds \bigg)^q w(y)\,dy \bigg)^{\frac{1}{q}}}{\bigg( \int_t^{\infty} f(s)^pv(s)\,ds\bigg)^{\frac{1}{p}}} & \\
	&\hspace{-9cm} \approx \, \bigg(\int_{t}^{\infty}\bigg(\int_{t}^{y} {\mathcal V}(x)\,dx\bigg)^\frac{p(q-1)}{p-q} {\mathcal V}(y) \bigg(\int_{y}^{\infty}\bigg(\int_{y}^{z}\bigg(\sup_{s \le \tau} {\mathcal U}(\tau) \bigg)a(s)\,ds\bigg)^{q}\,w(z)\,dz\bigg)^\frac{p}{p-q}\,dy\bigg)^\frac{p-q}{pq} \\
	&\hspace{-8.5cm} + \, \bigg(\int_{t}^{\infty}\bigg(\int_{t}^{y} {\mathcal V} (x) \bigg(\int_{x}^{y}\bigg(\sup_{s \le \tau} {\mathcal U} (\tau)\bigg)a(s)\,ds\bigg)^{p'}\,dx\bigg)^\frac{q(p-1)}{p-q}\bigg(\int_{y}^{\infty}\,w(z)\,dz\bigg)^\frac{q}{p-q}\,w(y)\,dy\bigg)^\frac{p-q}{pq} \\
	&\hspace{-8.5cm} + \bigg(\int_{t}^{\infty} \bigg( \int_{[x,\infty)} \, d \, \bigg( -\bigg( \sup_{s \le \tau} {\mathcal U}(\tau)^{p'} \bigg( \int_t^{\tau} {\mathcal V} (y) \,dy \bigg) \bigg) \bigg)\bigg)^\frac{q(p-1)}{p-q}\, \bigg( \int_t^{x} \bigg( \int_{t}^{z} a \bigg)^q \,w(z)\,dz\bigg)^{\frac{q}{p-q}} \bigg( \int_{t}^{x} a \bigg)^q \,w(x)\,dx\bigg)^\frac{p-q}{pq} \\
	&\hspace{-8.5cm} + \bigg(\int_{t}^{\infty} \bigg( \int_{(t,x]} \bigg( \int_{t}^{s} a \bigg)^{p'}\, d \, \bigg( -\bigg( \sup_{s \le \tau} {\mathcal U}(\tau)^{p'} \bigg( \int_t^{\tau} {\mathcal V} (y) \,dy \bigg) \bigg) \bigg)\bigg)^\frac{q(p-1)}{p-q}\, \bigg( \int_{x}^{\infty} w(z)\,dz\bigg)^{\frac{q}{p-q}} w(x)\,dx\bigg)^\frac{p-q}{pq} \\
	&\hspace{-8.5cm} + \bigg( \int_t^{\infty} \bigg(\int_{t}^{z} a\bigg) ^q \,w(z)\,dz\bigg)^{\frac{1}{q}}\lim_{s\rightarrow\infty}\bigg(\sup_{s \le\tau } {\mathcal U} (\tau) \bigg(\int_t^{\tau}{\mathcal V}(y) \,dy\bigg)^{\frac{1}{p'}}\bigg) \\
	&\hspace{-8.5cm} + \bigg( \int_t^{\infty} v(s)^{1 - p'}\,ds\bigg)^{- \frac{1}{p(p' + 1)}} \bigg(\int_t^{\infty}\bigg(\int_t^{x}\bigg(\sup_{s \le\tau } {\mathcal U} (\tau) \bigg)a(s)\, ds\bigg)^q \,w(x)\,dx\bigg)^{\frac{1}{q}}.
	\end{align*}
	The proof is completed.
	
\end{proof}


\section{Characterization of the restricted inequality}\label{MR}

We start this section with some historical remarks concerning restricted inequalities related to the operator $T_{u,b}$. 

Note that the inequality 
\begin{equation}\label{Tub.thm.1.eq.1}
\|T_{u,b}f \|_{q,w,(0,\infty)} \le C \| f \|_{p,v,(0,\infty)}, \qquad f \in
\mp^{+,\dn}(0,\infty)
\end{equation}
was characterized in \cite[Theorem 3.5]{gop} under condition
$$
\sup_{t \in (0,\infty)} \frac{u(t)}{B(t)} \int_0^t
\frac{b(\tau)}{u(\tau)}\,d\tau < \infty.
$$
However, the case when $0 < p \le 1 < q < \infty$ was not considered in \cite{gop}. It is also worth to mention that in the case when $1 < p < \infty$, $0 < q < p < \infty$, $q \neq 1$ \cite[Theorem 3.5]{gop} contains only discrete condition. In
\cite{gogpick2007} the new reduction theorem was obtained when $0 < p \le 1$, and this technique allowed to characterize inequality \eqref{Tub.thm.1.eq.1} when $b \equiv 1$, and in the case when $0 < q< p \le 1$, \cite{gogpick2007} contains only discrete condition. The complete characterizations of inequality  \eqref{Tub.thm.1.eq.1} for $0 < q \le \infty$, $0 < p \le \infty$ were given in \cite{GogMusISI} and \cite{musbil}. Using the results in  \cites{PS_Proc_2013,PS_Dokl_2013,PS_Dokl_2014,P_Dokl_2015}, another characterization of  \eqref{Tub.thm.1.eq.1}  was obtained  in  \cite{StepSham} and \cite{Sham}. 

Now we present characterization of inequality \eqref{main.ineq.}.

Denote the best constant in inequality \eqref{main.ineq.} by $K$, that is, 
$$
K : = \sup_{f \in \mp^+} \frac{\bigg( \int_0^{\infty} \bigg( \int_0^x \big[ T_{u,b}f^* (t)\big]^r\,dt\bigg)^{\frac{q}{r}} w(x)\,dx\bigg)^{\frac{1}{q}}}
{\bigg( \int_0^{\infty} \bigg( \int_0^x [f^* (\tau)]^p\,d\tau \bigg)^{\frac{m}{p}} v(x)\,dx \bigg)^{\frac{1}{m}}} \equiv \sup_{f \in \mp^+} \frac{\bigg( \int_0^{\infty} \bigg( \int_0^x \big[ T_{u,b}f^* (t)\big]^r\,dt\bigg)^{\frac{q}{r}} w(x)\,dx\bigg)^{\frac{1}{q}}}
{\|f\|_{\GG(p,m,v)}}.
$$

The following two reduction lemmas hold true.
\begin{lemma}\label{R1}
	Let $1 < r < q < \infty$, $0 < p < \infty$, $0 < m < \infty$ and $b \in \W(0,\infty)$ be such that the function 
	$B(t)$ satisfies  $0 < B(t) < \infty$ for every $t \in (0,\infty)$. Assume that $u \in \W(0,\infty) \cap C(0,\infty)$ and $v,\,w \in \W(0,\infty)$. Then
	\begin{align*}
	K = \sup_{g \in \mp^+} \frac{1}
	{\|g\|_{\frac{q}{q-r},w^{\frac{r}{r-q}},(0,\infty)}^{\frac{1}{r}}} \sup_{h:\, \int_0^x h \le \int_0^x \big( \int_{\tau}^{\infty} g\big)\,d\tau} \sup_{\vp \in {\mathfrak M}^+} \frac{1}{\|\vp\|_{r',h^{1-r'},(0,\infty)}}
	\sup_{f \in \mp^+} \frac{\int_0^{\infty} f^* (y) b(y) \int_y^{\infty} \vp(x) \frac{u(x)}{B(x)} \,dx \,dy}{\|f\|_{\GG(p,m,v)}}.
	\end{align*}	
\end{lemma}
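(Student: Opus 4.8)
The plan is to strip \eqref{main.ineq.} down to the asserted chain of suprema by three reduction steps -- dualizing the outer integral, transferring the supremum that defines $T_{u,b}$ via Theorem~\ref{transfermon}, and dualizing the remaining inner integral -- each step producing one of the new suprema on the right. Throughout I would assume every integral occurring is finite, the remaining cases being trivial since all identities below hold in $[0,\infty]$; I abbreviate $\widetilde g(\tau):=\int_\tau^\infty g$. Since $r<q$ gives $q/r>1$, the duality principle in weighted Lebesgue spaces recalled in Section~\ref{BM}, used with exponent $q/r$ on the function $x\mapsto\int_0^x[T_{u,b}f^*(t)]^r\,dt$ against the weight $w$, expresses $\big(\int_0^\infty(\int_0^x[T_{u,b}f^*]^r)^{q/r}w\big)^{r/q}$ as a supremum over $g\in\mp^+$ with denominator $\|g\|_{\frac{q}{q-r},\,w^{\frac{r}{r-q}},(0,\infty)}$, using $(q/r)'=q/(q-r)$ and $1-(q/r)'=r/(r-q)$. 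By the Fubini theorem the numerator $\int_0^\infty(\int_0^x[T_{u,b}f^*(t)]^r\,dt)g(x)\,dx$ equals $\int_0^\infty[T_{u,b}f^*(t)]^r\widetilde g(t)\,dt$, so raising $K$ to the power $r$ and interchanging the suprema over $f$ and $g$ yields
\[
K^r=\sup_{g\in\mp^+}\frac{1}{\|g\|_{\frac{q}{q-r},\,w^{\frac{r}{r-q}},(0,\infty)}}\ \sup_{f\in\mp^+}\frac{\int_0^\infty[T_{u,b}f^*(t)]^r\,\widetilde g(t)\,dt}{\|f\|_{\GG(p,m,v)}^r}.
\]

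Next, fix $f$ and $g$. Since $y\mapsto y^r$ is increasing, $[T_{u,b}f^*(x)]^r=\sup_{\tau\ge x}\Psi_f(\tau)$, where $\Psi_f(\tau):=\big(\tfrac{u(\tau)}{B(\tau)}\int_0^\tau f^*(s)b(s)\,ds\big)^r$; that is, $[T_{u,b}f^*]^r$ is the least non-increasing majorant of $\Psi_f$. Applying Theorem~\ref{transfermon} with $\widetilde g$ in the role of $f$ and $\Psi_f$ in the role of $w$ therefore gives
\[
\int_0^\infty[T_{u,b}f^*(t)]^r\,\widetilde g(t)\,dt=\int_0^\infty\widetilde g(t)\Big(\sup_{\tau\ge t}\Psi_f(\tau)\Big)dt=\sup_{h:\ \int_0^xh\le\int_0^x\widetilde g}\ \int_0^\infty h(x)\,\Big(\tfrac{u(x)}{B(x)}\int_0^xf^*(s)b(s)\,ds\Big)^r dx.
\]
The key point is that the pointwise supremum inside $T_{u,b}$ has been absorbed into the Hardy-type side constraint $\int_0^xh\le\int_0^x\widetilde g$ on a new variable $h$, while $[T_{u,b}f^*]^r$ has been replaced by $\big(\tfrac{u(x)}{B(x)}\int_0^xf^*b\big)^r$, which is linear in $f^*$ inside the $r$-th power. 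Pulling the supremum over $h$ out past the one over $f$, we obtain a formula for $K^r$ carrying an extra supremum over such $h$ and with $\int_0^\infty h(x)\big(\tfrac{u(x)}{B(x)}\int_0^xf^*b\big)^r dx$ replacing $\int_0^\infty[T_{u,b}f^*]^r\widetilde g$.

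Finally, since $r>1$, the same weighted-Lebesgue duality, used with exponent $r$ on the function $x\mapsto\tfrac{u(x)}{B(x)}\int_0^xf^*b$ against the weight $h$, expresses $\big(\int_0^\infty h(x)\big(\tfrac{u(x)}{B(x)}\int_0^xf^*b\big)^r dx\big)^{1/r}$ as $\sup_{\vp\in\mp^+}\|\vp\|_{r',\,h^{1-r'},(0,\infty)}^{-1}\int_0^\infty\tfrac{u(x)}{B(x)}\big(\int_0^xf^*b\big)\vp(x)\,dx$, and Fubini turns this last numerator into $\int_0^\infty f^*(y)b(y)\int_y^\infty\vp(x)\tfrac{u(x)}{B(x)}\,dx\,dy$. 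Taking $r$-th roots in the formula for $K^r$ coming from the previous step, interchanging the suprema over $f$ and $\vp$, and noting $\int_0^x\widetilde g(\tau)\,d\tau=\int_0^x(\int_\tau^\infty g)\,d\tau$, reproduces exactly the right-hand side of the Lemma. (The interchanged suprema are all of non-negative quantities and hence commute, and $r>0$ lets $r$-th powers pass through suprema, so nothing is lost there.)

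The argument is, apart from the middle step, a mechanical bookkeeping of duality exponents; the two dualizations are valid precisely because $q/r>1$ and $r>1$, which is where $1<r<q<\infty$ is used, while the continuity of $u$ and the finiteness of $B$ serve only to give $T_{u,b}$ and $\Psi_f$ meaning. I expect the main obstacle to be the middle step: one has to see that $[T_{u,b}f^*]^r$ is exactly the least non-increasing majorant of $\Psi_f$, so that Sinnamon's transfer formula applies and converts the supremum operator into the monotonicity constraint on $h$, thereby eliminating the supremum from the estimate. Once that is available, the remaining reorderings of suprema and the two Fubini computations are routine.
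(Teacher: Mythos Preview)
Your proof is correct and follows essentially the same three-step route as the paper: dualize the outer $L^{q/r}(w)$ norm to introduce $g$, apply Sinnamon's transfer formula (Theorem~\ref{transfermon}) with $\widetilde g$ and $\Psi_f$ to convert the supremum in $T_{u,b}$ into the Hardy constraint on $h$, then dualize the remaining $L^r(h)$ norm to introduce $\varphi$ and finish with Fubini. The only cosmetic difference is that you interchange the suprema incrementally at each step whereas the paper does all reorderings at once at the end; both are legitimate since all quantities are non-negative.
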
	

\begin{proof}
	By duality, on using Fubini's Theorem, we have that
	\begin{align*}
	K & = \sup_{f \in \mp^+} \frac{1}{\|f\|_{\GG(p,m,v)}} \left\{\sup_{g \in \mp^+} \frac{ \int_0^{\infty} \bigg( \int_0^x \big[ T_{u,b}f^* (t)\big]^r\,dt\bigg) g(x) \,dx}
	{\|g\|_{\frac{q}{q-r},w^{\frac{r}{r-q}},(0,\infty)}^{\frac{1}{r}}}\right\}^{\frac{1}{r}} \\
	& = \sup_{f \in \mp^+} \frac{1}{\|f\|_{\GG(p,m,v)}} \left\{\sup_{g \in \mp^+} \frac{ \int_0^{\infty} \bigg(\int_t^{\infty} g(x) \,dx \bigg) \bigg[\sup_{\tau \ge t} \bigg( \frac{u(\tau)}{B(\tau)} \int_0^{\tau} f^* (y) b(y)\,dy \bigg)^r\bigg]\,dt }
	{\|g\|_{\frac{q}{q-r},w^{\frac{r}{r-q}},(0,\infty)}^{\frac{1}{r}}}\right\}^{\frac{1}{r}}.
	\end{align*}
	
	On using Theorem \ref{transfermon}, we arrive at
	\begin{align*}
	K = \sup_{f \in \mp^+} \frac{1}{\|f\|_{\GG(p,m,v)}} \sup_{g \in \mp^+} \frac{ 1 }
	{\|g\|_{\frac{q}{q-r},w^{\frac{r}{r-q}},(0,\infty)}^{\frac{1}{r}}} \sup_{h:\, \int_0^x h \le \int_0^x \big( \int_{\tau}^{\infty} g\big)\,d\tau} \left\{\int_0^{\infty} h(x) \bigg( \frac{u(x)}{B(x)} \int_0^x f^* (y) b(y)\,dy \bigg)^r\,dx \right\}^{\frac{1}{r}}.
	\end{align*} 
	
	By duality, we obtain that
	\begin{align*}
	K = \sup_{f \in \mp^+} \frac{1}{\|f\|_{\GG(p,m,v)}} \sup_{g \in \mp^+} \frac{ 1 }
	{\|g\|_{\frac{q}{q-r},w^{\frac{r}{r-q}},(0,\infty)}^{\frac{1}{r}}} \sup_{h:\, \int_0^x h \le \int_0^x \big( \int_{\tau}^{\infty} g\big)\,d\tau} \sup_{\vp \in {\mathfrak M}^+}
	\frac{\int_0^{\infty} \vp(x) \frac{u(x)}{B(x)} \int_0^x f^* (y) b(y)\,dy \,dx}{\|\vp\|_{r',h^{1-r'},(0,\infty)}}.
	\end{align*} 
	
	By Fubini's Theorem, interchanging the suprema, we arrive at
	\begin{align*}
	K & = \sup_{f \in \mp^+} \frac{1}{\|f\|_{\GG(p,m,v)}} \sup_{g \in \mp^+} \frac{ 1 }
	{\|g\|_{\frac{q}{q-r},w^{\frac{r}{r-q}},(0,\infty)}^{\frac{1}{r}}} \sup_{h:\, \int_0^x h \le \int_0^x \big( \int_{\tau}^{\infty} g\big)\,d\tau} \sup_{\vp \in {\mathfrak M}^+}
	\frac{\int_0^{\infty} f^* (y) b(y) \int_y^{\infty} \vp(x) \frac{u(x)}{B(x)} \,dx \,dy}{\|\vp\|_{r',h^{1-r'},(0,\infty)}} \\
	& = \sup_{g \in \mp^+} \frac{ 1 }
	{\|g\|_{\frac{q}{q-r},w^{\frac{r}{r-q}},(0,\infty)}^{\frac{1}{r}}} \sup_{h:\, \int_0^x h \le \int_0^x \big( \int_{\tau}^{\infty} g\big)\,d\tau} \sup_{\vp \in {\mathfrak M}^+} \frac{1}{\|\vp\|_{r',h^{1-r'},(0,\infty)}}
	\sup_{f \in \mp^+} \frac{\int_0^{\infty} f^* (y) b(y) \int_y^{\infty} \vp(x) \frac{u(x)}{B(x)} \,dx \,dy}{\|f\|_{\GG(p,m,v)}}.
	\end{align*} 
	This completes the proof.
\end{proof}

\begin{lemma}\label{R2}
	Let $1 < r < q < \infty$, $1 < p < \infty$, $1 < m < \infty$ and $b \in \W (0,\infty) \cap \mp^+ ((0,\infty);\dn)$ be such that the function $B(t)$ satisfies  $0 < B(t) < \infty$ for every $t \in (0,\infty)$. Suppose that $u \in \W(0,\infty) \cap C(0,\infty)$, $v \in \W_{m,p}(0,\infty)$ and $w \in \W(0,\infty)$. Denote by
	\begin{equation}\label{defof_v2}
	v_2(t) : = \frac{t^{\frac{m'}{p'}}v_0(t)}{v_1(t)^{m' + 1}}, \qquad t \in (0,\infty),
	\end{equation} 
	where $v_0$ and $v_1$ are defined by \eqref{defof_v} and \eqref{defof_u}, respectively.
	Then
	\begin{align*}
	K \approx A + B,
	\end{align*}	
	where
	\begin{align*}
	A: = & \, \sup_{g \in \mp^+} \frac{ 1 }
	{\|g\|_{\frac{q}{q-r},w^{\frac{r}{r-q}},(0,\infty)}^{\frac{1}{r}}} \sup_{h:\, \int_0^x h \le \int_0^x \big( \int_{\tau}^{\infty} g\big)\,d\tau} \, \sup_{\vp \in {\mathfrak M}^+} \frac{1}{\|\vp\|_{r',\big(\frac{B}{u}\big)^{r'}h^{1-r'},(0,\infty)}}
	\bigg( \int_0^{\infty} \bigg( \int_t^{\infty} \bigg( \int_s^{\infty} \vp  \bigg)^{p'} \bigg( \frac{B(s)}{s} \bigg)^{p'}\,ds \bigg)^{\frac{m'}{p'}} v_2(t)\,dt \bigg)^{\frac{1}{m'}} \\
	\intertext{and}
	B: = & \sup_{g \in \mp^+} \frac{ 1 }
	{\|g\|_{\frac{q}{q-r},w^{\frac{r}{r-q}},(0,\infty)}^{\frac{1}{r}}} \sup_{h:\, \int_0^x h \le \int_0^x \big( \int_{\tau}^{\infty} g\big)\,d\tau} \, \sup_{\vp \in {\mathfrak M}^+} \frac{1}{\|\vp\|_{r',u^{-r'}h^{1-r'},(0,\infty)}}
	\bigg( \int_0^{\infty} \bigg( \int_t^{\infty} \bigg( \int_0^s \vp \bigg)^{p'}\,\frac{ds}{s^{p'}} \bigg)^{\frac{m'}{p'}} v_2(t)\,dt \bigg)^{\frac{1}{m'}}.
	\end{align*} 
\end{lemma}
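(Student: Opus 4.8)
The plan is to post-process the representation of $K$ obtained in Lemma \ref{R1}, namely
$$
K = \sup_{g \in \mp^+} \frac{1}{\|g\|_{\frac{q}{q-r},w^{\frac{r}{r-q}},(0,\infty)}^{\frac{1}{r}}} \sup_{h:\, \int_0^x h \le \int_0^x (\int_{\tau}^{\infty} g)\,d\tau} \sup_{\vp \in \mp^+} \frac{1}{\|\vp\|_{r',h^{1-r'},(0,\infty)}} \sup_{f \in \mp^+} \frac{\int_0^{\infty} f^*(y) b(y) \Phi(y)\,dy}{\|f\|_{\GG(p,m,v)}},
$$
where I write $\Phi(y):=\int_y^{\infty}\vp(x)\frac{u(x)}{B(x)}\,dx$. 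The first task is to evaluate the innermost supremum over $f$. Since $b$ is non-increasing by hypothesis and $\Phi$ is non-increasing as a tail integral of a non-negative function, the product $b\Phi$ is non-increasing, so $(b\Phi)^{*}=b\Phi$ and $(b\Phi)^{**}(s)=\frac1s\int_0^s b\Phi$. As $\|\cdot\|_{\GG(p,m,v)}$ depends only on the rearrangement and $f^{*}$ runs through all non-increasing functions on $(0,\infty)$ as $f$ runs through $\mp^{+}(\rn)$, a standard rearrangement argument (the Hardy--Littlewood inequality together with the definition of the associate norm) gives $\sup_{f}\frac{\int_0^{\infty}f^{*}b\Phi}{\|f\|_{\GG(p,m,v)}}=\|b\Phi\|_{\GG(p,m,v)'}$; since $1<m,p<\infty$ and $v\in\W_{m,p}(0,\infty)$, Theorem \ref{assosGG} then yields, with $v_2$ from \eqref{defof_v2},
$$
\sup_{f \in \mp^+} \frac{\int_0^{\infty} f^*(y) b(y) \Phi(y)\,dy}{\|f\|_{\GG(p,m,v)}} \approx \bigg( \int_0^{\infty} \bigg( \int_t^{\infty} \bigg( \frac1s\int_0^s b\Phi \bigg)^{p'}\,ds \bigg)^{\frac{m'}{p'}} v_2(t)\,dt \bigg)^{\frac{1}{m'}}.
$$

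Next I would compute the inner average explicitly. By Fubini's theorem,
$$
\int_0^s b(\tau)\Phi(\tau)\,d\tau = \int_0^{\infty} \vp(x)\frac{u(x)}{B(x)}B(\min(s,x))\,dx = \int_0^s \vp(x)u(x)\,dx + B(s)\int_s^{\infty}\vp(x)\frac{u(x)}{B(x)}\,dx.
$$
Inserting this into the previous display and applying repeatedly the elementary equivalence $(\alpha+\beta)^{\theta}\approx\alpha^{\theta}+\beta^{\theta}$ (with $\theta=p'$ inside the $s$-integral, then $\theta=m'/p'$ after it, using additivity of $\int_t^{\infty}$), and finally $(\cdot+\cdot)^{1/m'}\approx(\cdot)^{1/m'}+(\cdot)^{1/m'}$, I obtain $\|b\Phi\|_{\GG(p,m,v)'}\approx I_1(\vp)+I_2(\vp)$, where
$$
I_1(\vp)=\bigg(\int_0^{\infty}\bigg(\int_t^{\infty}\bigg(\int_0^s\vp u\bigg)^{p'}\frac{ds}{s^{p'}}\bigg)^{\frac{m'}{p'}}v_2(t)\,dt\bigg)^{\frac{1}{m'}},\quad
I_2(\vp)=\bigg(\int_0^{\infty}\bigg(\int_t^{\infty}\bigg(\int_s^{\infty}\frac{\vp u}{B}\bigg)^{p'}\bigg(\frac{B(s)}{s}\bigg)^{p'}ds\bigg)^{\frac{m'}{p'}}v_2(t)\,dt\bigg)^{\frac{1}{m'}}.
$$

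To finish, note that $u$ is continuous and strictly positive, so $\vp\mapsto\vp u$ and $\vp\mapsto\vp u/B$ are bijections of $\mp^{+}$, under which $\|\vp\|_{r',h^{1-r'},(0,\infty)}$ becomes $\|\vp\|_{r',u^{-r'}h^{1-r'},(0,\infty)}$, respectively $\|\vp\|_{r',(B/u)^{r'}h^{1-r'},(0,\infty)}$, after relabelling. Hence $\sup_{\vp}I_1(\vp)/\|\vp\|_{r',h^{1-r'},(0,\infty)}$ equals exactly the innermost $\vp$-supremum in the definition of $B$, and $\sup_{\vp}I_2(\vp)/\|\vp\|_{r',h^{1-r'},(0,\infty)}$ equals the one in $A$. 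Splitting the suprema additively, successively at the levels of $\vp$, $h$ and $g$, by means of the inequalities $\sup(a_{\bullet}+b_{\bullet})\le\sup a_{\bullet}+\sup b_{\bullet}\le 2\sup(a_{\bullet}+b_{\bullet})$, gives $K\approx A+B$.

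I expect the main obstacle to be the second step: establishing the monotonicity of $b\Phi$ (so that Theorem \ref{assosGG} may be applied with $g^{**}$ replaced by the explicit average $\frac1s\int_0^s b\Phi$), carrying out the Fubini interchange, and cleanly separating the resulting double integral into the two monotone Hardy-type pieces $I_1$ and $I_2$ carrying the weights $s^{-p'}$ and $(B(s)/s)^{p'}$, all the while controlling the accumulated constants through the chain of $(\alpha+\beta)^{\theta}\approx\alpha^{\theta}+\beta^{\theta}$ equivalences. The duality identity for the innermost supremum and the weight-absorbing changes of variable are routine by comparison.
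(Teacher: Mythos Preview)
Your proposal is correct and follows essentially the same route as the paper: both start from Lemma~\ref{R1}, invoke Theorem~\ref{assosGG} to replace the innermost $f$-supremum by the associate norm (you make the monotonicity of $b\Phi$ explicit, which is precisely where the hypothesis $b\in\mp^+((0,\infty);\downarrow)$ is used), apply the same Fubini splitting of $\frac1s\int_0^s b\Phi$ into the two Hardy-type pieces, and then absorb the factor $u$ (resp.\ $u/B$) into the weight of the $\vp$-norm by substitution. One minor remark: continuity of $u$ alone does not guarantee strict positivity, so the substitution $\vp\mapsto\vp u$ need not literally be a bijection of $\mp^+$; however, equality of the two suprema over $\vp$ is all that is needed, and this holds regardless (the paper glosses over this point in the same way).
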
	

\begin{proof}
	By Lemma \ref{R1} and Theorem \ref{assosGG}, we have that
	\begin{align*}
	K & \approx \sup_{g \in \mp^+} \frac{ 1 }
	{\|g\|_{\frac{q}{q-r},w^{\frac{r}{r-q}},(0,\infty)}^{\frac{1}{r}}} \sup_{h: \int_0^x h \le \int_0^x \big( \int_{\tau}^{\infty} g\big)d\tau} \sup_{\vp \in {\mathfrak M}^+} \frac{1}{\|\vp\|_{r',h^{1-r'},(0,\infty)}}
	\bigg( \int_0^{\infty} \bigg( \int_t^{\infty} \bigg( \frac{1}{s} \int_0^s b(y) \bigg( \int_y^{\infty} \vp \frac{u}{B} \bigg) \, dy  \bigg)^{p'} \,ds \bigg)^{\frac{m'}{p'}} v_2(t) \, dt \bigg)^{\frac{1}{m'}}.
	\end{align*} 
	Since
\begin{align*}
	\frac{1}{s} \int_0^s b(y) \bigg( \int_y^{\infty} \vp(x) \frac{u(x)}{B(x)} \,dx \bigg) \,dy & = \frac{1}{s} \int_0^s b(y) \bigg( \int_y^s \vp(x) \frac{u(x)}{B(x)} \,dx \bigg) \,dy + \frac{1}{s} \int_0^s b(y) \,dy \, \int_s^{\infty} \vp(x) \frac{u(x)}{B(x)} \,dx  \\
	&  = \frac{1}{s} \int_0^s \vp(x) u(x) \,dx  + \frac{B(s)}{s} \int_s^{\infty} \vp(x) \frac{u(x)}{B(x)} \,dx,
\end{align*} 
we arrive at
\begin{align*}
	K \approx & \, \sup_{g \in \mp^+} \frac{ 1 }
	{\|g\|_{\frac{q}{q-r},w^{\frac{r}{r-q}},(0,\infty)}^{\frac{1}{r}}} \sup_{h:\, \int_0^x h \le \int_0^x \big( \int_{\tau}^{\infty} g\big)\,d\tau} \\
	& \qquad  \sup_{\vp \in {\mathfrak M}^+} \frac{1}{\|\vp\|_{r',h^{1-r'},(0,\infty)}}
	\bigg( \int_0^{\infty} \bigg( \int_t^{\infty} \bigg( \frac{B(s)}{s}  \int_s^{\infty} \vp(x) \frac{u(x)}{B(x)} \,dx  \bigg)^{p'}\,ds \bigg)^{\frac{m'}{p'}} v_2(t)\,dt \bigg)^{\frac{1}{m'}} \\
	& \,\, + \sup_{g \in \mp^+} \frac{ 1 }
	{\|g\|_{\frac{q}{q-r},w^{\frac{r}{r-q}},(0,\infty)}^{\frac{1}{r}}} \sup_{h:\, \int_0^x h \le \int_0^x \big( \int_{\tau}^{\infty} g\big)\,d\tau} \\
	& \qquad  \sup_{\vp \in {\mathfrak M}^+} \frac{1}{\|\vp\|_{r',h^{1-r'},(0,\infty)}}
	\bigg( \int_0^{\infty} \bigg( \int_t^{\infty} \bigg( \frac{1}{s}  \int_0^s \vp(x) u(x) \,dx  \bigg)^{p'}\,ds \bigg)^{\frac{m'}{p'}} v_2(t)\,dt \bigg)^{\frac{1}{m'}}.
\end{align*} 
Observe that the inequality
$$
\bigg( \int_0^{\infty} \bigg( \int_t^{\infty} \bigg( \frac{B(s)}{s}  \int_s^{\infty} \vp(x) \frac{u(x)}{B(x)} \,dx  \bigg)^{p'}\,ds \bigg)^{\frac{m'}{p'}} v_2(t)\,dt \bigg)^{\frac{1}{m'}} \le c \, \bigg( \int_0^{\infty} \vp(x)^{r'} h(x)^{1-r'}\,dx \bigg)^{\frac{1}{r'}}
$$
holds true for all $h \in \mp^+$ if and only if the inequality
$$
\bigg( \int_0^{\infty} \bigg( \int_t^{\infty} \bigg( \int_s^{\infty} \vp(x) \,dx  \bigg)^{p'} \bigg( \frac{B(s)}{s} \bigg)^{p'}\,ds \bigg)^{\frac{m'}{p'}} v_2(t)\,dt \bigg)^{\frac{1}{m'}} \le c \, \bigg( \int_0^{\infty} \vp(x)^{r'} \bigg( \frac{B(x)}{u(x)} \bigg)^{r'}h(x)^{1-r'}\,dx \bigg)^{\frac{1}{r'}}
$$
holds for all $h \in \mp^+$.

Similarly, the inequality
$$
\bigg( \int_0^{\infty} \bigg( \int_t^{\infty} \bigg( \frac{1}{s}  \int_0^s \vp(x) u(x) \,dx  \bigg)^{p'}\,ds \bigg)^{\frac{m'}{p'}} v_2(t)\,dt \bigg)^{\frac{1}{m'}}
\le c \, \bigg( \int_0^{\infty} \vp(x)^{r'} h(x)^{1-r'}\,dx \bigg)^{\frac{1}{r'}}
$$
holds true for all $h \in \mp^+ $ if and only if the inequality
$$
\bigg( \int_0^{\infty} \bigg( \int_t^{\infty} \bigg( \int_0^s \vp(x)\,dx  \bigg)^{p'}\,\frac{ds}{s^{p'}} \bigg)^{\frac{m'}{p'}} v_2(t)\,dt \bigg)^{\frac{1}{m'}} \le c \, \bigg( \int_0^{\infty} \vp(x)^{r'} \bigg( \frac{1}{u(x)} \bigg)^{r'}h(x)^{1-r'}\,dx \bigg)^{\frac{1}{r'}}
$$
holds for all $h \in \mp^+$.

Thus,
\begin{align*}
	K \approx & \, \sup_{g \in \mp^+} \frac{ 1 }
	{\|g\|_{\frac{q}{q-r},w^{\frac{r}{r-q}},(0,\infty)}^{\frac{1}{r}}} \sup_{h:\, \int_0^x h \le \int_0^x \big( \int_{\tau}^{\infty} g\big)\,d\tau} \\
	& \qquad  \sup_{\vp \in {\mathfrak M}^+} \frac{1}{\|\vp\|_{r',\big(\frac{B}{u}\big)^{r'}h^{1-r'},(0,\infty)}}
	\bigg( \int_0^{\infty} \bigg( \int_t^{\infty} \bigg( \int_s^{\infty} \vp(x) \,dx  \bigg)^{p'} \bigg( \frac{B(s)}{s} \bigg)^{p'}\,ds \bigg)^{\frac{m'}{p'}} v_2(t)\,dt \bigg)^{\frac{1}{m'}} \\
	& \,\, + \sup_{g \in \mp^+} \frac{ 1 }
	{\|g\|_{\frac{q}{q-r},w^{\frac{r}{r-q}},(0,\infty)}^{\frac{1}{r}}} \sup_{h:\, \int_0^x h \le \int_0^x \big( \int_{\tau}^{\infty} g\big)\,d\tau} \\
	& \qquad  \sup_{\vp \in {\mathfrak M}^+} \frac{1}{\|\vp\|_{r',u^{-r'}h^{1-r'},(0,\infty)}}
	\bigg( \int_0^{\infty} \bigg( \int_t^{\infty} \bigg( \int_0^s \vp(x)\,dx  \bigg)^{p'}\,\frac{ds}{s^{p'}} \bigg)^{\frac{m'}{p'}} v_2(t)\,dt \bigg)^{\frac{1}{m'}} = A + B
\end{align*} 
holds.	
\end{proof}	

\begin{theorem}\label{thm.prev}
Let $1 < m < p \le r < q < \infty$ and $b \in \W (0,\infty) \cap \mp^+ ((0,\infty);\dn)$ be such that the function $B(t)$ satisfies  $0 < B(t) < \infty$ for every $t \in (0,\infty)$. Suppose that $u \in \W(0,\infty) \cap C(0,\infty)$, $v \in \W_{m.p}(0,\infty)$ and $w \in \W(0,\infty)$. Suppose that
$$
0 < \int_0^t \bigg( \int_s^t \bigg( \frac{B(y)}{y} \bigg)^{p'}\,dy \bigg)^{\frac{m'}{p'}} v_2(s)\,ds < \infty, \qquad t \in (0,\infty),
$$
where $v_2$ is defined by \eqref{defof_v2}.
Then
\begin{align*}
	K 	& \approx  \, \sup_{t \in (0,\infty)} \bigg( \int_0^t \bigg( \int_s^t \bigg( \frac{B(y)}{y} \bigg)^{p'}\,dy \bigg)^{\frac{m'}{p'}}
	v_2(s)\,ds \bigg)^{\frac{1}{m'}} \bigg( \sup_{t \le \tau} \frac{u(\tau)}{B(\tau)} \bigg) \bigg( \int_0^{\infty} \bigg(\frac{yt}{y+t}\bigg)^{\frac{q}{r}}w(y)\,dy \bigg)^{\frac{1}{q}} \\
	& +  \, \sup_{t \in (0,\infty)} \bigg( \int_0^t \bigg( \int_s^t \bigg( \frac{B(y)}{y} \bigg)^{p'}\,dy \bigg)^{\frac{m'}{p'}}
	v_2(s)\,ds \bigg)^{\frac{1}{m'}} \bigg(\int_t^{\infty} \bigg( \int_t^y \bigg( \sup_{x \le \tau}\bigg( \frac{u(\tau)}{B(\tau)} \bigg)^r  \bigg) \,dx \bigg)^{\frac{q}{r}}w(y)\,dy\bigg)^{\frac{1}{q}}\\
	& + \sup_{t \in (0,\infty)}  \bigg( \int_0^{\infty} \bigg( \frac{1}{s + t}\bigg)^{\frac{m'}{p}}v_2(s)\,ds \bigg)^{\frac{1}{m'}} \bigg(\int_0^t \bigg( \int_0^y  \bigg( \sup_{x \le \tau \le t} u(\tau)^r  \bigg) \,dx \bigg)^{\frac{q}{r}} w(y)\,dy\bigg)^{\frac{1}{q}} \\
	& + \sup_{t \in (0,\infty)}  \bigg( \int_0^{\infty} \bigg( \frac{1}{s + t}\bigg)^{\frac{m'}{p}}v_2(s)\,ds \bigg)^{\frac{1}{m'}} \bigg( \int_0^t  \bigg( \sup_{x \le \tau \le t} u(\tau)^r  \bigg) \,dx \bigg)^{\frac{1}{r}}\bigg(\int_t^{\infty}  w(y)\,dy\bigg)^{\frac{1}{q}}. 
\end{align*}
	
\end{theorem}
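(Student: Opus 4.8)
The plan is to start from Lemma~\ref{R2}, which reduces everything to proving $K\approx A+B$, and to estimate $A$ and $B$ separately, in each case peeling off the three nested suprema --- first over $\varphi$, then over $h$, then over $g$ --- one at a time. Consider $A$ first. For fixed admissible $g$ and $h$, the innermost supremum over $\varphi$ in $A$ is exactly the best constant in a weighted iterated Hardy inequality of the form \eqref{iterH1}, with the roles of $(p,q,m)$ and $(u,v,w)$ in Theorem~\ref{krepick} played respectively by $(r',m',p')$ and by the functions $s\mapsto(B(s)/s)^{p'}$, $(B/u)^{r'}h^{1-r'}$, $v_2$. Since $1<m<p\le r$ forces $r'\le p'<m'$, we are in case (a) of Theorem~\ref{krepick}, and the admissibility hypothesis it requires is exactly the non-degeneracy condition $0<\int_0^t\big(\int_s^t(B(y)/y)^{p'}dy\big)^{m'/p'}v_2(s)\,ds<\infty$ assumed in the statement. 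Using the elementary identities $r'(1-r)=-r$ and $(1-r')(1-r)=1$ (valid because $r+r'=rr'$), the resulting expression collapses to $C\approx E_1(h)$ with
\[
E_1(h)=\sup_{t>0}\Phi(t)\bigg(\int_t^\infty\Big(\tfrac{u(s)}{B(s)}\Big)^r h(s)\,ds\bigg)^{\frac1r},\qquad \Phi(t):=\bigg(\int_0^t\Big(\int_s^t\big(\tfrac{B(y)}{y}\big)^{p'}dy\Big)^{\frac{m'}{p'}}v_2(s)\,ds\bigg)^{\frac1{m'}}.
\]

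Substituting this back and interchanging the suprema over $h$, $g$ and $t$, I would use Theorem~\ref{transfermon} with the supremum weight $\chi_{(t,\infty)}\,(u/B)^r$ to evaluate $\sup_h\int_t^\infty(u/B)^r h$ over the cone $\{h:\int_0^x h\le\int_0^x(\int_\tau^\infty g)\,d\tau\}$; this produces $\int_0^\infty\big(\int_x^\infty g\big)\rho_t(x)\,dx$, where $\rho_t(x)=\big(\sup_{t\le\tau}u(\tau)/B(\tau)\big)^r$ for $x\le t$ and $\rho_t(x)=\big(\sup_{x\le\tau}u(\tau)/B(\tau)\big)^r$ for $x>t$. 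Fubini's theorem and the duality principle in weighted Lebesgue spaces (with $(q/r)'=q/(q-r)$ and weight $w^{r/(r-q)}$) then turn the supremum over $g$ into $\big(\int_0^\infty(\int_0^y\rho_t(x)\,dx)^{q/r}w(y)\,dy\big)^{r/q}$. Splitting the inner integral at $\min(y,t)$, using $(a+b)^{q/r}\approx a^{q/r}+b^{q/r}$ and $\min(y,t)\approx yt/(y+t)$, the contribution of the range $x\le t$ (on which $\rho_t$ is frozen at $\sup_{t\le\tau}u(\tau)/B(\tau)$) produces the first summand in the claimed estimate, and the range $x>t$ produces the second (note $\big(\sup_{x\le\tau}u(\tau)/B(\tau)\big)^r=\sup_{x\le\tau}(u(\tau)/B(\tau))^r$).

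The treatment of $B$ is entirely parallel. Now the innermost supremum over $\varphi$ is the best constant in an inequality of the form \eqref{iterH2}, with $(p,q,m)$ and $(u,v,w)$ in Theorem~\ref{gks} played by $(r',m',p')$ and by $s\mapsto s^{-p'}$, $u^{-r'}h^{1-r'}$, $v_2$; since again $r'\le p'<m'$, Theorem~\ref{gks}(a) gives $C\approx D_1+D_2$. Using $\int_s^\infty y^{-p'}\,dy\approx s^{1-p'}$ together with the gluing equivalence $(s+t)^{-m'/p}\approx t^{-m'/p}\chi_{\{s<t\}}(s)+s^{-m'/p}\chi_{\{s>t\}}(s)$, one verifies that $D_1(h)+D_2(h)\approx\sup_{t>0}\big(\int_0^\infty(s+t)^{-m'/p}v_2(s)\,ds\big)^{1/m'}\big(\int_0^t u(s)^r h(s)\,ds\big)^{1/r}$. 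Then Theorem~\ref{transfermon} with the supremum weight $\chi_{(0,t)}\,u^r$ (so that the resulting supremum freezes at $\sup_{x\le\tau\le t}u(\tau)^r$), Fubini's theorem, the $L^{q/r}$-duality, and the same splitting at $\min(y,t)$ produce the third and fourth summands. Hence $K\approx A+B$ yields the asserted four-term equivalence.

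The main obstacle is twofold. First, one must verify that the non-degeneracy and admissibility hypotheses needed by Theorems~\ref{krepick} and~\ref{gks} follow from $v\in\W_{m,p}(0,\infty)$ together with the single extra assumption in the statement; for Theorem~\ref{gks} this amounts to translating the conditions defining $\W_{m,p}(0,\infty)$, through \eqref{defof_v}--\eqref{defof_v2}, into $\int_0^t v_2<\infty$, $\int_t^\infty s^{-m'/p}v_2(s)\,ds<\infty$, $\int_0^1 s^{-m'/p}v_2(s)\,ds=\infty$ and $\int_1^\infty v_2=\infty$. Second, although each individual step of the final simplification is elementary --- legitimacy of interchanging the suprema, the equivalences $\min(y,t)\approx yt/(y+t)$, $(s+t)^{-\alpha}\approx\max(s,t)^{-\alpha}$ and $(a+b)^\theta\approx a^\theta+b^\theta$ for $\theta\ge1$, and the splittings at $\min(y,t)$ --- the bookkeeping is lengthy; the genuinely delicate point is that once the supremum weight in Theorem~\ref{transfermon} carries an indicator, $\sup_{s\ge x}(\cdot)$ must be shown to collapse to the stated frozen suprema on the two complementary ranges of $x$.
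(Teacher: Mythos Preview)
Your proposal is correct and follows essentially the same route as the paper's own proof: start from Lemma~\ref{R2}, apply Theorem~\ref{krepick}(a) to the $\varphi$-supremum in $A$ and Theorem~\ref{gks}(a) to the $\varphi$-supremum in $B$ (the parameter identifications and the index checks $r'\le p'<m'$ are exactly as you describe), then peel off the $h$-supremum via Theorem~\ref{transfermon} with the truncated weights $\chi_{(t,\infty)}(u/B)^r$ and $\chi_{(0,t)}u^r$, Fubini, and finally the $L^{q/r}$--$L^{(q/r)'}$ duality for the $g$-supremum, followed by the splitting at $t$ and the gluing $\min(y,t)\approx yt/(y+t)$. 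The paper carries out precisely these steps in the same order; your flagged ``obstacle'' about the non-degeneracy hypotheses for Theorem~\ref{gks} is real but is simply left implicit in the paper's proof as well.
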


\begin{proof}
	By Lemma \ref{R2} we have that
	$$
	K \approx A + B.
	$$
We estimate $A$. By Theorem \ref{krepick}, (a), we have that
\begin{align*}
\sup_{\vp \in {\mathfrak M}^+} \frac{1}{\|\vp\|_{r',h^{1-r'},(0,\infty)}}
\bigg( \int_0^{\infty} \bigg( \int_t^{\infty} \bigg( \int_s^{\infty} \vp(x) \,dx  \bigg)^{p'} \bigg( \frac{B(s)}{s} \bigg)^{p'}\,ds \bigg)^{\frac{m'}{p'}} v_2(t)\,dt \bigg)^{\frac{1}{m'}} & \\
& \hspace{-7cm}\approx \sup_{t \in (0,\infty)}\bigg( \int_t^{\infty} h(x) \bigg( \frac{u(x)}{B(x)} \bigg)^r \,dx \bigg)^{\frac{1}{r}} \bigg( \int_0^t \bigg( \int_s^t \bigg( \frac{B(y)}{y} \bigg)^{p'}\,dy \bigg)^{\frac{m'}{p'}} v_2(s)\,ds \bigg)^{\frac{1}{m'}}.
\end{align*}

Hence
\begin{align*}
	A \approx & \, \sup_{g \in \mp^+} \frac{ 1 }
	{\|g\|_{\frac{q}{q-r},w^{\frac{r}{r-q}},(0,\infty)}^{\frac{1}{r}}} \sup_{h:\, \int_0^x h \le \int_0^x \big( \int_{\tau}^{\infty} g\big)\,d\tau} 
	\sup_{t \in (0,\infty)}\bigg( \int_t^{\infty} h(x) \bigg( \frac{u(x)}{B(x)} \bigg)^r \,dx \bigg)^{\frac{1}{r}} \bigg( \int_0^t \bigg( \int_s^t \bigg( \frac{B(y)}{y} \bigg)^{p'}\,dy \bigg)^{\frac{m'}{p'}}v_2(s)\,ds \bigg)^{\frac{1}{m'}}.
\end{align*}	

Interchanging the suprema, we get that
\begin{align*}
	A \approx & \, \sup_{g \in \mp^+} \frac{ 1 }
	{\|g\|_{\frac{q}{q-r},w^{\frac{r}{r-q}},(0,\infty)}^{\frac{1}{r}}} \sup_{t \in (0,\infty)} \bigg( \int_0^t \bigg( \int_s^t \bigg( \frac{B(y)}{y} \bigg)^{p'}\,dy \bigg)^{\frac{m'}{p'}} v_2(s)\,ds \bigg)^{\frac{1}{m'}} \left\{\sup_{h:\, \int_0^x h \le \int_0^x \big( \int_{\tau}^{\infty} g\big)\,d\tau} \int_t^{\infty} h(x) \bigg( \frac{u(x)}{B(x)} \bigg)^r \,dx\right\}^{\frac{1}{r}} \\
	\approx & \, \sup_{g \in \mp^+} \frac{ 1 } {\|g\|_{\frac{q}{q-r},w^{\frac{r}{r-q}},(0,\infty)}^{\frac{1}{r}}} \sup_{t \in (0,\infty)} \bigg( \int_0^t \bigg( \int_s^t \bigg( \frac{B(y)}{y} \bigg)^{p'}\,dy \bigg)^{\frac{m'}{p'}} v_2(s)\,ds \bigg)^{\frac{1}{m'}} \left\{ \sup_{h:\, \int_0^x h \le \int_0^x \big( \int_{\tau}^{\infty} g\big)\,d\tau} \int_0^{\infty} h(x) \bigg( \frac{u(x)}{B(x)} \bigg)^r \chi_{(t,\infty)}(x) \,dx\right\}^{\frac{1}{r}}.
\end{align*}

By Theorem \ref{transfermon}, we have that
\begin{align*}
	A \approx & \, \sup_{g \in \mp^+} \frac{ 1 } {\|g\|_{\frac{q}{q-r},w^{\frac{r}{r-q}},(0,\infty)}^{\frac{1}{r}}} \sup_{t \in (0,\infty)} \bigg( \int_0^t \bigg( \int_s^t \bigg( \frac{B(y)}{y} \bigg)^{p'}\,dy \bigg)^{\frac{m'}{p'}} v_2(s)\,ds \bigg)^{\frac{1}{m'}} \bigg(\int_0^{\infty} \bigg( \int_x^{\infty} g\bigg) \bigg( \sup_{x \le \tau}\bigg( \frac{u(\tau)}{B(\tau)} \bigg)^r \chi_{(t,\infty)}(\tau) \bigg) \,dx \bigg)^{\frac{1}{r}}.
\end{align*}

Applying Fubini's Theorem, interchanging the suprema, by duality, we get that
\begin{align*}
A \approx & \, \sup_{t \in (0,\infty)} \bigg( \int_0^t \bigg( \int_s^t \bigg( \frac{B(y)}{y} \bigg)^{p'}\,dy \bigg)^{\frac{m'}{p'}}
v_2(s)\,ds \bigg)^{\frac{1}{m'}} \left\{\sup_{g \in \mp^+} \frac{ \int_0^{\infty} g(y)\, \int_0^y \bigg( \sup_{x \le \tau}\bigg( \frac{u(\tau)}{B(\tau)} \bigg)^r \chi_{(t,\infty)}(\tau) \bigg) \,dx \,dy } {\|g\|_{\frac{q}{q-r},w^{\frac{r}{r-q}},(0,\infty)}}  \right\}^{\frac{1}{r}} \\
= & \, \sup_{t \in (0,\infty)} \bigg( \int_0^t \bigg( \int_s^t \bigg( \frac{B(y)}{y} \bigg)^{p'}\,dy \bigg)^{\frac{m'}{p'}}
v_2(s)\,ds \bigg)^{\frac{1}{m'}} \bigg(\int_0^{\infty} \bigg( \int_0^y \bigg( \sup_{x \le \tau}\bigg( \frac{u(\tau)}{B(\tau)} \bigg)^r \chi_{(t,\infty)}(\tau) \bigg) \,dx \bigg)^{\frac{q}{r}}w(y)\,dy \bigg)^{\frac{1}{q}}.
\end{align*}

Since 
\begin{align*}
\sup_{x \le \tau} \chi_{(t,\infty)}(\tau) \bigg( \frac{u(\tau)}{B(\tau)} \bigg)^r = \sup_{t \le \tau} \bigg( \frac{u(\tau)}{B(\tau)} \bigg)^r,
\end{align*}
when $0 < x \le t$, we get that

\begin{align*}
	A \approx & \, \sup_{t \in (0,\infty)} \bigg( \int_0^t \bigg( \int_s^t \bigg( \frac{B(y)}{y} \bigg)^{p'}\,dy \bigg)^{\frac{m'}{p'}}
	v_2(s)\,ds \bigg)^{\frac{1}{m'}} \bigg(\int_0^t \bigg( \int_0^y \bigg( \sup_{x \le \tau}\bigg( \frac{u(\tau)}{B(\tau)} \bigg)^r \chi_{(t,\infty)}(\tau) \bigg) \,dx \bigg)^{\frac{q}{r}}w(y)\,dy\bigg)^{\frac{1}{q}} \\
	& +  \, \sup_{t \in (0,\infty)} \bigg( \int_0^t \bigg( \int_s^t \bigg( \frac{B(y)}{y} \bigg)^{p'}\,dy \bigg)^{\frac{m'}{p'}}
	v_2(s)\,ds \bigg)^{\frac{1}{m'}} \bigg(\int_t^{\infty} \bigg( \int_0^y \bigg( \sup_{x \le \tau}\bigg( \frac{u(\tau)}{B(\tau)} \bigg)^r \chi_{(t,\infty)}(\tau) \bigg) \,dx \bigg)^{\frac{q}{r}}w(y)\,dy\bigg)^{\frac{1}{q}} \\
	&\approx  \, \sup_{t \in (0,\infty)} \bigg( \int_0^t \bigg( \int_s^t \bigg( \frac{B(y)}{y} \bigg)^{p'}\,dy \bigg)^{\frac{m'}{p'}}
	v_2(s)\,ds \bigg)^{\frac{1}{m'}} \bigg( \sup_{t \le \tau} \frac{u(\tau)}{B(\tau)} \bigg) \bigg( \int_0^t y^{\frac{q}{r}}w(y)\,dy \bigg)^{\frac{1}{q}} \\
	& +  \, \sup_{t \in (0,\infty)} \bigg( \int_0^t \bigg( \int_s^t \bigg( \frac{B(y)}{y} \bigg)^{p'}\,dy \bigg)^{\frac{m'}{p'}}
	v_2(s)\,ds \bigg)^{\frac{1}{m'}} t^{\frac{1}{r}} \bigg( \sup_{t \le \tau} \frac{u(\tau)}{B(\tau)} \bigg) \bigg(\int_t^{\infty} w(y)\,dy\bigg)^{\frac{1}{q}}\\
	& +  \, \sup_{t \in (0,\infty)} \bigg( \int_0^t \bigg( \int_s^t \bigg( \frac{B(y)}{y} \bigg)^{p'}\,dy \bigg)^{\frac{m'}{p'}}
	v_2(s)\,ds \bigg)^{\frac{1}{m'}} \bigg(\int_t^{\infty} \bigg( \int_t^y \bigg( \sup_{x \le \tau}\bigg( \frac{u(\tau)}{B(\tau)} \bigg)^r  \bigg) \,dx \bigg)^{\frac{q}{r}}w(y)\,dy\bigg)^{\frac{1}{q}} \\
	& \approx  \, \sup_{t \in (0,\infty)} \bigg( \int_0^t \bigg( \int_s^t \bigg( \frac{B(y)}{y} \bigg)^{p'}\,dy \bigg)^{\frac{m'}{p'}}
	v_2(s)\,ds \bigg)^{\frac{1}{m'}} \bigg( \sup_{t \le \tau} \frac{u(\tau)}{B(\tau)} \bigg) \bigg( \int_0^{\infty} \bigg(\frac{yt}{y+t}\bigg)^{\frac{q}{r}}w(y)\,dy \bigg)^{\frac{1}{q}} \\
	& +  \, \sup_{t \in (0,\infty)} \bigg( \int_0^t \bigg( \int_s^t \bigg( \frac{B(y)}{y} \bigg)^{p'}\,dy \bigg)^{\frac{m'}{p'}}
	v_2(s)\,ds \bigg)^{\frac{1}{m'}} \bigg(\int_t^{\infty} \bigg( \int_t^y \bigg( \sup_{x \le \tau}\bigg( \frac{u(\tau)}{B(\tau)} \bigg)^r  \bigg) \,dx \bigg)^{\frac{q}{r}}w(y)\,dy\bigg)^{\frac{1}{q}}.
\end{align*}

	To estimate $B$, by Theorem \ref{gks}, (a), we have that
	\begin{align*}
	\sup_{\vp \in {\mathfrak M}^+} \frac{1}{\|\vp\|_{r',h^{1-r'},(0,\infty)}}
	\bigg( \int_0^{\infty} \bigg( \int_t^{\infty} \bigg( \int_0^s \vp(x)\,dx  \bigg)^{p'}\,\frac{ds}{s^{p'}} \bigg)^{\frac{m'}{p'}} v_2(t)\,dt \bigg)^{\frac{1}{m'}} & \\
	& \hspace{-7cm} \approx \sup_{t \in (0,\infty)}  \bigg( \int_0^t v_2(\tau)\,d\tau \bigg)^{\frac{1}{m'}} \bigg( \int_t^{\infty} \frac{ds}{s^{p'}} \bigg)^{\frac{1}{p'}} \bigg( \int_0^t h(x) u(x)^r \,dx \bigg)^{\frac{1}{r}}  \\
	& \hspace{-6.5cm} + \sup_{t \in (0,\infty)}  \bigg( \int_t^{\infty} \bigg( \int_s^{\infty} \frac{dy}{y^{p'}} \bigg)^{\frac{m'}{p'}} v_2(s) \,ds \bigg)^{\frac{1}{m'}}  \bigg( \int_0^t h(x) u(x)^r \,dx \bigg)^{\frac{1}{r}} \\
	& \hspace{-7cm} \approx \sup_{t \in (0,\infty)} t^{-\frac{1}{p}}  \bigg( \int_0^t v_2(s) \,ds \bigg)^{\frac{1}{m'}} \bigg( \int_0^t h(x) u(x)^r \,dx \bigg)^{\frac{1}{r}}  \\
	& \hspace{-6.5cm} + \sup_{t \in (0,\infty)}  \bigg( \int_t^{\infty} s^{ - \frac{m'}{p}} v_2(s)\,ds \bigg)^{\frac{1}{m'}}  \bigg( \int_0^t h(x) u(x)^r \,dx \bigg)^{\frac{1}{r}} \\
	& \hspace{-7cm} \approx \sup_{t \in (0,\infty)}  \bigg( \int_0^{\infty} \bigg( \frac{1}{s + t}\bigg)^{\frac{m'}{p}} v_2(s)\,ds \bigg)^{\frac{1}{m'}}  \bigg( \int_0^t h(x) u(x)^r \,dx \bigg)^{\frac{1}{r}}.
	\end{align*}
	
	Hence, interchanging the suprema, we get that
	\begin{align*}
	B & \ap \sup_{g \in \mp^+} \frac{ 1 }
	{\|g\|_{\frac{q}{q-r},w^{\frac{r}{r-q}}, (0,\infty)}^{\frac{1}{r}}} \sup_{h:\, \int_0^x h \le \int_0^x \big( \int_{\tau}^{\infty} g\big)\,d\tau}   \sup_{t \in (0,\infty)}  \bigg( \int_0^{\infty} \bigg( \frac{1}{s + t}\bigg)^{\frac{m'}{p}}v_2(s)\,ds \bigg)^{\frac{1}{m'}}  \bigg( \int_0^t h(x) u(x)^r \,dx \bigg)^{\frac{1}{r}}  \\
	& = \sup_{g \in \mp^+} \frac{ 1 }
	{\|g\|_{\frac{q}{q-r},w^{\frac{r}{r-q}}, (0,\infty)}^{\frac{1}{r}}} \sup_{t \in (0,\infty)}  \bigg( \int_0^{\infty} \bigg( \frac{1}{s + t}\bigg)^{\frac{m'}{p}}v_2(s)\,ds \bigg)^{\frac{1}{m'}} \left\{ \sup_{h:\, \int_0^x h \le \int_0^x \big( \int_{\tau}^{\infty} g\big)\,d\tau}      \int_0^{\infty} h(x) u(x)^r \chi_{(0,t)}(x) \,dx\right\} ^{\frac{1}{r}}.
	\end{align*}
	
	By Theorem \ref{transfermon}, we have that
	\begin{align*}
	B \approx \sup_{g \in \mp^+} \frac{ 1 }
	{\|g\|_{\frac{q}{q-r},w^{\frac{r}{r-q}}, (0,\infty)}^{\frac{1}{r}}} \sup_{t \in (0,\infty)}  \bigg( \int_0^{\infty} \bigg( \frac{1}{s + t}\bigg)^{\frac{m'}{p}}v_2(s)\,ds \bigg)^{\frac{1}{m'}} \bigg( \int_0^{\infty} \bigg( \int_x^{\infty} g\bigg) \bigg( \sup_{x \le \tau} u(\tau)^r \chi_{(0,t)}(\tau) \bigg) \,dx\bigg)^{\frac{1}{r}}.
	\end{align*}
	
	By Fubini's Theorem, interchanging the suprema, by duality we get that
	\begin{align*}
	B &\approx \sup_{t \in (0,\infty)}  \bigg( \int_0^{\infty} \bigg( \frac{1}{s + t}\bigg)^{\frac{m'}{p}}v_2(s)\,ds \bigg)^{\frac{1}{m'}} \left\{\sup_{g \in \mp^+} \frac{ \int_0^{\infty} g (y) \int_0^y  \bigg( \sup_{x \le \tau} u(\tau)^r \chi_{(0,t)}(\tau) \bigg) \,dx \,dy }
	{\|g\|_{\frac{q}{q-r},w^{\frac{r}{r-q}}, (0,\infty)}} \right\} ^{\frac{1}{r}} \\
	 & \approx \sup_{t \in (0,\infty)}  \bigg( \int_0^{\infty} \bigg( \frac{1}{s + t}\bigg)^{\frac{m'}{p}}v_2(s)\,ds \bigg)^{\frac{1}{m'}} \bigg(\int_0^{\infty} \bigg( \int_0^y  \bigg( \sup_{x \le \tau} u(\tau)^r \chi_{(0,t)}(\tau) \bigg) \,dx \bigg)^{\frac{q}{r}} w(y)\,dy\bigg)^{\frac{1}{q}}.
	\end{align*}
	
	Since 
		$$
		\sup_{x \le \tau} u(\tau)^r \chi_{(0,t)}(\tau) = \sup_{x \le \tau \le t} u(\tau)^r
		$$
		for $0 < x \le t$, we arrive at
		\begin{align*}
		B \approx &  \sup_{t \in (0,\infty)}  \bigg( \int_0^{\infty} \bigg( \frac{1}{s + t}\bigg)^{\frac{m'}{p}}v_2(s)\,ds \bigg)^{\frac{1}{m'}} \bigg(\int_0^t \bigg( \int_0^y  \bigg( \sup_{x \le \tau \le t} u(\tau)^r  \bigg) \,dx \bigg)^{\frac{q}{r}} w(y)\,dy \bigg)^{\frac{1}{q}} \\
		& + \sup_{t \in (0,\infty)}  \bigg( \int_0^{\infty} \bigg( \frac{1}{s + t}\bigg)^{\frac{m'}{p}}v_2(s)\,ds \bigg)^{\frac{1}{m'}} \bigg( \int_0^t  \bigg( \sup_{x \le \tau \le t} u(\tau)^r  \bigg) \,dx \bigg)^{\frac{1}{r}} \bigg(\int_t^{\infty}  w(y)\,dy\bigg)^{\frac{1}{q}} .
		\end{align*}	
		
	Combining the estimates for $A$ and $B$, we obtain that
	\begin{align*}
		K 	& \approx  \, \sup_{t \in (0,\infty)} \bigg( \int_0^t \bigg( \int_s^t \bigg( \frac{B(y)}{y} \bigg)^{p'}\,dy \bigg)^{\frac{m'}{p'}}
		v_2(s)\,ds \bigg)^{\frac{1}{m'}} \bigg( \sup_{t \le \tau} \frac{u(\tau)}{B(\tau)} \bigg) \bigg( \int_0^{\infty} \bigg(\frac{yt}{y+t}\bigg)^{\frac{q}{r}}w(y)\,dy \bigg)^{\frac{1}{q}} \\
		& +  \, \sup_{t \in (0,\infty)} \bigg( \int_0^t \bigg( \int_s^t \bigg( \frac{B(y)}{y} \bigg)^{p'}\,dy \bigg)^{\frac{m'}{p'}}
		v_2(s)\,ds \bigg)^{\frac{1}{m'}} \bigg(\int_t^{\infty} \bigg( \int_t^y \bigg( \sup_{x \le \tau}\bigg( \frac{u(\tau)}{B(\tau)} \bigg)^r  \bigg) \,dx \bigg)^{\frac{q}{r}}w(y)\,dy\bigg)^{\frac{1}{q}}\\
		& + \sup_{t \in (0,\infty)}  \bigg( \int_0^{\infty} \bigg( \frac{1}{s + t}\bigg)^{\frac{m'}{p}}v_2(s)\,ds \bigg)^{\frac{1}{m'}} \bigg(\int_0^t \bigg( \int_0^y  \bigg( \sup_{x \le \tau \le t} u(\tau)^r  \bigg) \,dx \bigg)^{\frac{q}{r}} w(y)\,dy\bigg)^{\frac{1}{q}} \\
		& + \sup_{t \in (0,\infty)}  \bigg( \int_0^{\infty} \bigg( \frac{1}{s + t}\bigg)^{\frac{m'}{p}}v_2(s)\,ds \bigg)^{\frac{1}{m'}} \bigg( \int_0^t  \bigg( \sup_{x \le \tau \le t} u(\tau)^r  \bigg) \,dx \bigg)^{\frac{1}{r}}\bigg(\int_t^{\infty}  w(y)\,dy\bigg)^{\frac{1}{q}}. 
	\end{align*}
\end{proof}
	
\begin{theorem}\label{them.prev.2}
	Let $1 < m \le r < \min\{p,q\} < \infty$ and $b \in \W (0,\infty) \cap \mp^+ ((0,\infty);\dn)$ be such that the function $B(t)$ satisfies  $0 < B(t) < \infty$ for every $t \in (0,\infty)$. Assume that $u \in \W(0,\infty) \cap C(0,\infty)$, $v \in \W_{m.p}(0,\infty)$ and $w \in \W(0,\infty)$. 
	Suppose that
	$$
	0 < \int_0^t \bigg( \int_s^t \bigg( \frac{B(y)}{y} \bigg)^{p'}\,dy \bigg)^{\frac{m'}{p'}} v_2(s)\,ds < \infty, \qquad t \in (0,\infty),
	$$
	\begin{gather*}
    \int_0^t \frac{s^{\frac{m'}{p'}} v_0(s)}{v_1(s)^{m' + 1}}\,ds < \infty, \quad \int_t^{\infty} \frac{s^{m' \big( 1 - \frac{2}{p}\big)} v_0 (s)}{v_1(s)^{m' + 1}} \,ds < \infty \qquad t \in (0,\infty), \\
    \intertext{and} \int_0^1 \frac{s^{m' \big( 1 - \frac{2}{p}\big)} v_0(s)}{v_1(s)^{m' + 1}} \,ds = \int_1^{\infty} \frac{s^{\frac{m'}{p'}} v_0(s)}{v_1(s)^{m' + 1}}\,ds =  \infty,
    \end{gather*}
    where $v_0$, $v_1$ and $v_2$ are defined by \eqref{defof_v}, \eqref{defof_u} and \eqref{defof_v2}, respectively. 
    Denote by
    $$
    V_2(t) : = \bigg( \int_0^t \frac{s^{\frac{m'}{p'}}v_0(s)}{v_1(s)^{m' + 1}}\,ds \bigg)^{\frac{1}{m'}}, \quad V_3(t) = \bigg( \int_0^{\infty} \bigg( \frac{t}{y + t}\bigg)^{\frac{m'}{p}}v_2(y)\,dy \bigg)^{\frac{1}{m'}}, \qquad 0 < t < \infty,
    $$ 
    $$
    {\mathcal B}(t,s) = \bigg( \int_t^s \bigg( \frac{B(y)}{y} \bigg)^{p'}\,dy \bigg)^{\frac{p(r-1)}{p-r}}  \,  \bigg( \frac{B(s)}{s} \bigg)^{p'}, \qquad 0 < t < s < \infty,
    $$	
    and 
    $$
    {\mathcal K}(\tau,t) = u(\tau) (\tau+t)^{-\frac{2}{2p-r}}, \qquad 0 < \tau,\,t < \infty.
    $$
	
	{\rm (i)} If $p \le q$, then
	\begin{align*}
	K \approx 
	&  \, \sup_{t \in (0,\infty)} \bigg( \int_0^t \bigg( \int_s^t \bigg( \frac{B(y)}{y} \bigg)^{p'}\,dy \bigg)^{\frac{m'}{p'}}
	v_2(s)\,ds \bigg)^{\frac{1}{m'}} \bigg( \sup_{t \le \tau} \frac{u(\tau)}{B(\tau)} \bigg) \bigg( \int_0^{\infty} \bigg(\frac{yt}{y+t}\bigg)^{\frac{q}{r}}w(y)\,dy \bigg)^{\frac{1}{q}} \\
	& +  \, \sup_{t \in (0,\infty)} \bigg( \int_0^t \bigg( \int_s^t \bigg( \frac{B(y)}{y} \bigg)^{p'}\,dy \bigg)^{\frac{m'}{p'}}
	v_2(s)\,ds \bigg)^{\frac{1}{m'}} \bigg(\int_t^{\infty} \bigg( \int_t^y \bigg( \sup_{x \le \tau}\bigg( \frac{u(\tau)}{B(\tau)} \bigg)^r  \bigg) \,dx \bigg)^{\frac{q}{r}}w(y)\,dy\bigg)^{\frac{1}{q}} \\
	& +\, \sup_{t \in (0,\infty)} V_2(t)  \sup_{t \le \tau} \bigg( \frac{u(\tau)}{B(\tau)} \bigg) \bigg(\int_t^{\tau} {\mathcal B}(t,s) \, ds \bigg)^{\frac{p-r}{pr}} \bigg(\int_0^{\infty} \bigg(\frac{yt}{y+t}\bigg)^{\frac{q}{r}}w(y)\,dy\bigg)^{\frac{1}{q}}\\
	& + \, \sup_{t \in (0,\infty)} V_2(t)  \, \sup_{x \in (t,\infty)} \bigg( \int_t^x {\mathcal B}(t,y) \, \bigg( \int_y^x \sup_{s \le \tau} \bigg( \frac{u(\tau)}{B(\tau)} \bigg)^{r} \,ds \bigg)^{\frac{p}{p-r}} \, dy \bigg)^{\frac{p-r}{pr}} \, \bigg( \int_x^{\infty} w(y) \,dy \bigg)^{\frac{1}{q}} & \\
	& + \, \sup_{t \in (0,\infty)} V_2(t)  \, \sup_{x \in (t,\infty)} \bigg( \int_t^x {\mathcal B}(t,y)  \, dy \bigg)^{\frac{p-r}{pr}} \, \bigg( \int_x^{\infty} \, \bigg( \int_x^y \bigg( \sup_{s \le \tau}\frac{u(\tau)}{B(\tau)}\bigg)^r \,ds \bigg)^{\frac{q}{r}} w(y) \,dy \bigg)^{\frac{1}{q}} \\
	& + \, \sup_{t \in (0,\infty)} V_2(t)  \, \sup_{x \in (t,\infty)} \bigg( \int_{[x,\infty)} \, d \, \bigg( - \sup_{t \le \tau} \bigg(\frac{u(\tau)}{B(\tau)}\bigg)^{\frac{pr}{p-r}} \bigg( \int_t^{\tau} {\mathcal B}(t,y)\,dy \bigg) \bigg) \bigg)^{\frac{p-r}{pr}} \, \bigg( \int_t^x (y - t)^{\frac{q}{r}} w(y) \,dy \bigg)^{\frac{1}{q}} \\
	& + \, \sup_{t \in (0,\infty)} V_2(t)  \, \sup_{x \in (t,\infty)} \bigg( \int_{(t,x]} \,  (y - t)^{\frac{p}{p-r}} \, d \, \bigg( - \sup_{y \le \tau} \bigg(\frac{u(\tau)}{B(\tau)}\bigg)^{\frac{pr}{p-r}} \bigg( \int_t^{\tau} {\mathcal B}(t,y)  \, dy \bigg) \bigg) \bigg)^{\frac{p-r}{pr}} \, \bigg( \int_x^{\infty} w(y) \,dy \bigg)^{\frac{1}{q}} \\
	& + \, \sup_{t \in (0,\infty)} V_2(t)  \, \bigg( \int_t^{\infty} (y - t)^{\frac{q}{r}} w(y)\,dy \bigg)^{\frac{1}{q}} \lim_{x \rightarrow \infty} \bigg(\sup_{x \le \tau} \frac{u(\tau)}{B(\tau)} \bigg( \int_t^{\tau} {\mathcal B}(t,y)  \, dy \bigg)^{\frac{p-r}{pr}}\bigg) \\
	& + \sup_{t \in (0,\infty)}  V_3(t) \,
	\sup_{s \in (0,\infty)} \bigg( \int_0^s (y+t)^{\frac{p(3r -2p)}{(2p-r)(p-r)}} \bigg( \int_y^s \bigg( \sup_{x \le \tau} {\mathcal K}(\tau,t)^r \bigg)\,dx \bigg)^{\frac{p}{p-r}}\,dy\bigg)^{\frac{p-r}{pr}} \, \bigg( \int_s^{\infty} w(z)\,dz \bigg)^{\frac{1}{q}} 
	\\
	& + \sup_{t \in (0,\infty)}  V_3(t) \, \sup_{s \in (0,\infty)}\bigg(\int_0^s (y+t)^{\frac{p(3r-2p)}{(2p-r)(p-r)}}\,dy\bigg)^\frac{p-r}{pr} \bigg(\int_s^{\infty} \bigg(\int_s^y\bigg(\sup_{x \le \tau} {\mathcal K}(\tau,t)^r \bigg)\,dx\bigg)^\frac{q}{r}\, w(y)dy\bigg)^\frac{1}{q}
	\\
	& + \sup_{t \in (0,\infty)}  V_3(t) \, \sup_{x \in (0,\infty)} \bigg(\int_{[x,\infty)}\,d\bigg(-\sup_{s\le \tau} {\mathcal K}(\tau,t)^\frac{pr}{p-r} \bigg(\int_0^{\tau}(y+t)^{\frac{p(3r-2p)}{(2p-r)(p-r)}}\,dy\bigg)\bigg)\bigg)^\frac{p-r}{pr} \bigg(\int_0^{x}y^\frac{q}{r}w(y)\,dy\bigg)^\frac{1}{q}
	\\
	& + \sup_{t \in (0,\infty)}  V_3(t) \, \sup_{x \in (0,\infty)} \bigg(\int_{(0,x]} s^\frac{p}{p-r}\,d\bigg(-\sup_{s\le \tau} {\mathcal K}(\tau,t)^\frac{pr}{p-r} \bigg(\int_0^{\tau}(y+t)^{\frac{p(3r-2p)}{(2p-r)(p-r)}}\,dy\bigg)\bigg)\bigg)^\frac{p-r}{pr} \bigg(\int_{x}^{\infty}w(y)\,dy\bigg)^\frac{1}{q}
	\\
	& + \bigg(\int_{0}^{\infty}y^\frac{q}{r}w(y)\,dy\bigg)^\frac{1}{q} \,\sup_{t \in (0,\infty)}  V_3(t) \, \lim_{s \rightarrow \infty}\bigg(\sup_{s\le \tau} {\mathcal K}(\tau,t) \bigg(\int_0^{\tau}(y+t)^{\frac{p(3r-2p)}{(2p-r)(p-r)}}\,dy\bigg)^\frac{p-r}{pr}\bigg)
	\\
	& + \sup_{t \in (0,\infty)}  V_3(t) \, t^\frac{r}{p (2p - r)}  \, \bigg(\int_{0}^{\infty}\bigg(\int_{0}^{x}\bigg(\sup_{s\le \tau} {\mathcal K}(\tau,t)^r \bigg)\,ds\bigg)^\frac{q}{r}w(x)\,dx\bigg)^\frac{1}{q}.
	\end{align*}

	{\rm (ii)} If $q < p$, then	
	\begin{align*}
	K \approx 
	&  \, \sup_{t \in (0,\infty)} \bigg( \int_0^t \bigg( \int_s^t \bigg( \frac{B(y)}{y} \bigg)^{p'}\,dy \bigg)^{\frac{m'}{p'}}
	v_2(s)\,ds \bigg)^{\frac{1}{m'}} \bigg( \sup_{t \le \tau} \frac{u(\tau)}{B(\tau)} \bigg) \bigg( \int_0^{\infty} \bigg(\frac{yt}{y+t}\bigg)^{\frac{q}{r}}w(y)\,dy \bigg)^{\frac{1}{q}} \\
	& +  \, \sup_{t \in (0,\infty)} \bigg( \int_0^t \bigg( \int_s^t \bigg( \frac{B(y)}{y} \bigg)^{p'}\,dy \bigg)^{\frac{m'}{p'}}
	v_2(s)\,ds \bigg)^{\frac{1}{m'}} \bigg(\int_t^{\infty} \bigg( \int_t^y \bigg( \sup_{x \le \tau}\bigg( \frac{u(\tau)}{B(\tau)} \bigg)^r  \bigg) \,dx \bigg)^{\frac{q}{r}}w(y)\,dy\bigg)^{\frac{1}{q}} \\
	& + \sup_{t \in (0,\infty)} V_2(t)  \sup_{t \le \tau} \bigg( \frac{u(\tau)}{B(\tau)} \bigg) \bigg(\int_t^{\tau} {\mathcal B}(t,s) \, ds \bigg)^{\frac{p-r}{pr}} \bigg(\int_0^{\infty} \bigg(\frac{yt}{y+t}\bigg)^{\frac{q}{r}}w(y)\,dy\bigg)^{\frac{1}{q}}\\
	& + \sup_{t \in (0,\infty)} V_2(t) \,\bigg(\int_t^{\infty} \bigg( \int_t^{\tau} {\mathcal B}(t,s) \,ds \bigg)^{\frac{p(q-r)}{r(p-q)}} {\mathcal B}(t,\tau) \, \bigg( \int_{\tau}^{\infty} \bigg( \int_{\tau}^z \sup_{s \le y} \bigg( \frac{u(y)}{B(y)} \bigg)^r \,ds \bigg)^{\frac{q}{r}} w(z) \,dz \bigg)^{\frac{p}{p-q}} \,d\tau\bigg)^{\frac{p-q}{pq}} \\
	& + \sup_{t \in (0,\infty)} V_2(t)  \, \bigg(\int_t^{\infty} \bigg( \int_t^{\tau} {\mathcal B}(t,s) \, \bigg( \int_s^{\tau} \sup_{z \le y} \bigg( \frac{u(y)}{B(y)} \bigg)^r \,dz \bigg)^{\frac{p}{p-r}} \,ds \bigg)^{\frac{q(p-r)}{r(p-q)}}  \bigg( \int_{\tau}^{\infty} w(x)\,dx\bigg)^{\frac{q}{p-q}} \, w(\tau)\,d\tau\bigg)^{\frac{p-q}{pq}} \\
	& + \sup_{t \in (0,\infty)} V_2(t) \, \bigg(\int_t^{\infty} \bigg( \int_{[x,\infty)} d \bigg( - \bigg( \sup_{y \le \tau} \bigg( \frac{u(\tau)}{B(\tau)} \bigg)^{\frac{pr}{p-r}} \bigg( \int_t^{\tau} {\mathcal B}(t,s)\,ds \bigg) \bigg) \bigg) \bigg)^{\frac{q(p-r)}{r(p-q)}} \bigg( \int_t^x (z - t)^{\frac{q}{r}} w(z) \,dz \bigg)^{\frac{q}{p-q}} (x-t)^{\frac{q}{r}} w(x) \,dx\bigg)^{\frac{p-q}{pq}} \\
	& + \sup_{t \in (0,\infty)} V_2(t) \, \bigg(\int_t^{\infty} \bigg( \int_{(t,x]} (y - t)^{\frac{p}{p-r}}d \bigg( - \bigg( \sup_{y \le \tau} \bigg( \frac{u(\tau)}{B(\tau)} \bigg)^{\frac{pr}{p-r}} \bigg( \int_t^{\tau} {\mathcal B}(t,s)\,ds \bigg) \bigg) \bigg) \bigg)^{\frac{q(p-r)}{r(p-q)}} \bigg( \int_x^{\infty} w(z) \,dz \bigg)^{\frac{q}{p-q}} w(x) \,dx\bigg)^{\frac{p-q}{pq}} \\
	& + \sup_{t \in (0,\infty)} V_2(t) \, \bigg( \int_t^{\infty} (y - t)^{\frac{q}{r}} w(y)\,dy \bigg)^{\frac{1}{q}} \lim_{x \rightarrow \infty} \bigg(\sup_{x \le \tau} \bigg( \frac{u(\tau)}{B(\tau)} \bigg)\bigg( \int_t^{\tau} {\mathcal B}(t,s)  \, ds \bigg)^{\frac{p-r}{pr}}\bigg) \\
	& + \sup_{t \in (0,\infty)}  V_3(t) \, \bigg(\int_0^{\infty} \bigg( \int_0^x (\tau+t)^{\frac{p(3r - 2p)}{(2p-r)(p-r)}} \,d\tau \bigg)^{\frac{p(q-r)}{r(p-q)}} \, (x+t)^{\frac{p(3r - 2p)}{(2p-r)(p-r)}} \, \bigg( \int_x^{\infty} \bigg( \int_x^z \bigg( \sup_{y \le \tau} {\mathcal K}(\tau,t)^r \bigg) \,dy \bigg)^{\frac{q}{r}} \, w(z)\,dz \bigg)^{\frac{p}{p-q}} \,dx \bigg)^{\frac{p-q}{pq}} \\
	& + \sup_{t \in (0,\infty)}  V_3(t) \, \bigg( \int_0^{\infty} \bigg( \int_0^{y} (x+t)^{\frac{p(3r - 2p)}{(2p - r)(p-r)}} \bigg( \int_x^y \bigg( \sup_{s \le \tau} {\mathcal K}(\tau,t)^r \bigg)\,ds \bigg)^{\frac{p}{p-r}}\,dx \bigg)^{\frac{q(p-r)}{r(p-q)}} \bigg(\int_y^{\infty} w(z)\,dz\bigg)^{\frac{q}{p-q}} w(y)\,dy \bigg)^{\frac{p-q}{pq}} \\
	& + \sup_{t \in (0,\infty)}  V_3(t) \, \bigg( \int_0^{\infty} \bigg( \int_{[x,\infty)} d\,\bigg( - \bigg(\sup_{y \le \tau} {\mathcal K}(\tau,t)^{\frac{pr}{p-r}} \bigg(\int_0^{\tau} (z+t)^{\frac{p(3r-2p)}{(2p - r)(p-r)}}\,dz\bigg)\bigg)\bigg)\bigg)^{\frac{q(p-r)}{r(p-q)}} \bigg(\int_0^x z^{\frac{q}{r}} w(z) \,dz\bigg)^{\frac{q}{p-q}} x^{\frac{q}{r}} w(x)\,dx \bigg)^{\frac{p-q}{pq}} \\
	& + \sup_{t \in (0,\infty)}  V_3(t) \, \bigg( \int_0^{\infty} \bigg( \int_{(0,x]} y^{\frac{p}{p-r}} d \bigg( - \bigg(\sup_{y \le \tau} {\mathcal K}(\tau,t)^{\frac{pr}{p-r}} \bigg( \int_0^{\tau} (z+t)^{\frac{p(3r-2p)}{(2p-r)(p-r)}}\,dz \bigg)\bigg)\bigg)\bigg)^{\frac{q(p-r)}{r(p-q)}} \bigg(\int_x^{\infty} w\bigg)^{\frac{q}{p-q}} w(x)\,dx  \bigg)^{\frac{p-q}{pq}}\\
	& + \bigg(\int_{0}^{\infty}y^\frac{q}{r}w(y)\,dy\bigg)^\frac{1}{q} \,\sup_{t \in (0,\infty)}  V_3(t) \, \lim_{s \rightarrow \infty}\bigg(\sup_{s\le \tau} {\mathcal K}(\tau,t) \bigg(\int_0^{\tau}(y+t)^{\frac{p(3r-2p)}{(2p-r)(p-r)}}\,dy\bigg)^\frac{p-r}{pr}\bigg)
	\\
	& + \sup_{t \in (0,\infty)}  V_3(t) \, t^\frac{r}{p (2p - r)}  \, \bigg(\int_{0}^{\infty}\bigg(\int_{0}^{x}\bigg(\sup_{s\le \tau} {\mathcal K}(\tau,t)^r \bigg)\,ds\bigg)^\frac{q}{r}w(x)\,dx\bigg)^\frac{1}{q}.
	\end{align*}
\end{theorem}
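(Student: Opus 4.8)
The plan is to mirror the architecture of the proof of Theorem~\ref{thm.prev}, replacing at the two decisive places the ``$p\le m$'' cases of the weighted iterated Hardy inequalities by their ``$m<p$'' counterparts, as the restriction $r<\min\{p,q\}$ now dictates. First I would invoke Lemma~\ref{R2}, all of whose hypotheses are in force, to reduce matters to $K\approx A+B$, and then estimate $A$ and $B$ separately. It is worth observing at the outset that the extra non-degeneracy conditions in the statement are exactly the finiteness and admissibility hypotheses needed later on: $\int_0^t s^{m'/p'}v_0(s)v_1(s)^{-m'-1}\,ds<\infty$ makes $V_2(t)$ finite, the integral $\int_t^\infty s^{m'(1-2/p)}v_0(s)v_1(s)^{-m'-1}\,ds=\int_t^\infty s^{-m'/p}v_2(s)\,ds$ is the one entering the ``$D_2$''-term below, and the two divergence conditions are the non-degeneracy required to legitimately invoke Theorems~\ref{aux.thm.2} and~\ref{aux.thm.3}; they serve no other purpose.

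For $A$, note that the innermost supremum over $\vp$ in Lemma~\ref{R2} is the optimal constant in an inequality of type~\eqref{iterH1} with inner exponent $p'$, outer exponent $m'$ and right-hand-side exponent $r'$. Since $r<p$ forces $p'<r'$ and $m\le r$ forces $r'\le m'$, it is part~(b) of Theorem~\ref{krepick} that applies and yields the two-term bound $E_1(h)+E_2(h)$; one uses the elementary identity $\bigl((B/u)^{r'}h^{1-r'}\bigr)^{1-r}=(u/B)^{r}h$, which is the source of the factors $(u/B)^{r}$ and of the kernel ${\mathcal B}(t,s)$, together with $p'/(r'-p')=p(r-1)/(p-r)$, $p'(r'-1)/(r'-p')=p/(p-r)$ and $(r'-p')/(r'p')=(p-r)/(pr)$. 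The $E_1$-term is handled exactly as in the proof of Theorem~\ref{thm.prev}: interchange the suprema, apply Theorem~\ref{transfermon} to the cone constraint, then Fubini's theorem and duality, and split $\int_0^\infty=\int_0^t+\int_t^\infty$ followed by the gluing $\int_0^t y^{q/r}w+t^{q/r}\int_t^\infty w\approx\int_0^\infty\bigl(yt/(y+t)\bigr)^{q/r}w$; this delivers the first two terms of the claimed formula. For the $E_2$-term I would pull the supremum over $t$ outside (producing the prefactor $V_2(t)$), dualize the inner $\ell^{p/(p-r)}$-structure after raising to the $r$-th power, use Fubini's theorem and Theorem~\ref{transfermon} to remove the cone constraint on $h$, and then dualize the outer functional $\|\cdot\|_{q/(q-r),w^{r/(r-q)}}$. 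The contribution coming from the part of the resulting $h$-integral supported on $(0,t)$ collapses, after a H\"older step, to $\sup_{t\le\tau}\tfrac{u(\tau)}{B(\tau)}\bigl(\int_t^\tau{\mathcal B}(t,s)\,ds\bigr)^{(p-r)/(pr)}$ times the glued $w$-weight, which is the third term; what is left is precisely the optimal constant, on the interval $(t,\infty)$, in a weighted Hardy-type inequality involving a supremum of exactly the form solved in Theorem~\ref{aux.thm.2} (the dualizing variable appearing there as a left integral inside the supremum), with $a\equiv1$, with ``$u$'' $=(u/B)^{r}$, with ``$v^{1-p'}$'' $={\mathcal B}(t,\cdot)$, with ``$p$'' $=(p/(p-r))'=p/r$ and ``$q$'' $=q/r$. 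Applying Theorem~\ref{aux.thm.2} in its ``$p\le q$'' form, which here means $p\le q$, gives the five remaining $A$-terms of part~(i), and in its ``$q<p$'' form gives those of part~(ii).

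The estimate of $B$ runs in complete parallel, only messier. Its innermost supremum over $\vp$ is the optimal constant in an inequality of type~\eqref{iterH2}, so with $r<p$ and $m\le r$ it is part~(b) of Theorem~\ref{gks} that applies, producing $D_2(h)+D_3(h)$; here one uses $\bigl(u^{-r'}h^{1-r'}\bigr)^{1-r}=u^{r}h$ and $\bigl(\int_s^\infty y^{-p'}\,dy\bigr)^{m'/p'}\approx s^{-m'/p}$, which brings in the weight $s^{-m'/p}v_2$ and, after gluing with the $\int_0^t v_2$ contribution, the prefactor $V_3(t)$. After the same dualization / Theorem~\ref{transfermon} / Fubini cascade, together with an integration by parts (Theorem~\ref{thm.IBP.0}) used to rewrite $\int_t^\infty\bigl(\int_0^s u^{r}h\bigr)^{r/(p-r)}u^{r}h$ in terms of $\int_t^s u^{r}h$, the dualizing variable now ends up as a right integral inside a supremum, so the problem reduces to a weighted Hardy-type inequality of the ``$\int_\tau^\infty f$''-type, solved by Theorem~\ref{aux.thm.3}; the $D_2$- and $D_3$-contributions are thereby merged, after the gluings, into a single family of terms governed by $V_3(t)$ and ${\mathcal K}(\tau,t)$. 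In that application the auxiliary weight ``$v$'' is a weight built from $v_2$ and powers of the variable for which $\int_\tau^\infty v^{1-p'}\approx(\tau+t)^{-\mathrm{const}}$, so the auxiliary weight ${\mathcal U}$ of Theorem~\ref{aux.thm.3} becomes $u(\tau)(\tau+t)^{-2/(2p-r)}={\mathcal K}(\tau,t)$ — the exponent $2/(p'+1)$ of that theorem turning into $2/(2p-r)$ once the conjugate exponent $p/(p-r)$ is inserted — which accounts for the $(2p-r)$- and $(3r-2p)$-exponents in the last block of terms, and the dichotomy in Theorem~\ref{aux.thm.3} again produces the split into parts~(i) and~(ii). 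Adding the estimates for $A$ and $B$, and discarding terms dominated by others after the various gluings, gives the asserted equivalences.

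The hard part, where essentially all the work lies, is the weight bookkeeping in the $E_2$- and $D_3$-reductions: one must carry the data $(u,b,v_2,t)$ faithfully through the long chain of dualities, Fubini interchanges and applications of Theorem~\ref{transfermon}, verify the many elementary exponent identities listed above and the precise $v_2$-integral estimates behind the $(\tau+t)$-powers, and — this being essential for the legitimacy of the argument — check that the transformed ``$u$'' appearing in the hypotheses of Theorems~\ref{aux.thm.2} and~\ref{aux.thm.3} is continuous on $(t,\infty)$, which follows from $u\in C(0,\infty)$ together with the continuity of $B$ (guaranteed since $b$ is locally integrable). A further and more tedious matter is to organize the five terms delivered by Theorem~\ref{aux.thm.2} and the six delivered by Theorem~\ref{aux.thm.3}, together with the ``boundary'' and $E_1$/$D_2$ terms, into the compact list displayed in the statement; Lemma~\ref{gluing.lem.0} is the natural device for the residual $\esup$-type simplifications.
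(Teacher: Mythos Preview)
Your proposal is correct and follows essentially the same architecture as the paper's proof: Lemma~\ref{R2} for $K\approx A+B$; Theorem~\ref{krepick}(b) on $A$ with the $E_1$-part recycled from Theorem~\ref{thm.prev} and the $E_2$-part reduced via duality/Fubini/Theorem~\ref{transfermon} to Theorem~\ref{aux.thm.2}; and Theorem~\ref{gks}(b) on $B$, merged via Lemma~\ref{gluing.lem.0} and Theorem~\ref{thm.IBP.0}, then reduced by the same cascade to Theorem~\ref{aux.thm.3}. Two small corrections to your bookkeeping: in the $B$-part the paper applies the gluing lemma and the integration by parts \emph{before} the duality/\textsc{transfermon} cascade (not after), and in the application of Theorem~\ref{aux.thm.3} the auxiliary ${\mathcal U}$ is ${\mathcal K}(\cdot,t)^r$ rather than ${\mathcal K}(\cdot,t)$, since the ``$u$'' there is $u^r$ and $2/(p'+1)=2(p-r)/(2p-r)$ (your stated exponent $2/(2p-r)$ is the one appearing in ${\mathcal K}$ only after taking the $r$-th root).
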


\begin{proof}
Recall that, by Lemma \ref{R2}, we know that
$$
K \approx A + B.
$$

We estimate $A$. By Theorem \ref{krepick}, (b), we have that
\begin{align*}
\sup_{\vp \in {\mathfrak M}^+} \frac{1}{\|\vp\|_{r',h^{1-r'},(0,\infty)}}
	\bigg( \int_0^{\infty} \bigg( \int_t^{\infty} \bigg( \int_s^{\infty} \vp(x) \,dx  \bigg)^{p'} \bigg( \frac{B(s)}{s} \bigg)^{p'}\,ds \bigg)^{\frac{m'}{p'}} v_2(t)\,dt \bigg)^{\frac{1}{m'}} & \\
& \hspace{-11cm} \approx \sup_{t \in (0,\infty)} \bigg( \int_0^t \bigg( \int_s^t \bigg( \frac{B(y)}{y} \bigg)^{p'}\,dy \bigg)^{\frac{m'}{p'}} v_2(s)\,ds \bigg)^{\frac{1}{m'}} \, \bigg( \int_t^{\infty} h(x) \bigg( \frac{u(x)}{B(x)} \bigg)^r \,dx \bigg)^{\frac{1}{r}} \\
& \hspace{-10.5cm} + \sup_{t \in (0,\infty)} \bigg( \int_0^t v_2(s)\,ds \bigg)^{\frac{1}{m'}} \, \bigg(\int_t^{\infty} \bigg( \int_t^s \bigg( \frac{B(y)}{y} \bigg)^{p'}\,dy \bigg)^{\frac{p(r-1)}{p-r}}  \,  \bigg( \frac{B(s)}{s} \bigg)^{p'} \, \bigg( \int_s^{\infty} h(x) \bigg( \frac{u(x)}{B(x)} \bigg)^r \,dx \bigg)^{\frac{p}{p-r}} \,ds \bigg)^{\frac{p-r}{pr}} \\
& \hspace{-11cm} : = I + II.
\end{align*}

Consequently, we get that
\begin{align*}
	A \approx & \, \sup_{g \in \mp^+} \frac{ 1 } {\|g\|_{\frac{q}{q-r},w^{\frac{r}{r-q}}, (0,\infty)}^{\frac{1}{r}}} \sup_{h:\, \int_0^x h \le \int_0^x \big( \int_{\tau}^{\infty} g\big)\,d\tau} \big\{ I + II \big\} : = A_1 + A_2.
\end{align*} 

It has been already shown in the proof of Theorem \ref{thm.prev} that
\begin{align*}
	A_1 \approx	&  \, \sup_{t \in (0,\infty)} \bigg( \int_0^t \bigg( \int_s^t \bigg( \frac{B(y)}{y} \bigg)^{p'}\,dy \bigg)^{\frac{m'}{p'}}
	v_2(s)\,ds \bigg)^{\frac{1}{m'}} \bigg( \sup_{t \le \tau} \frac{u(\tau)}{B(\tau)} \bigg) \bigg( \int_0^{\infty} \bigg(\frac{yt}{y+t}\bigg)^{\frac{q}{r}}w(y)\,dy \bigg)^{\frac{1}{q}} \\
	& +  \, \sup_{t \in (0,\infty)} \bigg( \int_0^t \bigg( \int_s^t \bigg( \frac{B(y)}{y} \bigg)^{p'}\,dy \bigg)^{\frac{m'}{p'}}
	v_2(s)\,ds \bigg)^{\frac{1}{m'}} \bigg(\int_t^{\infty} \bigg( \int_t^y \bigg( \sup_{x \le \tau}\bigg( \frac{u(\tau)}{B(\tau)} \bigg)^r  \bigg) \,dx \bigg)^{\frac{q}{r}}w(y)\,dy\bigg)^{\frac{1}{q}}.
\end{align*}

Interchanging the suprema, we get that
\begin{align*}
	A_2 =  \sup_{g \in \mp^+} \frac{ 1 }
	{\|g\|_{\frac{q}{q-r},w^{\frac{r}{r-q}},(0,\infty)}^{\frac{1}{r}}} \sup_{t \in (0,\infty)} V_2(t) \sup_{h:\, \int_0^x h \le \int_0^x \big( \int_{\tau}^{\infty} g\big)\,d\tau} \bigg(\int_t^{\infty} {\mathcal B}(t,s)\, \bigg( \int_s^{\infty} h(x) \bigg( \frac{u(x)}{B(x)} \bigg)^r \,dx \bigg)^{\frac{p}{p-r}} \, ds \bigg)^{\frac{p-r}{pr}}.
\end{align*}

By duality (recall that $p / (p-r) = (p / r)'$), we have that
\begin{align*}
\bigg(\int_t^{\infty} {\mathcal B}(t,s) \,  \bigg( \int_s^{\infty} h(x) \bigg( \frac{u(x)}{B(x)} \bigg)^r \,dx \bigg)^{\frac{p}{p-r}} \,ds \bigg)^{\frac{p-r}{pr}} = \left\{ \sup_{\psi \in {\mathfrak M}^+(t,\infty)} \frac{\int_t^{\infty} \bigg( \int_s^{\infty} h(x) \bigg( \frac{u(x)}{B(x)} \bigg)^r \,dx \bigg) \, \psi (s) \,  {\mathcal B}(t,s) \,ds }{\bigg( \int_t^{\infty} \psi(s)^{\frac{p}{r}}  \, {\mathcal B}(t,s) \,  ds \bigg)^{\frac{r}{p}}} \right\}^{\frac{1}{r}}.
\end{align*}

Thus
\begin{align*}
	A_2 =  \sup_{g \in \mp^+} \frac{ 1 }
	{\|g\|_{\frac{q}{q-r},w^{\frac{r}{r-q}},(0,\infty)}^{\frac{1}{r}}}  \sup_{t \in (0,\infty)} V_2(t) & \\
	& \hspace{-3cm} \left\{\sup_{h:\, \int_0^x h \le \int_0^x \big( \int_{\tau}^{\infty} g\big)\,d\tau} \sup_{\psi \in {\mathfrak M}^+(t,\infty)} \frac{\int_t^{\infty} \bigg( \int_s^{\infty} h(x) \bigg( \frac{u(x)}{B(x)} \bigg)^r \,dx \bigg) \, \psi (s) \,  {\mathcal B}(t,s) \,ds }{\bigg( \int_t^{\infty} \psi(s)^{\frac{p}{r}}  \, {\mathcal B}(t,s) \,  ds \bigg)^{\frac{r}{p}}} \right\}^{\frac{1}{r}}.
\end{align*}

By Fubini's Theorem, interchanging the suprema, we get that
\begin{align*}
	A_2 =  \sup_{g \in \mp^+} \frac{ 1 }
	{\|g\|_{\frac{q}{q-r},w^{\frac{r}{r-q}},(0,\infty)}^{\frac{1}{r}}} \sup_{t \in (0,\infty)} V_2(t) \left\{ \sup_{\psi \in {\mathfrak M}^+(t,\infty)} \bigg( \int_t^{\infty} \psi(s)^{\frac{p}{r}}  \, {\mathcal B}(t,s) \,  ds \bigg)^{-\frac{r}{p}} \right. & \\
	& \hspace{-7cm}  \left. \sup_{h:\, \int_0^x h \le \int_0^x \big( \int_{\tau}^{\infty} g\big)\,d\tau}  \int_t^{\infty} h(x) \bigg( \frac{u(x)}{B(x)} \bigg)^r  \, \int_t^x \psi (s) \, {\mathcal B}(t,s) \, ds \,dx \right\}^{\frac{1}{r}}.
\end{align*}

By Theorem \ref{transfermon}, we have that
\begin{align*}
\sup_{h:\, \int_0^x h \le \int_0^x \big( \int_{\tau}^{\infty} g\big)\,d\tau}  \int_t^{\infty} h(x) \bigg( \frac{u(x)}{B(x)} \bigg)^r  \, \int_t^x \psi (s) \, {\mathcal B}(t,s) \, ds \,dx & \\
& \hspace{-7cm}  = \sup_{h:\, \int_0^x h \le \int_0^x \big( \int_{\tau}^{\infty} g\big)\,d\tau}  \int_0^{\infty} h(x) \chi_{(t,\infty)}(x) \bigg( \frac{u(x)}{B(x)} \bigg)^r  \, \int_t^x \psi (s) \, {\mathcal B}(t,s) \, ds \,dx \\
& \hspace{-7cm}  = \int_0^{\infty} \bigg( \int_x^{\infty} g(z)\,dz \bigg) \sup_{x \le \tau} \chi_{(t,\infty)}(\tau) \bigg( \frac{u(\tau)}{B(\tau)} \bigg)^r  \, \int_t^{\tau} \psi (s) \, {\mathcal B}(t,s) \, ds \,dx \\
& \hspace{-7cm}  = \int_0^t \bigg( \int_x^{\infty} g(z)\,dz \bigg) \sup_{x \le \tau} \chi_{(t,\infty)}(\tau) \bigg( \frac{u(\tau)}{B(\tau)} \bigg)^r  \, \int_t^{\tau} \psi (s) \, {\mathcal B}(t,s) \, ds \,dx \\
& \hspace{-6.5cm} + \int_t^{\infty} \bigg( \int_x^{\infty} g(z)\,dz \bigg) \sup_{x \le \tau} \chi_{(t,\infty)}(\tau) \bigg( \frac{u(\tau)}{B(\tau)} \bigg)^r  \, \int_t^{\tau} \psi (s) \, {\mathcal B}(t,s) \, ds \,dx.
\end{align*}
Since 
\begin{align*}
\sup_{x \le \tau} \chi_{(t,\infty)}(\tau) \bigg( \frac{u(\tau)}{B(\tau)} \bigg)^r  \, \int_t^{\tau} \psi (s) \, {\mathcal B}(t,s) \, ds  = \sup_{t \le \tau} \bigg( \frac{u(\tau)}{B(\tau)} \bigg)^r  \, \int_t^{\tau} \psi (s) \, {\mathcal B}(t,s) \, ds,
\end{align*}
when $0 < x \le t$, by Fubini's Theorem,  we get that
\begin{align*}
\sup_{h:\, \int_0^x h \le \int_0^x \big( \int_{\tau}^{\infty} g\big)\,d\tau}  \int_t^{\infty} h(x) \bigg( \frac{u(x)}{B(x)} \bigg)^r  \, \int_t^x \psi (s) \, {\mathcal B}(t,s) \, ds \,dx & \\
& \hspace{-7cm}  = \sup_{t \le \tau} \bigg( \frac{u(\tau)}{B(\tau)} \bigg)^r  \, \int_t^{\tau} \psi (s) \, {\mathcal B}(t,s) \, ds \, \int_0^t \bigg( \int_x^{\infty} g(z)\,dz \bigg) \,dx \\
& \hspace{-6.5cm} + \int_t^{\infty} \bigg( \int_x^{\infty} g(z)\,dz \bigg) \sup_{x \le \tau} \bigg( \frac{u(\tau)}{B(\tau)} \bigg)^r  \, \int_t^{\tau} \psi (s) \, {\mathcal B}(t,s) \, ds \,dx \\
& \hspace{-7cm}  = t \sup_{t \le \tau} \bigg( \frac{u(\tau)}{B(\tau)} \bigg)^r  \, \int_t^{\tau} \psi (s) \, {\mathcal B}(t,s) \, ds \, \int_t^{\infty} g(z)\,dz \\ 
& \hspace{-6.5cm} + \sup_{t \le \tau} \bigg( \frac{u(\tau)}{B(\tau)} \bigg)^r  \, \int_t^{\tau} \psi (s) \, {\mathcal B}(t,s) \, ds \, \int_0^t g(z) \,z\,dz \\
& \hspace{-6.5cm} + \int_t^{\infty} g(z) \int_t^z \sup_{x \le \tau} \bigg( \frac{u(\tau)}{B(\tau)} \bigg)^r  \, \int_t^{\tau} \psi (s) \, {\mathcal B}(t,s) \,ds \,dx \,dz\\
& \hspace{-7cm} \approx \sup_{t \le \tau} \bigg( \frac{u(\tau)}{B(\tau)} \bigg)^r  \, \int_t^{\tau} \psi (s) \, {\mathcal B}(t,s) \, ds \, \int_0^{\infty} g(z)\,\bigg(\frac{zt}{z+t}\bigg) \, dz \\
& \hspace{-6.5cm} + \int_t^{\infty} g(z) \int_t^z \sup_{x \le \tau} \bigg( \frac{u(\tau)}{B(\tau)} \bigg)^r  \, \int_t^{\tau} \psi (s) \, {\mathcal B}(t,s) \,ds \,dx \,dz
\end{align*}

Thus
\begin{align*}
	A_2 \approx & \sup_{g \in \mp^+} \frac{ 1 } {\|g\|_{\frac{q}{q-r},w^{\frac{r}{r-q}},(0,\infty)}^{\frac{1}{r}}} \sup_{t \in (0,\infty)} V_2(t)  \left\{ \sup_{\psi \in {\mathfrak M}^+(t,\infty)} \frac{\sup_{t \le \tau} \bigg( \frac{u(\tau)}{B(\tau)} \bigg)^r  \, \int_t^{\tau} \psi (s) \, {\mathcal B}(t,s) \, ds \, \int_0^{\infty} g(z)\,\bigg(\frac{zt}{z+t}\bigg)\,dz}{\bigg( \int_t^{\infty} \psi(s)^{\frac{p}{r}}  \, {\mathcal B}(t,s) \,  ds \bigg)^{\frac{r}{p}}} \right\}^{\frac{1}{r}} \\
	& + \sup_{g \in \mp^+} \frac{ 1 } 	{\|g\|_{\frac{q}{q-r},w^{\frac{r}{r-q}},(0,\infty)}^{\frac{1}{r}}} \sup_{t \in (0,\infty)} V_2(t) \left\{ \sup_{\psi \in {\mathfrak M}^+(t,\infty)} \frac{\int_t^{\infty} g(z) \int_t^z \sup_{x \le \tau} \bigg( \frac{u(\tau)}{B(\tau)} \bigg)^r  \, \int_t^{\tau} \psi (s) \, {\mathcal B}(t,s) \,ds \,dx \,dz}{\bigg( \int_t^{\infty} \psi(s)^{\frac{p}{r}}  \, {\mathcal B}(t,s) \,  ds \bigg)^{\frac{r}{p}}} \right\}^{\frac{1}{r}}.
\end{align*}
Interchanging the suprema, we get that
\begin{align*}
	A_2 \approx & \sup_{t \in (0,\infty)} V_2(t) \sup_{t \le \tau} \bigg( \frac{u(\tau)}{B(\tau)} \bigg)  \left\{\sup_{\psi \in {\mathfrak M}^+(t,\infty)} \frac{\int_t^{\tau} \psi (s) \, {\mathcal B}(t,s) \, ds}{\bigg( \int_t^{\infty} \psi(s)^{\frac{p}{r}}  \, {\mathcal B}(t,s) \,  ds \bigg)^{\frac{r}{p}}} \sup_{g \in \mp^+} \frac{ \int_0^{\infty} g(z)\,\bigg(\frac{zt}{z+t}\bigg)\,dz} {\|g\|_{\frac{q}{q-r},w^{\frac{r}{r-q}},(0,\infty)}} \right\}^{\frac{1}{r}} \\
	& + \sup_{t \in (0,\infty)} V_2(t) \left\{ \sup_{\psi \in {\mathfrak M}^+(t,\infty)} \frac{1}{\bigg( \int_t^{\infty} \psi(s)^{\frac{p}{r}}  \, {\mathcal B}(t,s) \,  ds \bigg)^{\frac{r}{p}}} \,\sup_{g \in \mp^+} \frac{ \int_t^{\infty} g(z) \int_t^z \sup_{x \le \tau} \bigg( \frac{u(\tau)}{B(\tau)} \bigg)^r  \, \int_t^{\tau} \psi (s) \, {\mathcal B}(t,s) \,ds \,dx \,dz }
	{\|g\|_{\frac{q}{q-r},w^{\frac{r}{r-q}},(0,\infty)}} \right\}^{\frac{1}{r}}\\
	\approx & \sup_{t \in (0,\infty)} V_2(t) \left\{ \sup_{t \le \tau} \bigg( \frac{u(\tau)}{B(\tau)} \bigg)^r \sup_{\psi \in {\mathfrak M}^+(t,\infty)} \frac{\int_t^{\infty} \psi (s) \, {\mathcal B}(t,s) \chi_{(t,\tau)}(s)\, ds}{\bigg( \int_t^{\infty} \psi(s)^{\frac{p}{r}}  \, {\mathcal B}(t,s) \,  ds \bigg)^{\frac{r}{p}}} \sup_{g \in \mp^+} \frac{ \int_0^{\infty} g(z)\,\bigg(\frac{zt}{z+t}\bigg)\,dz} {\|g\|_{\frac{q}{q-r},w^{\frac{r}{r-q}},(0,\infty)}} \right\}^{\frac{1}{r}} \\
	& + \sup_{t \in (0,\infty)} V_2(t) \left\{ \sup_{\psi \in {\mathfrak M}^+(t,\infty)} \frac{1}{\bigg( \int_t^{\infty} \psi(s)^{\frac{p}{r}}  \, {\mathcal B}(t,s) \,  ds \bigg)^{\frac{r}{p}}} \,\sup_{g \in \mp^+} \frac{ \int_0^{\infty} g(z) \int_t^z \sup_{x \le \tau} \bigg( \frac{u(\tau)}{B(\tau)} \bigg)^r  \, \int_t^{\tau} \psi (s) \, {\mathcal B}(t,s) \,ds \,dx \, \chi_{(t,\infty)}(z)\,dz }
	{\|g\|_{\frac{q}{q-r},w^{\frac{r}{r-q}},(0,\infty)}} \right\}^{\frac{1}{r}}.
\end{align*}
By duality, we arrive at
\begin{align*}
	A_2 \approx & \sup_{t \in (0,\infty)} V_2(t)  \sup_{t \le \tau} \bigg( \frac{u(\tau)}{B(\tau)} \bigg) \bigg(\int_t^{\tau} {\mathcal B}(t,s) \, ds \bigg)^{\frac{p-r}{pr}} \bigg(\int_0^{\infty} \bigg(\frac{yt}{y+t}\bigg)^{\frac{q}{r}}w(y)\,dy\bigg)^{\frac{1}{q}}\\
	& + \sup_{t \in (0,\infty)} V_2(t) \left\{ \sup_{\psi \in {\mathfrak M}^+(t,\infty)} \frac{\bigg( \int_t^{\infty}  \bigg( \int_t^z \sup_{x \le \tau} \bigg( \frac{u(\tau)}{B(\tau)} \bigg)^r  \, \int_t^{\tau} \psi (s) \, {\mathcal B}(t,s) \,ds \,dx \bigg)^{\frac{q}{r}} w(z)\,dz\bigg)^{\frac{r}{q}}}{\bigg( \int_t^{\infty} \psi(s)^{\frac{p}{r}}  \, {\mathcal B}(t,s) \,  ds \bigg)^{\frac{r}{p}}} \right\}^{\frac{1}{r}}.
\end{align*}

{\rm (i)} Let $p \le q$. By Theorem \ref{aux.thm.2}, we have for any $t \in (0,\infty)$ that
\begin{align*}
\sup_{\psi \in {\mathfrak M}^+(t,\infty)} \frac{\bigg( \int_t^{\infty}  \bigg( \int_t^z \sup_{x \le \tau} \bigg( \frac{u(\tau)}{B(\tau)} \bigg)^r  \, \int_t^{\tau} \psi (s) \, {\mathcal B}(t,s) \,ds \,dx \bigg)^{\frac{q}{r}} w(z)\,dz\bigg)^{\frac{r}{q}}}{\bigg( \int_t^{\infty} \psi(s)^{\frac{p}{r}}  \, {\mathcal B}(t,s) \,  ds \bigg)^{\frac{r}{p}}} & \\
& \hspace{-9cm} \approx \,   \sup_{x \in (t,\infty)} \bigg( \int_t^x {\mathcal B}(t,y) \, \bigg( \int_y^x \sup_{s \le \tau} \bigg( \frac{u(\tau)}{B(\tau)} \bigg)^{r} \,ds \bigg)^{\frac{p}{p-r}} \, dy \bigg)^{\frac{p-r}{p}} \, \bigg( \int_x^{\infty} w(y) \,dy \bigg)^{\frac{r}{q}} \\
& \hspace{-8.5cm} +  \, \sup_{x \in (t,\infty)} \bigg( \int_t^x {\mathcal B}(t,y)  \, dy \bigg)^{\frac{p-r}{p}} \, \bigg( \int_x^{\infty} \, \bigg( \int_x^y \bigg( \sup_{s \le \tau}\frac{u(\tau)}{B(\tau)}\bigg)^r \,ds \bigg)^{\frac{q}{r}} w(y) \,dy \bigg)^{\frac{r}{q}} \\
& \hspace{-8.5cm} +  \, \sup_{x \in (t,\infty)} \bigg( \int_{[x,\infty)} \, d \, \bigg( - \sup_{t \le \tau} \bigg(\frac{u(\tau)}{B(\tau)}\bigg)^{\frac{pr}{p-r}} \bigg( \int_t^{\tau} {\mathcal B}(t,y)\,dy \bigg) \bigg) \bigg)^{\frac{p-r}{p}} \, \bigg( \int_t^x (y - t)^{\frac{q}{r}} w(y) \,dy \bigg)^{\frac{r}{q}} \\
& \hspace{-8.5cm} +  \, \sup_{x \in (t,\infty)} \bigg( \int_{(t,x]} \,  (y - t)^{\frac{p}{p-r}} \, d \, \bigg( - \sup_{y \le \tau} \bigg(\frac{u(\tau)}{B(\tau)}\bigg)^{\frac{pr}{p-r}} \bigg( \int_t^{\tau} {\mathcal B}(t,y)  \, dy \bigg) \bigg) \bigg)^{\frac{p-r}{p}} \, \bigg( \int_x^{\infty} w(y) \,dy \bigg)^{\frac{r}{q}} \\
& \hspace{-8.5cm} +  \, \bigg( \int_t^{\infty} (y - t)^{\frac{q}{r}} w(y)\,dy \bigg)^{\frac{1}{q}} \lim_{x \rightarrow \infty} \bigg(\sup_{x \le \tau} \bigg( \frac{u(\tau)}{B(\tau)} \bigg)^r \bigg( \int_t^{\tau} {\mathcal B}(t,y)  \, dy \bigg)^{\frac{p-r}{p}}\bigg).
\end{align*}

Consequently,
\begin{align*}
A_2 \approx & \sup_{t \in (0,\infty)} V_2(t)  \sup_{t \le \tau} \bigg( \frac{u(\tau)}{B(\tau)} \bigg) \bigg(\int_t^{\tau} {\mathcal B}(t,s) \, ds \bigg)^{\frac{p-r}{pr}} \bigg(\int_0^{\infty} \bigg(\frac{yt}{y+t}\bigg)^{\frac{q}{r}}w(y)\,dy\bigg)^{\frac{1}{q}}\\
& + \, \sup_{t \in (0,\infty)} V_2(t)  \, \sup_{x \in (t,\infty)} \bigg( \int_t^x {\mathcal B}(t,y) \, \bigg( \int_y^x \sup_{s \le \tau} \bigg( \frac{u(\tau)}{B(\tau)} \bigg)^{r} \,ds \bigg)^{\frac{p}{p-r}} \, dy \bigg)^{\frac{p-r}{pr}} \, \bigg( \int_x^{\infty} w(y) \,dy \bigg)^{\frac{1}{q}} & \\
& + \, \sup_{t \in (0,\infty)} V_2(t)  \, \sup_{x \in (t,\infty)} \bigg( \int_t^x {\mathcal B}(t,y)  \, dy \bigg)^{\frac{p-r}{pr}} \, \bigg( \int_x^{\infty} \, \bigg( \int_x^y \bigg( \sup_{s \le \tau}\frac{u(\tau)}{B(\tau)}\bigg)^r \,ds \bigg)^{\frac{q}{r}} w(y) \,dy \bigg)^{\frac{1}{q}} \\
& + \, \sup_{t \in (0,\infty)} V_2(t)  \, \sup_{x \in (t,\infty)} \bigg( \int_{[x,\infty)} \, d \, \bigg( - \sup_{t \le \tau} \bigg(\frac{u(\tau)}{B(\tau)}\bigg)^{\frac{pr}{p-r}} \bigg( \int_t^{\tau} {\mathcal B}(t,y)\,dy \bigg) \bigg) \bigg)^{\frac{p-r}{pr}} \, \bigg( \int_t^x (y - t)^{\frac{q}{r}} w(y) \,dy \bigg)^{\frac{1}{q}} \\
& + \, \sup_{t \in (0,\infty)} V_2(t)  \, \sup_{x \in (t,\infty)} \bigg( \int_{(t,x]} \,  (y - t)^{\frac{p}{p-r}} \, d \, \bigg( - \sup_{y \le \tau} \bigg(\frac{u(\tau)}{B(\tau)}\bigg)^{\frac{pr}{p-r}} \bigg( \int_t^{\tau} {\mathcal B}(t,y)  \, dy \bigg) \bigg) \bigg)^{\frac{p-r}{pr}} \, \bigg( \int_x^{\infty} w(y) \,dy \bigg)^{\frac{1}{q}} \\
& + \, \sup_{t \in (0,\infty)} V_2(t)  \, \bigg( \int_t^{\infty} (y - t)^{\frac{q}{r}} w(y)\,dy \bigg)^{\frac{1}{q}} \lim_{x \rightarrow \infty} \bigg(\sup_{x \le \tau} \frac{u(\tau)}{B(\tau)} \bigg( \int_t^{\tau} {\mathcal B}(t,y)  \, dy \bigg)^{\frac{p-r}{pr}}\bigg).
\end{align*}

Combining, we arrive at
\begin{align*}
A \approx &  \, \sup_{t \in (0,\infty)} \bigg( \int_0^t \bigg( \int_s^t \bigg( \frac{B(y)}{y} \bigg)^{p'}\,dy \bigg)^{\frac{m'}{p'}}
v_2(s)\,ds \bigg)^{\frac{1}{m'}} \bigg( \sup_{t \le \tau} \frac{u(\tau)}{B(\tau)} \bigg) \bigg( \int_0^{\infty} \bigg(\frac{yt}{y+t}\bigg)^{\frac{q}{r}}w(y)\,dy \bigg)^{\frac{1}{q}} \\
& +  \, \sup_{t \in (0,\infty)} \bigg( \int_0^t \bigg( \int_s^t \bigg( \frac{B(y)}{y} \bigg)^{p'}\,dy \bigg)^{\frac{m'}{p'}}
v_2(s)\,ds \bigg)^{\frac{1}{m'}} \bigg(\int_t^{\infty} \bigg( \int_t^y \bigg( \sup_{x \le \tau}\bigg( \frac{u(\tau)}{B(\tau)} \bigg)^r  \bigg) \,dx \bigg)^{\frac{q}{r}}w(y)\,dy\bigg)^{\frac{1}{q}} \\
& +\, \sup_{t \in (0,\infty)} V_2(t)  \sup_{t \le \tau} \bigg( \frac{u(\tau)}{B(\tau)} \bigg) \bigg(\int_t^{\tau} {\mathcal B}(t,s) \, ds \bigg)^{\frac{p-r}{pr}} \bigg(\int_0^{\infty} \bigg(\frac{yt}{y+t}\bigg)^{\frac{q}{r}}w(y)\,dy\bigg)^{\frac{1}{q}}\\
& + \, \sup_{t \in (0,\infty)} V_2(t)  \, \sup_{x \in (t,\infty)} \bigg( \int_t^x {\mathcal B}(t,y) \, \bigg( \int_y^x \sup_{s \le \tau} \bigg( \frac{u(\tau)}{B(\tau)} \bigg)^{r} \,ds \bigg)^{\frac{p}{p-r}} \, dy \bigg)^{\frac{p-r}{pr}} \, \bigg( \int_x^{\infty} w(y) \,dy \bigg)^{\frac{1}{q}} & \\
& + \, \sup_{t \in (0,\infty)} V_2(t)  \, \sup_{x \in (t,\infty)} \bigg( \int_t^x {\mathcal B}(t,y)  \, dy \bigg)^{\frac{p-r}{pr}} \, \bigg( \int_x^{\infty} \, \bigg( \int_x^y \bigg( \sup_{s \le \tau}\frac{u(\tau)}{B(\tau)}\bigg)^r \,ds \bigg)^{\frac{q}{r}} w(y) \,dy \bigg)^{\frac{1}{q}} \\
& + \, \sup_{t \in (0,\infty)} V_2(t)  \, \sup_{x \in (t,\infty)} \bigg( \int_{[x,\infty)} \, d \, \bigg( - \sup_{t \le \tau} \bigg(\frac{u(\tau)}{B(\tau)}\bigg)^{\frac{pr}{p-r}} \bigg( \int_t^{\tau} {\mathcal B}(t,y)\,dy \bigg) \bigg) \bigg)^{\frac{p-r}{pr}} \, \bigg( \int_t^x (y - t)^{\frac{q}{r}} w(y) \,dy \bigg)^{\frac{1}{q}} \\
& + \, \sup_{t \in (0,\infty)} V_2(t)  \, \sup_{x \in (t,\infty)} \bigg( \int_{(t,x]} \,  (y - t)^{\frac{p}{p-r}} \, d \, \bigg( - \sup_{y \le \tau} \bigg(\frac{u(\tau)}{B(\tau)}\bigg)^{\frac{pr}{p-r}} \bigg( \int_t^{\tau} {\mathcal B}(t,y)  \, dy \bigg) \bigg) \bigg)^{\frac{p-r}{pr}} \, \bigg( \int_x^{\infty} w(y) \,dy \bigg)^{\frac{1}{q}} \\
& + \, \sup_{t \in (0,\infty)} V_2(t)  \, \bigg( \int_t^{\infty} (y - t)^{\frac{q}{r}} w(y)\,dy \bigg)^{\frac{1}{q}} \lim_{x \rightarrow \infty} \bigg(\sup_{x \le \tau} \frac{u(\tau)}{B(\tau)} \bigg( \int_t^{\tau} {\mathcal B}(t,y)  \, dy \bigg)^{\frac{p-r}{pr}}\bigg).
\end{align*}

{\rm (ii)} Let $q < p$. By Theorem \ref{aux.thm.2}, we get for any $t \in (0,\infty)$ that
\begin{align*}
\sup_{\psi \in {\mathfrak M}^+(t,\infty)} \frac{\bigg( \int_t^{\infty}  \bigg( \int_t^z \sup_{x \le \tau} \bigg( \frac{u(\tau)}{B(\tau)} \bigg)^r  \, \int_t^{\tau} \psi (s) \, {\mathcal B}(t,s) \,ds \,dx \bigg)^{\frac{q}{r}} w(z)\,dz\bigg)^{\frac{r}{q}}}{\bigg( \int_t^{\infty} \psi(s)^{\frac{p}{r}}  \, {\mathcal B}(t,s) \,  ds \bigg)^{\frac{r}{p}}} & \\
& \hspace{-9cm} \approx \,   \bigg(\int_t^{\infty} \bigg( \int_t^{\tau} {\mathcal B}(t,s) \,ds \bigg)^{\frac{p(q-r)}{r(p-q)}} {\mathcal B}(t,\tau) \, \bigg( \int_{\tau}^{\infty} \bigg( \int_{\tau}^z \sup_{s \le y} \bigg( \frac{u(y)}{B(y)} \bigg)^r \,ds \bigg)^{\frac{q}{r}} w(z) \,dz \bigg)^{\frac{p}{p-q}} \,d\tau\bigg)^{\frac{r(p-q)}{pq}} \\
& \hspace{-8.5cm} +  \, \bigg(\int_t^{\infty} \bigg( \int_t^{\tau} {\mathcal B}(t,s) \, \bigg( \int_s^{\tau} \sup_{z \le y} \bigg( \frac{u(y)}{B(y)} \bigg)^r \,dz \bigg)^{\frac{p}{p-r}} \,ds \bigg)^{\frac{q(p-r)}{r(p-q)}}  \bigg( \int_{\tau}^{\infty} w(x)\,dx\bigg)^{\frac{q}{p-q}} \, w(\tau)\,d\tau\bigg)^{\frac{r(p-q)}{pq}} \\
& \hspace{-8.5cm} +  \, \bigg(\int_t^{\infty} \bigg( \int_{[x,\infty)} d \bigg( - \bigg( \sup_{y \le \tau} \bigg( \frac{u(\tau)}{B(\tau)} \bigg)^{\frac{pr}{p-r}} \bigg( \int_t^{\tau} {\mathcal B}(t,s)\,ds \bigg) \bigg) \bigg) \bigg)^{\frac{q(p-r)}{r(p-q)}} \bigg( \int_t^x (z - t)^{\frac{q}{r}} w(z) \,dz \bigg)^{\frac{q}{p-q}} (x-t)^{\frac{q}{r}} w(x) \,dx\bigg)^{\frac{r(p-q)}{pq}} \\
& \hspace{-8.5cm} +  \, \bigg(\int_t^{\infty} \bigg( \int_{(t,x]} (y - t)^{\frac{p}{p-r}}d \bigg( - \bigg( \sup_{y \le \tau} \bigg( \frac{u(\tau)}{B(\tau)} \bigg)^{\frac{pr}{p-r}} \bigg( \int_t^{\tau} {\mathcal B}(t,s)\,ds \bigg) \bigg) \bigg) \bigg)^{\frac{q(p-r)}{r(p-q)}} \bigg( \int_x^{\infty} w(z) \,dz \bigg)^{\frac{q}{p-q}} w(x) \,dx\bigg)^{\frac{r(p-q)}{pq}} \\
& \hspace{-8.5cm} +  \, \bigg( \int_t^{\infty} (y - t)^{\frac{q}{r}} w(y)\,dy \bigg)^{\frac{r}{q}} \lim_{x \rightarrow \infty} \bigg(\sup_{x \le \tau} \bigg( \frac{u(\tau)}{B(\tau)} \bigg)^r \bigg( \int_t^{\tau} {\mathcal B}(t,s)  \, ds \bigg)^{\frac{p-r}{p}}\bigg).
\end{align*}
Thus
\begin{align*}
A_2 \approx & \sup_{t \in (0,\infty)} V_2(t)  \sup_{t \le \tau} \bigg( \frac{u(\tau)}{B(\tau)} \bigg) \bigg(\int_t^{\tau} {\mathcal B}(t,s) \, ds \bigg)^{\frac{p-r}{pr}} \bigg(\int_0^{\infty} \bigg(\frac{yt}{y+t}\bigg)^{\frac{q}{r}}w(y)\,dy\bigg)^{\frac{1}{q}}\\
& + \sup_{t \in (0,\infty)} V_2(t) \,\bigg(\int_t^{\infty} \bigg( \int_t^{\tau} {\mathcal B}(t,s) \,ds \bigg)^{\frac{p(q-r)}{r(p-q)}} {\mathcal B}(t,\tau) \, \bigg( \int_{\tau}^{\infty} \bigg( \int_{\tau}^z \sup_{s \le y} \bigg( \frac{u(y)}{B(y)} \bigg)^r \,ds \bigg)^{\frac{q}{r}} w(z) \,dz \bigg)^{\frac{p}{p-q}} \,d\tau\bigg)^{\frac{p-q}{pq}} \\
& + \sup_{t \in (0,\infty)} V_2(t)  \, \bigg(\int_t^{\infty} \bigg( \int_t^{\tau} {\mathcal B}(t,s) \, \bigg( \int_s^{\tau} \sup_{z \le y} \bigg( \frac{u(y)}{B(y)} \bigg)^r \,dz \bigg)^{\frac{p}{p-r}} \,ds \bigg)^{\frac{q(p-r)}{r(p-q)}}  \bigg( \int_{\tau}^{\infty} w(x)\,dx\bigg)^{\frac{q}{p-q}} \, w(\tau)\,d\tau\bigg)^{\frac{p-q}{pq}} \\
& + \sup_{t \in (0,\infty)} V_2(t) \, \bigg(\int_t^{\infty} \bigg( \int_{[x,\infty)} d \bigg( - \bigg( \sup_{y \le \tau} \bigg( \frac{u(\tau)}{B(\tau)} \bigg)^{\frac{pr}{p-r}} \bigg( \int_t^{\tau} {\mathcal B}(t,s)\,ds \bigg) \bigg) \bigg) \bigg)^{\frac{q(p-r)}{r(p-q)}} \bigg( \int_t^x (z - t)^{\frac{q}{r}} w(z) \,dz \bigg)^{\frac{q}{p-q}} (x-t)^{\frac{q}{r}} w(x) \,dx\bigg)^{\frac{p-q}{pq}} \\
& + \sup_{t \in (0,\infty)} V_2(t) \, \bigg(\int_t^{\infty} \bigg( \int_{(t,x]} (y - t)^{\frac{p}{p-r}}d \bigg( - \bigg( \sup_{y \le \tau} \bigg( \frac{u(\tau)}{B(\tau)} \bigg)^{\frac{pr}{p-r}} \bigg( \int_t^{\tau} {\mathcal B}(t,s)\,ds \bigg) \bigg) \bigg) \bigg)^{\frac{q(p-r)}{r(p-q)}} \bigg( \int_x^{\infty} w(z) \,dz \bigg)^{\frac{q}{p-q}} w(x) \,dx\bigg)^{\frac{p-q}{pq}} \\
& + \sup_{t \in (0,\infty)} V_2(t) \, \bigg( \int_t^{\infty} (y - t)^{\frac{q}{r}} w(y)\,dy \bigg)^{\frac{1}{q}} \lim_{x \rightarrow \infty} \bigg(\sup_{x \le \tau} \bigg( \frac{u(\tau)}{B(\tau)} \bigg)\bigg( \int_t^{\tau} {\mathcal B}(t,s)  \, ds \bigg)^{\frac{p-r}{pr}}\bigg).
\end{align*}

Combining, we arrive at
\begin{align*}
A \approx &  \, \sup_{t \in (0,\infty)} \bigg( \int_0^t \bigg( \int_s^t \bigg( \frac{B(y)}{y} \bigg)^{p'}\,dy \bigg)^{\frac{m'}{p'}}
v_2(s)\,ds \bigg)^{\frac{1}{m'}} \bigg( \sup_{t \le \tau} \frac{u(\tau)}{B(\tau)} \bigg) \bigg( \int_0^{\infty} \bigg(\frac{yt}{y+t}\bigg)^{\frac{q}{r}}w(y)\,dy \bigg)^{\frac{1}{q}} \\
& +  \, \sup_{t \in (0,\infty)} \bigg( \int_0^t \bigg( \int_s^t \bigg( \frac{B(y)}{y} \bigg)^{p'}\,dy \bigg)^{\frac{m'}{p'}}
v_2(s)\,ds \bigg)^{\frac{1}{m'}} \bigg(\int_t^{\infty} \bigg( \int_t^y \bigg( \sup_{x \le \tau}\bigg( \frac{u(\tau)}{B(\tau)} \bigg)^r  \bigg) \,dx \bigg)^{\frac{q}{r}}w(y)\,dy\bigg)^{\frac{1}{q}} \\
& + \sup_{t \in (0,\infty)} V_2(t)  \sup_{t \le \tau} \bigg( \frac{u(\tau)}{B(\tau)} \bigg) \bigg(\int_t^{\tau} {\mathcal B}(t,s) \, ds \bigg)^{\frac{p-r}{pr}} \bigg(\int_0^{\infty} \bigg(\frac{yt}{y+t}\bigg)^{\frac{q}{r}}w(y)\,dy\bigg)^{\frac{1}{q}}\\
& + \sup_{t \in (0,\infty)} V_2(t) \,\bigg(\int_t^{\infty} \bigg( \int_t^{\tau} {\mathcal B}(t,s) \,ds \bigg)^{\frac{p(q-r)}{r(p-q)}} {\mathcal B}(t,\tau) \, \bigg( \int_{\tau}^{\infty} \bigg( \int_{\tau}^z \sup_{s \le y} \bigg( \frac{u(y)}{B(y)} \bigg)^r \,ds \bigg)^{\frac{q}{r}} w(z) \,dz \bigg)^{\frac{p}{p-q}} \,d\tau\bigg)^{\frac{p-q}{pq}} \\
& + \sup_{t \in (0,\infty)} V_2(t)  \, \bigg(\int_t^{\infty} \bigg( \int_t^{\tau} {\mathcal B}(t,s) \, \bigg( \int_s^{\tau} \sup_{z \le y} \bigg( \frac{u(y)}{B(y)} \bigg)^r \,dz \bigg)^{\frac{p}{p-r}} \,ds \bigg)^{\frac{q(p-r)}{r(p-q)}}  \bigg( \int_{\tau}^{\infty} w(x)\,dx\bigg)^{\frac{q}{p-q}} \, w(\tau)\,d\tau\bigg)^{\frac{p-q}{pq}} \\
& + \sup_{t \in (0,\infty)} V_2(t) \, \bigg(\int_t^{\infty} \bigg( \int_{[x,\infty)} d \bigg( - \bigg( \sup_{y \le \tau} \bigg( \frac{u(\tau)}{B(\tau)} \bigg)^{\frac{pr}{p-r}} \bigg( \int_t^{\tau} {\mathcal B}(t,s)\,ds \bigg) \bigg) \bigg) \bigg)^{\frac{q(p-r)}{r(p-q)}} \bigg( \int_t^x (z - t)^{\frac{q}{r}} w(z) \,dz \bigg)^{\frac{q}{p-q}} (x-t)^{\frac{q}{r}} w(x) \,dx\bigg)^{\frac{p-q}{pq}} \\
& + \sup_{t \in (0,\infty)} V_2(t) \, \bigg(\int_t^{\infty} \bigg( \int_{(t,x]} (y - t)^{\frac{p}{p-r}}d \bigg( - \bigg( \sup_{y \le \tau} \bigg( \frac{u(\tau)}{B(\tau)} \bigg)^{\frac{pr}{p-r}} \bigg( \int_t^{\tau} {\mathcal B}(t,s)\,ds \bigg) \bigg) \bigg) \bigg)^{\frac{q(p-r)}{r(p-q)}} \bigg( \int_x^{\infty} w(z) \,dz \bigg)^{\frac{q}{p-q}} w(x) \,dx\bigg)^{\frac{p-q}{pq}} \\
& + \sup_{t \in (0,\infty)} V_2(t) \, \bigg( \int_t^{\infty} (y - t)^{\frac{q}{r}} w(y)\,dy \bigg)^{\frac{1}{q}} \lim_{x \rightarrow \infty} \bigg(\sup_{x \le \tau} \bigg( \frac{u(\tau)}{B(\tau)} \bigg)\bigg( \int_t^{\tau} {\mathcal B}(t,s)  \, ds \bigg)^{\frac{p-r}{pr}}\bigg).
\end{align*}

Now we estimate $B$. By Theorem \ref{gks}, (b), and integrating by parts, we have that
\begin{align*}
\sup_{\vp \in {\mathfrak M}^+} \frac{1}{\|\vp\|_{r',h^{1-r'},(0,\infty)}}
\bigg( \int_0^{\infty} \bigg( \int_t^{\infty} \bigg( \int_0^s \vp(x)\,dx  \bigg)^{p'}\,\frac{ds}{s^{p'}} \bigg)^{\frac{m'}{p'}} v_2(t)\,dt \bigg)^{\frac{1}{m'}} & \notag\\
& \hspace{-9cm} \approx \sup_{t \in (0,\infty)}  \bigg( \int_0^t v_2(s)\,ds \bigg)^{\frac{1}{m'}} \bigg( \int_t^{\infty} \bigg( \int_s^{\infty} \frac{dy}{y^{p'}} \,dy \bigg)^{\frac{r(p-1)}{p-r}} \bigg( \int_0^s h(x) u(x)^r \,dx \bigg)^{\frac{r}{p-r}} h(s) u(s)^r \,ds \bigg)^{\frac{p-r}{pr}}  \notag\\
& \hspace{-8.5cm} + \sup_{t \in (0,\infty)}  \bigg( \int_t^{\infty} \bigg( \int_s^{\infty} \frac{dy}{y^{p'}} \bigg)^{\frac{m'}{p'}} v_2(s)\,ds \bigg)^{\frac{1}{m'}}  \bigg( \int_0^t h(x) u(x)^r \,dx \bigg)^{\frac{1}{r}} \\
& \hspace{-9cm} \approx \sup_{t \in (0,\infty)}  \bigg( \int_0^t v_2(s)\,ds \bigg)^{\frac{1}{m'}} \bigg( \int_t^{\infty} s^{\frac{r}{r-p}} \bigg( \int_0^s h(x) u(x)^r \,dx \bigg)^{\frac{r}{p-r}} h(s) u(s)^r \,ds \bigg)^{\frac{p-r}{pr}}  \\
& \hspace{-8.5cm} + \sup_{t \in (0,\infty)}  \bigg( \int_t^{\infty} s^{ - \frac{m'}{p}} v_2(s)\,ds \bigg)^{\frac{1}{m'}}  \bigg( \int_0^t h(x) u(x)^r \,dx \bigg)^{\frac{1}{r}}.
\end{align*}
Since
$$ 
\bigg( \int_{0}^{t} hu^r \bigg)^\frac{p}{p-r} = \int_{0}^{t} d\,\bigg( \int_{0}^{s} hu^r \bigg)^\frac{p}{p-r} \approx \int_{0}^{t} \bigg( \int_{0}^{s} hu^r \bigg)^\frac{r}{p-r} h(s)u(s)^r \, ds,
$$
then we get that
\begin{align*}
\sup_{\vp \in {\mathfrak M}^+} \frac{1}{\|\vp\|_{r',h^{1-r'},(0,\infty)}}
\bigg( \int_0^{\infty} \bigg( \int_t^{\infty} \bigg( \frac{1}{s}  \int_0^s \vp(x) u(x) \,dx  \bigg)^{p'}\,ds \bigg)^{\frac{m'}{p'}} v_2(t)\,dt \bigg)^{\frac{1}{m'}} & \\
& \hspace{-9cm} \approx \sup_{t \in (0,\infty)}  \bigg( \int_0^t v_2(s)\,ds \bigg)^{\frac{1}{m'}} \bigg( \int_t^{\infty} s^{\frac{r}{r-p}} \bigg( \int_0^s h(x) u(x)^r \,dx \bigg)^{\frac{r}{p-r}} h(s) u(s)^r \,ds \bigg)^{\frac{p-r}{pr}}  \\
& \hspace{-8.5cm} + \sup_{t \in (0,\infty)}  \bigg( \int_t^{\infty} s^{ - \frac{m'}{p}} v_2(s)\,ds \bigg)^{\frac{1}{m'}} \bigg(\int_{0}^{t} \bigg( \int_{0}^{s} h(x)u(x)^r \,dx \bigg)^\frac{r}{p-r} h(s)u(s)^r \, ds\bigg)^\frac{p-r}{pr}.
\end{align*}
By Lemma \ref{gluing.lem.0}, we arrive at
\begin{align*}
\sup_{\vp \in {\mathfrak M}^+} \frac{1}{\|\vp\|_{r',h^{1-r'},(0,\infty)}}
\bigg( \int_0^{\infty} \bigg( \int_t^{\infty} \bigg( \frac{1}{s}  \int_0^s \vp(x) u(x) \,dx  \bigg)^{p'}\,ds \bigg)^{\frac{m'}{p'}} v_2(t)\,dt \bigg)^{\frac{1}{m'}} & \\
& \hspace{-9cm} \approx \sup_{t \in (0,\infty)} \left(\int_{0}^{\infty} \left(\frac{t^\frac{1}{p}}{t^\frac{1}{p} + s^\frac{1}{p}}\right)^{m'} v_2(s) \, ds\right)^\frac{1}{m'} \left(\int_{0}^{\infty}\left(\frac{s^\frac{1}{p}}{t^\frac{1}{p} + s^\frac{1}{p}}\right)^\frac{pr}{p-r} \big(s^\frac{1}{p}\big)^{-\frac{pr}{p-r}} \left( \int_{0}^{s} hu^r \right)^\frac{r}{p-r} h(s)u(s)^r \, ds\right)^\frac{p-r}{pr}\\
& \hspace{-9cm}  \approx \sup_{t \in (0,\infty)} \bigg(\int_{0}^{\infty} \bigg(\frac{t}{s+t}\bigg)^{\frac{m'}{p}} v_2(s) \, ds\bigg)^\frac{1}{m'} \bigg(\int_{0}^{\infty}\bigg(\frac{1}{s+t}\bigg)^\frac{r}{p-r} \bigg( \int_{0}^{s} hu^r \bigg)^\frac{r}{p-r} h(s)u(s)^r \, ds\bigg)^\frac{p-r}{pr}.
\end{align*}
Applying Theorem \ref{thm.IBP.0}, it can be seen that if for some $t > 0$ 
$$
\int_{0}^{\infty}\bigg(\frac{1}{s+t}\bigg)^\frac{r}{p-r} \bigg( \int_{0}^{s} hu^r \bigg)^\frac{r}{p-r} h(s)u(s)^r \, ds < \infty, 
$$
then
$$
\int_{0}^{\infty} \bigg( \int_{0}^{s} hu^r \bigg)^\frac{p}{p-r}  d\,\bigg(-\bigg(\frac{1}{s+t}\bigg)^\frac{r}{p-r}\bigg) \le \frac{p}{p-r} \int_{0}^{\infty}\bigg(\frac{1}{s+t}\bigg)^\frac{r}{p-r} \bigg( \int_{0}^{s} hu^r \bigg)^\frac{r}{p-r} h(s)u(s)^r \, ds.
$$
Similarly, if
$$
\int_{0}^{\infty} \bigg( \int_{0}^{s} hu^r \bigg)^\frac{p}{p-r}  d\,\bigg(-\bigg(\frac{1}{s+t}\bigg)^\frac{r}{p-r}\bigg) < \infty
$$
for some $t > 0$, then
$$
\int_{0}^{\infty}\bigg(\frac{1}{s+t}\bigg)^\frac{r}{p-r} \bigg( \int_{0}^{s} hu^r \bigg)^\frac{r}{p-r} h(s)u(s)^r \, ds \le \frac{p-r}{p}\int_{0}^{\infty} \bigg( \int_{0}^{s} hu^r \bigg)^\frac{p}{p-r}  d\,\bigg(-\bigg(\frac{1}{s+t}\bigg)^\frac{r}{p-r}\bigg).
$$
So, we can write
$$
\int_{0}^{\infty}\bigg(\frac{1}{s+t}\bigg)^\frac{r}{p-r} \bigg( \int_{0}^{s} hu^r \bigg)^\frac{r}{p-r} h(s)u(s)^r \, ds 
\approx \int_{0}^{\infty} \bigg( \int_{0}^{s} hu^r \bigg)^\frac{p}{p-r}  d\,\bigg(-\bigg(\frac{1}{s+t}\bigg)^\frac{r}{p-r}\bigg).
$$
We obtain that 
\begin{align*}
\int_{0}^{\infty}\bigg(\frac{1}{s+t}\bigg)^\frac{r}{p-r} \bigg( \int_{0}^{s} hu^r \bigg)^\frac{r}{p-r} h(s)u(s)^r \, ds 
&\approx -\int_{0}^{\infty} \bigg( \int_{0}^{s} hu^r \bigg)^\frac{p}{p-r} \,\bigg(\frac{1}{s+t}\bigg)^\frac{2r-p}{p-r}\bigg(-\frac{1}{(s+t)^2}\bigg)\, ds \\
& = \int_{0}^{\infty} \bigg( \int_{0}^{s} hu^r \bigg)^\frac{p}{p-r} \,\bigg(\frac{1}{s+t}\bigg)^\frac{p}{p-r}\, ds.
\end{align*}
Thus, we have that
\begin{align*}
\sup_{\vp \in {\mathfrak M}^+} \frac{1}{\|\vp\|_{r',h^{1-r'},(0,\infty)}}
\bigg( \int_0^{\infty} \bigg( \int_t^{\infty} \bigg( \frac{1}{s}  \int_0^s \vp(x) u(x) \,dx  \bigg)^{p'}\,ds \bigg)^{\frac{m'}{p'}} v_2(t)\,dt \bigg)^{\frac{1}{m'}}&\\
&\hspace{-7cm} \approx \sup_{t \in (0,\infty)} V_3(t) \, \bigg(\int_{0}^{\infty} \bigg( \int_{0}^{s} hu^r \bigg)^\frac{p}{p-r} \,\bigg(\frac{1}{s+t}\bigg)^\frac{p}{p-r}\, ds\bigg)^\frac{p-r}{pr}.
\end{align*}
Consequenty, we get that
\begin{align*}
B & \approx \sup_{g \in \mp^+} \frac{ 1 }
{\|g\|_{\frac{q}{q-r},w^{\frac{r}{r-q}}}^{\frac{1}{r}}} 
\sup_{h:\, \int_0^x h \le \int_0^x \big( \int_{\tau}^{\infty} g\big)\,d\tau}   
\sup_{t \in (0,\infty)}  V_3(t) \,  \bigg( \int_0^\infty \bigg(\int_0^s h u^r \bigg)^\frac{p}{p-r}\bigg( \frac{1}{s + t}\bigg)^{\frac{p}{p-r}}\,ds \bigg)^{\frac{p-r}{pr}}. 
\end{align*}
Interchanging suprema, by duality (since $ p>r$), we arrive at
\begin{align*}
B & \approx \sup_{g \in \mp^+} \frac{ 1 } {\|g\|_{\frac{q}{q-r},w^{\frac{r}{r-q}}}^{\frac{1}{r}}}    
\sup_{t \in (0,\infty)}  V_3(t) \,
\sup_{h:\, \int_0^x h \le \int_0^x \big( \int_{\tau}^{\infty} g\big)\,d\tau}\bigg( \int_0^\infty \bigg(\int_0^s h u^r \bigg)^\frac{p}{p-r} \bigg( \frac{1}{s + t}\bigg)^{\frac{p}{p-r}}\,ds \bigg)^{\frac{p-r}{pr}} 
\\
& = \sup_{g \in \mp^+} \frac{ 1 } {\|g\|_{\frac{q}{q-r},w^{\frac{r}{r-q}}}^{\frac{1}{r}}}     
\sup_{t \in (0,\infty)}  V_3(t) \,
\left\{ \sup_{\psi \in {\mathfrak M}^+(0,\infty)} \frac{\sup_{h:\, \int_0^x h \le \int_0^x \big( \int_{\tau}^{\infty} g\big)\,d\tau}\int_0^{\infty}  \bigg( \int_0^s h u^r \bigg) \, \psi (s) \,  ds }{\bigg( \int_0^{\infty} \psi(s)^{\frac{p}{r}}  \, (s+t)^{\frac{p}{r}} \,  ds \bigg)^{\frac{r}{p}}} \right\}^{\frac{1}{r}}. 
\end{align*}
Applying Fubini Theorem, by Theorem \ref{transfermon}, we have that
\begin{align*}
B & \approx \sup_{g \in \mp^+} \frac{ 1 } {\|g\|_{\frac{q}{q-r},w^{\frac{r}{r-q}}}^{\frac{1}{r}}}     
\sup_{t \in (0,\infty)}  V_3(t)\,
\left\{ \sup_{\psi \in {\mathfrak M}^+(0,\infty)} \frac{\sup_{h:\, \int_0^x h \le \int_0^x \big( \int_{\tau}^{\infty} g\big)\,d\tau}\int_0^{\infty} h(x) u(x)^r \bigg( \int_x^\infty \psi \bigg) \,dx }{\bigg( \int_0^{\infty} \psi(s)^{\frac{p}{r}}  \, (s+t)^{\frac{p}{r}} \,  ds \bigg)^{\frac{r}{p}}} \right\}^{\frac{1}{r}}\\
& =\sup_{g \in \mp^+} \frac{ 1 } {\|g\|_{\frac{q}{q-r},w^{\frac{r}{r-q}}}^{\frac{1}{r}}}     
\sup_{t \in (0,\infty)}  V_3(t) \,
\left\{ \sup_{\psi \in {\mathfrak M}^+(0,\infty)} \frac{\int_{0}^{\infty}  \bigg( \int_x^{\infty} g\bigg)  \sup_{x \le \tau} u(\tau)^r \, \bigg( \int_{\tau}^{\infty} \psi \bigg)\, dx}{\bigg( \int_0^{\infty} \psi(s)^{\frac{p}{r}}  \, (s+t)^{\frac{p}{r}} \,  ds \bigg)^{\frac{r}{p}}} \right\}^{\frac{1}{r}}.
\end{align*}
Again, by Fubini Theorem, and interchanging suprema, we have that
\begin{align*}
B & \approx \sup_{t \in (0,\infty)}  V_3(t) \,
\left\{ \sup_{\psi \in {\mathfrak M}^+(0,\infty)} \frac{1}{\bigg( \int_0^{\infty} \psi(s)^{\frac{p}{r}}  \, (s+t)^{\frac{p}{r}} \,  ds \bigg)^{\frac{r}{p}}}
\sup_{g \in \mp^+} \frac{ \int_0^{\infty} g(y) \int_0^{y} \sup_{x \le \tau} u(\tau)^r \, \bigg( \int_{\tau}^{\infty} \psi \bigg)\, dx \, dy} {\|g\|_{\frac{q}{q-r},w^{\frac{r}{r-q}}}} \right\}^{\frac{1}{r}}.
\end{align*}
Applying duality principle yields the following estimate:
\begin{align*}
B & \approx \sup_{t \in (0,\infty)}  V_3(t) \,
\left\{ \sup_{\psi \in {\mathfrak M}^+(0,\infty)} \frac{\bigg(\int_0^{\infty} \bigg(\int_0^{y} \sup_{x \le \tau} u(\tau)^r \, \bigg( \int_{\tau}^{\infty} \psi \bigg)\, dx\bigg)^{\frac{q}{r}} w(y)\, dy \bigg)^{\frac{r}{q}}}{\bigg( \int_0^{\infty} \psi(s)^{\frac{p}{r}}  \, (s+t)^{\frac{p}{r}} \,  ds \bigg)^{\frac{r}{p}}}
\right\}^{\frac{1}{r}}.
\end{align*}
Here we apply Theorem \ref{aux.thm.3}.

{\rm (i)} Let $p \le q$. Then we have
\begin{align*}
	B \approx & \sup_{t \in (0,\infty)}  V_3(t) \,
	 \sup_{s \in (0,\infty)} \bigg( \int_0^s (y+t)^{\frac{p(3r -2p)}{(2p-r)(p-r)}} \bigg( \int_y^s \bigg( \sup_{x \le \tau} {\mathcal K}(\tau,t)^r \bigg)\,dx \bigg)^{\frac{p}{p-r}}\,dy\bigg)^{\frac{p-r}{pr}} \, \bigg( \int_s^{\infty} w(z)\,dz \bigg)^{\frac{1}{q}} 
	 \\
	& + \sup_{t \in (0,\infty)}  V_3(t) \, \sup_{s \in (0,\infty)}\bigg(\int_0^s (y+t)^{\frac{p(3r-2p)}{(2p-r)(p-r)}}\,dy\bigg)^\frac{p-r}{pr} \bigg(\int_s^{\infty} \bigg(\int_s^y\bigg(\sup_{x \le \tau} {\mathcal K}(\tau,t)^r \bigg)\,dx\bigg)^\frac{q}{r}\, w(y)dy\bigg)^\frac{1}{q}
	\\
	& + \sup_{t \in (0,\infty)}  V_3(t) \, \sup_{x \in (0,\infty)} \bigg(\int_{[x,\infty)}\,d\bigg(-\sup_{s\le \tau} {\mathcal K}(\tau,t)^\frac{pr}{p-r} \bigg(\int_0^{\tau}(y+t)^{\frac{p(3r-2p)}{(2p-r)(p-r)}}\,dy\bigg)\bigg)\bigg)^\frac{p-r}{pr} \bigg(\int_0^{x}y^\frac{q}{r}w(y)\,dy\bigg)^\frac{1}{q}
	\\
	& + \sup_{t \in (0,\infty)}  V_3(t) \, \sup_{x \in (0,\infty)} \bigg(\int_{(0,x]} s^\frac{p}{p-r}\,d\bigg(-\sup_{s\le \tau} {\mathcal K}(\tau,t)^\frac{pr}{p-r} \bigg(\int_0^{\tau}(y+t)^{\frac{p(3r-2p)}{(2p-r)(p-r)}}\,dy\bigg)\bigg)\bigg)^\frac{p-r}{pr} \bigg(\int_{x}^{\infty}w(y)\,dy\bigg)^\frac{1}{q}
	\\
	& + \bigg(\int_{0}^{\infty}y^\frac{q}{r}w(y)\,dy\bigg)^\frac{1}{q} \,\sup_{t \in (0,\infty)}  V_3(t) \, \lim_{s \rightarrow \infty}\bigg(\sup_{s\le \tau} {\mathcal K}(\tau,t) \bigg(\int_0^{\tau}(y+t)^{\frac{p(3r-2p)}{(2p-r)(p-r)}}\,dy\bigg)^\frac{p-r}{pr}\bigg)
	\\
	& + \sup_{t \in (0,\infty)}  V_3(t) \, t^\frac{r}{p (2p - r)}  \, \bigg(\int_{0}^{\infty}\bigg(\int_{0}^{x}\bigg(\sup_{s\le \tau} {\mathcal K}(\tau,t)^r \bigg)\,ds\bigg)^\frac{q}{r}w(x)\,dx\bigg)^\frac{1}{q}.
\end{align*}

{\rm (ii)} Let $q < p$. Then we have
\begin{align*}
	B \approx & \sup_{t \in (0,\infty)}  V_3(t) \, \bigg(\int_0^{\infty} \bigg( \int_0^x (\tau+t)^{\frac{p(3r - 2p)}{(2p-r)(p-r)}} \,d\tau \bigg)^{\frac{p(q-r)}{r(p-q)}} \, (x+t)^{\frac{p(3r - 2p)}{(2p-r)(p-r)}} \, \bigg( \int_x^{\infty} \bigg( \int_x^z \bigg( \sup_{y \le \tau} {\mathcal K}(\tau,t)^r \bigg) \,dy \bigg)^{\frac{q}{r}} \, w(z)\,dz \bigg)^{\frac{p}{p-q}} \,dx \bigg)^{\frac{p-q}{pq}} \\
	& + \sup_{t \in (0,\infty)}  V_3(t) \, \bigg( \int_0^{\infty} \bigg( \int_0^{y} (x+t)^{\frac{p(3r - 2p)}{(2p - r)(p-r)}} \bigg( \int_x^y \bigg( \sup_{s \le \tau} {\mathcal K}(\tau,t)^r \bigg)\,ds \bigg)^{\frac{p}{p-r}}\,dx \bigg)^{\frac{q(p-r)}{r(p-q)}} \bigg(\int_y^{\infty} w(z)\,dz\bigg)^{\frac{q}{p-q}} w(y)\,dy \bigg)^{\frac{p-q}{pq}} \\
	& + \sup_{t \in (0,\infty)}  V_3(t) \, \bigg( \int_0^{\infty} \bigg( \int_{[x,\infty)} d\,\bigg( - \bigg(\sup_{y \le \tau} {\mathcal K}(\tau,t)^{\frac{pr}{p-r}} \bigg(\int_0^{\tau} (z+t)^{\frac{p(3r-2p)}{(2p - r)(p-r)}}\,dz\bigg)\bigg)\bigg)\bigg)^{\frac{q(p-r)}{r(p-q)}} \bigg(\int_0^x z^{\frac{q}{r}} w(z) \,dz\bigg)^{\frac{q}{p-q}} x^{\frac{q}{r}} w(x)\,dx \bigg)^{\frac{p-q}{pq}} \\
	& + \sup_{t \in (0,\infty)}  V_3(t) \, \bigg( \int_0^{\infty} \bigg( \int_{(0,x]} y^{\frac{p}{p-r}} d \bigg( - \bigg(\sup_{y \le \tau} {\mathcal K}(\tau,t)^{\frac{pr}{p-r}} \bigg( \int_0^{\tau} (z+t)^{\frac{p(3r-2p)}{(2p-r)(p-r)}}\,dz \bigg)\bigg)\bigg)\bigg)^{\frac{q(p-r)}{r(p-q)}} \bigg(\int_x^{\infty} w\bigg)^{\frac{q}{p-q}} w(x)\,dx  \bigg)^{\frac{p-q}{pq}}\\
	& + \bigg(\int_{0}^{\infty}y^\frac{q}{r}w(y)\,dy\bigg)^\frac{1}{q} \,\sup_{t \in (0,\infty)}  V_3(t) \, \lim_{s \rightarrow \infty}\bigg(\sup_{s\le \tau} {\mathcal K}(\tau,t) \bigg(\int_0^{\tau}(y+t)^{\frac{p(3r-2p)}{(2p-r)(p-r)}}\,dy\bigg)^\frac{p-r}{pr}\bigg)
	\\
	& + \sup_{t \in (0,\infty)}  V_3(t) \, t^\frac{r}{p (2p - r)}  \, \bigg(\int_{0}^{\infty}\bigg(\int_{0}^{x}\bigg(\sup_{s\le \tau} {\mathcal K}(\tau,t)^r \bigg)\,ds\bigg)^\frac{q}{r}w(x)\,dx\bigg)^\frac{1}{q}.
\end{align*}

The proof is completed.
\end{proof}

\section{Boundedness of $M_{\phi,\Lambda^{\alpha}(b)}$ from $\GG(p_1,m_1,v)$ into $\GG(p_2,m_2,w)$}\label{BofMF}

In this section we give the proof of the reduction theorem for the boundedness of $M_{\phi,\Lambda^{\alpha}(b)}$ from $\GG(p_1,m_1,v_1)$ into $\GG(p_2,m_2,v_2)$ and calculate the best constant in inequality \eqref{opnorm of M}.

\

\noindent{\bf Proof of  Theorem \ref{main.reduc.thm}:}

Assume that the inequality 	
$$
\bigg( \int_0^{\infty} \bigg( \int_0^x \big[ \big(M_{\phi,\Lambda^{\alpha}(b)}f\big)^* (t)\big]^{p_2}\,dt\bigg)^{\frac{m_2}{p_2}} w(x)\,dx\bigg)^{\frac{1}{m_2}} \le C \bigg( \int_0^{\infty} \bigg( \int_0^x [f^* (\tau)]^{p_1}\,d\tau \bigg)^{\frac{m_1}{p_1}} v(x)\,dx \bigg)^{\frac{1}{m_1}}
$$
holds for all $f \in \mp (\rn)$. 

Denote by $\mf^{\rad,\dn}(\rn)$ the set of all measurable, non-negative,
radially decreasing functions on $\rn$, that is,
$$
\mf^{\rad,\dn}(\rn) : = \{f \in \mf(\rn):\, f(x) = h(|x|),\,x \in \rn
~\mbox{with}~ h \in \mp^+ ((0,\infty);\dn)\},
$$
where the notation $\mp^+ ((0,\infty);\dn)$ is used to denote the
subset of those functions from $\mf^+ (0,\infty)$ which are
non-increasing on $(0,\infty)$.

Recall that the inequality
$$
\big(M_{\phi,\Lambda^{\alpha}(b)}g\big)^* (t) \ge C \, \sup_{t \le \tau} \phi (\tau)^{-1} \bigg( \int_0^{\tau} [g^* (y)]^{\alpha} b(y)\,dy\bigg)^{\frac{1}{\alpha}}
$$	
holds for all $g \in \mf^{\rad,\dn}(\rn)$ with constant $C > 0$ independent of $g$ and $t$ (cf. \cite[Lemma 3.12]{musbil}). 

Thus the inequality
$$
\bigg( \int_0^{\infty} \bigg( \int_0^x \bigg[ \sup_{t \le \tau} \phi (\tau)^{-1} \bigg( \int_0^{\tau} [g^*(y)]^{\alpha} b(y)\,dy\bigg)^{\frac{1}{\alpha}}\bigg]^{p_2}\,dt\bigg)^{\frac{m_2}{p_2}} w(x)\,dx\bigg)^{\frac{1}{m_2}} \le C \bigg( \int_0^{\infty} \bigg( \int_0^x [g^* (\tau)]^{p_1}\,d\tau \bigg)^{\frac{m_1}{p_1}} v(x)\,dx \bigg)^{\frac{1}{m_1}}
$$
holds for all $g \in \mf^{\rad,\dn}(\rn)$, which evidently can be rewritten as follows
$$
\bigg( \int_0^{\infty} \bigg( \int_0^x \big[ (T_{B/\phi^{\alpha},b} g^*) (t) \big]^{\frac{p_2}{\alpha}}\,dt\bigg)^{\frac{m_2}{p_2}} w(x)\,dx\bigg)^{\frac{1}{m_2}} \le C \bigg( \int_0^{\infty} \bigg( \int_0^x [g^* (\tau)]^{\frac{p_1}{\alpha}}\,d\tau \bigg)^{\frac{m_1}{p_1}} v(x)\,dx \bigg)^{\frac{1}{m_1}}, \qquad g \in \mf^{\rad,\dn}(\rn).
$$
Since for any $h \in \mp (\rn)$ there exists $g \in \mf^{\rad,\dn}(\rn)$ such that $g^* = h^*$, then the ineqaulity
$$
\bigg( \int_0^{\infty} \bigg( \int_0^x \big[ (T_{B/\phi^{\alpha},b} h^*) (t) \big]^{\frac{p_2}{\alpha}}\,dt\bigg)^{\frac{m_2}{p_2}} w(x)\,dx\bigg)^{\frac{1}{m_2}} \le C \bigg( \int_0^{\infty} \bigg( \int_0^x [h^* (\tau)]^{\frac{p_1}{\alpha}}\,d\tau \bigg)^{\frac{m_1}{p_1}} v(x)\,dx \bigg)^{\frac{1}{m_1}}
$$
holds for all $h \in \mp (\rn)$, as well.

Now assume that the inequality
$$
\bigg( \int_0^{\infty} \bigg( \int_0^x \big[ (T_{B/\phi^{\alpha},b} h^*) (t) \big]^{\frac{p_2}{\alpha}}\,dt\bigg)^{\frac{m_2}{p_2}} w(x)\,dx\bigg)^{\frac{1}{m_2}} \le C \bigg( \int_0^{\infty} \bigg( \int_0^x [h^* (\tau)]^{\frac{p_1}{\alpha}}\,d\tau \bigg)^{\frac{m_1}{p_1}} v(x)\,dx \bigg)^{\frac{1}{m_1}}
$$
holds for all $h \in \mp (\rn)$.

Obviously, the last inequality is equivalent to the inequality
$$
\bigg( \int_0^{\infty} \bigg( \int_0^x \bigg[ \sup_{t \le \tau} \phi (\tau)^{-1} \bigg( \int_0^{\tau} [f^* (y)]^{\alpha} b(y)\,dy\bigg)^{\frac{1}{\alpha}}\bigg]^{p_2}\,dt\bigg)^{\frac{m_2}{p_2}} w(x)\,dx\bigg)^{\frac{1}{m_2}} \le C \bigg( \int_0^{\infty} \bigg( \int_0^x [f^* (\tau)]^{p_1}\,d\tau \bigg)^{\frac{m_1}{p_1}} v(x)\,dx \bigg)^{\frac{1}{m_1}}
$$
for all $f \in \mp (\rn)$. 

Recall that the inequality
$$
(M_{\phi,\Lambda^{\alpha}(b)}f)^* (t) \le C \sup_{t \le \tau} \phi (\tau)^{-1} \bigg( \int_0^{\tau} [f^* (y)]^{\alpha}b(y)\,dy\bigg)^{\frac{1}{\alpha}}
$$
holds for all $f \in \mp (\rn)$ (cf. \cite[Corollary 3.6]{musbil}).

Consequently, the inequality
$$
\bigg( \int_0^{\infty} \bigg( \int_0^x \big[ \big(M_{\phi,\Lambda^{\alpha}(b)}f\big)^* (t)\big]^{p_2}\,dt\bigg)^{\frac{m_2}{p_2}} w(x)\,dx\bigg)^{\frac{1}{m_2}} \le C \bigg( \int_0^{\infty} \bigg( \int_0^x [f^* (\tau)]^{p_1}\,d\tau \bigg)^{\frac{m_1}{p_1}} v(x)\,dx \bigg)^{\frac{1}{m_1}}
$$
holds for all $f \in \mp (\rn)$, as well.

The proof is completed.

\qed



Denote by 
\begin{equation}\label{v0tilde}
\tilde{v}_0 (t) : = t^{\frac{m_1}{p_1} - 1}\int_0^t v(s)s^{\frac{m_1}{p_1}}\,ds \int_t^{\infty} v(s)\,ds, \qquad t \in (0,\infty),
\end{equation}
\begin{equation}\label{defof_u2}
\tilde{v}_1 (t) : = \int_0^t v(s)s^{\frac{m_1}{p_1}}\,ds + t^{\frac{m_1}{p_1}} \int_t^{\infty} v(s)\,ds, \qquad t \in (0,\infty),
\end{equation}
and
\begin{equation}\label{defof_v22}
\tilde{v}_2(t) : = \frac{t^{\frac{m_1(p_1-\alpha)}{p_1(m_1-\alpha)}}\tilde{v}_0(t)}{\tilde{v}_1(t)^{\frac{2m_1-\alpha}{m_1-\alpha}}}, \qquad t \in (0,\infty).
\end{equation} 

\begin{theorem}
	Let $0 < \alpha < m_1 < p_1 \le p_2 < m_2 < \infty$, $\alpha \le r < \infty$ and $b \in \W (0,\infty) \cap \mp^+ ((0,\infty);\dn)$ be such that the function $B(t)$ satisfies  $0 < B(t) < \infty$ for every $t \in (0,\infty)$, $B \in \Delta_2$, $B(\infty) = \infty$ and $B(t) / t^{\alpha / r}$ is quasi-increasing. Moreover, let $\phi \in \W (0,\infty) \cap C(0,\infty)$ be such that $\phi \in Q_{r}(0,\infty)$ is a quasi-increasing function. Assume that $v \in \W_{m_1,p_1}(0,\infty)$ and $w \in \W(0,\infty)$. Suppose that
	$$
	0 < \int_0^t \bigg( \int_s^t \bigg( \frac{B(y)}{y} \bigg)^{\frac{p_1}{p_1-\alpha}}\,dy \bigg)^{\frac{m_1(p_1-\alpha)}{p_1(m_1-\alpha)}} \tilde{v}_2(s)\,ds < \infty, \qquad t \in (0,\infty),
	$$
	where $\tilde{v}_2$ is defined by \eqref{defof_v22}. Then
	\begin{align*}
	\|M_{\phi,\Lambda^{\alpha}(b)}\|_{\GG(p_1,m_1,v) \rightarrow \GG(p_2,m_2,w)} & \\
	& \hspace{-4cm} \approx  \sup_{t \in (0,\infty)} \bigg( \int_0^t \bigg( \int_s^t \bigg( \frac{B(y)}{y} \bigg)^{\frac{p_1}{p_1-\alpha}}\,dy \bigg)^{\frac{m_1(p_1-\alpha)}{p_1(m_1-\alpha)}}
	\tilde{v}_2(s)\,ds \bigg)^{\frac{m_1-\alpha}{m_1\alpha}} \bigg( \sup_{t \le \tau} \frac{1}{\phi(\tau)} \bigg) \left( \int_0^{\infty} \bigg(\frac{yt}{y+t}\bigg)^{\frac{m_2}{p_2}}w(y)\,dy \right)^{\frac{1}{m_2}} \\
	& \hspace{-3.5cm} +  \, \sup_{t \in (0,\infty)} \bigg( \int_0^t \bigg( \int_s^t \bigg( \frac{B(y)}{y} \bigg)^{\frac{p_1}{p_1-\alpha}}\,dy \bigg)^{\frac{m_1(p_1-\alpha)}{p_1(m_1-\alpha)}}
	\tilde{v}_2(s)\,ds \bigg)^{\frac{m_1-\alpha}{m_1\alpha}} \left( \int_t^{\infty} \bigg( \int_t^y \bigg( \sup_{x \le \tau}\bigg( \frac{1}{\phi(\tau)^{p_2}} \bigg)  \bigg) \,dx \bigg)^{\frac{m_2}{p_2}}w(y)\,dy \right)^{\frac{1}{m_2}} \\
	& \hspace{-3.5cm} + \, \sup_{t \in (0,\infty)}  \bigg( \int_0^{\infty} \bigg( \frac{1}{s + t}\bigg)^{\frac{m_1\alpha}{p_1(m_1-\alpha)}}\tilde{v}_2(s)\,ds \bigg)^{\frac{m_1-\alpha}{m_1\alpha}} \left( \int_0^t \bigg( \int_0^y  \bigg( \sup_{x \le \tau \le t} \bigg(\frac{B(\tau)}{\phi(\tau)^{\alpha}}\bigg)^{\frac{p_2}{\alpha}}  \bigg) \,dx \bigg)^{\frac{m_2}{p_2}} w(y)\,dy \right)^{\frac{1}{m_2}} \\
	& \hspace{-3.5cm} + \,\sup_{t \in (0,\infty)}  \bigg( \int_0^{\infty} \bigg( \frac{1}{s + t}\bigg)^{\frac{m_1\alpha}{p_1(m_1-\alpha)}} \tilde{v}_2(s)\,ds \bigg)^{\frac{m_1-\alpha}{m_1\alpha}} \bigg( \int_0^y  \bigg( \sup_{x \le \tau \le t} \bigg(\frac{B(\tau)}{\phi(\tau)^{\alpha}}\bigg)^{\frac{p_2}{\alpha}}   \bigg) \,dx \bigg)^{\frac{1}{p_2}} \left( \int_t^{\infty}  w(y)\,dy \right)^{\frac{1}{m_2}}. 
	\end{align*}
\end{theorem}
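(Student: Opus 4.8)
The plan is to obtain the statement by combining the reduction Theorem~\ref{main.reduc.thm} with the characterization of the restricted inequality \eqref{main.ineq.} contained in Theorem~\ref{thm.prev}. First I would check that all hypotheses of Theorem~\ref{main.reduc.thm} are in force here: we have $0<p_1,p_2<\infty$, $0<m_1,m_2<\infty$, $0<\alpha\le r<\infty$, $v,w\in\W(0,\infty)$, $\phi\in Q_r(0,\infty)$ quasi-increasing, and $b$ with $0<B(t)<\infty$, $B\in\Delta_2$, $B(\infty)=\infty$ and $B(t)/t^{\alpha/r}$ quasi-increasing. Consequently the least constant $C$ in \eqref{opnorm of M} — which is exactly $\|M_{\phi,\Lambda^{\alpha}(b)}\|_{\GG(p_1,m_1,v)\rightarrow\GG(p_2,m_2,w)}$ — equals the least constant $C$ for which
$$
\bigg( \int_0^{\infty} \bigg( \int_0^x \big[ T_{B/\phi^{\alpha},b} h^* (t) \big]^{\frac{p_2}{\alpha}}\,dt\bigg)^{\frac{m_2}{p_2}} w(x)\,dx\bigg)^{\frac{1}{m_2}} \le C \bigg( \int_0^{\infty} \bigg( \int_0^x [h^* (\tau)]^{\frac{p_1}{\alpha}}\,d\tau \bigg)^{\frac{m_1}{p_1}} v(x)\,dx \bigg)^{\frac{1}{m_1}}
$$
holds for all $h\in\mp(\rn)$.

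The next step is to recognize this last inequality as inequality \eqref{main.ineq.} for the data $u:=B/\phi^{\alpha}$, $b$, $w$, $v$ and the exponents $p:=p_1/\alpha$, $m:=m_1/\alpha$, $r:=p_2/\alpha$, $q:=m_2/\alpha$, read with an overall $\alpha$-th power. Indeed, writing $J$ and $P$ for the two iterated integrals above, the displayed inequality is $J^{1/m_2}\le C\,P^{1/m_1}$, while \eqref{main.ineq.} for the reparametrized data is $J^{\alpha/m_2}\le K\,P^{\alpha/m_1}$; hence $C=K^{1/\alpha}$, where $K$ is the quantity estimated in Theorem~\ref{thm.prev}. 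To apply that theorem I would verify its hypotheses for the reparametrized problem: the chain $1<m<p\le r<q<\infty$ is literally the standing assumption $0<\alpha<m_1<p_1\le p_2<m_2<\infty$; the function $u=B/\phi^{\alpha}$ is strictly positive and continuous on $(0,\infty)$ (since $B$ is locally absolutely continuous and $\phi\in C(0,\infty)$ is positive), hence $u\in\W(0,\infty)\cap C(0,\infty)$; and, because $m/p=m_1/p_1$, the auxiliary weights of \eqref{defof_v}--\eqref{defof_u} computed with exponent $m/p$ coincide with $\tilde{v}_0,\tilde{v}_1$ of \eqref{v0tilde}--\eqref{defof_u2}, so $v\in\W_{m,p}(0,\infty)=\W_{m_1,p_1}(0,\infty)$. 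Using $m'=(m_1/\alpha)'=m_1/(m_1-\alpha)$ and $p'=(p_1/\alpha)'=p_1/(p_1-\alpha)$ one gets $m'/p'=\frac{m_1(p_1-\alpha)}{p_1(m_1-\alpha)}$ and $m'+1=\frac{2m_1-\alpha}{m_1-\alpha}$, whence the weight $v_2$ of Theorem~\ref{thm.prev} equals $\tilde{v}_2$ from \eqref{defof_v22}, and the smallness integral condition appearing there is exactly the one imposed in the present statement.

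Finally I would insert these identifications into the four-term estimate of Theorem~\ref{thm.prev}, raise it to the power $1/\alpha$, and distribute the power over the finite sum (for nonnegative $a_1,\dots,a_4$ one has $(\sum_i a_i)^{1/\alpha}\approx\sum_i a_i^{1/\alpha}$). In each term the factor $\big(\int_0^t(\cdots)^{m'/p'}v_2\big)^{1/m'}$ becomes $\big(\int_0^t(\cdots)^{\frac{m_1(p_1-\alpha)}{p_1(m_1-\alpha)}}\tilde{v}_2\big)^{\frac{m_1-\alpha}{m_1\alpha}}$, the factor $\big(\int_0^\infty(s+t)^{-m'/p}v_2\big)^{1/m'}$ becomes $\big(\int_0^\infty(s+t)^{-\frac{m_1\alpha}{p_1(m_1-\alpha)}}\tilde{v}_2\big)^{\frac{m_1-\alpha}{m_1\alpha}}$, the relation $u/B=\phi^{-\alpha}$ turns the factor $\sup_{t\le\tau}u(\tau)/B(\tau)=\big(\sup_{t\le\tau}\phi(\tau)^{-1}\big)^{\alpha}$ into $\sup_{t\le\tau}\phi(\tau)^{-1}$ after the $1/\alpha$-power, and similarly $(u/B)^r=\phi^{-p_2}$, $u^r=(B/\phi^{\alpha})^{p_2/\alpha}$; together with $1/q=\alpha/m_2$, $1/r=\alpha/p_2$, $q/r=m_2/p_2$ this converts the $w$-integrals into the $\GG(p_2,m_2,w)$-type quantities with outer exponents $1/m_2$ and $1/p_2$ once the extra $1/\alpha$-power is applied. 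Collecting these yields precisely the four displayed terms. There is no analytic obstacle here — the two cited theorems do all the work — and I expect the bulk of the write-up to be the careful exponent bookkeeping of the $\alpha$-rescaling, in particular verifying that the weight $v_2$ of Theorem~\ref{thm.prev} is the weight $\tilde{v}_2$ of the present statement.
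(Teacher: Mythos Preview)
Your proposal is correct and follows exactly the same approach as the paper, which simply states that the result follows immediately from Theorems~\ref{main.reduc.thm} and~\ref{thm.prev}. Your careful bookkeeping of the $\alpha$-rescaling (identifying $p=p_1/\alpha$, $m=m_1/\alpha$, $r=p_2/\alpha$, $q=m_2/\alpha$, $u=B/\phi^{\alpha}$, and checking that $v_2$ becomes $\tilde{v}_2$) is precisely what is needed to make that one-line proof rigorous.
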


\begin{proof}
	The statement immediately follows by Theorems \ref{main.reduc.thm} and \ref{thm.prev}.	
\end{proof}

\begin{theorem}
	Let $0 < \alpha < m_1 \le p_2 < \min\{p_1,m_2\} < \infty$, $\alpha \le r < \infty$ and $b \in \W (0,\infty) \cap \mp^+ ((0,\infty);\dn)$ be such that the function $B(t)$ satisfies  $0 < B(t) < \infty$ for every $t \in (0,\infty)$, $B \in \Delta_2$, $B(\infty) = \infty$ and $B(t) / t^{\alpha / r}$ is quasi-increasing. Moreover, let $\phi \in \W (0,\infty) \cap C(0,\infty)$ be such that $\phi \in Q_{r}(0,\infty)$ is a quasi-increasing function. Assume that $v \in \W_{m_1,p_1}(0,\infty)$ and $w \in \W(0,\infty)$. 
	Suppose that
	$$
	0 < \int_0^t \bigg( \int_s^t \bigg(\frac{B(y)}{y}\bigg)^{\frac{p_1}{p_1-\alpha}}\,dy \bigg)^{\frac{m_1(p_1-\alpha)}{p_1(m_1-\alpha)}} \tilde{v}_2(s)\,ds < \infty, \qquad t \in (0,\infty),
	$$
	\begin{gather*}
	\int_0^t s^{\frac{m_1(p_1-\alpha)}{p_1(m_1-\alpha)}}\tilde{v_0}(s)\tilde{v}_1(s)^{-\frac{2m_1-\alpha}{m_1-\alpha}}\,ds < \infty, \quad \int_t^{\infty} s^{\frac{m_1(p_1-2\alpha)}{p_1(m_1-\alpha)}}\tilde{v_0}(s)\tilde{v}_1(s)^{-\frac{2m_1-\alpha}{m_1-\alpha}} \,ds < \infty ,\qquad t \in (0,\infty), \\
	\intertext{and} \int_0^1 s^{\frac{m_1(p_1-2\alpha)}{p_1(m_1-\alpha)}}\tilde{v_0}(s)\tilde{v}_1(s)^{-\frac{2m_1-\alpha}{m_1-\alpha}} \,ds = \int_1^{\infty} s^{\frac{m_1(p_1-\alpha)}{p_1(m_1-\alpha)}}\tilde{v_0}(s)\tilde{v}_1(s)^{-\frac{2m_1-\alpha}{m_1-\alpha}}\,ds  =  \infty,
	\end{gather*}
	where $\tilde{v}_0$, $\tilde{v}_1$ and $\tilde{v}_2$ are defined by \eqref{v0tilde}, \eqref{defof_u2} and \eqref{defof_v22}, respectively. 
	Denote by
	$$
	\tilde{V}_2(t) : = \bigg( \int_0^t s^{\frac{m_1(p_1-\alpha)}{p_1(m_1-\alpha)}}\tilde{v_0}(s)\tilde{v}_1(s)^{-\frac{2m_1-\alpha}{m_1-\alpha}}\,ds \bigg)^{\frac{m_1-\alpha}{m_1\alpha}}, \quad \tilde{V}_3(t) = \bigg( \int_0^{\infty} \bigg( \frac{t}{y + t}\bigg)^{\frac{m_1\alpha}{p_1(m_1-\alpha)}}\tilde{v}_2(y)\,dy \bigg)^{\frac{m_1-\alpha}{m_1\alpha}}, \quad 0 < t < \infty,
	$$ 
	$$
	\tilde{\mathcal {B}}(t,s) = \bigg( \int_t^s \bigg( \frac{B(y)}{y} \bigg)^{\frac{p_1}{p_1-\alpha}}\,dy \bigg)^{\frac{p_1(p_2-\alpha)}{(p_1-p_2)\alpha}}  \,  \bigg( \frac{B(s)}{s} \bigg)^{\frac{p_1}{p_1-\alpha}}, \quad 0 < t < s < \infty,
	$$	
	and
	$$
	\tilde{\mathcal K}(\tau,t) = \frac{B(\tau)^{\frac{1}{\alpha}}}{\phi(\tau)} (\tau+t)^{-\frac{2}{2p_1-p_2}}, \quad 0 < \tau,\,t < \infty.
	$$
	
	{\rm (i)} If $p_1 \le m_2$, then
	\begin{align*}
	\|M_{\phi,\Lambda^{\alpha}(b)}\|_{\GG(p_1,m_1,v) \rightarrow \GG(p_2,m_2,w)} & \\
	& \hspace{-4.8cm} \approx \, \sup_{t \in (0,\infty)} \bigg( \int_0^t \bigg( \int_s^t \bigg( \frac{B(y)}{y} \bigg)^{^{\frac{p_1}{p_1-\alpha}}}\,dy \bigg)^{\frac{m_1(p_1-\alpha)}{p_1(m_1-\alpha)}}
	\tilde{v}_2(s)\,ds \bigg)^{\frac{m_1-\alpha}{m_1\alpha}} \bigg( \sup_{t \le \tau} \frac{1}{\phi(\tau)} \bigg) \bigg(\int_0^{\infty} \bigg(\frac{yt}{y+t}\bigg)^{\frac{m_2}{p_2}}w(y)\,dy\bigg)^{\frac{1}{m_2}} \\
	& \hspace{-4.5cm} +  \, \sup_{t \in (0,\infty)} \bigg( \int_0^t \bigg( \int_s^t \bigg( \frac{B(y)}{y} \bigg)^{\frac{p_1}{p_1-\alpha}}\,dy \bigg)^{\frac{m_1(p_1-\alpha)}{p_1(m_1-\alpha)}}
	\tilde{v}_2(s)\,ds \bigg)^{\frac{m_1-\alpha}{m_1\alpha}} \bigg(\int_t^{\infty} \bigg( \int_t^y \bigg( \sup_{x \le \tau} \frac{1}{\phi(\tau)} \bigg)^{p_2} \,dx \bigg)^{\frac{m_2}{p_2}}w(y)\,dy \bigg)^{\frac{1}{m_2}} \\
	& \hspace{-4.5cm} + \sup_{t \in (0,\infty)} \tilde{V}_2(t)\, \sup_{t \le \tau} \frac{1}{\phi(\tau)} \bigg(\int_t^{\tau} \tilde{\mathcal B}(t,s) \, ds \bigg)^{\frac{p_1-p_2}{p_1p_2}}  \bigg(\int_0^{\infty} \bigg(\frac{yt}{y+t}\bigg)^{\frac{m_2}{p_2}}w(y)\,dy\bigg)^{\frac{1}{m_2}} \\
	& \hspace{-4.5cm} + \, \sup_{t \in (0,\infty)} \tilde{V}_2(t)  \, \sup_{x \in (t,\infty)} \bigg( \int_t^x \tilde{\mathcal B}(t,y) \, \bigg( \int_y^x \bigg( \sup_{s \le \tau} \frac{1}{\phi(\tau)} \bigg)^{p_2} \,ds \bigg)^{\frac{p_1}{p_1-p_2}} \, dy \bigg)^{\frac{p_1-p_2}{p_1p_2}} \, \bigg( \int_x^{\infty} w(y) \,dy \bigg)^{\frac{1}{m_2}} \\
	& \hspace{-4.5cm} + \, \sup_{t \in (0,\infty)} \tilde{V}_2(t)  \, \sup_{x \in (t,\infty)} \bigg( \int_t^x \tilde{\mathcal B}(t,y)  \, dy \bigg)^{\frac{p_1-p_2}{p_1p_2}} \, \bigg( \int_x^{\infty} \, \bigg( \int_x^y  \bigg( \sup_{s \le \tau} \frac{1}{\phi(\tau)}\bigg)^{p_2} \,ds \bigg)^{\frac{m_2}{p_2}} w(y) \,dy \bigg)^{\frac{1}{m_2}} \\
	& \hspace{-4.5cm} + \, \sup_{t \in (0,\infty)} \tilde{V}_2(t)  \, \sup_{x \in (t,\infty)} \bigg( \int_{[x,\infty)} \, d \, \bigg( - \sup_{t \le \tau} \bigg(\frac{1}{\phi(\tau)}\bigg)^{\frac{p_1p_2}{p_1-p_2}} \bigg( \int_t^{\tau} \tilde{\mathcal B}(t,y)\,dy \bigg) \bigg) \bigg)^{\frac{p_1-p_2}{p_1p_2}} \, \bigg( \int_t^x (y - t)^{\frac{m_2}{p_2}} w(y) \,dy \bigg)^{\frac{1}{m_2}} \\
	& \hspace{-4.5cm} + \, \sup_{t \in (0,\infty)} \tilde{V}_2(t)  \, \sup_{x \in (t,\infty)} \bigg( \int_{(t,x]} \,  (y - t)^{\frac{p_1}{p_1-p_2}} \, d \, \bigg( - \sup_{y \le \tau} \bigg(\frac{1}{\phi(\tau)}\bigg)^{\frac{p_1p_2}{p_1-p_2}} \bigg( \int_t^{\tau} \tilde{\mathcal B}(t,y)  \, dy \bigg) \bigg) \bigg)^{\frac{p_1-p_2}{p_1p_2}} \, \bigg( \int_x^{\infty} w(y) \,dy \bigg)^{\frac{1}{m_2}} \\
	& \hspace{-4.5cm} + \, \sup_{t \in (0,\infty)} \tilde{V}_2(t)  \, \bigg( \int_t^{\infty} (y - t)^{\frac{m_2}{p_2}} w(y)\,dy \bigg)^{\frac{1}{m_2}} \lim_{x \rightarrow \infty} \bigg(\sup_{x \le \tau} \frac{1}{\phi(\tau)} \bigg( \int_t^{\tau} \tilde{\mathcal B}(t,y)  \, dy \bigg)^{\frac{p_1-p_2}{p_1p_2}}\bigg) \\
	& \hspace{-4.5cm} + \, \sup_{t \in (0,\infty)}  \tilde{V}_3(t) \,
	\sup_{s \in (0,\infty)} \bigg( \int_0^s (y+t)^{\frac{p_1(3p_2 -2p_1)}{(2p_1-p_2)(p_1-p_2)}} \bigg( \int_y^s \sup_{x \le \tau} \tilde{\mathcal K}(\tau,t)^{p_2}\,dx \bigg)^{\frac{p_1}{p_1-p_2}}\,dy\bigg)^{\frac{p_1-p_2}{p_1p_2}} \, \bigg( \int_s^{\infty} w(z)\,dz \bigg)^{\frac{1}{m_2}} \\
	& \hspace{-4.5cm} + \, \sup_{t \in (0,\infty)}  \tilde{V}_3(t) \, \sup_{s \in (0,\infty)}\bigg(\int_0^s (y+t)^{\frac{p_1(3p_2 -2p_1)}{(2p_1-p_2)(p_1-p_2)}}\,dy\bigg)^{\frac{p_1-p_2}{p_1p_2}} \bigg(\int_s^{\infty} \bigg(\int_s^y \sup_{x \le \tau} \tilde{\mathcal K}(\tau,t)^{p_2}\,dx\bigg)^\frac{m_2}{p_2}\, w(y)dy\bigg)^\frac{1}{m_2}
	\\
	& \hspace{-4.5cm} + \, \sup_{t \in (0,\infty)}  \tilde{V}_3(t) \, \sup_{x \in (0,\infty)} \bigg(\int_{[x,\infty)}\,d\bigg(-\sup_{s\le \tau} \tilde{\mathcal K}(\tau,t)^\frac{p_1p_2}{p_1-p_2}  \bigg(\int_0^{\tau}(y+t)^{\frac{p_1(3p_2 -2p_1)}{(2p_1-p_2)(p_1-p_2)}}\,dy\bigg)\bigg)\bigg)^{\frac{p_1-p_2}{p_1p_2}} \bigg(\int_0^{x}y^\frac{m_2}{p_2}w(y)\,dy\bigg)^\frac{1}{m_2}	
	\\
	& \hspace{-4.5cm} + \, \sup_{t \in (0,\infty)}  \tilde{V}_3(t) \, \sup_{x \in (0,\infty)} \bigg(\int_{(0,x]} s^\frac{p_1}{p_1-p_2}\,d\bigg(-\sup_{s\le \tau} \tilde{\mathcal K}(\tau,t)^\frac{p_1p_2}{p_1-p_2} \bigg(\int_0^{\tau}(y+t)^{\frac{p_1(3p_2 -2p_1)}{(2p_1-p_2)(p_1-p_2)}}\,dy\bigg)\bigg)\bigg)^{\frac{p_1-p_2}{p_1p_2}} \bigg(\int_{x}^{\infty}w(y)\,dy\bigg)^\frac{1}{m_2}
	\\
	& \hspace{-4.5cm} + \, \bigg(\int_{0}^{\infty}y^\frac{m_2}{p_2}w(y)\,dy\bigg)^\frac{1}{m_2} \,\sup_{t \in (0,\infty)}  \tilde{V}_3(t) \, \lim_{s \rightarrow \infty}\bigg(\sup_{s\le \tau} \tilde{\mathcal K}(\tau,t) \bigg(\int_0^{\tau}(y+t)^{\frac{p_1(3p_2 -2p_1)}{(2p_1-p_2)(p_1-p_2)}}\,dy\bigg)^{\frac{p_1-p_2}{p_1p_2}}\bigg)	
	\\
	& \hspace{-4.5cm} + \, \sup_{t \in (0,\infty)}  \tilde{V}_3(t) \, t^\frac{p_2}{p_1 (2p_1 - p_2)}  \, \bigg(\int_{0}^{\infty}\bigg(\int_{0}^{x} \sup_{s\le \tau} \tilde{\mathcal K}(\tau,t)^{p_2}\,ds\bigg)^\frac{m_2}{p_2}w(x)\,dx\bigg)^\frac{1}{m_2}.
	\end{align*}
	
	{\rm (ii)} If $m_2 < p_1$, then	
	\begin{align*}
	\|M_{\phi,\Lambda^{\alpha}(b)}\|_{\GG(p_1,m_1,v) \rightarrow \GG(p_2,m_2,w)} & \\ 
	& \hspace{-4.8cm} \approx \sup_{t \in (0,\infty)} \bigg( \int_0^t \bigg( \int_s^t \bigg( \frac{B(y)}{y} \bigg)^{^{\frac{p_1}{p_1-\alpha}}} dy \bigg)^{\frac{m_1(p_1-\alpha)}{p_1(m_1-\alpha)}}
	\tilde{v}_2(s) ds \bigg)^{\frac{m_1-\alpha}{m_1\alpha}} \bigg( \sup_{t \le \tau} \frac{1}{\phi(\tau)} \bigg) \bigg(\int_0^{\infty} \bigg(\frac{yt}{y+t}\bigg)^{\frac{m_2}{p_2}}w(y) dy\bigg)^{\frac{1}{m_2}} \\
	& \hspace{-4.5cm} + \sup_{t \in (0,\infty)} \bigg( \int_0^t \bigg( \int_s^t \bigg( \frac{B(y)}{y} \bigg)^{\frac{p_1}{p_1-\alpha}} dy \bigg)^{\frac{m_1(p_1-\alpha)}{p_1(m_1-\alpha)}}
	\tilde{v}_2(s) ds \bigg)^{\frac{m_1-\alpha}{m_1\alpha}} \bigg(\int_t^{\infty} \bigg( \int_t^y \bigg( \sup_{x \le \tau} \frac{1}{\phi(\tau)}  \bigg)^{p_2} dx \bigg)^{\frac{m_2}{p_2}}w(y) dy \bigg)^{\frac{1}{m_2}} \\
	& \hspace{-4.5cm} + \sup_{t \in (0,\infty)} \tilde{V}_2(t) \sup_{t \le \tau} \bigg( \frac{1}{\phi(\tau)} \bigg) \bigg(\int_t^{\tau} \tilde{\mathcal B}(t,s) ds \bigg)^{\frac{p_1-p_2}{p_1p_2}}  \bigg(\int_0^{\infty} \bigg(\frac{yt}{y+t}\bigg)^{\frac{m_2}{p_2}}w(y) dy\bigg)^{\frac{1}{m_2}} \\
	& \hspace{-4.5cm} + \sup_{t \in (0,\infty)} \tilde{V}_2(t) \bigg( \int_t^{\infty} \bigg( \int_t^{\tau} \tilde{\mathcal B}(t,s) ds \bigg)^{\frac{p_1(m_2-p_2)}{p_2(p_1-m_2)}} \tilde{\mathcal B}(t,\tau) \bigg( \int_{\tau}^{\infty} \bigg( \int_{\tau}^z \bigg( \sup_{x \le y} \frac{1}{\phi(y)} \bigg)^{p_2} dx \bigg)^{\frac{m_2}{p_2}} w(z) dz \bigg)^{\frac{p_1}{p_1-m_2}} d\tau  \bigg)^{\frac{p_1-m_2}{p_1m_2}} 
	\\
	& \hspace{-4.5cm} + \sup_{t \in (0,\infty)} \tilde{V}_2(t)  \bigg( \int_t^{\infty} \bigg( \int_t^{\tau} \tilde{\mathcal B}(t,s) \bigg( \int_s^{\tau} \bigg( \sup_{z \le y} \frac{1}{\phi(y)} \bigg)^{p_2} dz \bigg)^{\frac{p_1}{p_1-p_2}} ds \bigg)^{\frac{m_2(p_1-p_2)}{p_2(p_1-m_2)}}  \bigg( \int_{\tau}^{\infty} w(x) dx\bigg)^{\frac{m_2}{p_1-m_2}} w(\tau) d\tau \bigg)^{\frac{p_1-m_2}{p_1m_2}} 
	\\
	& \hspace{-4.5cm} + \sup_{t \in (0,\infty)} \tilde{V}_2(t) \bigg( \int_t^{\infty} \bigg( \int_{[x,\infty)} d \bigg( - \bigg( \sup_{y \le \tau} \bigg( \frac{1}{\phi(\tau)} \bigg)^{\frac{p_1p_2}{(p_1-p_2)}} \bigg( \int_t^{\tau} \tilde{\mathcal B}(t,s) ds \bigg) \bigg) \bigg) \bigg)^{\frac{m_2(p_1-p_2)}{p_2(p_1-m_2)}} \times \\
	& \hspace{-3cm} \times \bigg( \int_t^x (z - t)^{\frac{m_2}{p_2}} w(z) dz \bigg)^{\frac{m_2}{p_1-m_2}} x^{\frac{m_2}{p_2}} w(x) dx \bigg)^{\frac{p_1-m_2}{p_1m_2}}  
	\\
	& \hspace{-4.5cm} + \sup_{t \in (0,\infty)} \tilde{V}_2(t) \bigg( \int_t^{\infty} \bigg( \int_{(t,x]} (y - t)^{\frac{p_1}{p_1-p_2}}d \bigg( - \bigg( \sup_{y \le \tau} \bigg( \frac{1}{\phi(\tau)} \bigg)^{\frac{p_1p_2}{(p_1-p_2)}} \bigg( \int_t^{\tau} \tilde{\mathcal B}(t,s)\,ds \bigg) \bigg) \bigg) \bigg)^{\frac{m_2(p_1-p_2)}{p_2(p_1-m_2)}} \times \\
	& \hspace{-3cm} \times \bigg( \int_x^{\infty} w(z) dz \bigg)^{\frac{m_2}{p_1-m_2}} w(x) dx \bigg)^{\frac{p_1-m_2}{p_1m_2}} 
	\\
	& \hspace{-4.5cm} + \sup_{t \in (0,\infty)} \tilde{V}_2(t) \bigg( \int_t^{\infty} (y - t)^{\frac{m_2}{p_2}} w(y)dy \bigg)^{\frac{1}{m_2}} \lim_{x \rightarrow \infty} \bigg(\sup_{x \le \tau} \frac{1}{\phi(\tau)} \bigg( \int_t^{\tau} \tilde{\mathcal B}(t,s) ds \bigg)^{\frac{p_1-p_2}{p_1p_2}}\bigg) 
	\\
	& \hspace{-4.5cm} + \sup_{t \in (0,\infty)} \tilde{V}_3(t) \bigg(\int_0^{\infty} \!\bigg( \int_0^x \! (\tau+t)^{\frac{p_1(3p_2 -2p_1)}{(2p_1-p_2)(p_1-p_2)}}d\tau \bigg)^{\frac{p_1(m_2-p_2)}{p_2(p_1-m_2)}} (x+t)^{\frac{p_1(3p_2 -2p_1)}{(2p_1-p_2)(p_1-p_2)}}  \times \\
	& \hspace{-3cm} \times \bigg( \int_x^{\infty} \bigg( \int_x^z \bigg( \sup_{y \le \tau} \tilde{\mathcal K}(\tau,t)^{p_2} \bigg) dy \bigg)^{\frac{m_2}{p_2}} \! w(z)dz \bigg)^{\frac{p_1}{p_1-m_2}} \! dx \bigg)^{\frac{p_1-m_2}{p_1m_2}}  
	\\
	& \hspace{-4.5cm} + \sup_{t \in (0,\infty)} \tilde{V}_3(t)  \bigg( \int_0^{\infty} \bigg( \int_0^{y} (x+t)^{\frac{p_1(3p_2 -2p_1)}{(2p_1-p_2)(p_1-p_2)}} \bigg( \int_x^y \bigg( \sup_{s \le \tau} \tilde{\mathcal K}(\tau,t)^{p_2} \bigg)ds \bigg)^{\frac{p_1}{p_1-p_2}}dx \bigg)^{\frac{m_2(p_1-p_2)}{p_2(p_1-m_2)}} \times \\
	& \hspace{-3cm} \times \bigg(\int_y^{\infty} w(z)dz\bigg)^{\frac{m_2}{p_1-m_2}} w(y)dy \bigg)^{\frac{p_1-m_2}{p_1m_2}}  
	\\
	& \hspace{-4.5cm} + \sup_{t \in (0,\infty)} \tilde{V}_3(t) \bigg( \int_0^{\infty} \! \bigg( \int_{[x,\infty)} \! d\bigg( - \bigg(\sup_{y \le \tau} \tilde{\mathcal K}(\tau,t)^\frac{p_1p_2}{p_1-p_2} \bigg(\int_0^{\tau} (z+t)^{\frac{p_1(3p_2 -2p_1)}{(2p_1-p_2)(p_1-p_2)}} dz\bigg)\bigg)\bigg)\bigg)^{\frac{m_2(p_1-p_2)}{p_2(p_1-m_2)}} \times \\
	& \hspace{-3cm} \times \bigg(\int_0^x z^{\frac{m_2}{p_2}} w(z) dz\bigg)^{\frac{m_2}{p_1-m_2}} x^{\frac{m_2}{p_2}} w(x)dx \bigg)^{\frac{p_1-m_2}{p_1m_2}}  
	\\
	& \hspace{-4.5cm} + \sup_{t \in (0,\infty)} \tilde{V}_3(t) \bigg( \int_0^{\infty} \bigg( \int_{(0,x]} y^{\frac{p_1}{p_1-p_2}} d \bigg( - \bigg(\sup_{y \le \tau} \tilde{\mathcal K}(\tau,t)^\frac{p_1p_2}{p_1-p_2} \bigg( \int_0^{\tau} (z+t)^{\frac{p_1(3p_2 -2p_1)}{(2p_1-p_2)(p_1-p_2)}}dz \bigg)\bigg)\bigg)\bigg)^{\frac{m_2(p_1-p_2)}{p_2(p_1-m_2)}} \times \\
	& \hspace{-3cm} \times \bigg(\int_x^{\infty} w(z)dz\bigg)^{\frac{m_2}{p_1-m_2}} w(x)dx  \bigg)^{\frac{p_1-m_2}{p_1m_2}}  
	\\
	& \hspace{-4.5cm} + \bigg(\int_{0}^{\infty}y^\frac{m_2}{p_2}w(y)dy\bigg)^\frac{1}{m_2} \sup_{t \in (0,\infty)}  \tilde{V}_3(t) \lim_{s \rightarrow \infty}\bigg(\sup_{s\le \tau} \tilde{\mathcal K}(\tau,t) \bigg(\int_0^{\tau}(y+t)^{\frac{p_1(3p_2 -2p_1)}{(2p_1-p_2)(p_1-p_2)}} dy\bigg)^\frac{p_1-p_2}{p_1p_2}\bigg)
	\\
	& \hspace{-4.5cm} + \sup_{t \in (0,\infty)}  \tilde{V}_3(t) t^\frac{p_2}{p_1 (2p_1 - p_2)} \bigg(\int_{0}^{\infty}\bigg(\int_{0}^{x}\bigg(\sup_{s\le \tau} \tilde{\mathcal K}(\tau,t)^{p_2}\bigg) ds\bigg)^\frac{m_2}{p_2}w(x)dx\bigg)^\frac{1}{m_2}.
	\end{align*}
	
\end{theorem}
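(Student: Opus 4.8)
The plan is to obtain this theorem as a direct consequence of the reduction Theorem \ref{main.reduc.thm} together with the characterization of the restricted inequality in Theorem \ref{them.prev.2}: once the parameters and the weights are identified correctly, the whole argument reduces to invoking these two results and raising the resulting two-sided estimate to a single fixed power.

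First I would apply Theorem \ref{main.reduc.thm}. Its proof (through the pointwise bounds $(M_{\phi,\Lambda^{\alpha}(b)}f)^{*}\ge C\sup_{t\le\tau}\phi(\tau)^{-1}(\int_{0}^{\tau}[f^{*}]^{\alpha}b)^{1/\alpha}$ and the reverse estimate from \cite[Lemma 3.12]{musbil} and \cite[Corollary 3.6]{musbil}) in fact shows that the best constant in \eqref{opnorm of M} is comparable to the best constant $C$ in
\begin{equation*}
\bigg( \int_0^{\infty} \bigg( \int_0^x \big[ T_{B/\phi^{\alpha},b} h^* (t) \big]^{\frac{p_2}{\alpha}}\,dt\bigg)^{\frac{m_2}{p_2}} w(x)\,dx\bigg)^{\frac{1}{m_2}} \le C \bigg( \int_0^{\infty} \bigg( \int_0^x [h^* (\tau)]^{\frac{p_1}{\alpha}}\,d\tau \bigg)^{\frac{m_1}{p_1}} v(x)\,dx \bigg)^{\frac{1}{m_1}},\qquad h\in\mp(\rn).
\end{equation*}
Raising both sides to the power $\alpha$ turns this into \eqref{main.ineq.} with $u=B/\phi^{\alpha}$ and with the quadruple of exponents $(m,p,r,q)=(m_1/\alpha,\,p_1/\alpha,\,p_2/\alpha,\,m_2/\alpha)$; hence the best constant in the displayed inequality equals $K^{1/\alpha}$, where $K$ is the optimal constant in \eqref{main.ineq.} for these data. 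Since passing to a fixed power preserves the relation $\approx$, it suffices to compute $K$.

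Next I would verify that Theorem \ref{them.prev.2} applies to these data. The ordering $1<m\le r<\min\{p,q\}$ becomes, after multiplication by $\alpha$, exactly $\alpha<m_1\le p_2<\min\{p_1,m_2\}$; the hypothesis $v\in\W_{m,p}$ coincides with $v\in\W_{m_1,p_1}$, because conditions \eqref{nontriv} and \eqref{nondegen} depend on $m$ and $p$ only through the ratio $m/p=m_1/p_1$; the weight $u=B/\phi^{\alpha}$ lies in $\W(0,\infty)\cap C(0,\infty)$ since $B$ is continuous and $\phi$ is positive and continuous; and the integrability/non-degeneracy conditions of Theorem \ref{them.prev.2} pass into the ones in the statement once one computes $m'=\tfrac{m_1}{m_1-\alpha}$, $p'=\tfrac{p_1}{p_1-\alpha}$, $\tfrac{m'}{p'}=\tfrac{m_1(p_1-\alpha)}{p_1(m_1-\alpha)}$, $m'+1=\tfrac{2m_1-\alpha}{m_1-\alpha}$, $m'(1-\tfrac{2}{p})=\tfrac{m_1(p_1-2\alpha)}{p_1(m_1-\alpha)}$, which identify $v_0=\tilde{v}_0$, $v_1=\tilde{v}_1$, $v_2=\tilde{v}_2$. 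Then I invoke Theorem \ref{them.prev.2}, part (i) when $p\le q$, i.e. $p_1\le m_2$, and part (ii) when $q<p$, i.e. $m_2<p_1$, to obtain the two-sided estimate for $K$.

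The last step is pure bookkeeping. Substituting $u=B/\phi^{\alpha}$ (so that $u/B=\phi^{-\alpha}$ and $r=p_2/\alpha$) into the formula for $K$ and then raising the whole expression to the power $1/\alpha$, each summand passes to the corresponding summand of the claimed formula: $(u(\tau)/B(\tau))^{1/\alpha}=\phi(\tau)^{-1}$, $V_2(t)^{1/\alpha}=\tilde{V}_2(t)$, $V_3(t)^{1/\alpha}=\tilde{V}_3(t)$, ${\mathcal K}(\tau,t)^{1/\alpha}=\tilde{\mathcal K}(\tau,t)$ (hence $\sup{\mathcal K}(\tau,t)^{r}=\sup\tilde{\mathcal K}(\tau,t)^{p_2}$), while ${\mathcal B}(t,s)$ already equals $\tilde{\mathcal B}(t,s)$ for these parameters and is therefore unchanged; and every exponent reduces to the one displayed in the statement — the inner exponents directly, the outermost ones after additionally absorbing the factor $1/\alpha$ (for instance $\tfrac1q\mapsto\tfrac1{m_2}$, $\tfrac1{m'}\mapsto\tfrac{m_1-\alpha}{m_1\alpha}$, $\tfrac{p-q}{pq}\mapsto\tfrac{p_1-m_2}{p_1m_2}$, $\tfrac{r}{p(2p-r)}\mapsto\tfrac{p_2}{p_1(2p_1-p_2)}$). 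There is no genuine mathematical difficulty in this theorem; the only point demanding care is to confirm that each of these weights and kernels is exactly the $1/\alpha$-power (or the unchanged form) of its counterpart in Theorem \ref{them.prev.2} and to distribute the single exponent $1/\alpha$ consistently over all factors of $K$.
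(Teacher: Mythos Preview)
Your proposal is correct and follows exactly the approach of the paper, which simply states that the result ``immediately follows by Theorems \ref{main.reduc.thm} and \ref{them.prev.2}.'' You have merely (and usefully) spelled out the parameter dictionary $(m,p,r,q)=(m_1/\alpha,\,p_1/\alpha,\,p_2/\alpha,\,m_2/\alpha)$, $u=B/\phi^{\alpha}$, and the resulting identifications $v_i=\tilde v_i$, $V_i^{1/\alpha}=\tilde V_i$, ${\mathcal B}=\tilde{\mathcal B}$, ${\mathcal K}^{1/\alpha}=\tilde{\mathcal K}$, together with the observation that raising the two-sided estimate for $K$ to the power $1/\alpha$ distributes over the sum up to $\approx$.
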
	

\begin{proof}
	The statement immediately follows by Theorems \ref{main.reduc.thm} and \ref{them.prev.2}.	
\end{proof}


	\begin{bibdiv}
		\begin{biblist}
			
			\bib{AFFGR}{article}{
				author={Ahmed, I.},
				author={Fiorenza, A.},
				author={Formica, M. R.},
				author={Gogatishvili, A.},
				author={Rakotoson, J. M.},
				title={Some new results related to Lorentz $G\Gamma$-spaces and
					interpolation},
				journal={J. Math. Anal. Appl.},
				volume={483},
				date={2020},
				number={2},
				pages={123623, 24},
				issn={0022-247X},
				review={\MR{4026493}},
				doi={10.1016/j.jmaa.2019.123623},
			}
			
			\bib{basmilruiz}{article}{
				author={Bastero, J.},
				author={Milman, M.},
				author={Ruiz, F. J.},
				title={Rearrangement of Hardy-Littlewood maximal functions in Lorentz
					spaces},
				journal={Proc. Amer. Math. Soc.},
				volume={128},
				date={2000},
				number={1},
				pages={65--74},
				issn={0002-9939},
				review={\MR{1641637 (2000c:42020)}},
				doi={10.1090/S0002-9939-99-05128-X},
			}

			\bib{cpss}{article}{
				author={Carro, M.},
				author={Pick, L.},
				author={Soria, J.},
				author={Stepanov, V. D.},
				title={On embeddings between classical Lorentz spaces},
				journal={Math. Inequal. Appl.},
				volume={4},
				date={2001},
				number={3},
				pages={397--428},
				issn={1331-4343},
				review={\MR{1841071 (2002d:46026)}},
				doi={10.7153/mia-04-37},
			}
	
			\bib{cwikpys}{article}{
				author={Cwikel, M.},
				author={Pustylnik, E.},
				title={Weak type interpolation near ``endpoint'' spaces},
				journal={J. Funct. Anal.},
				volume={171},
				date={2000},
				number={2},
				pages={235--277},
				issn={0022-1236},
				review={\MR{1745635 (2001b:46118)}},
				doi={10.1006/jfan.1999.3502},
			}
			
			\bib{dok}{article}{
				author={Doktorskii, R. Ya.},
				title={Reiterative relations of the real interpolation method},
				language={Russian},
				journal={Dokl. Akad. Nauk SSSR},
				volume={321},
				date={1991},
				number={2},
				pages={241--245},
				issn={0002-3264},
				translation={
					journal={Soviet Math. Dokl.},
					volume={44},
					date={1992},
					number={3},
					pages={665--669},
					issn={0197-6788},
				},
				review={\MR{1153547 (93b:46143)}},
			}
			
			\bib{gop2009}{article}{
				author={Evans, W. D.},
				author={Gogatishvili, A.},
				author={Opic, B.},
				title={The $\rho$-quasiconcave functions and weighted inequalities},
				conference={
					title={Inequalities and applications},
				},
				book={
					series={Internat. Ser. Numer. Math.},
					volume={157},
					publisher={Birkh\"auser},
					place={Basel},
				},
				date={2009},
				pages={121--132},
				review={\MR{2758974 (2012a:26025)}},
			}
		
			\bib{edop}{article}{
				author={Edmunds, D. E.},
				author={Opic, B.},
				title={Boundedness of fractional maximal operators between classical and
					weak-type Lorentz spaces},
				journal={Dissertationes Math. (Rozprawy Mat.)},
				volume={410},
				date={2002},
				pages={50},
				issn={0012-3862},
				review={\MR{1952673 (2004c:42040)}},
				doi={10.4064/dm410-0-1},
			}
			
			\bib{evop}{article}{
				author={Evans, W. D.},
				author={Opic, B.},
				title={Real interpolation with logarithmic functors and reiteration},
				journal={Canad. J. Math.},
				volume={52},
				date={2000},
				number={5},
				pages={920--960},
				issn={0008-414X},
				review={\MR{1782334 (2001i:46115)}},
				doi={10.4153/CJM-2000-039-2},
			}
			
			\bib{F2000}{article}{
				author={Fiorenza, A.},
				title={Duality and reflexivity in grand Lebesgue spaces},
				journal={Collect. Math.},
				volume={51},
				date={2000},
				number={2},
				pages={131--148},
				issn={0010-0757},
				review={\MR{1776829}},
			}
			
			\bib{FFGKR}{article}{
				author={Fiorenza, A.},
				author={Formica, M. R.},
				author={Gogatishvili, A.},
				author={Kopaliani, T.},
				author={Rakotoson, J. M.},
				title={Characterization of interpolation between grand, small or
					classical Lebesgue spaces},
				journal={Nonlinear Anal.},
				volume={177},
				date={2018},
				number={part B},
				part={part B},
				pages={422--453},
				issn={0362-546X},
				review={\MR{3886583}},
				doi={10.1016/j.na.2017.09.005},
			}
		
			\bib{FK}{article}{
				author={Fiorenza, A.},
				author={Karadzhov, G. E.},
				title={Grand and small Lebesgue spaces and their analogs},
				journal={Z. Anal. Anwendungen},
				volume={23},
				date={2004},
				number={4},
				pages={657--681},
				issn={0232-2064},
				review={\MR{2110397}},
				doi={10.4171/ZAA/1215},
			}
			\bib{FR2008}{article}{
				author={Fiorenza, A.},
				author={Rakotoson, J. M.},
				title={Some estimates in $G\Gamma(p,m,w)$ spaces},
				journal={J. Math. Anal. Appl.},
				volume={340},
				date={2008},
				number={2},
				pages={793--805},
				issn={0022-247X},
				review={\MR{2390887}},
				doi={10.1016/j.jmaa.2007.09.013},
			}
			
			\bib{FRZ2009}{article}{
				author={Fiorenza, A.},
				author={Rakotoson, J. M.},
				author={Zitouni, L.},
				title={Relative rearrangement method for estimating dual norms},
				journal={Indiana Univ. Math. J.},
				volume={58},
				date={2009},
				number={3},
				pages={1127--1149},
				issn={0022-2518},
				review={\MR{2541361}},
				doi={10.1512/iumj.2009.58.3580},
			}
			
			\bib{GR}{book}{
				author={Garcia-Cuerva, J.},
				author={Rubio de Francia, J.L.},
				title={Weighted norm inequalities and related topics},
				series={North-Holland Mathematics Studies},
				volume={116},
				note={Notas de Matem\'atica [Mathematical Notes], 104},
				publisher={North-Holland Publishing Co.},
				place={Amsterdam},
				date={1985},
				pages={x+604},
			}
		    
		    \bib{g1}{article}{
		    	author={Gogatishvili, A.},
		    	title={Discretization and anti-discretization of function spaces},
		    	series={},
		    	edition={},
		    	journal={In the proceedings of the The Autumn Conference Mathematical Society
		    		of Japan, September 25--28, Shimane University, Matsue (2002)},
		    	pages={63--72},
		    }
	        
	        \bib{gjop}{article}{
	        	author={Gogatishvili, A.},
	        	author={Johansson, M.},
	        	author={Okpoti, C. A.},
	        	author={Persson, L.-E.},
	        	title={Characterisation of embeddings in Lorentz spaces},
	        	journal={Bull. Austral. Math. Soc.},
	        	volume={76},
	        	date={2007},
	        	number={1},
	        	pages={69--92},
	        	issn={0004-9727},
	        	review={\MR{2343440}},
	        	doi={10.1017/S0004972700039484},
	        }
		    			
			\bib{GKPS}{article}{
				author={Gogatishvili, A.},
				author={K\v{r}epela, M.},
				author={Pick, L.},
				author={Soudsk\'{y}, F.},
				title={Embeddings of Lorentz-type spaces involving weighted integral
					means},
				journal={J. Funct. Anal.},
				volume={273},
				date={2017},
				number={9},
				pages={2939--2980},
				issn={0022-1236},
				review={\MR{3692326}},
				doi={10.1016/j.jfa.2017.06.008},
			}	
		
			\bib{GogMusISI}{article}{
				author={Gogatishvili, A.},
				author={Mustafayev, R.Ch.},
				title={Iterated Hardy-type inequalities involving suprema},
				journal={Math. Inequal. Appl.},
				volume={20},
				date={2017},
				number={4},
				pages={901--927},
				issn={1331-4343},
				review={\MR{3711402}},
			}
		    
		    \bib{GMP1}{article}{
		    	author={Gogatishvili, A.},
		    	author={Mustafayev, R. Ch.},
		    	author={Persson, L.-E.},
		    	title={Some new iterated Hardy-type inequalities},
		    	journal={J. Funct. Spaces Appl.},
		    	date={2012},
		    	pages={Art. ID 734194, 30},
		    	issn={0972-6802},
		    	review={\MR{3000818}},
		    	doi={10.1155/2012/734194},
		    }
	    
	        \bib{GMP2}{article}{
	        	author={Gogatishvili, A.},
	        	author={Mustafayev, R. Ch.},
	        	author={Persson, L.-E.},
	        	title={Some new iterated Hardy-type inequalities: the case $\theta=1$},
	        	journal={J. Inequal. Appl.},
	        	date={2013},
	        	pages={2013:515, 29},
	        	review={\MR{3320124}},
	        }
            
            \bib{gog.mus.2017_1}{article}{
            	author={Gogatishvili, A.},
            	author={Mustafayev, R. Ch.},
            	title={Weighted iterated Hardy-type inequalities},
            	journal={Math. Inequal. Appl.},
            	volume={20},
            	date={2017},
            	number={3},
            	pages={683--728},
            	issn={1331-4343},
            	review={\MR{3653914}},
            }
            
            \bib{gog.mus.2017_2}{article}{
            	author={Gogatishvili, A.},
            	author={Mustafayev, R. Ch.},
            	title={Iterated Hardy-type inequalities involving suprema},
            	journal={Math. Inequal. Appl.},
            	volume={20},
            	date={2017},
            	number={4},
            	pages={901--927},
            	issn={},
            	review={},
            }
	    
			\bib{gogmusunv}{article}{
				author = {Gogatishvili, A.}
				author = {Mustafayev, R. Ch.}
				author = {Unver, T.},
				year = {2019},
				month = {12},
				pages = {1303-1328},
				title = {Pointwise multipliers between weighted copson and cesàro function spaces},
				volume = {69},
				journal = {Mathematica Slovaca},
				doi = {10.1515/ms-2017-0310}
			}	
			
			\bib{gop}{article}{
				author={Gogatishvili, A.},
				author={Opic, B.},
				author={Pick, L.},
				title={Weighted inequalities for Hardy-type operators involving suprema},
				journal={Collect. Math.},
				volume={57},
				date={2006},
				number={3},
				pages={227--255},
				issn={0010-0757},
				review={\MR{2264321 (2007g:26019)}},
			}
			
			\bib{gogpick2007}{article}{
				author={Gogatishvili, A.},
				author={Pick, L.},
				title={A reduction theorem for supremum operators},
				journal={J. Comput. Appl. Math.},
				volume={208},
				date={2007},
				number={1},
				pages={270--279},
				issn={0377-0427},
				review={\MR{2347749 (2009a:26013)}},
			}
			
			\bib{GPS}{article}{
				author={Gogatishvili, A.},
				author={Pick, L.},
				author={Soudsk\'y, F.},
				title={Characterization of associate spaces of weighted Lorentz spaces
					with applications},
				journal={Studia Math.},
				volume={224},
				date={2014},
				number={1},
				pages={1--23},
				issn={0039-3223},
				review={\MR{3277050}},
				doi={10.4064/sm224-1-1},
			}
		
			\bib{graf2008}{book}{
				author={Grafakos, L.},
				title={Classical Fourier analysis},
				series={Graduate Texts in Mathematics},
				volume={249},
				edition={2},
				publisher={Springer, New York},
				date={2008},
				pages={xvi+489},
				isbn={978-0-387-09431-1},
				review={\MR{2445437 (2011c:42001)}},
			}
			
			\bib{graf}{book}{
				author={Grafakos, L.},
				title={Modern Fourier analysis},
				series={Graduate Texts in Mathematics},
				volume={250},
				edition={2},
				publisher={Springer},
				place={New York},
				date={2009},
				pages={xvi+504},
				isbn={978-0-387-09433-5},
				review={\MR{2463316 (2011d:42001)}},
			}
			
			\bib{guz1975}{book}{
				author={de Guzm{\'a}n, M.},
				title={Differentiation of integrals in $R^{n}$},
				series={Lecture Notes in Mathematics, Vol. 481},
				note={With appendices by Antonio C\'ordoba, and Robert Fefferman, and two
					by Roberto Moriy\'on},
				publisher={Springer-Verlag, Berlin-New York},
				date={1975},
				pages={xii+266},
			}
			
			\bib{IS}{article}{
				author={Iwaniec, T.},
				author={Sbordone, C.},
				title={On the integrability of the Jacobian under minimal hypotheses},
				journal={Arch. Rational Mech. Anal.},
				volume={119},
				date={1992},
				number={2},
				pages={129--143},
				issn={0003-9527},
				review={\MR{1176362}},
				doi={10.1007/BF00375119},
			}
			
			\bib{kerp}{article}{
				author={Kerman, R.},
				author={Pick, L.},
				title={Optimal Sobolev imbeddings},
				journal={Forum Math.},
				volume={18},
				date={2006},
				number={4},
				pages={535--570},
				issn={0933-7741},
				review={\MR{2254384 (2007g:46052)}},
				doi={10.1515/FORUM.2006.028},
			}
						
			\bib{krepick}{article}{
				author={K\v{r}epela, M.},
				author={Pick, L.},
				title={Weighted inequalities for iterated Copson integral operators},
				journal={Studia Math.},
				volume={253},
				date={2020},
				number={2},
				pages={163--197},
				issn={0039-3223},
				review={\MR{4078221}},
				doi={10.4064/sm181016-5-5},
			}

			\bib{leckneug}{article}{
				author={Leckband, M. A.},
				author={Neugebauer, C. J.},
				title={Weighted iterates and variants of the Hardy-Littlewood maximal
					operator},
				journal={Trans. Amer. Math. Soc.},
				volume={279},
				date={1983},
				number={1},
				pages={51--61},
				issn={0002-9947},
				review={\MR{704601 (85c:42021)}},
				doi={10.2307/1999370},
			}
			
			\bib{ler2005}{article}{
				author={Lerner, A. K.},
				title={A new approach to rearrangements of maximal operators},
				journal={Bull. London Math. Soc.},
				volume={37},
				date={2005},
				number={5},
				pages={771--777},
				issn={0024-6093},
				review={\MR{2164840 (2006d:42032)}},
				doi={10.1112/S0024609305004698},
			}
		    
		    \bib{mus.2017}{article}{
		    	author={Mustafayev, R. Ch.},
		    	title={On weighted iterated Hardy-type inequalities},
		    	journal={Positivity},
		    	volume={22},
		    	date={2018},
		    	number={},
		    	pages={275--299},
		    	issn={},
		    	review={},
		    }
	    
			\bib{musbil}{article}{
				author={Mustafayev, R. Ch.},
				author={Bilgi\c{c}li, N.},
				title={Generalized fractional maximal functions in Lorentz spaces
					$\Lambda$},
				journal={J. Math. Inequal.},
				volume={12},
				date={2018},
				number={3},
				pages={827--851},
				issn={1846-579X},
				review={\MR{3857365}},
				doi={10.7153/jmi-2018-12-62},
			}
			
			\bib{musbil_2}{article}{
				author={Mustafayev, R. Ch.},
				author={Bilgi\c{c}li, N.},
				title={Boundedness of weighted iterated Hardy-type operators involving
					suprema from weighted Lebesgue spaces into weighted Ces\`aro function
					spaces},
				journal={Real Anal. Exchange},
				volume={45},
				date={2020},
				number={2},
				pages={339--374},
				issn={0147-1937},
				review={\MR{4196080}},
			}
			
			\bib{neug1987}{article}{
				author={Neugebauer, C. J.},
				title={Iterations of Hardy-Littlewood maximal functions},
				journal={Proc. Amer. Math. Soc.},
				volume={101},
				date={1987},
				number={2},
				pages={272--276},
				issn={0002-9939},
				review={\MR{902540 (88k:42014)}},
				doi={10.2307/2045994},
			}
			
			\bib{OT1}{article}{
				author={Opic, B.},
				author={Trebels, W.},
				title={Bessel potentials with logarithmic components and Sobolev-type
					embeddings},
				language={English, with English and Russian summaries},
				journal={Anal. Math.},
				volume={26},
				date={2000},
				number={4},
				pages={299--319},
				issn={0133-3852},
				review={\MR{1805506 (2002b:46057)}},
				doi={10.1023/A:1005691512014},
			}
			
			\bib{OT2}{article}{
				author={Opic, B.},
				author={Trebels, W.},
				title={Sharp embeddings of Bessel potential spaces with logarithmic
					smoothness},
				journal={Math. Proc. Cambridge Philos. Soc.},
				volume={134},
				date={2003},
				number={2},
				pages={347--384},
				issn={0305-0041},
				review={\MR{1972143 (2004c:46057)}},
				doi={10.1017/S0305004102006321},
			}
			
			\bib{pick2000}{article}{
				author={Pick, L.},
				title={Supremum operators and optimal Sobolev inequalities},
				conference={
					title={Function spaces, differential operators and nonlinear analysis
					},
					address={Pudasj\"arvi},
					date={1999},
				},
				book={
					publisher={Acad. Sci. Czech Repub., Prague},
				},
				date={2000},
				pages={207--219},
				review={\MR{1755311 (2000m:46075)}},
			}
			
			\bib{perez1995}{article}{
				author={P{\'e}rez, C.},
				title={On sufficient conditions for the boundedness of the
					Hardy-Littlewood maximal operator between weighted $L^p$-spaces with
					different weights},
				journal={Proc. London Math. Soc. (3)},
				volume={71},
				date={1995},
				number={1},
				pages={135--157},
				issn={0024-6115},
				review={\MR{1327936 (96k:42023)}},
				doi={10.1112/plms/s3-71.1.135},
			}
			
			\bib{pick2002}{article}{
				author={Pick, L.},
				title={Optimal Sobolev embeddings---old and new},
				conference={
					title={Function spaces, interpolation theory and related topics (Lund,
						2000)},
				},
				book={
					publisher={de Gruyter, Berlin},
				},
				date={2002},
				pages={403--411},
				review={\MR{1943297 (2003j:46054)}},
			}
		
			\bib{PS_Proc_2013}{article}{
				author={Prokhorov, D. V.},
				author={Stepanov, V. D.},
				title={On weighted Hardy inequalities in mixed norms},
				journal={Proc. Steklov Inst. Math.},
				volume={283},
				date={2013},
				number={},
				pages={149--164},
				issn={},
			}
		
		    \bib{PS_Dokl_2013}{article}{
		    	author={Prokhorov, D. V.},
		    	author={Stepanov, V. D.},
		    	title={Weighted estimates for a class of sublinear operators},
		    	language={Russian},
		    	journal={Dokl. Akad. Nauk},
		    	volume={453},
		    	date={2013},
		    	number={5},
		    	pages={486--488},
		    	issn={0869-5652},
		    	translation={
		    		journal={Dokl. Math.},
		    		volume={88},
		    		date={2013},
		    		number={3},
		    		pages={721--723},
		    		issn={1064-5624},
		    	},
		    	review={\MR{3203323}},
		    }
	        
	        \bib{PS_Dokl_2014}{article}{
	        	author={Prokhorov, D. V.},
	        	author={Stepanov, V. D.},
	        	title={Estimates for a class of sublinear integral operators},
	        	language={Russian},
	        	journal={Dokl. Akad. Nauk},
	        	volume={456},
	        	date={2014},
	        	number={6},
	        	pages={645--649},
	        	issn={0869-5652},
	        	translation={
	        		journal={Dokl. Math.},
	        		volume={89},
	        		date={2014},
	        		number={3},
	        		pages={372--377},
	        		issn={1064-5624},
	        	},
	        	review={\MR{3287911}},
	        }
	        
	        \bib{P_Dokl_2015}{article}{
	        	author={Prokhorov, D. V.},
	        	title={On the boundedness of a class of sublinear integral operators},
	        	language={Russian},
	        	journal={Dokl. Akad. Nauk},
	        	volume={92},
	        	date={2015},
	        	number={2},
	        	pages={602--605},
	        	issn={},
	        }
	        		    		
			\bib{pys}{article}{
				author={Pustylnik, E.},
				title={Optimal interpolation in spaces of Lorentz-Zygmund type},
				journal={J. Anal. Math.},
				volume={79},
				date={1999},
				pages={113--157},
				issn={0021-7670},
				review={\MR{1749309 (2001a:46028)}},
				doi={10.1007/BF02788238},
			}
			
			\bib{Sham}{article}{
				author={Shambilova, G. {\`E}.},
				title={Weighted inequalities for a class of quasilinear integral
					operators on the cone of monotone functions},
				language={Russian, with Russian summary},
				journal={Sibirsk. Mat. Zh.},
				volume={55},
				date={2014},
				number={4},
				pages={912--936},
				issn={0037-4474},
				translation={
					journal={Sib. Math. J.},
					volume={55},
					date={2014},
					number={4},
					pages={745--767},
					issn={0037-4466},
				},
				review={\MR{3242605}},
			}
					
			\bib{Sinn}{article}{
				author={Sinnamon, G.},
				title={Transferring monotonicity in weighted norm inequalities},
				journal={Collect. Math.},
				volume={54},
				date={2003},
				number={2},
				pages={181--216},
				issn={0010-0757},
				review={\MR{1995140 (2004m:26031)}},
			}
		    
		    \bib{ss}{article}{
		    	author={Sinnamon, G.},
		    	author={Stepanov, V. D.},
		    	title={The weighted Hardy inequality: new proofs and the case $p=1$},
		    	journal={J. London Math. Soc. (2)},
		    	volume={54},
		    	date={1996},
		    	number={1},
		    	pages={89--101},
		    	issn={0024-6107},
		    	review={\MR{1395069 (97e:26021)}},
		    	doi={10.1112/jlms/54.1.89},
		    }
						
			\bib{stein1981}{article}{
				author={Stein, E. M.},
				title={Editor's note: the differentiability of functions in ${\bf
						R}^{n}$},
				journal={Ann. of Math. (2)},
				volume={113},
				date={1981},
				number={2},
				pages={383--385},
				issn={0003-486X},
				review={\MR{607898 (84j:35077)}},
			}
			
			\bib{stein1970}{book}{
				author={Stein, E.M.},
				title={Singular integrals and differentiability properties of functions},
				series={Princeton Mathematical Series, No. 30},
				publisher={Princeton University Press, Princeton, N.J.},
				date={1970},
				pages={xiv+290},
				review={\MR{0290095 (44 \#7280)}},
			}
			
			\bib{stein1993}{book}{
				author={Stein, E.M.},
				title={Harmonic analysis: real-variable methods, orthogonality, and
					oscillatory integrals},
				series={Princeton Mathematical Series},
				volume={43},
				note={With the assistance of Timothy S. Murphy;
					Monographs in Harmonic Analysis, III},
				publisher={Princeton University Press, Princeton, NJ},
				date={1993},
				pages={xiv+695},
				isbn={0-691-03216-5},
				review={\MR{1232192 (95c:42002)}},
			}
			
			\bib{StepSham}{article}{
				author={Stepanov, V. D.},
				author={Shambilova, G. {\`E}.},
				title={Weight boundedness of a class of quasilinear operators on the cone of monotone functions},
				journal={Dokl. Math.},
				volume={90},
				date={2014},
				number={2},
				pages={569--572},
				issn={},
			}
		
			\bib{tor1986}{book}{
				author={Torchinsky, A.},
				title={Real-variable methods in harmonic analysis},
				series={Pure and Applied Mathematics},
				volume={123},
				publisher={Academic Press, Inc., Orlando, FL},
				date={1986},
				pages={xii+462},
				isbn={0-12-695460-7},
				isbn={0-12-695461-5},
				review={\MR{869816 (88e:42001)}},
			}
		\end{biblist}
	\end{bibdiv}
	
\end{document}